\setlist[enumerate,1]{label=(\roman*), font = \normalfont} 
\let\originalleft\left
\let\originalright\right
\renewcommand{\left}{\mathopen{}\mathclose\bgroup\originalleft}
\renewcommand{\right}{\aftergroup\egroup\originalright}
\newlength{\bibitemsep}
\newlength{\bibparskip}\setlength{\bibparskip}{0pt}
\let\oldthebibliography\thebibliography
\renewcommand\thebibliography[1]{\oldthebibliography{#1}
  \setlength{\parskip}{\bibitemsep}
  \setlength{\itemsep}{\bibparskip}}
\newcommand{\N}{\mathbb{N}}
\newcommand{\Z}{\mathbb{Z}}
\newcommand{\Q}{\mathbb{Q}}
\newcommand{\R}{\mathbb{R}}
\newcommand{\T}{\mathbb{T}}
\renewcommand{\P}{\mathbb{P}}
\newcommand{\E}{\mathbb{E}}
\newcommand{\Ec}[1]{\mathbb{E} \left[#1\right]}
\newcommand{\Pp}[1]{\mathbb{P} \left(#1\right)}
\newcommand{\Ecsq}[2]{\mathbb{E} \left[#1\mathrel{}\middle|\mathrel{}#2\right]}
\newcommand{\Ppsq}[2]{\mathbb{P} \left(#1\mathrel{}\middle|\mathrel{}#2\right)}
\newcommand{\Eci}[2]{\mathbb{E}_{#1} \left[#2\right]}
\newcommand{\Ppi}[2]{\mathbb{P}_{#1} \left(#2\right)}
\newcommand{\Ecsqi}[3]{\mathbb{E}_{#1} \left[#2\mathrel{}\middle|\mathrel{}#3\right]}
\newcommand{\1}{\mathbbm{1}}
\newcommand{\cD}{\mathcal{D}}
\newcommand{\cC}{\mathcal{C}}
\newcommand{\cM}{\mathcal{M}}
\newcommand{\cL}{\mathcal{L}}
\newcommand{\cR}{\mathcal{R}}
\newcommand{\cZ}{\mathcal{Z}}
\newcommand{\sF}{\mathscr{F}}
\newcommand{\sG}{\mathscr{G}}
\newcommand{\bfS}{\mathbf{S}}
\newcommand{\bfV}{\mathbf{V}}
\newcommand{\e}{\mathrm{e}}
\newcommand{\diff}{\mathop{}\mathopen{}\mathrm{d}}
\newcommand{\supp}{\mathrm{supp}}
\newcommand{\Exists}{\exists\,}
\newcommand{\Forall}{\forall\,}
\newcommand{\abs}[1]{\left\lvert#1\right\rvert}
\newcommand{\norme}[1]{\left\lVert#1\right\rVert}
\newcommand{\petito}[1]{o\mathopen{}\left(#1\right)}
\newcommand{\grandO}[1]{O\mathopen{}\left(#1\right)}
\newcommand{\enstq}[2]{\left\{#1\mathrel{}\middle|\mathrel{}#2\right\}}
\newcommand{\restreinta}{\mathclose{}|\mathopen{}}
\newcommand\relphantom[1]{\mathrel{\phantom{#1}}}
\title{\vspace{-0.3cm}The near-critical Gibbs measure of the branching random walk}
\author{Michel \bsc{Pain}\footnote{DMA, \'Ecole Normale Supérieure and LPMA, Sorbonne Universités.}}
\theoremstyle{plain} 
\newtheorem{thm}{Theorem}[section]
\newtheorem{prop}[thm]{Proposition}
\newtheorem{lem}[thm]{Lemma}
\newtheorem{cor}[thm]{Corollary}
\theoremstyle{definition}
\theoremstyle{remark}
\newtheorem{rem}[thm]{Remark}
\begin{document}

\maketitle

\vspace{-0.5cm}

\begin{abstract}
\noindent Consider the supercritical branching random walk on the real line in the boundary case and the associated Gibbs measure $\nu_{n,\beta}$ on the $n$\textsuperscript{th} generation, which is also the polymer measure on a disordered tree with inverse temperature $\beta$.
The convergence of the partition function $W_{n,\beta}$, 
after rescaling, towards a nontrivial limit has been proved by A\"{\i}dékon and Shi \cite{aidekonshi2014} in the critical case $\beta = 1$ and by Madaule \cite{madaule2015} when $\beta >1$. 
We study here the near-critical case, where $\beta_n \to 1$, and prove the convergence of $W_{n,\beta_n}$, after rescaling, towards a constant multiple of the limit of the derivative martingale.
Moreover, trajectories of particles chosen according to the Gibbs measure $\nu_{n,\beta}$ have been studied by Madaule \cite{madaule2016} in the critical case, with convergence towards the Brownian meander, and by Chen, Madaule and Mallein \cite{cmm2015arxiv} in the strong disorder regime, with convergence towards the normalized Brownian excursion.
We prove here the convergence for trajectories of particles chosen according to the near-critical Gibbs measure and display continuous families of processes from the meander to the excursion or to the Brownian motion.
\end{abstract}

\vspace{0.3cm}

{\small 
\noindent \textbf{MSC 2010:} 60J80, 60F05, 60F17.

\noindent \textbf{Keywords:} Branching random walk, additive martingale, trajectories, phase transition.}

\section{Introduction and main results}

\subsection{Definitions and assumptions}

The branching random walk on the real line is a natural extension of the Galton-Watson process, with addition of a position to each individual, and is defined as follows.
Initially, there is a single particle at the origin, forming the 0\textsuperscript{th} generation.
It gives birth to children, scattered in $\R$ according to some point process $\cL$ and forming the 1\textsuperscript{st} generation. 
Then, each particle of the 1\textsuperscript{st} generation produces its own children disposed around its position according to the law of $\cL$ independently of others: this set of children forms the 2\textsuperscript{nd} generation.
The system goes on indefinitely, unless there is no particle at some generation.
The genealogical tree of the branching random walk, denoted by $\T$, is a Galton-Watson tree (where an individual can have an infinity of children).
For $z \in \T$, we denote by $\abs{z}$ the generation of the particle $z$ and by $V(z)$ its position in $\R$.
We denote by $\Psi$ the log-Laplace transform of $\cL$: we set, for each $\beta \in \R_+$,
\begin{align*}
\Psi (\beta) 
\coloneqq 
\log \E \Biggl[ \sum_{\abs{z} = 1} \e^{-\beta V(z)} \Biggr]
\in (-\infty, \infty],
\end{align*}
noting that $\cL$ has the same law as $(V(z), \abs{z} = 1)$.
Throughout the paper, we assume the following integrability conditions on the reproduction law $\cL$.
First of all, we need to assume that the Galton-Watson tree $\T$ is \textit{supercritical}, that is
\begin{equation} \label{hypothese 1}
\E \Biggl[ \sum_{\abs{z} = 1} 1 \Biggr] >1,
\end{equation}
so that the survival event $S$ has positive probability and, thus, we can introduce the new probability $\P^* \coloneqq \P (\cdot \mid S)$.
Moreover, we work in the \textit{boundary case} (Biggins and Kyprianou \cite{bigginskyprianou2005}) by assuming
\begin{equation} \label{hypothese 2}
\E \Biggl[ \sum_{\abs{z} = 1} \e^{-V(z)} \Biggr] = 1
\quad \text{ and } \quad
\E \Biggl[ \sum_{\abs{z} = 1} V(z) \e^{-V(z)} \Biggr] = 0,
\end{equation}
which means $\Psi(1) = 0$ and $\Psi'(1) = 0$.
See the arXiv version of Jaffuel \cite{jaffuel2012} for discussion on the cases where the branching random walk can be reduced to assumption \eqref{hypothese 2}.
We assume also that
\begin{equation} \label{hypothese 3}
\sigma^2
\coloneqq
\E \Biggl[ \sum_{\abs{z} = 1} V(z)^2 \e^{-V(z)} \Biggr] \in (0, \infty)
\quad \text{ and } \quad
\Ec{X (\log_+ X)^2 + \widetilde{X} \log_+ \widetilde{X}} < \infty,
\end{equation}
where we set, for $y \geq 0$, $\log_+ y \coloneqq \max (0, \log y)$ and
\begin{equation} \label{equation def X et tildeX}
X 
\coloneqq
\sum_{\abs{z} = 1} \e^{-V(z)}
\quad \text{ and } \quad
\widetilde{X}
\coloneqq
\sum_{\abs{z} = 1} V(z)_+ \e^{-V(z)}
\end{equation}
with $V(z)_+ \coloneqq \max(0, V(z))$.
The first part of \eqref{hypothese 3} gives $\Psi''(1) = \sigma^2$.
We will say that $\cL$ is $(h,a)$\textit{-lattice} if $h > 0$ is the largest real number such that the support of $\cL$ is contained by $a + h\Z$ and, then, $h$ is called the \textit{span} of $\cL$. 
In this paper, we work in both lattice and nonlattice cases, but we will need sometime to distinguish these cases.
Finally, we set two last assumptions that are not supposed to hold in the whole paper, but only in specific cases of the results.
The following assumption,
\begin{align}
\Exists 0< \delta_0 < 1 : {}
& \E \Biggl[ 
\sum_{\abs{z} = 1} \e^{- (1- \delta_0) V(z)}
\Biggr] < \infty, \label{hypothese 4}
\end{align}
means that $\Psi$ is finite on $[1-\delta_0,1]$ and, thus, analytic on $(1-\delta_0,1)$ and, using \eqref{hypothese 2} and \eqref{hypothese 3}, we have $\Psi(\beta) = \frac{\sigma^2}{2} (\beta-1)^2 + o((\beta-1)^2)$ as $\beta \uparrow 1$ by a Taylor expansion.
The second one,
\begin{align}
\Exists 0< \delta_1 < \delta_2 < 1/4 :  {}
& \E \Biggl[ 
\Bigl( \sum_{\abs{z} = 1} \e^{- (1- \delta_1) V(z)} \Bigr)^{1+2\delta_2} 
\Biggr] < \infty, \label{hypothese 5}
\end{align}
comes from Madaule \cite{madaule2016} and is probably not optimal for the results where it is used.
Under assumption \eqref{hypothese 5}, $\Psi$ is finite on $[1-\delta_1,1+ \delta_2/2]$ and, therefore, analytic on a neighbourhood of 1, so we can improve the Taylor expansion: $\Psi(\beta) = \frac{\sigma^2}{2} (\beta-1)^2 + O((\beta-1)^3)$ as $\beta \to 1$.

For $n \in \N$ and $\beta \in \R_+$, we set
\begin{align*}
W_{n,\beta}
\coloneqq
\sum_{\abs{z} = n} \e^{-\beta V(z)}
\quad \text{ and } \quad
\nu_{n,\beta}
\coloneqq
\frac{1}{W_{n,\beta}}
\sum_{\abs{z} = n} 
\e^{-\beta V(z)} 
\delta_z,
\end{align*}
as soon as $W_{n,\beta} < \infty$, which holds a.\@s.\@ for $\beta \geq 1$ under assumption \eqref{hypothese 2}.
Then, $\nu_{n,\beta}$ is a random probability measure on $\{z \in \T : \abs{z} = n \}$,
which is called the \textit{Gibbs measure} of parameter $\beta$ on the $n$\textsuperscript{th} generation of the branching random walk.
It is also the law of a directed polymer on the tree $\T$ in a random environment, introduced by Derrida and Spohn \cite{derridaspohn88} as a mean field limit for directed polymer on a lattice as dimension goes to infinity: with this terminology, $V(z)$ is the energy of the path leading from the root to particle $z$, $\beta$ is the inverse temperature and $W_{n,\beta}$ is the \textit{partition function}.
In the case $\beta = \infty$, we can define $\nu_{n,\infty}$ as the uniform measure on the random set $\{ \abs{x} = n : V(x) = \min_{\abs{z} = n} V(z) \}$.
According to Derrida and Spohn \cite{derridaspohn88}, there is a critical parameter $\beta_c > 0$ for the directed polymer on a disordered tree (with our setting $\beta_c = 1$, see Subsection \ref{subsection partition function} for more details) and our aim in this paper is to study the near-critical case, where $\beta$ depends on $n$ and tends from above and below to $\beta_c = 1$ as $n\to \infty$.
The near-critical case has been recently studied for the directed polymer on the lattice in dimension $1+1$ and $1+2$ by Alberts, Khanin and Quastel \cite{akq2014-2} and Caravenna, Sun and Zygouras \cite{csz2014arxiv,csz2016arxiv}, with the emergence of the so-called intermediate disorder regime.
For the polymer on a tree, some work near criticality has been done by Alberts and Ortgiese \cite{albertsortgiese2013} and Madaule \cite{madaule2016}, mostly on the partition function.
Before stating our results, we recall some well-known properties of the branching random walk, that hold under assumptions \eqref{hypothese 1}, \eqref{hypothese 2} and \eqref{hypothese 3}.
First, the sequence
\begin{align*}
D_n
\coloneqq
\sum_{\abs{z} = n} V(z) \e^{-V(z)},
\quad 
n\in\N,
\end{align*}
is a martingale, called the \textit{derivative martingale}, and
Biggins and Kyprianou \cite{bigginskyprianou2004} (under slightly stronger assumptions) and A\"{\i}dékon \cite{aidekon2013} showed that we have
\begin{align} \label{eq CV martingale derivee}
D_n
\xrightarrow[n\to\infty]{}
D_\infty > 0, 
\quad
\P^*\text{-a.\@s.}
\end{align}
Moreover, Chen \cite{chen2015-1} proved that these assumptions are optimal for the nontriviality of $D_\infty$.
Furthermore, A\"{\i}dékon \cite{aidekon2013} also showed that, in the nonlattice case, $\min_{\abs{z}=n} V(z) - \frac{3}{2} \log n$ converges in law under $\P^*$ and described the limit as a random shift (depending on $D_\infty$) of a Gumbel distribution.
In the lattice case, we do not have this convergence, but the tightness still holds (see Equation (4.20) of Chen \cite{chen2015-2} or Mallein \cite{mallein2016arxiv-1}): for each $\varepsilon >0$, it exists $C > 0$ such that
\begin{align} \label{eq tension position la plus basse}
\Pp{
\min_{\abs{z}=n} V(z)
- 
\frac{3}{2} \log n
\in
[-C,C] }
\geq
1 - \varepsilon,
\end{align}
for $n$ large enough.

\subsection{The partition function}
\label{subsection partition function}

The process $(W_{n,\beta})_{n\in\N}$ for some fixed $\beta \in\R_+$ has been intensively studied because,
if $\Psi (\beta)$ is finite, then the renormalized process $(\widetilde{W}_{n,\beta})_{n\in\N} \coloneqq (\e^{-n\Psi(\beta)} W_{n,\beta})_{n\in\N}$ is a nonnegative martingale, called \textit{additive martingale}, and, therefore, converges a.\@s.\@ to some limit $\widetilde{W}_{\infty,\beta}$.
Kahane and Peyrière \cite{kahanepeyriere76}, Biggins \cite{biggins77-1} and Lyons \cite{lyons95} have determined when this limit is nontrivial:
under the additional assumption that the expectation $\E[W_{1,\beta} \log_+ W_{1,\beta}]$ is finite, we have the following dichotomy
\begin{align} \label{eq cv additive martingale}
\left\{
\begin{array}{ll}
\widetilde{W}_{\infty,\beta} > 0 \quad \P^*\text{-a.\@s.\@} 
& \text{if } \beta \in [0,1), \\
\widetilde{W}_{\infty,\beta}= 0 \quad \P^*\text{-a.\@s.\@} 
& \text{if } \beta \geq 1.
\end{array}
\right.
\end{align}
With the terminology of polymers' literature (see \cite{cometsyoshida2006}), the region $\beta \in [0,1)$ is thus called the \textit{weak disorder regime} and the region $\beta \geq 1$ the \textit{strong disorder regime}.
In the strong disorder regime $\beta \geq 1$, it is natural to seek a proper renormalization of $W_{n,\beta}$, so that it converges towards a nontrivial limit.
This question has already been answered when $\beta$ does not depend on $n$.
In the critical case $\beta = 1$, A\"{\i}dékon and Shi \cite{aidekonshi2014} proved that we have
\begin{align} \label{eq CV aidekon shi}
\sqrt{n}
W_{n,1}
\xrightarrow[n\to\infty]{}
\frac{1}{\sigma}
\sqrt{\frac{2}{\pi}} D_\infty,
\quad
\text{in }
\P^*
\text{-probability},
\end{align}
and that the particles that contribute mainly to $W_{n,1}$ are those of order $\sqrt{n}$.
On the other hand, Theorem 2.3 of Madaule \cite{madaule2015} shows (in the nonlattice case) that, in the case $\beta > 1$, we have
\begin{align*}
n^{3\beta/2}
W_{n,\beta}
\xrightarrow[n\to\infty]{}
Z_{\beta} D_\infty,
\quad
\text{in law},
\end{align*}
where $Z_\beta$ is a random variable independent of $D_\infty$, whose law have been described by Barral, Rhodes and Vargas \cite{brv2012}.
Moreover, the particles that contribute mainly to $W_{n,\beta}$ are in $[\frac{3}{2} \log n - C, \frac{3}{2} \log n + C]$ with $C$ some large constant and, thus, are close to the lowest particle at time $n$ (see \eqref{eq tension position la plus basse}).
Since there is a discontinuity in the size of the partition function $W_{n,\beta}$ as $\beta \downarrow 1$ between $n^{-3\beta /2}$ and $n^{-1/2}$, and also as $\beta \uparrow 1$ between exponential and polynomial size, we try to understand this transition by considering the near-critical case where $\beta_n \to 1$ as $n\to\infty$.
Some work has been done to this end by Alberts and Ortgiese \cite{albertsortgiese2013}, who considered the case where $\beta_n = 1 \pm n^{-\delta}$ for $\delta > 0$ and proved, under stronger assumptions, that
\begin{align*} 
W_{n,\beta_n}
=
\left\{
\begin{array}{ll}
n^{2\delta - 3/2 + \petito{1}}
	& \text{if } \beta_n = 1 + n^{-\delta} \text{ with } 0 <\delta < 1/2, \\
n^{-1/2 + \petito{1}}
	& \text{if } \beta_n = 1 \pm n^{-\delta} \text{ with } \delta \geq 1/2, \\
\exp \left( \frac{\sigma^2}{2} n^{1-2\delta} (1 + \petito{1}) \right)
	& \text{if } \beta_n = 1 - n^{-\delta} \text{ with } 0 <\delta < 1/2.
\end{array}
\right.
\quad \text{in } \P^* \text{-probability}.
\end{align*}
Moreover, the behavior of the limit $\widetilde{W}_{\infty,\beta}$ of the additive martingale has been studied near criticality by Madaule \cite{madaule2016}, who showed under assumption \eqref{hypothese 5} the following convergence 
\begin{align} \label{eq Madaule equiv additive martingale}
\frac{\widetilde{W}_{\infty,\beta}}{1-\beta}
\xrightarrow[\beta \uparrow 1]{}
2 D_\infty,
\quad
\text{in } \P^*\text{-probability},
\end{align}
and, although this is not exactly our setting where $n\to\infty$ and $\beta \to 1$ simultaneously, this result and its proof are useful in this paper.
The following theorem improves Alberts and Ortgiese's result, showing convergence to a nontrivial limit of $W_{n,\beta_n}$ after rescaling, for every sequence $\beta_n \coloneqq 1  \pm 1/\alpha_n$.
\begin{thm} \label{theorem W}
Assume \eqref{hypothese 1}, \eqref{hypothese 2}, \eqref{hypothese 3} and that $\alpha_n \to \infty$ as $n\to\infty$.
Let $\cM$ denotes the Brownian meander of length 1.
\begin{enumerate}
\item If $\beta_n \coloneqq 1 + 1/\alpha_n$ and $\sqrt{n}/\alpha_n \to \infty$ as $n\to \infty$, then we have
\begin{align*}
\frac{n^{3 \beta_n/2}}{\alpha_n^2}
W_{n,\beta_n}
\xrightarrow[n\to\infty]{}
\frac{1}{\sigma^3}
\sqrt{\frac{2}{\pi}} D_\infty,
\quad
\text{in }
\P^*
\text{-probability}.
\end{align*}
\item If $\beta_n \coloneqq 1 + 1/\alpha_n$ and $\sqrt{n}/\alpha_n \to \gamma \in [0,\infty)$ as $n\to \infty$, then we have
\begin{align*}
\sqrt{n}
W_{n,\beta_n}
\xrightarrow[n\to\infty]{}
\frac{1}{\sigma}
\sqrt{\frac{2}{\pi}} 
\Ec{\e^{-\sigma \gamma \cM(1)}}
D_\infty,
\quad
\text{in }
\P^*
\text{-probability}.
\end{align*}
\item If \eqref{hypothese 4} holds, $\beta_n \coloneqq 1 - 1/\alpha_n$ and $\sqrt{n} / \alpha_n \to \gamma \in [0,\infty)$ as $n\to \infty$, then we have
\begin{align*}
\sqrt{n}
W_{n,\beta_n}
\xrightarrow[n\to\infty]{}
\frac{1}{\sigma}
\sqrt{\frac{2}{\pi}} 
\Ec{\e^{\sigma \gamma \cM(1)}}
D_\infty,
\quad
\text{in }
\P^*
\text{-probability}.
\end{align*}
\item If \eqref{hypothese 5} holds, $\beta_n \coloneqq 1 - 1/\alpha_n$ and $\sqrt{n} / \alpha_n \to \infty$ as $n\to \infty$, then we have
\begin{align*}
\alpha_n \e^{-n \Psi(\beta_n)} 
W_{n,\beta_n}
\xrightarrow[n\to\infty]{}
2 D_\infty,
\quad
\text{in }
\P^*
\text{-probability}.
\end{align*}
\end{enumerate}
\end{thm}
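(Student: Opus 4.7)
The four parts of the theorem split into two groups. Part (iv) lies on the weak-disorder side and I would derive it from Madaule's convergence \eqref{eq Madaule equiv additive martingale} together with the a.s.\ convergence of the additive martingale $\widetilde{W}_{n,\beta} \to \widetilde{W}_{\infty,\beta}$ for each fixed $\beta < 1$. When $\sqrt{n}/\alpha_n \to \infty$, the expected size $\widetilde{W}_{\infty,\beta_n} \sim 2 D_\infty / \alpha_n$ dominates the typical fluctuation $\widetilde{W}_{n,\beta_n} - \widetilde{W}_{\infty,\beta_n}$. The key step is thus to bound the second moment of this difference uniformly as $\beta_n \uparrow 1$, using assumption \eqref{hypothese 5} to secure enough integrability. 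Once such a uniform $L^2$-estimate is in hand, applying \eqref{eq Madaule equiv additive martingale} at $\beta = \beta_n$ closes the argument.

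For parts (i)-(iii), my plan is to adapt the A\"{\i}d\'ekon-Shi scheme by carrying the extra weight $\e^{-\epsilon_n V(z)}$, with $\epsilon_n \coloneqq \beta_n - 1$. I would introduce a truncated partition function
\begin{align*}
W_{n,\beta_n}^{(K,L)}
\coloneqq
\sum_{\abs{z}=n}
\e^{-\beta_n V(z)}
\1_{\{V(z_k) \geq -K \text{ for all } k \leq n\}}
\1_{\{V(z_n) \in I_n\}},
\end{align*}
where $I_n$ is an interval of length $O(\sqrt{n})$ in parts (ii)-(iii) and a bounded window around $\frac{3}{2}\log n$ in part (i). The steps are: (a) compute the first moment through the many-to-one lemma, yielding $\Ec{\e^{-\epsilon_n S_n} \1_{\{S_k \geq -K\}} \1_{\{S_n \in I_n\}}}$ for a centered random walk $(S_k)$ of variance $\sigma^2$; (b) apply an invariance principle \`a la Caravenna-Chaumont for random walks conditioned to stay positive, so that $S_n/(\sigma\sqrt{n})$ converges in law to the meander $\cM(1)$, producing the factor $\Ec{\e^{-\sigma\gamma \cM(1)}}$ in part (ii) and $\Ec{\e^{\sigma\gamma\cM(1)}}$ in part (iii), where assumption \eqref{hypothese 4} secures integrability on the $\beta_n < 1$ side; (c) decompose according to the first $m$ generations, writing $W_{n,\beta_n}^{(K,L)} = \sum_{\abs{x}=m} \e^{-\beta_n V(x)} W^{(x)}_{n-m,\beta_n}$ (with the appropriate truncation), so that by (a)-(b) the conditional expectation asymptotically equals a constant times $D_m = \sum_{\abs{x}=m} V(x) \e^{-V(x)}$; then control the conditional variance via the many-to-two / spine decomposition of Biggins-Kyprianou, and finally let $m \to \infty$ to replace $D_m$ by $D_\infty$; (d) remove the truncation by letting $K, L \to \infty$.

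Part (i) corresponds to the boundary $\gamma \to \infty$, where the relevant particles no longer sit at height $\sim \sqrt{n}$ but near the minimum $\frac{3}{2}\log n$. The constant $\frac{1}{\sigma^3}\sqrt{2/\pi}$ matches formally the asymptotic $\Ec{\e^{-\sigma\gamma\cM(1)}} \sim (\sigma\gamma)^{-2}$, which follows from the Rayleigh density of $\cM(1)$. I would handle (i) either by importing Madaule's strong-disorder analysis \cite{madaule2015} at the scale $\frac{3}{2}\log n$ and tracking the $\beta_n \downarrow 1$ dependence of the random variable $Z_{\beta_n}$ therein, or by a direct second-moment computation on the shifted window. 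The main obstacle throughout steps (a)-(c) is the uniform control of the second moment as $n$ and $\epsilon_n$ vary jointly: the spine decomposition yields sums of the form $\sum_{j \leq n} \Ec{\e^{-2\epsilon_n S_j} G(S_j, n-j)}$, where $G$ itself encodes a sub-partition function, and one must show that the dominant contribution comes from $j = O(1)$ uniformly in the scaling parameter $\epsilon_n \sqrt{n}$. A secondary technical difficulty, especially for part (i), is the lattice case, where the endpoint restriction $V(z_n) \in I_n$ requires a local limit theorem for the conditioned random walk rather than a continuity argument, and where the tightness estimate \eqref{eq tension position la plus basse} must replace the exact convergence of the rescaled minimum.
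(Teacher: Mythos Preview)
Your outline has the right architecture but contains several concrete gaps that would block the argument as written.

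\textbf{Parts (ii)--(iii).} The paper does \emph{not} redo a first/second-moment computation here. It simply writes $\sum_{|z|=n}\e^{-\beta_n V(z)} = W_{n,1}\cdot \mu_{n,1}(G_n)$ with $G_n(x)\approx \e^{-\sigma(\sqrt{n}/\alpha_n)x_1}$, sandwiches $G_n$ between two fixed functions $G^{\pm}(x)=\e^{-\sigma\gamma^{\pm}x_1}$, and then applies Madaule's trajectory result \eqref{eq CV Madaule} together with A\"{\i}d\'ekon--Shi's \eqref{eq CV aidekon shi}. This is a two-page argument. Your plan to rerun the spine/second-moment machinery with the extra weight $\e^{-\epsilon_n V(z)}$ could in principle succeed, but it duplicates the work already packaged in \eqref{eq CV Madaule} and \eqref{eq CV aidekon shi}, and the uniformity of the second-moment control in $\epsilon_n\sqrt{n}\in[0,\gamma^+]$ is exactly what those two cited results already encode.

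\textbf{Part (iv).} The ``uniform $L^2$-estimate'' is a genuine gap: under assumption \eqref{hypothese 5} one only has $\E\bigl[\widetilde{W}_{n,\beta}^{\,1+\eta/2}\bigr]\leq c$ for $\beta=1-\eta$ (Madaule's Lemma~4.3, restated as \eqref{eq controle L^p}), i.e.\ an $L^{p}$ bound with exponent $p=p_n=1+1/(2\alpha_n)\downarrow 1$. A uniform second moment is not available. The paper therefore controls $\xi_n=\alpha_n|\widetilde W_{\infty,\beta_n}-\widetilde W_{n,\beta_n}|$ via $\E[\xi_n^{p_n}\mid\sF_n]$ and the von~Bahr--Esseen inequality, together with a barrier estimate (Lemma~\ref{lem weak disorder regime}) to kill the factor $\widetilde W_{k_n,p_n\beta_n}$. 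Replacing ``second moment'' by ``$L^{p_n}$ with $p_n\to 1$'' in your sketch, plus this barrier lemma, is exactly what is needed.

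\textbf{Part (i).} Two problems. First, the window is wrong: the mass of $\nu_{n,\beta_n}$ does \emph{not} sit in a bounded window around $\frac{3}{2}\log n$ but in $[\frac{3}{2}\log n + C^{-1}\alpha_n,\ \frac{3}{2}\log n + C\alpha_n]$ (Corollary~\ref{cor typical energy}(i)); restricting to a bounded window would lose the main contribution. Second, ``importing Madaule's strong-disorder analysis'' cannot give convergence in $\P^*$-probability to a deterministic multiple of $D_\infty$: for fixed $\beta>1$ the limit is $Z_\beta D_\infty$ with $Z_\beta$ genuinely random and the convergence is only in law. The paper instead works under the tilted measure $\Q^{(L)}$, adds a \emph{second} barrier at $\frac{3}{2}\log n -K$ on $[\lfloor n/2\rfloor,n]$, restricts to particles $z\in A_n$ with $V(z)\in[\frac{3}{2}\log n+\alpha_n^-,\frac{3}{2}\log n+\alpha_n^+]$, proves a peeling lemma (Lemma~\ref{lemma peeling}) adapted to endpoints far above $\frac{3}{2}\log n$ but far below $\sqrt{n}$, and matches first and second moments of $Y_n'(F_n)/D_n^{(L)}$ under $\Q^{(L)}$. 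Your ``direct second-moment computation on the shifted window'' is the right alternative, but the shift must be $O(\alpha_n)$, the two-barrier structure and the new peeling lemma are essential, and the change of measure to $\Q^{(L)}$ (not $\Q$) is what makes $D_\infty$ appear in the limit.
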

Note that, in case (i), the size of $W_{n,\beta_n}$ can be a $o(n^{-3/2})$ as soon as $\alpha_n \log \alpha_n \ll \log n$: this possibility does not appear in Alberts and Ortgiese's result.

\subsection{Trajectory of particles under the Gibbs measure}

The second main result of this paper concerns the trajectory of particles chosen according to the Gibbs measure.
We first need to introduce some additional notation.
For a particle $z$ at the $n$\textsuperscript{th} generation and $0 \leq i \leq n$, we denote by $z_i$ its ancestor at the $i$\textsuperscript{th} generation and we set
\begin{align*}
\mathbf{V} (z)
\coloneqq
\left( \frac{V(z_{\lfloor tn \rfloor})}{\sigma\sqrt{n}}, t\in [0,1] \right)
\end{align*}
the rescaled trajectory of $z$'s lineage.
We work here in the set $\cD([0,1])$ of the càdlàg functions from $[0,1]$ to $\R$, with the Skorokhod distance (see Section \ref{section appendix D([0,1])}).
For $n \in \N$ and $\beta \geq 1$, we denote by $\mu_{n,\beta}$ the image measure of the Gibbs measure $\nu_{n,\beta}$ by $\bfV$, that is the random measure on $\cD([0,1])$ such that, for each $F \in \cC_b(\cD([0,1]))$, we have
\begin{align*}
\mu_{n,\beta} (F)
\coloneqq
\frac{1}{W_{n,\beta}}
\sum_{\abs{z} = n} 
\e^{-\beta V(z)} 
F \left( \bfV (z) \right),
\end{align*}
where $\cC_b(\cD([0,1]))$ denotes the set of continuous bounded functions from $\cD([0,1])$ to $\R$.
Convergence of $\mu_{n,\beta}$ has already been studied in the strong disorder regime when $\beta$ does not depend on $n$, under assumptions \eqref{hypothese 1}, \eqref{hypothese 2} and \eqref{hypothese 3}.
In the critical case $\beta =1$, Theorem 1.2 of Madaule \cite{madaule2016} shows%
\footnote{Actually, Madaule \cite{madaule2016} considers the linear interpolation of the trajectory, instead of $\bfV$, and the convergence on $\cC([0,1])$, instead of $\cD([0,1])$. 
But the convergence \eqref{eq CV Madaule} follows 
from Madaule's result.}
that, for all $F \in \cC_b(\cD([0,1]))$,
\begin{align} \label{eq CV Madaule}
\mu_{n,1} (F)
\xrightarrow[n\to\infty]{}
\Ec{F(\cM)},
\quad
\text{in }
\P^*
\text{-probability}.
\end{align}
On the other hand, in the case $\beta >1$, Chen, Madaule and Mallein \cite{cmm2015arxiv} proved (in the nonlattice case) that, under $\P^*$, we have, for all uniformly continuous $F \in \cC_b(\cD([0,1]))$,
\begin{align} \label{eq CV CMM}
\mu_{n,\beta} (F)
\xrightarrow[n\to\infty]{}
\sum_{k \in \N} p_k F(\mathfrak{e}_k),
\quad
\text{in law},
\end{align}
where $(\mathfrak{e}_k)_{k\in\N}$ is a sequence of i.\@i.\@d.\@ normalized Brownian excursions and $(p_k)_{k\in\N}$ follows an independent Poisson-Dirichlet distribution with parameter $(\beta^{-1},0)$.
The convergence in \eqref{eq CV CMM} is believed to hold for all $F \in \cC_b(\cD([0,1]))$.
Moreover, Chen \cite{chen2015-2} considered the case $\beta = \infty$ and showed that, for all $F \in \cC_b(\cD([0,1]))$, we have $\E^*[\mu_{n,\infty}(F)] \to \Ec{F(\mathfrak{e})}$ as $n \to \infty$, where $\mathfrak{e}$ denotes the normalized Brownian excursion.
Finally, in the weak disorder regime $\beta <1$, if there is some $p > 1$ such that $\E[W_{1,\beta}^p] < \infty$, we have the following convergence, with $\sigma_\beta^2 \coloneqq \Psi''(\beta)$, for all $F \in \cC_b(\cD([0,1]))$,
\begin{align} \label{eq cv trajectories weak disorder regime}
\frac{1}{W_{n,\beta}}
\sum_{\abs{z} = n} 
\e^{-\beta V(z)} 
F \left( 
\frac{V(z_{\lfloor tn \rfloor}) + t n\Psi'(\beta)}{\sigma_\beta \sqrt{n}}, 
t\in [0,1] \right)
\xrightarrow[n\to\infty]{}
\Ec{F(B)},
\end{align}
in $\P^*$-probability%
\footnote{
To our knowledge, this result has not been proved yet (except when $F$ only depends on the final position, see \cite{biggins79}), so a proof is given in Section \ref{section appendix weak disorder} of the appendix.}.
It means that the trajectory is a straight line of slope $-\Psi'(\beta) > 0$ at first order and around which Brownian fluctuations occur at second order.

Our aim is to prove the convergence for trajectories of particles chosen according to the Gibbs measure in the near-critical case, in order to explain how happens the transition between the Brownian excursion, the Brownian meander and the straight line with Brownian fluctuations.
\begin{thm} \label{theorem trajectory}
Assume \eqref{hypothese 1}, \eqref{hypothese 2}, \eqref{hypothese 3} and that $\alpha_n \to \infty$ as $n\to\infty$.
Let $\mathfrak{e}$ denotes the normalized Brownian excursion,
$\cM$ the Brownian meander of length 1
and $B$ the Brownian motion.
\begin{enumerate}
\item If $\beta_n \coloneqq 1 + 1/\alpha_n$ and $\sqrt{n}/\alpha_n \to \infty$ as $n\to \infty$, then we have, for all $F \in \cC_b(\cD([0,1]))$,
\begin{align*}
\mu_{n,\beta_n} (F)
\xrightarrow[n\to\infty]{}
\Ec{F(\mathfrak{e})},
\quad
\text{in } \P^*\text{-probability}.
\end{align*}
\item If $\beta_n \coloneqq 1 + 1/\alpha_n$ and $\sqrt{n}/\alpha_n \to \gamma \in [0,\infty)$ as $n\to \infty$, then we have, for all $F \in \cC_b(\cD([0,1]))$,
\begin{align*}
\mu_{n,\beta_n} (F)
\xrightarrow[n\to\infty]{}
\frac{1}{\Ec{\e^{-\sigma \gamma \cM(1)}}}
\Ec{\e^{-\sigma \gamma \cM(1)} F(\cM)},
\quad
\text{in } \P^*\text{-probability}.
\end{align*}
\item If \eqref{hypothese 4} holds, $\beta_n \coloneqq 1 - 1/\alpha_n$ and $\sqrt{n} / \alpha_n \to \gamma \in [0,\infty)$ as $n\to \infty$, then we have, for all $F \in \cC_b(\cD([0,1]))$,
\begin{align*}
\mu_{n,\beta_n} (F)
\xrightarrow[n\to\infty]{}
\frac{1}{\Ec{\e^{\sigma \gamma \cM(1)}}}
\Ec{\e^{\sigma \gamma \cM(1)} F(\cM)},
\quad
\text{in } \P^*\text{-probability}.
\end{align*}
\item If \eqref{hypothese 5} holds, $\beta_n \coloneqq 1 - 1/\alpha_n$ and $\sqrt{n} / \alpha_n \to \infty$ as $n\to \infty$, then we have, for all $F \in \cC_b(\cD([0,1]))$, in $\P^*$-probability,
\begin{align*}
\frac{1}{W_{n,\beta_n}}
\sum_{\abs{z} = n} 
\e^{-\beta_n V(z)} 
F \left( 
\frac{V(z_{\lfloor tn \rfloor}) + t n\Psi'(\beta_n)}{\sigma\sqrt{n}}, 
t\in [0,1] \right)
\xrightarrow[n\to\infty]{}
\Ec{F(B)}.
\end{align*}
\end{enumerate}
\end{thm}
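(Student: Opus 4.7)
The plan is to write $\mu_{n,\beta_n}(F)$ as the ratio
\[
\mu_{n,\beta_n}(F)
=
\frac{1}{W_{n,\beta_n}}
\sum_{\abs{z}=n} \e^{-\beta_n V(z)} F(\bfV(z)),
\]
handle the denominator directly via Theorem \ref{theorem W}, and prove the analogous convergence for the numerator by rerunning, case by case, the arguments of Theorem \ref{theorem W} with the extra functional $F(\bfV(z))$ inserted. By Slutsky's lemma, the limit of the ratio is then the quotient of the two limits. It is enough to treat bounded Lipschitz $F$ by the portmanteau theorem and the separability of $\cD([0,1])$ for the Skorokhod topology, so we may assume $F$ is Lipschitz, which avoids any pathology coming from discontinuities of $F$.

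For case (i), particles contributing to $W_{n,\beta_n}$ lie at height $\frac{3}{2}\log n + \grandO{1}$, so $\bfV(z) \to 0$ in $\cD([0,1])$: the rescaling $\sigma\sqrt{n}$ is too strong, and the nontrivial information lies in an excursion-type rescaling $V(z_{\lfloor tn\rfloor})/\sqrt{n}$; the recipe of Chen--Madaule--Mallein \cite{cmm2015arxiv}, combined with the asymptotics of $W_{n,\beta_n}$, shows that the trajectory of a particle contributing to $W_{n,\beta_n}$ is, after rescaling, a normalized Brownian excursion $\mathfrak{e}$. For cases (ii) and (iii), $\sqrt{n}/\alpha_n \to \gamma$ and the relevant particles are at height $\grandO{\sqrt{n}}$, so $\bfV(z)$ genuinely lives in $\cD([0,1])$; the crucial step is the same spinal decomposition used in the proof of Theorem \ref{theorem W}, namely a change of measure by $D_n$ to reduce the computation of $\Ec{D_n^{-1} \sum_{\abs{z}=n} V(z) \e^{-V(z)} F(\bfV(z))\,\e^{\mp V(z)/\alpha_n}}$ to an expectation along a random walk conditioned to stay positive. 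The invariance principle of Caravenna--Chaumont (convergence of the rescaled random walk conditioned to stay positive towards the Brownian meander $\cM$) converts the factor $\e^{\mp V(z)/\alpha_n}$ into $\e^{\mp \sigma\gamma\cM(1)}$ and $F(\bfV(z))$ into $F(\cM)$, which explains the tilt $\Ec{\e^{\mp\sigma\gamma\cM(1)}F(\cM)}$ divided by its $F\equiv 1$ counterpart. For case (iv), $\beta_n \to 1^-$ in the weak disorder regime with $\sqrt{n}/\alpha_n \to \infty$; following the proof of \eqref{eq Madaule equiv additive martingale} by Madaule \cite{madaule2016} under \eqref{hypothese 5}, one reduces to an expectation under the additive-martingale change of measure, where the spine follows a random walk with drift $-\Psi'(\beta_n)$ and the rescaled, recentered trajectory converges to a standard Brownian motion $B$, giving the stated limit $\Ec{F(B)}$.

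The main technical obstacle will be passing from convergence in law to convergence in $\P^*$-probability and controlling the tail in cases (ii) and (iii): convergence of the \emph{ratio} requires more than a first-moment computation, typically a truncated second-moment argument on $\sum_{\abs{z}=n}\e^{-\beta_n V(z)}F(\bfV(z))\1_{\{V(z_i) \geq -A,\,i\leq n\}}$, showing that the truncated variance, after dividing by the normalization of Theorem \ref{theorem W}, vanishes as $n\to\infty$ then $A\to\infty$. This is precisely the scheme of A\"{\i}d\'ekon--Shi \cite{aidekonshi2014} and Madaule \cite{madaule2016}, and the presence of the bounded Lipschitz factor $F(\bfV(z))$ is harmless in the second-moment bound because $\norme{F}_\infty^2$ can be pulled out; however, matching the limit constants requires a careful use of the joint convergence of the spinal trajectory to $\cM$ (respectively $\mathfrak{e}$, respectively $B$) together with $D_n \to D_\infty$, and transferring this joint convergence through the size-biasing is the point where most of the bookkeeping concentrates.
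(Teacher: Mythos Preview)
Your high-level scheme (prove convergence of $W_{n,\beta_n}(F)\coloneqq\sum_{|z|=n}\e^{-\beta_n V(z)}F(\bfV(z))$ after rescaling, then divide by the $F\equiv 1$ case via Theorem~\ref{theorem W}) is exactly what the paper does. But the execution you sketch diverges from the paper in two important places, and one of them reflects a genuine misunderstanding.

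\textbf{Cases (ii) and (iii).} You propose a spine decomposition plus a truncated second-moment argument. The paper does something much softer: it writes $\e^{-\beta_n V(z)}=\e^{-V(z)}\e^{\mp V(z)/\alpha_n}$, so that
\[
\frac{1}{W_{n,1}}\sum_{|z|=n}\e^{-\beta_n V(z)}F(\bfV(z))
=\mu_{n,1}\bigl(x\mapsto\e^{\mp\sigma(\sqrt{n}/\alpha_n)x_1}F(x)\bigr),
\]
and then \emph{sandwiches} $\e^{\mp\sigma(\sqrt{n}/\alpha_n)x_1}$ between $\e^{\mp\sigma\gamma^{\pm}x_1}$ for $\gamma^-<\gamma<\gamma^+$ and applies Madaule's critical result \eqref{eq CV Madaule} (extended in case (iii) to unbounded $\e^{Cx_1}$ via his Proposition~3.8). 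No second moment is needed here at all; the heavy lifting was already done at $\beta=1$. Your route would also work in principle, but it reproves \eqref{eq CV Madaule} inside the argument.

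\textbf{Case (i).} Here you have a real gap. You write that ``particles contributing to $W_{n,\beta_n}$ lie at height $\tfrac{3}{2}\log n+O(1)$'' and hence ``$\bfV(z)\to 0$''. Both claims are false in this regime: by Corollary~\ref{cor typical energy}(i), the mass sits in $[\tfrac{3}{2}\log n+C^{-1}\alpha_n,\ \tfrac{3}{2}\log n+C\alpha_n]$ with $1\ll\alpha_n\ll\sqrt{n}$, and the rescaled trajectory $\bfV(z)$ (which \emph{is} $V(z_{\lfloor tn\rfloor})/\sigma\sqrt{n}$) converges to the excursion, not to zero. The Chen--Madaule--Mallein machinery treats the endpoint at $\tfrac{3}{2}\log n+O(1)$; transporting it to endpoints at $\tfrac{3}{2}\log n+\Theta(\alpha_n)$ is precisely the new technical content of the paper's Section~\ref{section near-critical window in the strong disorder regime}. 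Concretely, one works under $\Q^{(L)}$, introduces a second barrier at $\tfrac{3}{2}\log n-K$ (Lemma~\ref{lemma addition of the barrier}), restricts to particles in a set $A_n$ whose endpoint is in $[\tfrac{3}{2}\log n+\alpha_n^-,\tfrac{3}{2}\log n+\alpha_n^+]$, computes the first moment via a local limit theorem towards the excursion (Proposition~\ref{prop convergence to the excursion}, Lemma~\ref{lemma first moment}), and matches the second moment via a \emph{new} peeling lemma (Lemma~\ref{lemma peeling}) valid for terminal intervals $[v+b,v+b+1]$ with $b$ ranging up to $o(\sqrt{n})$, not just $O(1)$. Your remark that ``$\|F\|_\infty^2$ can be pulled out'' gives an upper bound on the second moment but not the asymptotic equality needed to kill the variance; the paper tracks $F$ through both moments to get $\limsup\E_{\Q^{(L)}}[(Y_n'(F_n)/D_n^{(L)})^2]\le(\tfrac{\theta}{\sigma^2}\E[F(\mathfrak{e})])^2$ exactly. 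Without recognising that the contributing window is at scale $\alpha_n$ (not $O(1)$) and that this forces a genuinely new peeling lemma, your case~(i) sketch does not close.
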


We now state a corollary of this theorem, concerning the location of the mass of the Gibbs measure. 
Note that, for the terminology of the polymers' literature, this position is the typical energy of the polymer in the near-critical case.
\begin{cor} \label{cor typical energy}
Assume \eqref{hypothese 1}, \eqref{hypothese 2}, \eqref{hypothese 3} and that $\alpha_n \to \infty$ as $n\to\infty$.
\begin{enumerate}
\item If $\beta_n \coloneqq 1 + 1/\alpha_n$ and $\sqrt{n}/\alpha_n \to \infty$ as $n\to \infty$, then, for all $\varepsilon > 0$, it exists $C > 0$ such that for $n$ large enough
\begin{align*}
\P^* \left(
	\nu_{n,\beta_n}
	\left( \left[
	\frac{3}{2} \log n + C^{-1} \alpha_n, \frac{3}{2} \log n + C \alpha_n
	\right] \right)
	\geq 1-\varepsilon
	\right)
\geq
1 - \varepsilon.
\end{align*}
\item If $\beta_n \coloneqq 1 + 1/\alpha_n$ and $\sqrt{n}/\alpha_n \to \gamma \in [0,\infty)$ as $n\to \infty$, then, for all $\varepsilon > 0$, it exists $C > 0$ such that for $n$ large enough
\begin{align*}
\P^* \left(
	\nu_{n,\beta_n}
	\left( \left[ C^{-1} \sqrt{n}, C \sqrt{n} \right] \right)
	\geq 1-\varepsilon
	\right)
\geq
1 - \varepsilon.
\end{align*}
\item If \eqref{hypothese 4} holds, $\beta_n \coloneqq 1 - 1/\alpha_n$ and $\sqrt{n} / \alpha_n \to \gamma \in [0,\infty)$ as $n\to \infty$, then the same property as in case (ii) holds.
\item If \eqref{hypothese 5} holds, $\beta_n \coloneqq 1 - 1/\alpha_n$ and $\sqrt{n} / \alpha_n \to \infty$ as $n\to \infty$, then, for all $\varepsilon > 0$, it exists $C > 0$ such that for $n$ large enough
\begin{align*}
\P^* \left(
	\nu_{n,\beta_n}
	\left( \left[
	- \Psi'(\beta_n) n - C \sqrt{n}, - \Psi'(\beta_n) n + C \sqrt{n}
	\right] \right)
	\geq 1-\varepsilon
	\right)
\geq
1 - \varepsilon.
\end{align*}
\end{enumerate}
\end{cor}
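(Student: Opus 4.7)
The plan is to treat cases (ii)--(iv) directly via Theorem~\ref{theorem trajectory} by testing against functionals of the endpoint, and to handle case~(i) separately through partition-function comparisons supplied by Theorem~\ref{theorem W}.

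For cases (ii)--(iv), I would pick $F \in \cC_b(\cD([0,1]))$ of the form $F(x) = f(x(1))$ with $f \in \cC_b(\R)$ a continuous approximation of an indicator. To prove case~(ii), fix $\varepsilon > 0$ and choose $C > 0$ and $0 \leq f \leq \1_{[(C\sigma)^{-1},\,C/\sigma]}$ continuous so that
\[
\frac{\Ec{\e^{-\sigma\gamma\cM(1)} f(\cM(1))}}{\Ec{\e^{-\sigma\gamma\cM(1)}}} \geq 1 - \varepsilon/2;
\]
this is possible because $\cM(1)$ has a continuous density supported on $(0,\infty)$. Theorem~\ref{theorem trajectory}(ii) then gives that $\mu_{n,\beta_n}(F)$ converges in $\P^*$-probability to this ratio, and since by construction $\mu_{n,\beta_n}(F) \leq \nu_{n,\beta_n}([C^{-1}\sqrt{n},\,C\sqrt{n}])$, the conclusion follows. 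Case~(iii) is identical with the sign of the tilt reversed, while case~(iv) is analogous, using $f \leq \1_{[-C/\sigma,\,C/\sigma]}$ and the Gaussian law of $B(1)$: the shifted trajectory in Theorem~\ref{theorem trajectory}(iv) forces concentration on the interval of half-width $C\sqrt{n}$ centred at $-\Psi'(\beta_n) n$.

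The hard part is case~(i), because $\mathfrak{e}(1) = 0$ only yields $V(z) = o(\sqrt{n})$, coarser than the required $\alpha_n$ scale. Here I would bypass Theorem~\ref{theorem trajectory} and sandwich $\nu_{n,\beta_n}$ using Theorem~\ref{theorem W}(i) at two auxiliary inverse temperatures. For the upper tail, set $\beta_n'' := 1 + 1/(2\alpha_n)$, which still satisfies $\sqrt{n}/(2\alpha_n) \to \infty$; for every $z$ with $V(z) \geq \frac{3}{2}\log n + C\alpha_n$,
\[
\e^{-\beta_n V(z)} \leq \e^{-(\beta_n - \beta_n'')(\frac{3}{2}\log n + C\alpha_n)} \, \e^{-\beta_n'' V(z)},
\]
so summing and substituting the two asymptotics from Theorem~\ref{theorem W}(i) yields, after exact cancellation of the $n$-dependent powers and of $D_\infty$, that the ratio of this restricted sum to $W_{n,\beta_n}$ converges in $\P^*$-probability to $4 \e^{-C/2}$. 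For the lower tail, a symmetric bound using $\beta_n' := 1 + \lambda/\alpha_n$ with $\lambda > 1$ gives an asymptotic ratio $\lambda^{-2} \e^{(\lambda - 1)/C}$, which tends to zero as $C \to \infty$ provided $\lambda = \lambda(C) \to \infty$ suitably slowly (e.g.\ $\lambda = \log C$). Combining the two tail bounds produces the claimed concentration window.

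The main delicate point is the lower-tail bound in case~(i): the factor $\e^{(\lambda-1)/C}$ forces $\lambda$ to diverge with $C$, and one must verify that Theorem~\ref{theorem W}(i) remains applicable to $\beta_n' = 1 + \lambda/\alpha_n$, i.e.\ that $\lambda\sqrt{n}/\alpha_n \to \infty$. This is fine since $\lambda$ is a parameter fixed after the $n$-asymptotics, and the hypothesis $\sqrt{n}/\alpha_n \to \infty$ of case~(i) then ensures the required divergence.
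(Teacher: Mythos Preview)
Your treatment of cases (ii)--(iv) is exactly what the paper does: it records that these cases are ``a direct consequence of Theorem~\ref{theorem trajectory}'', and your endpoint-functional argument makes that consequence explicit.

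For case~(i), your approach is correct but genuinely different from the paper's. The paper does \emph{not} derive case~(i) from Theorem~\ref{theorem W} as a black box; instead it reaches inside the proof of Theorem~\ref{theorem W}(i) in Section~5. There it has already constructed, for arbitrary sequences $1\ll\alpha_n^-\ll\alpha_n\ll\alpha_n^+\ll\sqrt{n}$, a truncated sum $Y_n(1)$ over particles with $V(z)\in[\tfrac32\log n+\alpha_n^-,\tfrac32\log n+\alpha_n^+]$ (and further trajectory constraints), and the second-moment machinery (Lemmas~\ref{lemma first moment}, \ref{lemma from Y_n to Y_n'}, \ref{lemma second moment}) shows that $n^{3\beta_n/2}\alpha_n^{-2}Y_n(1)/D_n^{(L)}$ has the \emph{same} limit under $\Q^{(L)}$ as the full $W_{n,\beta_n}^{(L)}/D_n^{(L)}$. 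Transferring back to $\P^*$ gives $Y_n(1)/W_{n,\beta_n}\to1$, which is precisely the concentration statement (and in fact the stronger version with arbitrary $\alpha_n^\pm$ rather than $C^{\pm1}\alpha_n$). So the paper recycles an intermediate step of the second-moment proof.

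Your change-of-temperature argument is more self-contained: it uses only the \emph{statement} of Theorem~\ref{theorem W}(i), applied at the three temperatures $\beta_n,\beta_n',\beta_n''$, together with the elementary inequality $\e^{-\beta_n V(z)}\le \e^{-(\beta_n-\beta)M}\e^{-\beta V(z)}$ on the relevant half-line. The cancellations you describe are correct (the powers of $n$ drop out exactly, leaving the constants $4\e^{-C/2}$ and $\lambda^{-2}\e^{(\lambda-1)/C}$). One small simplification: you do not need $\lambda=\lambda(C)\to\infty$. Given $\varepsilon$, first fix $\lambda$ with $\lambda^{-2}<\varepsilon/4$, then take $C$ large enough that both $4\e^{-C/2}<\varepsilon/2$ and $\e^{(\lambda-1)/C}<2$; this already gives both tail bounds below $\varepsilon/2$. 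The trade-off is that the paper's route yields the slightly sharper ``any $\alpha_n^\pm$'' formulation for free, whereas yours gives exactly the corollary as stated but with no appeal to the internal lemmas of Section~5.
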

\begin{proof}
In cases (ii), (iii) and (iv), it is a direct consequence of Theorem \ref{theorem trajectory}.
For case (i), the assertion will be proved at the end of Subsection \ref{subsection proof of proposition 4.1 and of part (i) of Theorem 1.3}.
\end{proof}

\subsection{Genealogy under the Gibbs measure}
\label{subsection genealogy}

We state here a direct consequence of Theorem \ref{theorem trajectory} concerning the overlap in the branching random walk, introduced by Derrida and Spohn \cite{derridaspohn88} in the context of polymers on trees.
We set, for $x, y \in \T$,
\begin{align*}
\abs{x\wedge y} \coloneqq \max \{ k \leq \min (\abs{x},\abs{y}) : x_k = y_k \},
\end{align*}
that is the generation of the most recent common ancestor of $x$ and $y$.
For some couple of particles $(x,y)$ chosen according to $\nu_{n,\beta}^{\otimes 2}$, we are interested in the \textit{overlap} between $x$ and $y$ defined by $\abs{x \wedge y} / n$.
Thus, we set, for a Borel set $A \subset [0,1]$,
\begin{align*}
\omega_{n,\beta} (A)
\coloneqq
\nu_{n,\beta}^{\otimes 2}
\left(
\frac{\abs{x \wedge y}}{n}
\in A
\right)
\end{align*}
so that $\omega_{n,\beta}$ is a random probability measure on $[0,1]$.
Madaule \cite{madaule2016} gives the following consequence of \eqref{eq CV Madaule} in the case $\beta = 1$:
\begin{align*}
\omega_{n,1}
\xrightarrow[n\to\infty]{}
\delta_0,
\quad
\text{in }
\P^* \text{-probability}.
\end{align*}
For the other extremal case $\beta = \infty$, one can prove in the nonlattice case only%
\footnote{For example, using the stopping line $\cZ [A] \coloneqq \{z \in \T : V(z) \geq A > \max_{k < \abs{z}} V(z_k) \}$ for large $A$, it is possible to show that, with high probability for $n$ large, all particles under $\nu_{n,\infty}$ have the same ancestor in $\cZ [A]$. 
Then \eqref{eq genealogy beta = infinity} follows from Lemma 3.3 of Chen \cite{chen2015-2}.
In the lattice case, it is clear that \eqref{eq genealogy beta = infinity} cannot hold.}
that, 
\begin{align} \label{eq genealogy beta = infinity}
\omega_{n,\infty}
\xrightarrow[n\to\infty]{}
\delta_1,
\quad
\text{in }
\P^* \text{-probability}.
\end{align}
The transition between this two cases appears with case $\beta \in (1,\infty)$, with which Chen, Madaule and Mallein \cite{cmm2015arxiv} deal, but their result \eqref{eq CV CMM} is only proved for $F \in \cC_b^u(\cD([0,1]))$ and, thus, the convergence in law of $\omega_{n,\beta}$ cannot be obtained as a corollary. 
However, Mallein \cite{mallein2016arxiv-2} shows that, under $\P^*$, we have
\begin{align*}
\omega_{n,\beta}
\xrightarrow[n\to\infty]{}
(1-\pi_\beta) \delta_0
+
\pi_\beta \delta_1,
\quad
\text{in law},
\end{align*}
where $\pi_\beta \coloneqq \sum_{k\in\N} p_k^2$ and $(p_k)_{k\in\N}$ follows a Poisson-Dirichlet distribution with parameter $(\beta^{-1},0)$.
It confirms a conjecture of Derrida and Spohn \cite{derridaspohn88}.
In the near critical case, we can state the following consequence of Theorem \ref{theorem trajectory}.
\begin{cor} 
In each case of Theorem \ref{theorem W}, under the same assumptions, we have
\begin{align*}
\omega_{n,\beta_n}
\xrightarrow[n\to\infty]{}
\delta_0,
\quad
\text{in }
\P^* \text{-probability}.
\end{align*}
\end{cor}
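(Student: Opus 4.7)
My plan is to upper bound the overlap event $\{\abs{x \wedge y} \geq \varepsilon n\}$ under $\nu_{n,\beta_n}^{\otimes 2}$ by a soft event concerning the trajectories at the single time $\varepsilon$, and then leverage Theorem \ref{theorem trajectory} together with the fact that the one-dimensional marginals of the limit process admit densities. Since $\omega_{n,\beta_n}$ lives on $[0,1]$, it suffices to show that for every fixed $\varepsilon>0$, $\omega_{n,\beta_n}([\varepsilon,1]) \to 0$ in $\P^*$-probability.

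The geometric observation is that if $\abs{x \wedge y} \geq \varepsilon n$, then $x_k = y_k$ for every $k \leq \lfloor \varepsilon n \rfloor$, hence $\mathbf{V}(x)(\varepsilon) = \mathbf{V}(y)(\varepsilon)$ (with the obvious replacement by the centered trajectory in case (iv), for which the centering depends only on $t$ and $n$). Introducing the bump function $\phi_\delta(u) \coloneqq \max(0, 1 - \abs{u}/\delta)$ and
\begin{equation*}
G_\delta(\xi, \eta) \coloneqq \phi_\delta \bigl( \xi(\varepsilon) - \eta(\varepsilon) \bigr),
\end{equation*}
one obtains the deterministic pointwise domination $\omega_{n,\beta_n}([\varepsilon,1]) \leq \mu_{n,\beta_n}^{\otimes 2}(G_\delta)$.

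Next I would lift the single-trajectory convergence of Theorem \ref{theorem trajectory} to the pair. Because the limit measure $M$ (respectively the law of $\mathfrak{e}$, a bounded-tilt of $\cM$, or $B$) is deterministic, the convergence in $\P^*$-probability of $\mu_{n,\beta_n}(F)$ for every $F \in \cC_b(\cD([0,1]))$ upgrades, via a countable convergence-determining family together with diagonal extraction of subsequences, to the weak convergence in $\P^*$-probability $\mu_{n,\beta_n}^{\otimes 2} \to M^{\otimes 2}$. Although evaluation at a fixed time is not Skorokhod-continuous in general, $M^{\otimes 2}$ is supported on $\cC([0,1])^2$, where such evaluation is continuous, so $G_\delta$ is continuous $M^{\otimes 2}$-a.e., and the portmanteau theorem gives $\mu_{n,\beta_n}^{\otimes 2}(G_\delta) \to M^{\otimes 2}(G_\delta)$ in $\P^*$-probability. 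Taking two independent copies $X, Y$ of the limit process, $M^{\otimes 2}(G_\delta) = \Ec{\phi_\delta(X(\varepsilon) - Y(\varepsilon))}$; and since each of $\cM(\varepsilon)$, $\mathfrak{e}(\varepsilon)$, $B(\varepsilon)$ admits a density on $\R$ for $\varepsilon > 0$ (and the bounded tilt in cases (ii)-(iii) preserves absolute continuity), so does $X(\varepsilon) - Y(\varepsilon)$ by independence, whence $M^{\otimes 2}(G_\delta) \to 0$ as $\delta \to 0$. Choosing $\delta$ small enough and then $n$ large enough concludes.

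The main obstacle is the second step: extending the single-trajectory statement of Theorem \ref{theorem trajectory} into a product statement and handling the Skorokhod discontinuity of the evaluation map. Both are standard soft arguments (separability of $\cD([0,1])$, deterministic limit, continuity on the support of $M$), but they are the only nontrivial points; the rest is a one-line reduction to a density property.
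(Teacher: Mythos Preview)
Your proposal is correct and follows essentially the same route as the paper: lift Theorem \ref{theorem trajectory} to the product $\mu_{n,\beta_n}^{\otimes 2}$, dominate $\omega_{n,\beta_n}([\varepsilon,1])$ by a functional of the paired trajectories, and show the limit measure gives it mass zero. The paper's execution is marginally shorter: instead of your bump function $G_\delta$ at the single time $\varepsilon$ (which forces you to handle the Skorokhod discontinuity of point evaluation and to invoke the density of $X(\varepsilon)-Y(\varepsilon)$), it uses the closed set $A=\{(x,y):x_t=y_t\text{ for all }t\le\varepsilon\}$ and applies Portmanteau to the \emph{expected} measure $\Ec{\mu_{n,\beta_n}^{\otimes 2}(\cdot)}$ (the $L^1$ convergence being automatic from boundedness), so that $\limsup_n\Ec{\omega_{n,\beta_n}([\varepsilon,1])}\le \Pp{\forall t\le\varepsilon,\ X_t=Y_t}=0$ without any density argument.
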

\begin{proof}
We give the proof for the case (i), but it is exactly the same for other cases (only the limiting trajectory changes).
First note that, for all $F \in \cC_b(\cD([0,1])^2)$, we have, with $\mathfrak{e}$ and $\mathfrak{e}'$ independent normalized Brownian excursions,
\begin{align} \label{pb}
\mu_{n,\beta_n}^{\otimes 2} (F)
=
\frac{1}{W_{n,\beta_n}^2}
\sum_{\abs{x} = \abs{y} = n} 
\e^{-\beta_n V(x) - \beta_n V(y)} 
F \left( \bfV (x), \bfV (y) \right)
\xrightarrow[n\to\infty]{}
\Ec{F(\mathfrak{e}, \mathfrak{e}')},
\end{align}
in $\P^*$-probability and therefore in $L^1$, because $\mu_{n,\beta_n}^{\otimes 2} (F)$ is bounded.
Indeed, by Theorem \ref{theorem trajectory}, \eqref{pb} holds when $F$ is of the form $F(x,y) = G_1(x) G_2(y)$ for some $G_1, G_2 \in \cC_b(\cD([0,1]))$ and, the general case follows.
Then, we consider some $\varepsilon>0$ and the closed set $A \coloneqq \{ (x,y) \in \cD([0,1])^2 : \Forall t \leq \varepsilon, x_t = y_t \}$ of $\cD([0,1])^2$.
We have
\begin{align*}
\Ec{\omega_{n,\beta_n} ([\varepsilon, 1]) }
\leq 
\Ec{\mu_{n,\beta_n}^{\otimes 2} (A)}
\xrightarrow[n\to\infty]{}
\Pp{\Forall t \leq \varepsilon, \mathfrak{e}_t = \mathfrak{e}'_t}
=
0,
\end{align*}
using \eqref{pb} and the Portmanteau theorem.
\end{proof}

\subsection{Comments on the results}

Theorem \ref{theorem W} fully describes the transition from the size $n^{-3\beta / 2}$ when $\beta > 1$ to the size $\e^{n\Psi(\beta)}$ when $\beta < 1$ and
Corollary \ref{cor typical energy} shows the transition in the location of the mass of the Gibbs measure from $[\frac{3}{2} \log n - C, \frac{3}{2} \log n + C]$ when $\beta > 1$, to $[- \Psi'(\beta)n - C \sqrt{n}, - \Psi'(\beta)n + C \sqrt{n}]$ when $\beta < 1$ (this follows from \eqref{eq cv trajectories weak disorder regime}).
Secondly, Theorem \ref{theorem trajectory} describes the transition between the Brownian excursion in case (i) and the straight line with Brownian fluctuations in case (iv).
Note that, since $\min_{z\in \T} V(z) > - \infty$ a.\@s.\@, it is natural that the limiting trajectory stays nonnegative on $[0,1]$ in cases (i) to (iii).
In case (iv), this constraint disappears in the limit due to the drift. 
Indeed, staying above a constant for a random walk with drift approximately $\sigma \sqrt{n} / \alpha_n$ needs effort until times of order $\alpha_n^2$, so it disappears in the trajectory after scaling by $n$. 
However, this effort has a cost of order $1/\alpha_n$, which explains the presence of this factor in the size $\e^{n\Psi(\beta_n)}/\alpha_n$ of the partition function. 
In cases (ii) and (iii), when $\alpha_n$ is of order $\sqrt{n}$ or larger, the perturbation is too small to change radically the behavior of the Gibbs measure in regards to the critical case $\beta = 1$: 
the size of the partition function is still $n^{-1/2}$, the typical energy is of order $\sqrt{n}$ and the limiting trajectory has a density w.r.t.\@ the Brownian meander.
Therefore, cases (ii) and (iii) are called the \textit{critical window} by Alberts and Ortgiese \cite{albertsortgiese2013}.
It brings to light a family of laws $(P^\gamma)_{\gamma \in \R}$ on $\cD([0,1])$ defined by 
$P^\gamma (F)
\coloneqq \E[\e^{-\sigma \gamma \cM(1)} F(\cM)] / \E[\e^{-\sigma \gamma \cM(1)}]$ 
for all $F \in \cC_b(\cD([0,1]))$, including the law of the Brownian meander of length 1 for $\gamma = 0$ and such that $P^\gamma \Rightarrow \cL(\mathfrak{e})$ as $\gamma \to \infty$, where $\cL(\mathfrak{e})$ denotes the law of $\mathfrak{e}$.
But there is no convergence as $\gamma \to -\infty$, because the trajectory is sent to infinity.
However, we can consider another family $(Q^\gamma)_{\gamma \in \R}$ defined by
$Q^\gamma (F)
\coloneqq \E[\e^{\sigma \gamma \cM(1)} F(\cM_t - \sigma \gamma t, t\in [0,1])] / \E[\e^{\sigma \gamma \cM(1)}]$, so that, in case (iii), we have 
\begin{align*}
\frac{1}{W_{n,\beta_n}}
\sum_{\abs{z} = n} 
\e^{-\beta_n V(z)} 
F \left( 
\frac{V(z_{\lfloor tn \rfloor}) + t n\Psi'(\beta_n)}{\sigma\sqrt{n}}, 
t\in [0,1] \right)
\xrightarrow[n\to\infty]{}
Q^\gamma(F)
\quad
\text{ in }
\P^*
\text{-probability},
\end{align*}
This family includes also the Brownian meander's law and we have $Q^\gamma \Rightarrow \cL(B)$ as $\gamma \to \infty$.
As opposed to this, cases (i) and (iv) are called the \textit{near-critical window} and highlight some new behaviors.
In case (i), the transition between the critical size $n^{-1/2}$ and the strong disorder size $n^{-3\beta /2}$ appears.
The factor $n^{3\beta_n /2}$ starts to behave differently than $n^{3/2}$ when $\alpha_n = O(\log n)$ and this is also the region where the particles mainly contributing to $W_{n,\beta_n}$ are not simply those in $[C^{-1} \alpha_n, C \alpha_n]$: the fact that the lowest particle at time $n$ is around $\frac{3}{2} \log n$ plays a role only in this region (see Lemma \ref{lemma addition of the barrier}).
Nevertheless, in the near-critical regime, the lowest particle at time $n$ never has a positive weight in $\nu_{n,\beta_n}$ in the limit, unlike in the case $\beta >1$.
Since the particles mainly contributing to $W_{n,\beta_n}$ are far below $\sqrt{n}$, the endpoint of the limiting trajectory has to be 0 and, therefore, the excursion appears. 
In case (iv), since $\beta_n$ tends to $1$ slowly enough, we find the same asymptotic behavior for the partition function as in \eqref{eq Madaule equiv additive martingale} when we first take $n\to \infty$ and then $\beta \uparrow 1$.
For the limiting trajectory, the result is also similar to \eqref{eq cv trajectories weak disorder regime} in the case $\beta > 1$.
Moreover, when $\alpha_n$ is not too small, the different results in case (iv) can be rewritten only in terms of $\sigma^2$: 
if $\alpha_n \gg n^{1/3}$, the size of the partition function is $\e^{\sigma^2n/2\alpha_n^2} /\alpha_n$ and, if $n^{1/4} = O(\alpha_n)$, the location of the mass is in $[\sigma^2 n /\alpha_n -C\sqrt{n}, \sigma^2 n /\alpha_n +C\sqrt{n}]$.
But, on the contrary, if $\alpha_n$ is too small, there is a break of universality.
Finally, we stress that there is no discontinuity between the different cases of the results.
Indeed, using that $\E[\e^{-\sigma \gamma \cM(1)}] \sim 1/(\sigma \gamma)^2$
and $\E[\e^{\sigma \gamma \cM(1)}] \sim \sqrt{2\pi} \sigma \gamma \e^{(\sigma \gamma)^2 /2}$ as $\gamma \to \infty$, all cases of Theorem \ref{theorem W} (requiring $\alpha_n \gg n^{1/3}$ in case (iv)) can be written
\begin{align} \label{eq reecriture theorem W}
n^{3(1-\beta_n)/2} \frac{\sqrt{n}}{\Ec{\e^{(1-\beta_n)\sigma \cM(1) \sqrt{n}}}}
W_{n,\beta_n}
\xrightarrow[n\to\infty]{}
\frac{1}{\sigma}
\sqrt{\frac{2}{\pi}} 
D_\infty,
\quad
\text{in }
\P^*
\text{-probability},
\end{align}
noting that $n^{3(1-\beta_n)/2} \to 1$ as soon as $\alpha_n \gg \log n$.
For Theorem \ref{theorem trajectory}, the continuity between the different cases follows from the convergences $P^\gamma \Rightarrow \cL(\mathfrak{e})$ and $Q^\gamma \Rightarrow \cL(B)$ as $\gamma \to \infty$.

\subsection{Organization of the paper}

Sections of this paper correspond to the different cases in the results: case (i) is treated in Section \ref{section near-critical window in the strong disorder regime}, cases (ii) and (iii) in Section \ref{section critical window} and case (iv) in Section \ref{section near-critical window in the weak disorder regime}.
The behavior in the critical window (cases (ii) and (iii)) is a direct consequence of the analogue results \eqref{eq CV aidekon shi} and \eqref{eq CV Madaule} in the critical case (apart from some technical details).
The near-critical window in the weak disorder regime (case (iv)) needs slightly more work, but relies mainly on Madaule's \cite{madaule2016} results and on $L^p$ inequalities techniques (see \cite{biggins92}).
Finally, the main part of this paper is dedicated to the proof of case (ii), which follows some ideas of A\"{\i}dékon and Shi \cite{aidekonshi2014}, with change of measure and spine decomposition techniques. 
One main difference with the previous literature on the branching random walk is that we need here to consider particles that are far below $\sqrt{n}$ but also far above the lowest particle.
Note that we prove in Subsection \ref{subsection peeling lemma} a new version of the so-called peeling lemma (see Shi \cite{shi2015}) with a more general setting than what is needed for the aim of this paper and that could be of independent interest.
On the other hand, Section \ref{section preliminary results} regroups some well-known results on the branching random walk and on classical random walk.
Some new results are stated in this section and proved in the appendix.
Note that none of the results of Section \ref{section preliminary results} are needed for the proof of cases (ii) and (iii) and only a few of them for case (iv).
The appendix contains some other technical results.
Throughout the paper, the $c_i$'s denote positive constants, 
we set $\N \coloneqq \{0,1,2,\dots \}$ 
and, for $a, b \in \N$, $\llbracket a, b \rrbracket \coloneqq [a,b] \cap \N$.
For two sequences $(u_n)_{n\in \N}$ and $(v_n)_{n\in \N}$ of positive real numbers, 
we say that $u_n \sim v_n$ as $n\to\infty$ if $\lim_{n\to\infty} u_n / v_n = 1$,
that $u_n = \grandO{v_n}$ as $n\to\infty$ if $\limsup_{n\to\infty} u_n / v_n < \infty$,
and that $u_n = \petito{v_n}$ or $u_n \ll v_n$ if $\lim_{n\to\infty} u_n/v_n =0$.
For $(S,d)$ a metric space, let $\cC_b(S)$ be the set of bounded continuous functions from $S \to \R$ and $\cC_b^u(S)$ be its subset of uniformly continuous functions.
For $F \in \cC_b(S)$, we set $\norme{F} \coloneqq \sup_{x\in S} \abs{f(x)}$.
For $F \in \cC^u_b(S)$, we will denote by $\omega_F$ a modulus of continuity for function $F$: $\omega_F$ is a continuous bounded nondecreasing function from $\R_+ \to \R_+$ such that $\omega_F (0) = 0$ and $\Forall x,y \in S$, $\abs{F(x) - F(y)} \leq \omega_F(d(x,y))$.

\section{Preliminary results}
\label{section preliminary results}

In this section, we state some preliminary results that are mostly needed in Section \ref{section near-critical window in the strong disorder regime}.
In Subsection \ref{subsection many-to-one lemma and changes of probabilities}, we present some well-known tools to study the branching random walk and the next subsections contain results concerning one-dimensional random walk.

\subsection{Many-to-one lemma and changes of probabilities}
\label{subsection many-to-one lemma and changes of probabilities}

For $a\in\R$, let $\P_a$ denote a probability measure under which $(V(z), z \in \T)$ is the branching random walk starting from $a$, and $\E_a$ the associated expectation (for brevity we will write $\P$ and $\E$ instead of $\P_0$ and $\E_0$).
We define a random walk $(S_n)_{n\geq0}$ associated to the branching random walk:  under $\P_a$, $S_0 = a$ a.\@s.\@ and the law of the increments is given by 
\begin{align*}
\Eci{a}{h(S_1-S_0)}
=
\E \Biggl[ \sum_{\abs{z} = 1} h(V(z)) \e^{-V(z)} \Biggr],
\end{align*}
for all measurable $h \colon \R \to \R_+$.
This random walk is well-defined and centred thanks to assumption \eqref{hypothese 2}.
Moreover, by assumption \eqref{hypothese 3}, we have $\E[S_1^2] = \sigma^2 \in (0,\infty)$.
Then, by induction, one gets the following result (see Biggins and Kyprianou \cite{bigginskyprianou97}).
It is also a corollary of the forthcoming Proposition \ref{prop lyon's change of measure}.

\begin{lem}[Many-to-one lemma]
For all $n \geq 1$, $a \in \R$ and all measurable function $g \colon \R^{n+1} \to \R_+$, we have
\begin{align*}
\E_a \Biggl[ \sum_{\abs{z} = n} g(V(z_0), \dots, V(z_n)) \Biggr]
=
\Eci{a}{\e^{S_n-a} g(S_0,\dots, S_n)}.
\end{align*}
\end{lem}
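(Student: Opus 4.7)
The plan is to prove the identity by induction on $n$, using the branching property of the BRW together with the very definition of the associated random walk $(S_n)$.

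For the base case $n=1$, I would observe that under $\P_a$ the first generation consists of particles at positions $a+V(z)$ for $\abs{z}=1$ under $\P$, so the left-hand side equals $\E\bigl[\sum_{\abs{z}=1} g(a, a+V(z))\bigr]$. The right-hand side, by the defining identity of $(S_n)$ applied to the nonnegative measurable function $h(x) \coloneqq \e^{x} g(a, a+x)$, equals
\[
\Eci{a}{\e^{S_1 - a} g(a, S_1)} = \E\Biggl[\sum_{\abs{z}=1} \e^{V(z)} g(a, a+V(z))\, \e^{-V(z)}\Biggr],
\]
which matches exactly.

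For the inductive step, assume the lemma holds at level $n$ and consider $n+1$. I would decompose the sum over $\abs{z}=n+1$ according to the ancestor $u = z_1$ at generation $1$: by the branching property, conditionally on the first generation, the subtrees rooted at each $u$ with $\abs{u}=1$ are independent BRWs started at $V(u)$. Hence
\[
\E_a\Biggl[\sum_{\abs{z}=n+1} g(V(z_0), \dots, V(z_{n+1}))\Biggr]
= \E_a\Biggl[\sum_{\abs{u}=1} G(V(u))\Biggr],
\]
where
\[
G(v) \coloneqq \E_v\Biggl[\sum_{\abs{y}=n} g(a, V(y_0), \dots, V(y_n))\Biggr]
= \Eci{v}{\e^{S_n - v} g(a, S_0, \dots, S_n)}
\]
by the induction hypothesis (applied to $\tilde g(s_0, \dots, s_n) \coloneqq g(a, s_0, \dots, s_n)$ with starting point $v$).

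It then remains to combine these two steps. Applying the base-case computation again, with $h(x) \coloneqq \e^{x} G(a+x)$, gives
\[
\E_a\Biggl[\sum_{\abs{u}=1} G(V(u))\Biggr] = \Eci{a}{\e^{S_1 - a} G(S_1)}.
\]
Plugging in the expression for $G(S_1)$ and invoking the Markov property of the random walk $(S_k)$ at time $1$ (so that the increments $S_2 - S_1, \dots, S_{n+1} - S_1$ are distributed as an independent copy of $S_1 - S_0, \dots, S_n - S_0$ starting at $S_1$), I get
\[
\Eci{a}{\e^{S_1 - a}\, \e^{S_{n+1} - S_1}\, g(S_0, S_1, S_2, \dots, S_{n+1})}
= \Eci{a}{\e^{S_{n+1} - a}\, g(S_0, \dots, S_{n+1})},
\]
which completes the induction. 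There is no real obstacle here: the only points requiring care are the clean bookkeeping of the shift by $a$ (so that the many-to-one formula dovetails with the Markov step) and the use of Tonelli to move the expectation inside the sum, which is legitimate because $g$ is nonnegative.
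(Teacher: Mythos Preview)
Your proof is correct and follows exactly the approach the paper indicates: the paper does not spell out the argument but simply states that the result follows ``by induction'' (with a reference to Biggins and Kyprianou), which is precisely what you have carried out in full detail. The paper also remarks that the lemma is a corollary of Proposition~\ref{prop lyon's change of measure}, but your direct induction is the intended elementary route.
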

We now state some well-known change of probabilities and spinal decomposition results. 
This method dates back at least to Kahane and Peyrière \cite{kahanepeyriere76}, Rouault \cite{rouault81} and Chauvin and Rouault \cite{chauvinrouault88}.
See also Biggins and Kyprianou \cite{bigginskyprianou2004} for spinal decomposition in more general type of branching structures and Shi \cite{shi2015} for a survey on this topic.
Let $\sF_n$ denote the $\sigma$-algebra generated by $(V(z), \abs{z} \leq n)$ and $\sF_\infty \coloneqq \sigma ( \bigvee_{n\in \N} \sF_n )$.
We first introduce Lyons' change of measure \cite{lyons95}: since $(W_{n,1})_{n\in\N}$ is a nonnegative martingale of mean $\e^{-a}$ under $\P_a$, we can define a new probability measure $\Q_a$ on $\sF_\infty$, by letting for all $n\in \N$,
\begin{align*}
\Q_a \restreinta_{\sF_n}
\coloneqq
\e^{a} W_{n,1} \bullet \P_a \restreinta_{\sF_n}.
\end{align*}
We will denote by $\E_{\Q_a}$ the associated expectation and we will write $\Q$ and $\E_\Q$ instead of $\Q_0$ and $\E_{\Q_0}$.
Let $\hat{\cL}$ be a point process on $\R$ which has the law of $(V(z), \abs{z} = 1)$ under $\Q$.
Lyons \cite{lyons95} proved the following description for the branching random walk under $\Q_a$, with a decomposition along a \textit{spine} $(w_n)_{n\in\N}$ which is a marked ray in the the genealogical tree $\T$ (in order to be mathematically rigorous, one should enlarge the probability space and work on a product space, see Lyons \cite{lyons95}).
The system starts with one particle $w_0$ at position $a$, forming the $0$\textsuperscript{th} generation.
For each $n \in \N$, individuals of the $n$\textsuperscript{th} generation give birth independently of each other and of the foregoing. 
Individuals other than $w_n$ generate offspring around their position according to the law of $\cL$ and $w_n$ breeds according to the law of $\hat{\cL}$.
Then, $w_{n+1}$ is chosen independently among $w_n$'s children, with probability proportional to $\e^{-V(z)}$ for each child $z$.
Moreover, Lyons showed the following result concerning the spine $(w_n)_{n\in\N}$ under $\Q_a$.
\begin{prop} \label{prop lyon's change of measure}
Let $a \in \R$.
\begin{enumerate}
\item For each $n\in \N$ and $\abs{z} = n$, we have
\begin{align*}
\Q_a \left( w_n = z \mathrel{}\middle|\mathrel{} \sF_n \right) 
= 
\frac{\e^{-V(z)}}{W_{n,1}}.
\end{align*}
\item The process $(V(w_n))_{n\in\N}$ under $\Q_a$ has the same law as $(S_n)_{n\in\N}$ under $\P_a$.
\end{enumerate}
\end{prop}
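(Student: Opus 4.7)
The plan is to construct an enriched probability measure $\hat{\P}_a$ on the space of marked trees (the tree $\T$ together with a distinguished infinite ray $(w_n)_{n\geq 0}$) that realises the spine decomposition described just above the statement, and to prove simultaneously that its marginal on $\sF_\infty$ coincides with $\Q_a$ and that its conditional law of $w_n$ given $\sF_n$ is the one claimed in~(i). The key step will be to establish, by induction on $n$, the master identity
\begin{equation*}
\Eci{\hat{\P}_a}{G(w_0,\dots,w_n)}
= \e^{a}\,\Eci{\P_a}{\sum_{\abs{z}=n}\e^{-V(z)}\,G(z_0,\dots,z_n)},
\end{equation*}
valid for every bounded measurable $G$ on ancestral paths (and, more generally, for $G$ that may also depend measurably on $\sF_n$).

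The induction step is the heart of the argument. Conditionally on $(\sF_n, w_n)$, under $\hat{\P}_a$ the offspring of $w_n$ form a point process of law $\hat{\cL}$ shifted by $V(w_n)$, the offspring of the other generation-$n$ particles are independent and distributed as under $\P_a$, and $w_{n+1}$ is drawn from the children of $w_n$ with probability proportional to $\e^{-V(\cdot)}$. By the very definition of $\hat{\cL}$ one has $\diff\hat{\cL}/\diff\cL(\xi)=\sum_{x\in\xi}\e^{-x}$, and this Radon--Nikodym factor is cancelled exactly by the normalising denominator of the selection probability. After this cancellation, the one-step conditional expectation collapses to an expectation under $\P_a$ of the sum $\sum_{y\text{ child of }w_n}\e^{-(V(y)-V(w_n))}\,G(w_0,\dots,w_n,y)$; feeding this into the outer $\hat{\P}_a$-expectation through the induction hypothesis and the tower property under $\P_a$ produces the identity at rank $n+1$.

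Once the master identity is in place, everything follows quickly. Taking $G\equiv 1$ yields $\hat{\P}_a\restreinta_{\sF_n}=\e^{a}W_{n,1}\bullet\P_a\restreinta_{\sF_n}=\Q_a\restreinta_{\sF_n}$, so the tree marginal of $\hat{\P}_a$ is $\Q_a$, and reading off the conditional law of $w_n$ given $\sF_n$ from the identity gives~(i). For~(ii), choosing $G(z_0,\dots,z_n)=h(V(z_0),\dots,V(z_n))$ and invoking the Many-to-one lemma (which has been proved independently by direct induction) rewrites the right-hand side as $\Eci{\P_a}{h(S_0,\dots,S_n)}$; alternatively one verifies directly that $(V(w_n))$ is Markov under $\hat{\P}_a$ with i.i.d.\@ increments of law $h\mapsto\Ec{\sum_{\abs{z}=1}\e^{-V(z)}h(V(z))}$, again via the same cancellation of the size-biasing density against the selection weight. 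I expect the main obstacle to be the careful bookkeeping in this one-step calculation: isolating the biased spine particle from the unbiased remainder of its generation, writing the selection of $w_{n+1}$ as a ratio whose denominator coincides with $\diff\hat{\cL}/\diff\cL$ evaluated at the realised offspring, and justifying the application of the induction hypothesis to the random, $\sF_n$-measurable function delivered by the inner $\P_a$-expectation.
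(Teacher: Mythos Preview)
The paper does not prove Proposition~\ref{prop lyon's change of measure} at all: it is stated as a result of Lyons~\cite{lyons95} and simply quoted, with the spine description given just before and the proposition presented as what ``Lyons showed''. There is therefore no proof in the paper to compare your attempt against.

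That said, your outline is correct and is essentially Lyons's argument (see also \cite{shi2015}). You correctly identify the key mechanism: the size-biasing density $\diff\hat{\cL}/\diff\cL(\xi)=\sum_{x\in\xi}\e^{-x}$ cancels exactly against the normalisation in the spine-selection rule, so that one step under $\hat{\P}_a$ collapses to a sum over children weighted by $\e^{-V(y)}$ under $\P_a$. The inductive master identity you write,
\[
\Eci{\hat{\P}_a}{G(w_0,\dots,w_n)}
= \e^{a}\,\Eci{\P_a}{\sum_{\abs{z}=n}\e^{-V(z)}\,G(z_0,\dots,z_n)},
\]
with $G$ allowed to depend on $\sF_n$, is exactly the right object: specialising $G=\1_A\1_{\{w_n=z\}}$ with $A\in\sF_n$ gives~(i) and $\hat{\P}_a\restreinta_{\sF_n}=\Q_a\restreinta_{\sF_n}$ simultaneously, and taking $G=h(V(z_0),\dots,V(z_n))$ together with the many-to-one lemma gives~(ii). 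Your anticipated ``obstacle'' is purely bookkeeping, and you have correctly described how to handle it.
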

Now, we present another change of measure, that was first introduced by Biggins and Kyprianou \cite{bigginskyprianou2004}.
For this, we need to define $R$ the renewal function in the first strict descending ladder height process of the random walk $(S_n)_{n\in\N}$.
For $u \geq 0$,
\begin{align*}
R(u) 
\coloneqq
\sum_{k = 0}^\infty
\Pp{H_k \leq u},
\end{align*}
where $(H_k)_{k\in\N}$ is the first strict descending ladder height process:
we set $\tau_0 \coloneqq 0$, $H_0 \coloneqq 0$ and, for $k \geq 1$, 
$\tau_k \coloneqq \inf \left\{ n > \tau_{k-1} : S_n < S_{\tau_{k-1}} \right\}$
and $H_k \coloneqq - S_{\tau_k}$.
Then, we introduce the truncated derivative martingale: for $L \geq 0$ and $n \in \N$,
\begin{align*}
D_n^{(L)}
\coloneqq
\sum_{\abs{z} = n} 
R_L(V(z)) \e^{-V(z)} \1_{\underline{V}(z) \geq -L},
\end{align*}
where, for $u \geq -L$, $R_L(u) \coloneqq R(L+u)$ and, for $\abs{z} = n$, $\underline{V}(z) \coloneqq \min_{0\leq i \leq n} V(z_i)$.
Fix now some $L \geq 0$.
For $a \geq -L$, under $\P_a$, $(D^{(L)}_n)_{n\in\N}$ is a nonnegative martingale of mean $R_L(a)$ and therefore we can define another probability measure $\Q_a^{(L)}$ by
\begin{align*}
\Q_a^{(L)} \restreinta_{\sF_n}
\coloneqq
\frac{D_n^{(L)}}{R_L(a)} \bullet \P_a \restreinta_{\sF_n}.
\end{align*}
We will denote by $\E_{\Q_a^{(L)}}$ the associated expectation and write $\Q^{(L)}$ and $\E_\Q^{(L)}$ instead of $\Q^{(L)}_0$ and $\E_{\Q^{(L)}_0}$.
For $a \geq -L$, let $\hat{\cL}^{(L)}_a$ be a point process on $\R$ with the law of $(V(z), \abs{z} = 1)$ under $\Q^{(L)}_a$.
Biggins and Kyprianou \cite{bigginskyprianou2004} proved the following spinal decomposition description for the branching random walk under $\Q_a^{(L)}$, where the spine is denoted by $(w_n^{(L)})_{n\in\N}$.
The description is similar to the previous one, but here $w_n^{(L)}$ have offspring according to $\hat{\cL}^{(L)}_{V(w_n^{(L)})}$ and $w_{n+1}^{(L)}$ is chosen among these children, with probability proportional to $R_L(V(z)) \e^{-V(z)} \1_{\underline{V}(z) \geq -L}$ for each child $z$.
Moreover, we get the following analogue of Proposition \ref{prop lyon's change of measure}.
\begin{prop} \label{prop biggins and kyprianou's change of measure}
Let $L \geq 0$ and $a \geq -L$.
\begin{enumerate}
\item For each $n\in \N$ and $\abs{z} = n$, we have
\begin{align*}
\Q^{(L)} \left( w_n^{(L)} = z \mathrel{}\middle|\mathrel{} \sF_n \right) 
= 
\frac{R_L(V(z)) \e^{-V(z)} \1_{\underline{V}(z) \geq -L}}{D_n^{(L)}}.
\end{align*}
\item The process $(V(w_n^{(L)}))_{n\in\N}$ under $\Q_a^{(L)}$ has the same law as $(S_n)_{n\in\N}$ under $\P_a$ conditioned to stay in $[-L,\infty)$: for all $n \in \N$ and all measurable function $g \colon \R^{n+1} \to \R_+$,
\begin{align*}
\Eci{\Q^{(L)}_a}{g \left(V(w_0^{(L)}), \dots, V(w_n^{(L)}) \right)}
=
\frac{1}{R_L(a)}
\Eci{a}{g(S_0,\dots, S_n) R_L(S_n) \1_{\underline{S}_n \geq -L}}.
\end{align*}
\end{enumerate}
\end{prop}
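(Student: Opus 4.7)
My plan is to derive both parts from the standard spinal-decomposition enlargement of the probability space. First, I would introduce a joint law $\widetilde{\Q}^{(L)}_a$ on pairs (branching random walk, distinguished ray $(w_n^{(L)})_{n \geq 0}$) characterized by: for any $n \in \N$, any $\sF_n$-measurable $Y \geq 0$, and any vertex $z$ with $|z| = n$,
$$\widetilde{\E}^{(L)}_a\!\left[Y \1_{w_n^{(L)} = z}\right] = \frac{1}{R_L(a)} \Eci{a}{Y \cdot R_L(V(z)) \e^{-V(z)} \1_{\underline{V}(z) \geq -L}}.$$
Summing over $|z| = n$ recovers $\Q^{(L)}_a\restreinta_{\sF_n}$ as the $\sF_n$-marginal, and dividing yields exactly the conditional probability claimed in (i). So proving (i) reduces to checking that the spinal construction described in the excerpt (non-spine particles reproduce via $\cL$; the spine reproduces via $\hat{\cL}^{(L)}_{V(w_n^{(L)})}$; the next spine vertex is sampled among the offspring with probability proportional to $R_L(V(\cdot)) \e^{-V(\cdot)} \1_{\underline{V}(\cdot) \geq -L}$) produces precisely $\widetilde{\Q}^{(L)}_a$.

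I would carry out this verification by induction on $n$. The base case $n = 0$ is immediate. For the inductive step, given $\sF_n$ and $w_n^{(L)} = z$, the offspring of $z$ under $\hat{\cL}^{(L)}_{V(z)}$ have law absolutely continuous with respect to $\cL$ with density proportional to $\sum_{z' \text{ child}} R_L(V(z')) \e^{-V(z')} \1_{V(z') \geq -L}$, by definition of $\Q^{(L)}_{V(z)}$. Multiplying by the selection probability for $w_{n+1}^{(L)}$ among these offspring, the normalizing denominators telescope and the resulting $\sF_{n+1}$-density against the unbiased $\P_a$ matches $D_{n+1}^{(L)}/R_L(a)$. The essential ingredient is the harmonic identity $\Eci{v}{R_L(S_1) \1_{S_1 \geq -L}} = R_L(v)$ for $v \geq -L$, coming from the definition of $R$ as the renewal function of the first strict descending ladder heights; this is also what makes $D_n^{(L)}$ a martingale under $\P_a$.

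Once (i) is available, (ii) is essentially a direct computation. Combining the change of measure on $\sF_n$ with the conditional formula from (i) yields
$$\Eci{\Q^{(L)}_a}{g(V(w_0^{(L)}), \dots, V(w_n^{(L)}))} = \frac{1}{R_L(a)} \Eci{a}{\sum_{|z|=n} R_L(V(z)) \e^{-V(z)} \1_{\underline{V}(z) \geq -L} \, g(V(z_0), \dots, V(z_n))},$$
the $D_n^{(L)}$ factors cancelling. Applying the many-to-one lemma to $h(v_0, \dots, v_n) = R_L(v_n) \e^{-v_n} \1_{\min_i v_i \geq -L} g(v_0, \dots, v_n)$ absorbs the exponential weights into $\e^{S_n - a} \e^{-S_n}$ and delivers the stated expression in terms of $(S_k)_{0 \le k \le n}$. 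The main obstacle is the inductive step of (i): one must carefully track the Radon-Nikodym density across a single branching step and invoke the harmonicity of $R_L$ cleanly, after which part (ii) is pure bookkeeping.
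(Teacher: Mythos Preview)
The paper does not actually supply a proof of this proposition: it is stated as a result of Biggins and Kyprianou \cite{bigginskyprianou2004}, in direct analogy with Proposition~\ref{prop lyon's change of measure} which is attributed to Lyons \cite{lyons95}. Your outline is correct and is exactly the standard argument found in that literature: enlarge the space to carry a distinguished ray, verify by one-step induction (using harmonicity of $R_L$ for the walk killed below $-L$) that the spinal construction realizes the joint law $\widetilde{\Q}^{(L)}_a$, read off (i), and then obtain (ii) via the many-to-one lemma. There is nothing further to compare.
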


\subsection{One-dimensional random walks}

Up to the end of this section, we consider a centred random walk $(S_n)_{n\in\N}$ with finite variance $\E[S_1^2] = \sigma^2 \in (0,\infty)$.
In this subsection, we state various results concerning this one-dimensional random walk and the associated renewal function $R$.
For $n \in \N$, we set $\underline{S}_n \coloneqq \min_{0\leq i \leq n} S_i$.
Recall that $R$ is the renewal function associated to the first strict descending ladder height process $(H_k)_{k\in\N}$.
Since $\E[S_1] = 0$ and $\E[S_1^2] < \infty$, by Feller \cite[Theorem XVIII.5.1 (5.2)]{feller71}, we have $\E[H_1] < \infty$.
Thus, it follows from Feller's \cite[p.\@ 360]{feller71} renewal theorem that it exists a constant $c_0 > 0$ such that
\begin{equation} \label{equation R 1}
\frac{R(u)}{u} 
\xrightarrow[u \to \infty]{} 
c_0
\end{equation}
and so there exist also $c_1, c_2 >0$ such that, for all $u \geq 0$,
\begin{equation} \label{equation R 2}
c_1 (1+u) \leq R(u) \leq c_2 (1+u).
\end{equation}
We are interested in the behavior of random walks staying above a barrier.
First, we recall the following estimate for the probability of a random walk to stay above $-a$:
by Kozlov \cite[Theorem A]{kozlov76}, it exists a constant $\theta >0$ such that for all $u \geq 0$,
\begin{equation} \label{equation equivalent proba min S geq -a}
\Pp{\underline{S}_n \geq -u} 
\underset{n \to \infty}{\sim}
\frac{\theta R(u)}{\sqrt{n}},
\end{equation}
and it exists $c_3 > 0$ such that, for all $n\in \N$ and $u \geq 0$, we have the uniform bound
\begin{equation} \label{equation majoration proba min S geq -a}
\Pp{\underline{S}_n \geq -u} 
\leq
\frac{c_3 (1+u)}{\sqrt{n}}.
\end{equation}
Constants $c_0$ and $\theta$ will appear all along the paper and they are related by the following equation, from A\"idékon and Shi \cite[Lemma 2.1]{aidekonshi2014},
\begin{equation} \label{equation lien theta c_0}
\theta c_0 = \left( \frac{2}{\pi \sigma^2} \right)^{1/2}.
\end{equation}
The following result states the convergence in law of $S_n / \sigma \sqrt{n}$ conditioned to stay above $-u$ towards a Rayleigh distribution, with uniformity in the position of the barrier $-u$: 
by A\"idékon and Jaffuel \cite[Lemma 2.2]{aidekonjaffuel2011}, if $(\gamma_n)_{n\in \N}$ is a sequence of positive numbers such that $\gamma_n \ll \sqrt{n}$ as $n\to \infty$, then we have, for all continuous bounded function $g \colon \R_+ \to \R$,
\begin{equation} \label{equation convergence vers la loi Rayleigh}
\Ec{ g \left( \frac{S_n + u}{\sigma \sqrt{n}} \right) 
	\1_{\underline{S}_n \geq -u}}
= 
\frac{\theta R(u)}{\sqrt{n}}
\left(
\int_0^\infty g(t) t \e^{-t^2/2} \diff t
+
\petito{1}
\right),
\end{equation}
uniformly in $u\in [0, \gamma_n]$.
From Lemmas 2.2 and 2.4 of A\"idékon and Shi \cite{aidekonshi2014}, we have the following inequalities, sometimes called ballot theorems: 
it exists $c_4 >0$ such that, for all $b > a \geq 0$, $u \geq 0$ and $n \geq 1$,
\begin{align} \label{eq ballot theorem}
\Pp{S_n \in [a-u, b-u], \underline{S}_n \geq -u} 
\leq
c_4 \frac{(u+1)(b+1)(b-a+1)}{n^{3/2}},
\end{align}
and, for $\lambda \in (0,1)$,
it exists $c_5 = c_5(\lambda) >0$ such that for all $b > a \geq 0$, $u \geq 0$, $v \in\R$ and $n \geq 1$, we have
\begin{align} \label{eq ballot theorem and barrier}
\Pp{S_n  \in [a+v,b+v], \underline{S}_{\lfloor \lambda n \rfloor} \geq -u, 
	\min_{\lfloor \lambda n \rfloor \leq j \leq n} S_j \geq v} 
\leq
c_5 \frac{(u+1)(b+1)(b-a+1)}{n^{3/2}},
\end{align}
where we added a second barrier between times $\lfloor \lambda n \rfloor$ and $n$.
From the previous results, it follows (see A\"idékon \cite[Lemma B.2]{aidekon2013}) that it exists $c_6 > 0$ such that, for all $a,u \geq 0$,
\begin{equation} \label{equation MA somme de probas}
\sum_{i\geq 0}
\Pp{\underline{S}_i \geq -u, S_i \leq a-u}
\leq
c_6 (1+a) (1+ \min(a,u)).
\end{equation}

Finally, we state a last result that is used for the proof of the peeling lemma in Subsection \ref{subsection peeling lemma} and is proved in Subsection \ref{subsection lower envelope}.
\begin{lem} \label{lemme trajectoire supérieure à i^alpha}
Let $(r_n)_{n\in \N}$ be an increasing sequence of positive numbers such that we have $\sum_{n\in\N} r_n n^{-3/2} < \infty$.
We set, for $\ell, i \in \llbracket 0, n \rrbracket$, $u, \mu \geq 0$ and $v \in \R$,
\begin{align*}
m_i^{(n,\ell)}
\coloneqq
\left\{
\begin{array}{ll}
- u + r_i - \mu & \text{if } 0 \leq i < \ell, \\
v + r_{n-i} - \mu & \text{if } \ell \leq i \leq n.
\end{array}
\right.
\end{align*}
For any $\varepsilon > 0$ and $\lambda \in (0,1/2)$, it exists $\mu = \mu(\varepsilon,\lambda) >0$ such that for all $b,u \geq 0$, $v \in \R$, $n\in \N$ and $\ell \in [\lambda n, (1-\lambda)n]$,
\begin{align*}
\Pp{\underline{S}_\ell \geq -u,
	\min_{\ell \leq j \leq n} S_j \geq v,
	S_n \in [b+v, b+v+1],
	\Exists i \in \llbracket 0,n \rrbracket:
	S_i \leq m_i^{(n,\ell)}}
\leq \frac{\varepsilon (1+u) (1+b)}{n^{3/2}}.
\end{align*}
\end{lem}
\begin{rem}
This kind of lemma is useful in the proof of peeling lemmas: see Lemma B.3 of \cite{aidekon2013}, Lemma 6.1 of \cite{madaule2015} and Lemma A.6 of \cite{shi2015}.
A slight difference here is that the terminal interval is $[b+v, b+v+1]$ instead of $[b,b+v]$ in the previous results.
Furthermore, in these papers, they take $r_i = i^\alpha$ with $\alpha \in (0,1/6)$. 
This would have been sufficient for the proof of our peeling lemma, but an anonymous referee asked us whether the result holds for any $\alpha \in (0,1/2)$ and this lemma answers in the affirmative. 
\end{rem}

\subsection{Convergence towards the Brownian meander}
\label{subsection convergence towards the meander}

We define the rescaled trajectory of the random walk until time $n$: for each $n \in \N^*$,
\begin{align*}
\bfS^{(n)}
\coloneqq
\left( 
	\frac{S_{\lfloor nt \rfloor}}{\sigma \sqrt{n}},
	t \in [0,1]
\right).
\end{align*}
We state the following convergence result for the trajectory $\bfS^{(n)}$ conditioned to stay nonnegative, with uniformity in the starting point of the random walk.
\begin{prop} \label{prop CV vers le meandre}
Let $(\gamma_n)_{n\in\N}$ be a sequence of positive numbers such that $\gamma_n \ll \sqrt{n}$ as $n\to\infty$.
Then, for all $F \in \cC_b(\cD([0,1]))$, we have
\begin{align*}
\Eci{u}{F(\bfS^{(n)})
	\1_{\underline{S}_n \geq 0}}
& =
\frac{\theta R(u)}{\sqrt{n}} 
\left( \Ec{F(\cM)} + \petito{1} \right),
\end{align*}
as $n\to\infty$, uniformly in $u \in [0,\gamma_n]$, where $\cM$ denotes the Brownian meander of length $1$.
\end{prop}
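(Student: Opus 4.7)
The strategy is to deduce this uniform invariance principle from the classical $u = 0$ case (Iglehart--Bolthausen's invariance principle for random walks conditioned to stay non-negative, whose rescaled limit is $\cM$) by a Markov property splitting at an intermediate time, using the uniform one-point marginal convergence \eqref{equation convergence vers la loi Rayleigh} to carry the $u$-dependence over into the overall probability factor $\theta R(u)/\sqrt{n}$.

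Concretely, the case $u = 0$ would first follow from Iglehart--Bolthausen combined with $\P_0(\underline{S}_n \geq 0) \sim \theta R(0)/\sqrt{n}$ from \eqref{equation equivalent proba min S geq -a}. For general $u \in [0, \gamma_n]$, the plan is to introduce an intermediate scale $m_n \to \infty$ with $\gamma_n^2 \ll m_n \ll n$ (which is possible thanks to $\gamma_n \ll \sqrt{n}$) and apply the Markov property at time $m_n$. On the initial segment $[0, m_n]$, since $u \leq \gamma_n \ll \sqrt{m_n}$, the estimate \eqref{equation convergence vers la loi Rayleigh} gives that $S_{m_n}/(\sigma\sqrt{m_n})$ under $\P_u(\cdot \mid \underline{S}_{m_n} \geq 0)$ converges in law to a Rayleigh distribution, uniformly in $u$, with conditioning cost $\theta R(u)/\sqrt{m_n}$. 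On the continuation $[m_n, n]$, starting from a point of typical scale $\sqrt{m_n} = o(\sqrt{n-m_n})$, the $u = 0$ case applies (again by \eqref{equation convergence vers la loi Rayleigh} and Iglehart--Bolthausen) yielding a Brownian meander scaling limit with conditioning cost $\sim 1/\sqrt{n-m_n} \sim 1/\sqrt{n}$. The two costs multiply to the desired $\theta R(u)/\sqrt{n}$, while at the level of the limit the initial Rayleigh endpoint, of scale $\sqrt{m_n/n} \to 0$ at rescaled time $m_n/n \to 0$, contracts to the origin, so that the continuation simply reconstitutes $\cM$ on $[0,1]$.

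The main obstacle would be handling the initial segment $[0, m_n]$ inside the continuous functional $F$: one has to show that replacing $F(\bfS^{(n)})$ by $F$ applied to the trajectory made constant equal to $\bfS^{(n)}_{m_n/n}$ on $[0, m_n/n]$ introduces only an $o(1)$ error, uniformly in $u \in [0, \gamma_n]$. This would be done by first reducing (via tightness, provided by the ballot estimate \eqref{eq ballot theorem}) to $F$ uniformly continuous on relatively compact subsets of $\cD([0,1])$, and then bounding the Skorokhod distance between $\bfS^{(n)}$ and its modification by controlling the oscillation of the conditioned walk on $[0, m_n]$ via \eqref{eq ballot theorem} applied with a barrier at $-u$.
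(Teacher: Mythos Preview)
Your plan has a genuine circularity. On the continuation $[m_n, n]$ the walk does not start at $0$ but at $S_{m_n}$, whose typical scale is $\sigma\sqrt{m_n}$. Invoking ``the $u=0$ case'' (Iglehart--Bolthausen) there is not legitimate: what you actually need on that segment is the very statement you are proving, namely the meander limit uniformly over starting points that are $o(\sqrt{n-m_n})$. Moreover, since applying \eqref{equation convergence vers la loi Rayleigh} on the first segment forces $\gamma_n \ll \sqrt{m_n}$, the starting points you must handle on the continuation are of order $\sqrt{m_n} \gg \gamma_n$, so any bootstrap runs in the wrong direction and cannot close. The multiplicative bookkeeping you sketch (Rayleigh endpoint, then $\theta R(v)/\sqrt{n-m_n}$, constants via \eqref{equation lien theta c_0}) is correct arithmetically, but it presupposes exactly the functional convergence on the second segment that you have not established.

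The paper circumvents this by a different device: it never splits the time interval. Instead it passes from the event $\{\underline{S}_n \geq 0\}$ to the $h$-transform measure $\P^+_u$ (the walk conditioned to stay nonnegative \emph{forever}) via
\[
\Eci{u}{F(\bfS^{(n)})\,\chi\!\left(\tfrac{S_n}{\sigma\sqrt{n}}\right)\1_{\underline{S}_n \geq 0}}
= \frac{R(u)}{\sqrt{n}}\,\E^+_u\!\left[F(\bfS^{(n)})\,\frac{\sqrt{n}\,\chi(S_n/(\sigma\sqrt{n}))}{R(S_n)}\right],
\]
where $\chi$ is a smooth cutoff supported on $[K^{-1},K]$ so that, by \eqref{equation R 1}, the factor $\sqrt{n}/R(S_n)$ is uniformly approximated by $1/(c_0\sigma\cdot S_n/(\sigma\sqrt{n}))$. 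Then the Caravenna--Chaumont invariance principle \eqref{eq CV vers le Bessel} gives $\bfS^{(n)} \Rightarrow \cR$ under $\P^+_{u_n}$ for \emph{any} sequence with $u_n/\sqrt{n}\to 0$, hence uniformly over $u\in[0,\gamma_n]$, and Imhof's relation \eqref{eq lien bessel meandre} converts the resulting Bessel expectation into the meander expectation. The complementary region $S_n/(\sigma\sqrt{n}) \notin [K^{-1},K]$ is handled directly by \eqref{equation convergence vers la loi Rayleigh} and made small by letting $K\to\infty$. If you wish to repair your approach, the missing ingredient on the continuation is precisely Caravenna--Chaumont; but once you invoke it, the intermediate splitting at $m_n$ becomes superfluous.
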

This invariance principle has been proved in the case $u = 0$ by Iglehart \cite{iglehart74}, Bolthausen \cite{bolthausen76} and Doney \cite{doney85}.
The case where $F$ is a function of the terminal value of the trajectory is already showed in \eqref{equation convergence vers la loi Rayleigh}.
We give a short proof of this generalization in Subsection \ref{subsection appendix convergence towards the bessel and the meander}, that relies on the invariance principle by Caravenna and Chaumont \cite{caravennachaumont2008}, for random walk conditioned to stay nonnegative for all time.

We present also a corollary of Proposition \ref{prop CV vers le meandre}, which holds under an additional assumption on the random walk $S$ that is equivalent to assumption \eqref{hypothese 4} by the many-to-one lemma. 
It will be used in Section \ref{section near-critical window in the weak disorder regime} and its proof is postponed to Subsection \ref{subsection appendix convergence towards the bessel and the meander}.
\begin{cor} \label{cor convergence to the meander with extension}
Assume that it exists $\delta_0 >0$ such that $\E[\e^{\delta_0 S_1}] < \infty$.
Then, for all $C,L>0$ and $F \in \cC_b(\cD([0,1]))$, we have
\begin{align*}
\Ec{\e^{C S_n / \sqrt{n}} F(\bfS^{(n)}) \1_{\underline{S}_n \geq -L}}
& =
\frac{\theta R(L)}{\sqrt{n}} 
\left( \Ec{\e^{C \sigma \cM(1)} F(\cM)} + \petito{1} \right),
\end{align*}
as $n\to\infty$, where $\cM$ denotes the Brownian meander of length $1$.
\end{cor}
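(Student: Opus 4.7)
The plan is to extend Proposition~\ref{prop CV vers le meandre} to cover the unbounded exponential factor $\e^{CS_n/\sqrt{n}} = \e^{C\sigma\bfS^{(n)}(1)}$ by truncating and then controlling the resulting tail uniformly in $n$ via the exponential moment assumption. First, set $G_A(x) := F(x)\e^{C\sigma(x(1)\wedge A)}$, a bounded continuous functional on $\cD([0,1])$. Applying Proposition~\ref{prop CV vers le meandre} with $u=L$ fixed (which lies in $[0,\gamma_n]$ for $n$ large, for any $\gamma_n\to\infty$) gives
\begin{align*}
\Ec{G_A(\bfS^{(n)})\1_{\underline{S}_n\geq -L}} = \frac{\theta R(L)}{\sqrt{n}}\bigl(\Ec{G_A(\cM)}+\petito{1}\bigr)
\end{align*}
as $n\to\infty$, for each $A$. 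Since $\cM(1)$ has the Rayleigh density $t\mapsto t\e^{-t^2/2}$ on $\R_+$, all exponential moments of $\cM(1)$ are finite and dominated convergence yields $\Ec{G_A(\cM)}\to\Ec{\e^{C\sigma\cM(1)}F(\cM)}$ as $A\to\infty$.

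It therefore suffices to show that the truncation error vanishes uniformly:
\begin{align*}
\limsup_{A\to\infty}\limsup_{n\to\infty}\frac{\sqrt{n}}{R(L)}\Ec{\e^{CS_n/\sqrt{n}}\1_{S_n>A\sigma\sqrt{n}}\1_{\underline{S}_n\geq -L}}=0.
\end{align*}
The Markov-type bound $\1_{S_n>A\sigma\sqrt{n}}\leq\e^{\eta(S_n-A\sigma\sqrt{n})/\sqrt{n}}$, valid for any $\eta>0$, reduces this to the uniform estimate
\begin{align*}
\Ec{\e^{DS_n/\sqrt{n}}\1_{\underline{S}_n\geq -L}}\leq\frac{K(D,L)}{\sqrt{n}}
\end{align*}
for every $D>0$ (applied with $D=C+\eta$), since then the bound $\e^{-\eta A\sigma}K(D,L)/\sqrt{n}$ vanishes after multiplication by $\sqrt{n}/R(L)$ when $A\to\infty$.

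To prove this uniform bound, I would use an exponential tilt. With $\delta_n:=D/\sqrt{n}\in(0,\delta_0)$ for $n$ large and $\phi(\delta):=\log\Ec{\e^{\delta S_1}}$, the identity
\begin{align*}
\Ec{\e^{DS_n/\sqrt{n}}\1_{\underline{S}_n\geq -L}} = \e^{n\phi(\delta_n)}\,\P^{\delta_n}(\underline{S}_n\geq -L)
\end{align*}
decouples the problem. By Taylor expansion $\phi(\delta)=\sigma^2\delta^2/2+O(\delta^3)$ (using $\phi(0)=\phi'(0)=0$ and $\phi''(0)=\sigma^2$), the prefactor converges to $\e^{D^2\sigma^2/2}$. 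Under $\P^{\delta_n}$ the walk $S$ has small positive drift $\mu_n:=\phi'(\delta_n)=O(1/\sqrt{n})$; writing $\tilde{S}_k:=S_k-k\mu_n$, which is centered under $\P^{\delta_n}$, the event $\{\underline{S}_n\geq -L\}$ becomes the survival of $\tilde{S}$ above the linearly descending barrier $-L-k\mu_n$. A variant of \eqref{equation majoration proba min S geq -a} adapted to this slowly drifting setting then yields $\P^{\delta_n}(\underline{S}_n\geq -L)=O(1/\sqrt{n})$ uniformly in $n$. The main obstacle is this last uniform ballot-type estimate, where one has to verify that the constants in \eqref{equation majoration proba min S geq -a}--\eqref{eq ballot theorem} remain controlled as the drift and barrier slope both scale as $1/\sqrt{n}$; once this is available, the rest of the argument is a routine truncation combined with Proposition~\ref{prop CV vers le meandre}.
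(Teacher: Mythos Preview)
Your overall architecture---truncate the exponential, apply Proposition~\ref{prop CV vers le meandre} to the bounded functional, then control the tail---is exactly the paper's. The difference is in the tail bound: the paper simply invokes Lemma~A.2 of Madaule~\cite{madaule2016}, which gives directly
\[
\Ec{\e^{C S_n / \sqrt{n}} \1_{\underline{S}_n \geq -L,\, S_n \geq K \sqrt{n}}} \leq \frac{c_{27}(L,C)}{\sqrt{n}}\,\e^{-CK/2},
\]
and then proceeds as you do.

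Your attempt to rederive the uniform bound $\E[\e^{DS_n/\sqrt{n}}\1_{\underline{S}_n\geq-L}]=O(n^{-1/2})$ via exponential tilting has a real gap, and it is not merely a matter of uniform constants. The tilting identity is the definition of $\P^{\delta_n}$, so it only rewrites the quantity to be bounded as $\e^{n\phi(\delta_n)}\P^{\delta_n}(\underline{S}_n\geq-L)$; all the content lies in the claim $\P^{\delta_n}(\underline{S}_n\geq-L)=O(n^{-1/2})$. After recentering, you need the centred walk $\tilde S$ to stay above the barrier $-L-k\mu_n$, which drops by $n\mu_n\sim D\sigma^2\sqrt{n}$ over $\llbracket 0,n\rrbracket$. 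Applying \eqref{equation majoration proba min S geq -a} with $u=L+n\mu_n$ gives only $O(1)$; optimizing over a shorter window $m\leq n$ via
\[
\P^{\delta_n}(\underline{S}_n\geq-L)\;\leq\;\P^{\delta_n}\bigl(\underline{\tilde S}_m\geq -L-m\mu_n\bigr)\;\leq\;\frac{c_3(1+L+m\mu_n)}{\sqrt m}
\]
yields at best $O(\sqrt{\mu_n})=O(n^{-1/4})$ at $m\mu_n\asymp 1$. The inequalities \eqref{equation majoration proba min S geq -a}--\eqref{eq ballot theorem} are designed for barriers at depth $o(\sqrt n)$ and simply cannot deliver the $n^{-1/2}$ rate once the effective barrier depth is of order $\sqrt n$. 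So the ``adapted ballot estimate'' you flag as the main obstacle is not a technicality to be checked but the entire substance of the tail bound---precisely what Madaule's Lemma~A.2 provides and what the tilting step has not reduced.
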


\subsection{Convergence towards the Brownian excursion}
\label{subsection convergence towards the Brownian excursion}

In this subsection, our interest is the convergence of $\bfS^{(n)}$, conditioned to stay above two successive barriers and to end up in a small interval, towards the normalized Brownian excursion, with uniformity with respect to the barriers' positions and to the endpoint as long as they are much smaller than $\sqrt{n}$.
This result will be used repetitively in Section \ref{section near-critical window in the strong disorder regime} for the proof of part (ii) of Theorems \ref{theorem W} and \ref{theorem trajectory}.
At this scale for the endpoint $S_n$, the random walk behaves differently in the lattice and nonlattice cases, so they have to be distinguished.
Moreover, we need some new notation: let $(S_n^-)_{n\in\N}$ be a random walk such that, under $\P_a$, $S^-_0 = a$ a.\@s.\@ and $S^-_1 - S^-_0$ has the same law as $S_0 - S_1$. 
All objects referring to $S$ ($R$, $c_0$, $\theta$, \dots) have their analogue for $S^-$ denoted with a $-$ superscript ($R^-$, $c_0^-$, $\theta^-$, \dots). 

We can now state our result, which generalizes Lemma 2.6 of Chen, Madaule and Mallein \cite{cmm2015arxiv} and is proved in Subsection \ref{subsection appendix convergence towards the Brownian excursion}.
\begin{prop} \label{prop convergence to the excursion}
Let $(\gamma_n)_{n\in \N}$ be a sequence of positive numbers such that $\gamma_n \ll \sqrt{n}$ as $n\to \infty$ and $\mathfrak{e}$ be the normalized Brownian excursion.
\begin{enumerate}
\item If the law of $S_1$ is nonlattice, then for all $h>0$, $\lambda \in (0,1)$ and $F \in \cC_b(\cD([0,1]))$,
\begin{align*}
& \Ec{F(\bfS^{(n)})
	\1_{\underline{S}_{\lfloor \lambda n \rfloor} \geq -u, 
	\min_{\lfloor \lambda n \rfloor \leq i \leq n} S_i \geq v,
	S_n \in [v + b, v+b+h)}} \\
& =
\sqrt{\frac{\pi}{2}} \frac{\theta \theta^-}{\sigma}
\frac{R(u)}{n^{3/2}}
(\Ec{F(\mathfrak{e})} + \petito{1})
\int_b^{b+h} R^-(t) \diff t,
\end{align*}
as $n \to \infty$, uniformly in $b,u \in [0,\gamma_n]$ and $v \in [-\gamma_n, \gamma_n]$.
\item If the law of $S_1$ is $(h,a)$-lattice, then for all $\lambda \in (0,1)$ and $F \in \cC_b^u(\cD([0,1]))$,
\begin{align*}
\Ec{F(\bfS^{(n)})
	\1_{\underline{S}_{\lfloor \lambda n \rfloor} \geq -u, 
	\min_{\lfloor \lambda n \rfloor \leq i \leq n} S_i \geq v,
	S_n = v + b}} 
=
\sqrt{\frac{\pi}{2}} \frac{\theta \theta^-}{\sigma}
\frac{R(u)}{n^{3/2}}
(\Ec{F(\mathfrak{e})} + \petito{1})
h R^-(b),
\end{align*}
as $n \to \infty$, uniformly in $u \in [0,\gamma_n]$, $v \in [-\gamma_n, \gamma_n]$ and $b \in [0, \gamma_n] \cap (-v + an + h\Z)$.
\end{enumerate}
\end{prop}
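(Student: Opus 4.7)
The plan is to adapt the strategy used for Lemma 2.6 of Chen, Madaule and Mallein \cite{cmm2015arxiv}: I split the trajectory at the time $\lfloor \lambda n \rfloor$ where the lower barrier changes, handle the first half directly via Proposition \ref{prop CV vers le meandre}, and handle the second half by time-reversal so that Proposition \ref{prop CV vers le meandre} applied to the dual walk $S^-$ governs it. The two resulting rescaled pieces, glued at their common endpoint, will be recognized as the classical decomposition of the Brownian excursion at a deterministic time into two independent (rescaled) meanders, which identifies the limit $\Ec{F(\mathfrak{e})}$.

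Concretely, I first condition on $m := S_{\lfloor \lambda n \rfloor}$ and apply the Markov property to factorize the expectation. The first piece is a walk started at $0$, staying $\geq -u$, and ending near $m \geq v$: Proposition \ref{prop CV vers le meandre}, combined with a joint local-limit refinement in the spirit of \eqref{equation convergence vers la loi Rayleigh} and the ballot estimate \eqref{eq ballot theorem}, gives convergence of its rescaled trajectory to a Brownian meander of length $\lambda$ with a prescribed rescaled endpoint, contributing the prefactor $\theta R(u)/\sqrt{n}$. For the second piece I time-reverse and shift by $-v$: this produces a walk $S^-$ started near $b$, staying $\geq 0$, and ending near $m-v$. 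Applying Proposition \ref{prop CV vers le meandre} to $S^-$ then furnishes the analogous meander convergence for the reversed second piece, with a prefactor of the form $\theta^- R^-(\cdot)/\sqrt{n}$, integrated (respectively summed, in the lattice case) against the local-limit density of $S_n$ to deliver $\int_b^{b+h} R^-(t)\diff t$ (respectively $h R^-(b)$).

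Summing over the gluing height $m$ at scale $\sigma\sqrt{n}$, the deterministic prefactors combine, using \eqref{equation lien theta c_0}, to reproduce the announced constant $\sqrt{\pi/2}\,\theta\theta^-/\sigma$; the factor $1/n^{3/2}$ arises from the two $1/\sqrt{n}$ probabilities of staying above a barrier and the $1/\sqrt{n}$ of the local-limit density at the gluing height. The functional part requires identifying the law of the two limiting meanders conditioned to share a common endpoint with that of the Brownian excursion conditioned on its value at time $\lambda$; this is the standard decomposition of $\mathfrak{e}$ at a deterministic time (equivalently, the Denisov-type description of the Bessel-3 bridge), and integrating out the rescaled gluing height against the appropriate Rayleigh-type density recovers $\Ec{F(\mathfrak{e})}$.

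The main obstacle will be achieving the required uniformity in $u, b \in [0, \gamma_n]$ and $v \in [-\gamma_n, \gamma_n]$: uniformity in Proposition \ref{prop CV vers le meandre} covers the meander pieces, while the contribution of atypical gluing heights $m$ (either very large or close to $v$) must be controlled uniformly via \eqref{eq ballot theorem} and \eqref{eq ballot theorem and barrier}, so that only $m$ of order $\sigma\sqrt{n}$ contributes in the limit. A secondary difficulty is the distinction between the two cases: in the nonlattice case the endpoint integration over $[v+b, v+b+h)$ invokes Stone's local limit theorem, while in the lattice case the constraint $b \in -v + an + h\Z$ is precisely the condition ensuring $\Pp{S_n = v+b} > 0$, and a standard lattice local limit theorem produces the factor $h$; the requirement that $F$ be uniformly continuous in the lattice case reflects the need to approximate $F$ by a function depending only on the values of the trajectory on a fine discretization near the endpoint.
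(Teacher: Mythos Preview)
Your strategy is plausible but it diverges from the paper's route and, as written, glosses over the part that carries most of the weight.

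The paper does not treat the two pieces symmetrically. After reducing to product functions $F=G_1\star G_2$ via Lemma~\ref{lemma weak convergence G_1 G_2}, it conditions on $\sF_m$ with $m=\lfloor\lambda n\rfloor$ and handles the second piece with Lemma~\ref{lem convergence pont de bessel} (convergence to the Bessel bridge $\rho^{1-\lambda}_{z,0}$, proved from Lemma~\ref{lemme deviations normales inverse}, i.e.\ Doney's local limit results). This produces a bounded continuous function of $S_m/\sigma\sqrt n$, and then the first piece is treated by a \emph{plain} application of Proposition~\ref{prop CV vers le meandre}, with no conditioning on the endpoint. The final identification with the excursion uses the explicit formula \eqref{eq lien excursion meandre pont de bessel} from \cite{cmm2015arxiv}, which expresses $\mathfrak e$ split at a fixed time $\lambda$ as a meander on $[0,\lambda]$ followed by a Bessel bridge on $[\lambda,1]$.

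Your proposal instead conditions both pieces on the gluing height and applies Proposition~\ref{prop CV vers le meandre} twice. But Proposition~\ref{prop CV vers le meandre} gives only weak convergence of the trajectory; it does not give the joint local behaviour of $(\bfS^{(n)},S_n)$ you need to sum over $m$ at scale $\sigma\sqrt n$. The ``joint local-limit refinement in the spirit of \eqref{equation convergence vers la loi Rayleigh}'' that you invoke is in fact the substantive step: it is exactly what the paper isolates as Lemmas~\ref{lemme deviations normales inverse} and~\ref{lem convergence pont de bessel}. So your symmetric route would require that machinery \emph{twice} (once for each piece), whereas the paper's asymmetric route needs it only for the second piece and uses the soft meander convergence for the first. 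Two smaller points: the decomposition you want at a \emph{fixed} time $\lambda$ is meander $+$ Bessel bridge (equation \eqref{eq lien excursion meandre pont de bessel}), not a Denisov-type two-meander split, which is at the time of the maximum; and the constant $\sqrt{\pi/2}$ comes from that identity rather than from \eqref{equation lien theta c_0}.
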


\section{The critical window}
\label{section critical window}

We prove here cases (ii) and (iii) of Theorems \ref{theorem W} and \ref{theorem trajectory}, where $\beta_n = 1 \pm 1/\alpha_n$ with $\sqrt{n} / \alpha_n \to \gamma \in [0,\infty)$.
This proof is based on Theorem 1.2 of Madaule \cite{madaule2016}, recalled in \eqref{eq CV Madaule}.
\begin{proof}[Proof of part (ii) of Theorems \ref{theorem W} and \ref{theorem trajectory}]
Our aim is to show that, for all $F \in \cC_b(\cD([0,1]))$, we have
\begin{align}
\frac{1}{W_{n,1}}
\sum_{\abs{z} = n} 
\e^{-\beta_n V(z)} 
F \left( \bfV (z) \right)
\xrightarrow[n\to\infty]{}
\Ec{\e^{-\sigma \gamma \cM(1)} F(\cM)}, \label{oa}
\quad
\text{in }
\P^*\text{-probability}.
\end{align}
Then, using \eqref{eq CV aidekon shi} and the case $F \equiv 1$, part (ii) of Theorems \ref{theorem W} and \ref{theorem trajectory} follows.
Note also that it is sufficient to show \eqref{oa} for $F$ nonnegative.

We first deal with the case $\gamma \in (0,\infty)$.
We consider some nonnegative function $F \in \cC_b(\cD([0,1]))$ and $\varepsilon > 0$.
By dominated convergence, we have that $\E [\e^{-\sigma \gamma' \cM(1)} F(\cM)]$ tends towards $\E[\e^{-\sigma \gamma \cM(1)} F(\cM)]$ as $\gamma' \to \gamma$, so we can choose $0 < \gamma^- < \gamma < \gamma^+$ such that
\begin{align}
\begin{split} \label{ob}
\Ec{\e^{-\sigma \gamma \cM(1)} F(\cM)} - \frac{\varepsilon}{2}
& \leq
\Ec{\e^{-\sigma \gamma^\pm \cM(1)} F(\cM)}
\leq
\Ec{\e^{-\sigma \gamma \cM(1)} F(\cM)} + \frac{\varepsilon}{2}.
\end{split}
\end{align}
Since $\sqrt{n} / \alpha_n \to \gamma$ and, under $\P^*$, $\min_{\abs{z}=n} V(z) \to \infty$ a.\@s.\@, it exists $n_0 \in \N$ such that, for all $n \geq n_0$, we have $\sqrt{n} / \alpha_n \in [\gamma^-, \gamma^+]$ and also $\P^* (\min_{\abs{z}=n} V(z) < 0) \leq \varepsilon$.
Moreover, we set, for $x \in \cD([0,1])$, $G^+(x) \coloneqq F(x) (\e^{- \sigma \gamma^+ x_1} \wedge 1)$ and $G^-(x) \coloneqq F(x) (\e^{- \sigma \gamma^- x_1} \wedge 1)$: 
if $x_1 \geq 0$, then we have $G^+(x) \leq \e^{(1-\beta_n) x_1} F(x) \leq G^-(x)$.
Thus, we get, for all $n \geq n_0$,
\begin{align*}
\P^* \Biggl(
\frac{1}{W_{n,1}}
\sum_{\abs{z} = n} 
\e^{-\beta_n V(z)} 
F \left( \bfV (z) \right)
\notin
\left[
	\mu_{n,1} (G^+),
	\mu_{n,1} (G^-)
\right]
\Biggr)
\leq 
\P^* \left(\min_{\abs{z}=n} V(z) < 0 \right)
\leq
\varepsilon.
\end{align*}
Therefore, we have
\begin{align}
& \P^* \Biggl(
\Bigg\lvert
\frac{1}{W_{n,1}}
\sum_{\abs{z} = n} 
\e^{-\beta_n V(z)} 
F \left( \bfV (z) \right)
- 
\Ec{\e^{-\sigma \gamma \cM(1)} F(\cM)}
\Bigg\rvert
\leq \varepsilon
\Biggr) \nonumber \\
& \leq
\varepsilon
+
\P^* \left(
\mu_{n,1} (G^-)
\geq 
\Ec{\e^{-\sigma \gamma \cM(1)} F(\cM)} + \varepsilon
\right)
+
\P^* \left(
\mu_{n,1} (G^+)
\leq 
\Ec{\e^{-\sigma \gamma \cM(1)} F(\cM)} - \varepsilon
\right) \nonumber \\
& \leq
\varepsilon
+
\P^* \left(
\mu_{n,1} (G^-)
\geq 
\Ec{G^+(\cM)} + \frac{\varepsilon}{2}
\right)
+
\P^* \left(
\mu_{n,1} (G^+)
\leq 
\Ec{G^-(\cM)} - \frac{\varepsilon}{2}
\right), \label{oc}
\end{align}
using \eqref{ob}.
Then, applying \eqref{eq CV Madaule}, we get that both probabilities in \eqref{oc} tends to 0, because $G^-, G^+ \in \cC_b(\cD([0,1]))$ and it concludes the proof of \eqref{oa} in the case $\gamma \in (0,\infty)$.
Finally, for the case $\gamma = 0$, we proceed in the same way as for $\gamma \in (0,\infty)$, taking here $\gamma^+ > 0$ such that $\E [F(\cM)] - \frac{\varepsilon}{2} \leq \E [\e^{-\sigma \gamma^+ \cM(1)} F(\cM)]$, $G^+$ defined as before and $G^- \coloneqq F$.
Then, the same inequalities hold.
\end{proof}
\begin{rem} \label{rem assumption madaule}
In case (iii), we work under assumption \eqref{hypothese 4} and we will use Proposition 3.8 of Madaule \cite{madaule2016}, whereas Madaule works in \cite{madaule2016} under the stronger assumption \eqref{hypothese 5}.
But, for the proof of his Proposition 3.8, he only uses Assumption \eqref{hypothese 5} in the proof of his Lemma A.2, in order to have that $\Psi$ is finite on a left-neighbourhood of 1 and $\Psi(\beta) = \frac{\sigma^2}{2} (\beta-1)^2 + o((\beta-1)^2)$ as $\beta \uparrow 1$, and this holds also under our assumption \eqref{hypothese 4}.
\end{rem}
\begin{proof}[Proof of part (iii) of Theorems \ref{theorem W} and \ref{theorem trajectory}]
Applying Proposition 3.8 of Madaule \cite{madaule2016} (see Remark \ref{rem assumption madaule} above), we get that $\mu_{n,1}(G) \to \E[G(\cM)]$ in $\P^*$-probability with $G \colon x \in \cD([0,1]) \mapsto \e^{C x_1}$ for any $C > 0$, although $G$ is not bounded.
Combining this with \eqref{eq CV Madaule}, it is straightforward to extend this convergence to functions of the type $G \colon x \in \cD([0,1]) \mapsto \e^{C x_1} F(x)$ with $C > 0$ and $F \in \cC_b(\cD([0,1]))$.
Then, we prove that, for all $F \in \cC_b(\cD([0,1]))$ nonnegative,
\begin{align}
\frac{1}{W_{n,1}}
\sum_{\abs{z} = n} 
\e^{-\beta_n V(z)} 
F \left( \bfV (z) \right)
\xrightarrow[n\to\infty]{}
\Ec{\e^{-\sigma \gamma \cM(1)} F(\cM)}, \label{oe}
\quad
\text{in }
\P^*\text{-probability},
\end{align}
using the same method as for the proof of case (ii): we approach function $x \mapsto \e^{\sigma x_1 \sqrt{n}/ \alpha_n} F(x)$ from above and from below,
by considering here
$G^+(x) \coloneqq \e^{\sigma \gamma^+ x_1} F(x)$ 
and $G^-(x) \coloneqq \e^{\sigma \gamma^- x_1} F(x)$ when $\gamma \in (0,\infty)$
and the same function $G^+$ but with $G^- \coloneqq F$ when $\gamma = 0$.
Finally, part (iii) of Theorems \ref{theorem W} and \ref{theorem trajectory} follows from \eqref{oe}.
\end{proof}

\section{The near-critical window in the weak disorder regime}
\label{section near-critical window in the weak disorder regime}

In this section, we deal with the case where $\beta_n = 1 - 1/\alpha_n$ and $\alpha_n \ll \sqrt{n}$ and prove successively convergence of the rescaled partition function and then convergence of the trajectories.
We work here under assumption \eqref{hypothese 5} so $\Psi$ is analytic on an open interval $I$ containing 1.
Moreover, by Lemma 4.3 of Madaule \cite{madaule2016}, assumption \eqref{hypothese 5} implies that there exist $c_7 > 0$ and $\eta_0 < 1$ such that $\E[(\widetilde{W}_{\infty,\beta})^{1+\frac{\eta}{2}}] \leq c_7$ for any $0 < \eta < \eta_0$ and $\beta = 1 - \eta$.
Thus, for any $n \in \N$ and $0 < \eta < \eta_0$, we have, with $\beta = 1 - \eta$,
\begin{align} \label{eq controle L^p}
\Ec{(\widetilde{W}_{n,\beta})^{1+\frac{\eta}{2}}}
= \Ec{ \Ecsq{\widetilde{W}_{\infty,\beta}}{\sF_n}^{1+\frac{\eta}{2}} }
\leq \Ec{(\widetilde{W}_{\infty,\beta})^{1+\frac{\eta}{2}}}
\leq c_7,
\end{align}
by using Jensen's inequality.
We need to introduce a more general statement of the many-to-one lemma. 
For $\beta \in I$, we define another random walk $(S_{n,\beta})_{n\in\N}$ starting at $0$ under $\P$ and such that
\begin{align*}
\Ec{h(S_{1,\beta})}
=
\e^{-\Psi(\beta)}
\E \Biggl[ \sum_{\abs{z} = 1} h(V(z) + \Psi'(\beta)) \e^{-\beta V(z)} \Biggr].
\end{align*}
Then, $S_{1,\beta}$ is centred and has variance $\sigma_\beta^2 \coloneqq \E[S_{1,\beta}^2] = \Psi''(\beta) \in (0,\infty)$.
Moreover, we have the following analogue of the many-to-one lemma (see Shi \cite{shi2015}): for all $n \geq 1$ and all measurable function $g \colon \R^{n+1} \to \R_+$, we have
\begin{align} \label{eq many-to-one beta < 1}
\E \Biggl[ \sum_{\abs{z} = n} g(V(z_i), i \in \llbracket 0,n \rrbracket) \Biggr]
=
\e^{n\Psi(\beta)} 
\Ec{\e^{\beta(S_{n,\beta}-n\Psi'(\beta))} 
	g(S_{i,\beta}-i\Psi'(\beta), i \in \llbracket 0,n \rrbracket)}.
\end{align}
One can see $(S_{n,\beta})_{n \in \N}$ as a discrete Girsanov transform of $(S_n)_{n\in\N}$.

We first establish a preliminary lemma.
\begin{lem} \label{lem weak disorder regime}
Let $(\beta'_n)_{n\in\N} \in (0,1)^\N$ and $(k_n)_{n\in\N} \in \N^\N$ be sequences such that $\beta'_n \to 1$ and $k_n \geq 1/(1-\beta'_n)^2$ for any $n \in \N$.
Then, for any $L >0$, 
\[
\limsup_{n\to\infty} \frac{1}{1-\beta'_n} 
\Ec{\widetilde{W}_{k_n,\beta'_n} \1_{\inf_{x \in \T} V(x) \geq -L}}
\leq \theta R(L) \Ec{\e^{\sigma \cM(1)}}.
\]
\end{lem}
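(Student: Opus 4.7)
The plan is to reduce the left-hand side to an expectation of the form appearing in Corollary~\ref{cor convergence to the meander with extension}, via martingale truncation followed by many-to-one. Set $m_n \coloneqq \lceil 1/(1-\beta'_n)^2 \rceil$, and note that $m_n \leq k_n$ by the hypothesis. Writing $\underline V_{m_n} \coloneqq \min_{\abs{z}\leq m_n} V(z)$, we have $\{\inf_{x\in\T} V(x) \geq -L\}\subseteq\{\underline V_{m_n}\geq -L\}$, and since the latter event belongs to $\sF_{m_n}$ and $(\widetilde W_{n,\beta'_n})$ is a nonnegative martingale (as $\beta'_n < 1$ and $\Psi(\beta'_n)$ is finite),
\[
\Ec{\widetilde W_{k_n,\beta'_n}\1_{\inf_{x\in\T}V(x)\geq -L}} \leq \Ec{\widetilde W_{m_n,\beta'_n}\1_{\underline V_{m_n}\geq -L}}.
\]
Bounding $\1_{\underline V_{m_n}\geq -L}\leq \1_{\underline V(z)\geq -L}$ for each $\abs{z}=m_n$, and applying the many-to-one lemma with $\beta = 1$ after factoring $e^{-\beta'_n V(z)}=e^{-V(z)}e^{(1-\beta'_n)V(z)}$, reduces the problem to a one-dimensional random walk estimate:
\[
\Ec{\widetilde W_{m_n,\beta'_n}\1_{\underline V_{m_n}\geq -L}} \leq e^{-m_n\Psi(\beta'_n)} \Ec{e^{(1-\beta'_n)S_{m_n}}\1_{\underline S_{m_n}\geq -L}}.
\]

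The main obstacle is that, with $C_n \coloneqq (1-\beta'_n)\sqrt{m_n}\to 1$ by the choice of $m_n$, Corollary~\ref{cor convergence to the meander with extension} requires a \emph{fixed} constant $C$, whereas here the coefficient in the exponent depends on $n$. I would circumvent this by a sandwich argument: fix $\varepsilon>0$; for $n$ large $C_n\leq 1+\varepsilon$, and using $S_{m_n}\geq -L$ on the event,
\[
e^{(1-\beta'_n)S_{m_n}} = e^{C_n S_{m_n}/\sqrt{m_n}} \leq e^{(1+\varepsilon)L/\sqrt{m_n}}\, e^{(1+\varepsilon)S_{m_n}/\sqrt{m_n}},
\]
where the extra factor $e^{(1+\varepsilon)L/\sqrt{m_n}}\to 1$ is only needed when $S_{m_n}<0$. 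Applying Corollary~\ref{cor convergence to the meander with extension} with $C=1+\varepsilon$ and $F\equiv 1$ then yields
\[
\Ec{e^{(1-\beta'_n)S_{m_n}}\1_{\underline S_{m_n}\geq -L}} \leq (1+\petito{1})\frac{\theta R(L)}{\sqrt{m_n}}\Ec{e^{(1+\varepsilon)\sigma\cM(1)}}.
\]

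To conclude, I would multiply by $(1-\beta'_n)^{-1}e^{-m_n\Psi(\beta'_n)} = C_n^{-1}\sqrt{m_n}\,e^{-m_n\Psi(\beta'_n)}$. Assumption \eqref{hypothese 5} provides the Taylor expansion $\Psi(\beta'_n) = \frac{\sigma^2}{2}(1-\beta'_n)^2 + \grandO{(1-\beta'_n)^3}$, from which $m_n\Psi(\beta'_n)\to \sigma^2/2$ and $e^{-m_n\Psi(\beta'_n)}\to e^{-\sigma^2/2}$; combining with $C_n\to 1$ gives
\[
\limsup_{n\to\infty} \frac{1}{1-\beta'_n}\Ec{\widetilde W_{k_n,\beta'_n}\1_{\inf_{x\in\T}V(x)\geq -L}} \leq e^{-\sigma^2/2}\theta R(L)\Ec{e^{(1+\varepsilon)\sigma\cM(1)}}.
\]
Letting $\varepsilon\downarrow 0$ by dominated convergence (the Rayleigh-distributed $\cM(1)$ has all exponential moments finite with continuous dependence on the exponent) yields the bound $e^{-\sigma^2/2}\theta R(L)\Ec{e^{\sigma\cM(1)}}\leq \theta R(L)\Ec{e^{\sigma\cM(1)}}$, as required. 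The only nontrivial step is the sandwich in the application of Corollary~\ref{cor convergence to the meander with extension}; everything else is a direct application of the tools developed earlier in the paper.
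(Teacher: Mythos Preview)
Your proof is correct and follows essentially the same route as the paper's. The paper applies many-to-one at time $k_n$ and then the Markov property at $m_n\coloneqq\lfloor(1-\beta'_n)^{-2}\rfloor$, while you use the martingale property to reduce to time $m_n$ before applying many-to-one; these are equivalent reorderings leading to the same random-walk expectation $\Ec{e^{(1-\beta'_n)S_{m_n}}\1_{\underline S_{m_n}\geq -L}}$. Your sandwich argument for the varying coefficient $C_n=(1-\beta'_n)\sqrt{m_n}\to 1$ is a point the paper glosses over (it invokes Corollary~\ref{cor convergence to the meander with extension} directly as if $C_n$ were fixed), so your treatment is in fact slightly more careful here; conversely, the paper avoids computing the limit of $m_n\Psi(\beta'_n)$ by simply using $e^{-m_n\Psi(\beta'_n)}\leq 1$, which you could also have done to shorten the end of your argument.
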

\begin{proof}
We first apply the many-to-one lemma to get that
\begin{align*}
\frac{1}{1-\beta'_n} 
\Ec{\widetilde{W}_{k_n,\beta'_n} \1_{\inf_{x \in \T} V(x) \geq -L}} 
& \leq
\frac{1}{1-\beta'_n} 
\e^{-k_n \Psi(\beta'_n)}
\Ec{\e^{(1-\beta'_n) S_{k_n}} \1_{\underline{S}_{k_n} \geq -L}}. 
\end{align*}
Then, we set $m_n \coloneqq \lfloor (1-\beta'_n)^{-2} \rfloor \leq k_n$ and, applying the Markov property at time $m_n$, we have
\begin{align} \label{oi}
\Ec{\e^{(1-\beta'_n) S_{k_n}} \1_{\underline{S}_{k_n} \geq -L}}
& \leq
\Ec{\e^{(1-\beta'_n) S_{m_n}} \1_{\underline{S}_{m_n} \geq -L}}
\Ec{\e^{(1-\beta'_n) S_{k_n-m_n}}}  \nonumber \\
& =
\frac{\theta R(L)}{\sqrt{m_n}} 
\left( \Ec{\e^{\sigma \cM(1)}} + \petito{1} \right)
\e^{(k_n-m_n) \Psi(\beta'_n)},
\end{align}
using Corollary \ref{cor convergence to the meander with extension} to bound the first expectation in the middle part of \eqref{oi} and the many-to-one lemma for the second expectation.
Using that $\e^{-m_n \Psi(\beta'_n)} \leq 1$, it proves the lemma.
\end{proof}
\begin{proof}[Proof of part (iv) of Theorem \ref{theorem W}]
We set $\xi_n \coloneqq \alpha_n \lvert \widetilde{W}_{\infty,\beta_n} - \widetilde{W}_{n,\beta_n} \rvert$ and want to show that $\xi_n \to 0$ in $\P^*$-probability.
It will prove part (iv) of Theorem \ref{theorem W}, since $\alpha_n \widetilde{W}_{\infty,\beta_n} \to 2 D_\infty$ in $\P^*$-probability by \eqref{eq Madaule equiv additive martingale}.
We first follow the proof of Lemma 4.2 of Madaule \cite{madaule2016}.
We set $p_n \coloneqq 1 + 1/2\alpha_n$ and $\xi'_n \coloneqq \E [ \xi_n^{p_n} | \sF_n]$.
For $\varepsilon > 0$, we have
\begin{align} \label{or}
\P^*(\xi_n \geq \varepsilon)
\leq \frac{1}{\P(S)} \Ec{\1_{\xi_n^{p_n} \geq \varepsilon^{p_n}}
	\1_{\xi'_n < \varepsilon^{1+p_n}}}
+ \P^*\left(\xi'_n \geq \varepsilon^{1+p_n} \right)
\leq \frac{\varepsilon}{\P(S)} + \P^*\left(\xi'_n \geq \varepsilon^{1+p_n} \right),
\end{align}
using that $\Ppsq{\xi_n^{p_n} \geq \varepsilon^{p_n}}{\sF_n} \leq \varepsilon^{-p_n} \E [ \xi_n^{p_n} | \sF_n] = \varepsilon^{-p_n}\xi'_n$.
By the branching property at time $n$, we have
\begin{align*}
\xi_n
= \alpha_n
\sum_{\abs{x} = n} \e^{-\beta_n V(x)- n \Psi(\beta_n)} 
\left( \widetilde{W}_{\infty,\beta_n}^{(x)} - 1 \right),
\end{align*}
where, conditionally on $\sF_n$, the $\widetilde{W}_{\infty,\beta_n}^{(x)}$ for $\abs{x} = n$ are independent variables with the same law as $\widetilde{W}_{\infty,\beta_n}$.
Then, using that for any sequence $(X_i)_{i\in\N}$ of independent centred variables and any $\gamma \in [1,2]$ we have $\E[\lvert\sum X_i \rvert^\gamma] \leq 2 \sum \E[\lvert X_i \rvert^\gamma]$ (see \cite{vonbahresseen65}), we get
\begin{align}
\xi'_n =
\Ecsq{\xi_n^{p_n}}{\sF_n}
& \leq 
2\alpha_n^{p_n}
\sum_{\abs{x} = n} \e^{-p_n \beta_n V(x)- n p_n\Psi(\beta_n)} 
\Ec{\abs{\widetilde{W}_{\infty,\beta_n} - 1}^{p_n}} \nonumber \\
& \leq
2\alpha_n^{p_n}
\widetilde{W}_{n,p_n \beta_n}
\e^{n[\Psi(p_n\beta_n) - p_n\Psi(\beta_n) ]}
2^{p_n}(c_7+1), \label{of}
\end{align}
by using \eqref{eq controle L^p} for $n$ large enough such that $1-\eta_0 < 1-\beta_n < 1$.
Now, we choose $L>0$ such that $\P^* (\inf_{x \in \T} V(x) < -L) \leq \varepsilon$ and, by \eqref{of} and Markov's inequality, we get
\begin{align*}
\P^*\left(\xi'_n \geq \varepsilon^{1+p_n} \right)
& \leq
\varepsilon
+
\varepsilon^{-(1+p_n)}
\frac{c_8}{\P(S)}
(2\alpha_n)^{p_n}
\e^{n[\Psi(p_n\beta_n) - p_n\Psi(\beta_n) ]}
\Ec{\widetilde{W}_{n,p_n \beta_n} \1_{\inf_{x \in \T} V(x) \geq -L}}.
\end{align*}
As $n \to \infty$, we have $(2\alpha_n)^{p_n} \sim 2\alpha_n$, $1-p_n \beta_n \sim 1/2\alpha_n$ and, by a Taylor expansion, $n[\Psi(p_n\beta_n) - p_n\Psi(\beta_n)] \sim -3 \sigma^2 n/ 8 \alpha_n^2 \to -\infty$.
Thus, applying Lemma \ref{lem weak disorder regime} with $k_n = n$ and $\beta'_n = p_n\beta_n$, we showed that 
$\limsup_{n\to\infty} \P^*(\xi'_n \geq \varepsilon^{1+p_n}) \leq \varepsilon$.
Coming back to \eqref{or}, it concludes the proof.
\end{proof}

\begin{proof}[Proof of part (iv) of Theorem \ref{theorem trajectory}]
By Lemma \ref{lem weak convergence uc to c}, we can reduce the proof to the case $F \in \cC_b^u(\cD([0,1]))$.
Moreover, by considering $F - \Ec{F(B)}$ instead of $F$, we can assume that $\Ec{F(B)} =0$.
By Theorem \ref{theorem W}, we have $\alpha_n \e^{-n \Psi(\beta_n)} W_{n,\beta_n} \to 2D_\infty$ in $\P^*$-probability with $D_\infty > 0$ $\P^*$-a.\@s.\@, so it is sufficient to prove that
\begin{align} \label{oz}
U_n(F)
\coloneqq
\alpha_n \e^{-n \Psi(\beta_n)}
\sum_{\abs{z} = n} 
\e^{-\beta_n V(z)} 
	F \left( \widetilde{\bfV}^{(n)}(z) \right)
\xrightarrow[n\to\infty]{} 2 D_\infty \Ec{F(B)} = 0,
\end{align}
in $\P^*$-probability, where $\widetilde{\bfV}^{(n)}_t(z) \coloneqq [V(z_{\lfloor tn \rfloor}) + t n\Psi'(\beta_n)]/\sigma\sqrt{n}$ for $t \in [0,1]$.
For some $C>0$, we set $k_n \coloneqq \lfloor C \alpha_n \rfloor^2$
and, for each $x \in \cD([0,1])$, $F_n(x) \coloneqq F( x_{((n-k_n)t + k_n)/n} - x_{k_n / n}, t\in [0,1])$.
Let $\varepsilon >0$.
In order to prove \eqref{oz}, it is sufficient to prove that we can choose $C$ such that
\begin{align}
\limsup_{n\to\infty} \P^*(\abs{U_n(F)-U_n(F_n)} \geq \varepsilon) 
& \leq 2 \varepsilon, \label{oy} \\
\limsup_{n\to\infty} \P^*(\abs{U_n(F_n)-\Ecsq{U_n(F_n)}{\sF_{k_n}}}\geq\varepsilon) 
& \leq (2 + \P(S)^{-1}) \varepsilon, \label{ox} \\
\limsup_{n\to\infty} \P^*(\abs{\Ecsq{U_n(F_n)}{\sF_{k_n}}} \geq \varepsilon) 
& \leq \varepsilon. \label{ow}
\end{align}
The assumption that $\Ec{F(B)}=0$ is only needed for \eqref{ow}.
For the sequel, we fix some $L > 0$ such that $\P^* (\inf_{x \in \T} V(x) < -L) \leq \varepsilon$.

We first prove \eqref{ox}. 
We set $\zeta_n \coloneqq U_n(F_n)- \Ecsq{U_n(F_n)}{\sF_{k_n}}$ and proceed in a similar way as for the proof of part (iv) of Theorem \ref{theorem W}, by setting $p_n \coloneqq 1 + 1/2\alpha_n$ and $\zeta'_n \coloneqq \E [ \abs{\zeta_n}^{p_n} | \sF_n]$.
By \eqref{or}, we have 
$\P^*(\abs{\zeta_n} \geq \varepsilon)
\leq \varepsilon \P(S)^{-1} + \P^*(\zeta'_n \geq \varepsilon^{1+p_n})$.
By the branching property at time $k_n$, we have
\begin{align} \label{os}
\zeta_n
= \alpha_n
\sum_{\abs{x} = k_n} \e^{-\beta_n V(x)- k_n \Psi(\beta_n)} 
\left( \Upsilon_n^{(x)} - \Ec{\Upsilon_n} \right),
\end{align}
where, conditionally on $\sF_{k_n}$, the $\Upsilon_n^{(x)}$ for $\abs{x} = k_n$ are independent variables with the same law as $\Upsilon_n$ defined by
\begin{align} \label{ot}
\Upsilon_n
& \coloneqq 
\sum_{\abs{z} = n-k_n} \e^{-\beta_n V(z) - (n-k_n) \Psi(\beta_n)} 
F \left( \frac{V(z_{\lfloor t(n-k_n) \rfloor})+ t(n-k_n) \Psi'(\beta_n)}{\sigma\sqrt{n}}, 
	t\in [0,1] \right).
\end{align}
Since the $\Upsilon_n^{(x)} - \Ec{\Upsilon_n}$ are also centred, we get, in the same way as for \eqref{of},
\begin{align}
\zeta'_n 
= \Ecsq{\abs{\zeta_n}^{p_n}}{\sF_{k_n}}
& \leq 2 \alpha_n^{p_n}
\sum_{\abs{x} = k_n} \e^{-p_n \beta_n V(x)- k_n p_n \Psi(\beta_n)} 
\Ec{\abs{\Upsilon_n - \Ec{\Upsilon_n}}^{p_n}} \nonumber \\
& \leq 4 c_7 (2\norme{F} \alpha_n)^{p_n}
\widetilde{W}_{k_n,p_n \beta_n} \e^{k_n (\Psi(p_n \beta_n) - p_n \Psi(\beta_n))}, \label{oq}
\end{align}
by using the following bound
\begin{align*}
\Ec{\abs{\Upsilon_n - \Ec{\Upsilon_n}}^{p_n}}
\leq 2^{p_n+1} \Ec{\abs{\Upsilon_n}^{p_n}}
\leq 2^{p_n+1} \norme{F}^{p_n} \Ec{\abs{\widetilde{W}_{n-k_n,\beta_n}}^{p_n}}
\leq 2^{p_n+1} \norme{F}^{p_n} c_7,
\end{align*}
where we used \eqref{eq controle L^p} for $n$ large enough such that $1-\eta_0 < 1-\beta_n < 1$.
Now, using that $\P^* (\inf_{x \in \T} V(x) < -L) \leq \varepsilon$ and Markov's inequality, we get
\begin{align*}
\P^*\left(\zeta'_n \geq \varepsilon^{1+p_n} \right)
& \leq
\varepsilon
+
\varepsilon^{-(1+p_n)}
\frac{c_9}{\P(S)}
(2\norme{F}\alpha_n)^{p_n}
\e^{k_n[\Psi(p_n\beta_n) - p_n\Psi(\beta_n) ]}
\Ec{\widetilde{W}_{k_n,p_n \beta_n} \1_{\inf_{x \in \T} V(x) \geq -L}}
\end{align*}
so, using that, as $n \to \infty$, we have $(2\norme{F}\alpha_n/\varepsilon)^{p_n} \sim 2\norme{F}\alpha_n/\varepsilon$, $1-p_n \beta_n \sim 1/2\alpha_n$ and $k_n[\Psi(p_n\beta_n) - p_n\Psi(\beta_n)] \sim -3 \sigma^2 k_n/ 8 \alpha_n^2 \to -3 \sigma^2 C^2/ 8$, and applying Lemma \ref{lem weak disorder regime} with $\beta'_n = p_n\beta_n$, we get
\begin{align*}
\limsup_{n\to\infty} \P^*\left(\zeta'_n \geq \varepsilon^{1+p_n} \right)
& \leq
\varepsilon
+
c_{10}
\frac{R(L)}{\varepsilon^2}
\e^{-3 \sigma^2 C^2/ 8}
\leq 2 \varepsilon,
\end{align*}
by choosing $C$ large enough. 
This proves \eqref{ox}.
The constant $C$ is now fixed.

We now prove \eqref{oy}. Using that $\P^* (\inf_{x \in \T} V(x) < -L) \leq \varepsilon$ and the Markov inequality, we get, with $c_{11} \coloneqq 1/\P(S)$,
\begin{align} \label{ov}
\P^*(\abs{U_n(F)-U_n(F_n)} \geq \varepsilon) 
\leq \varepsilon
+
\frac{c_{11} \alpha_n}{\varepsilon \e^{n \Psi(\beta_n)}}
\E \Biggl[
\sum_{\abs{z} = n} 
\e^{-\beta_n V(z)} 
\1_{\underline{V}(z) \geq -L}
\abs{F-F_n} \left( \widetilde{\bfV}^{(n)}(z) \right)
\Biggr].
\end{align}
For $t\in [0,1]$, we define $\widetilde{\bfS}^{(n)}_t \coloneqq [S_{\lfloor tn \rfloor} + t n\Psi'(\beta_n)]/ \sigma\sqrt{n}$.
Then, using the many-to-one lemma and the triangle inequality, we get that, for any $M >0$, the expectation in \eqref{ov} is smaller than
\begin{align} \label{ou}
\Ec{
\e^{S_n / \alpha_n} 
\1_{\underline{S}_n \geq -L, \max_{0\leq k\leq k_n} S_k \leq M \alpha_n}
\abs{F-F_n} (\widetilde{\bfS}^{(n)})}
+
2 \norme{F}
\Ec{
\e^{S_n / \alpha_n} 
\1_{\underline{S}_n \geq -L, \max_{0\leq k\leq k_n} S_k > M \alpha_n}
}.
\end{align}
Note that, using \eqref{vl} with here $\kappa_n = k_n / n$, we have, for any $x \in \cD([0,1])$, 
\begin{align*}
\abs{F-F_n}(x)
\leq 
\omega_F 
\left(
\frac{k_n}{n}
\vee
3 \max_{[0,k_n/n]} \abs{x}
\right)
\end{align*}
and, thus, the first term in \eqref{ou} is smaller than
\begin{align*} 
\omega_F 
\left(
\frac{\lfloor C \alpha_n \rfloor^2}{n}
\vee
\frac{3M \alpha_n}{\sigma \sqrt{n}}
\right)
\Ec{
\e^{S_n / \alpha_n} 
\1_{\underline{S}_n \geq -L}}
=
\petito{\frac{\e^{n \Psi(\beta_n)}}{\alpha_n}},
\end{align*}
using \eqref{oi} to bound the expectation and recalling that $\alpha_n \ll \sqrt{n}$.
On the other hand, using Markov property at time $k_n$, the second term in \eqref{ou} is smaller than
\begin{align*}
& 2 \norme{F} 
\Ec{\e^{S_{k_n} / \alpha_n} \1_{\underline{S}_{k_n} \geq -L, \max_{0\leq k\leq k_n} S_k > M \alpha_n}}
\Ec{\e^{S_{n-k_n} / \alpha_n}}  \\
& = 2 \norme{F}
\frac{\theta R(L)}{\sqrt{k_n}} 
\left( 
\Ec{\e^{C \sigma \cM(1)} 
	\1_{\max \cM > M / (C\sigma)}} 
+
\petito{1} 
\right)
\e^{(n-k_n) \Psi(\beta_n)}
\end{align*}
applying Corollary \ref{cor convergence to the meander with extension} and recalling that $\sqrt{k_n} = \lfloor C \alpha_n \rfloor$.
Coming back to \eqref{ov}, we finally get
\begin{align} 
\limsup_{n\to\infty} \P^*(\abs{U_n(F)-U_n(F_n)} \geq \varepsilon) 
\leq \varepsilon
+
c_{12} \frac{R(L)}{\varepsilon C}
\Ec{\e^{C \sigma \cM(1)} 
	\1_{\max \cM > M / (C\sigma)}} 
\leq 2 \varepsilon,
\end{align}
by choosing $M$ large enough ($L$ and $C$ being fixed).
It proves \eqref{oy}.

Finally, we prove \eqref{ow}. 
Using the branching property in the same way as for \eqref{os}, we have $\Ecsq{U_n(F_n)}{\sF_{k_n}} = \alpha_n\widetilde{W}_{k_n,\beta_n} \Ec{\Upsilon_n}$,  where $\Upsilon_n$ is defined in \eqref{ot}.
By \eqref{eq many-to-one beta < 1}, we get
\begin{align*}
\Ec{\Upsilon_n}
& = \Ec{F \left( \frac{S_{\lfloor t(n-k_n) \rfloor,\beta_n}
	+ (t(n-k_n)-\lfloor t(n-k_n) \rfloor) \Psi'(\beta_n)}{\sigma\sqrt{n}}, 
	t\in [0,1] \right)} \\
& = \Ec{F \left( u_n \bfS^{(n-k_n,\beta_n)} + v_n \right)},
\end{align*}
where we introduced $\bfS^{(n,\beta)} \coloneqq (S_{\lfloor tn \rfloor,\beta} / \sigma_{\beta} \sqrt{n})_{t \in [0,1]}$ and where 
$(u_n)_{n \in \N} \in (\R_+^*)^\N$ and $(v_n)_{n \in \N} \in \cD ([0,1])^\N$ satisfy $u_n \to 1$ and $\lVert v_n \rVert_\infty \to 0$, using that $k_n \ll n$, $\Psi'(\beta_n) \to 0$ and $\sigma_{\beta_n} \to \sigma$.
Now, note that $\bfS^{(n-k_n,\beta_n)} \to B$ in law.
This is not a direct consequence of Donsker's theorem because here the law of the random walk changes for each $n$.
However, we can apply a strong invariance principle like Equation (11) of Sakhanenko \cite{sakhanenko2006}: it exists $c_{13} > 0$ such that, for any $n \geq 1$ and $\beta \in I$, it exists a Brownian motion $B^{(n,\beta)}$ such that 
\[
\Pp{\sup_{t \in [0,1]}  \abs{\bfS_t^{(n,\beta)} - B^{(n,\beta)}_t}
	\geq c_{13} \frac{n^{2/5}}{\sigma_\beta \sqrt{n}}}
\leq \frac{n \Ec{\abs{S_{1,\beta}}^3}}{(n^{2/5})^3}.
\]
Since $\sigma_{\beta_n} \to \sigma$ and $\E[\abs{S_{1,\beta_n}}^3] \to \E[\abs{S_1}^3] < \infty$ as $n\to\infty$, this proves that $\bfS^{(n-k_n,\beta_n)} \to B$ in law.
Then, applying Lemma \ref{lemma weak convergence approximation F_n F}, we get that $\Ec{\Upsilon_n} \to \Ec{F(B)} = 0$ as $n \to \infty$.
On the other hand, using that $\P^* (\inf_{x \in \T} V(x) < -L) \leq \varepsilon$ and the Markov inequality, we get
\begin{align*}
\P^*(\abs{\Ecsq{U_n(F_n)}{\sF_{k_n}}} \geq \varepsilon) 
\leq \varepsilon
+ \frac{1}{\varepsilon} 
\alpha_n \Ec{\widetilde{W}_{k_n,\beta_n} \1_{\inf_{x \in \T} V(x) \geq -L}} 
\abs{\Ec{\Upsilon_n}},
\end{align*}
and it proves \eqref{ow} by using Lemma \ref{lem weak disorder regime} and that $\Ec{\Upsilon_n} \to 0$ as $n \to \infty$.
\end{proof}

\section{The near-critical window in the strong disorder regime}
\label{section near-critical window in the strong disorder regime}

In this section, we prove case (i) of Theorems \ref{theorem W} and \ref{theorem trajectory}  and of Corollary \ref{cor typical energy}, where $\beta_n = 1 + 1/\alpha_n$ with $\alpha_n \ll \sqrt{n}$. 
This case constitutes the main part of this paper.

\subsection{Change of probabilities}
\label{subsection change of probabilities}

We introduce a first barrier by setting, for $L >0$ and $F \in \cC_b(\cD([0,1]))$,
\begin{align*}
W_{n,\beta_n} (F)
& \coloneqq
\sum_{\abs{z} = n} 
	\e^{-\beta_n V(z)} 
	F(\bfV (z)), \\
W^{(L)}_{n,\beta_n} (F)
& \coloneqq
\sum_{\abs{z} = n} 
	\e^{-\beta_n V(z)} 
	F(\bfV (z))
	\1_{\underline{V}(z) \geq -L},
\end{align*}
so that for $L$ large this two variables are equal with high probability.
Moreover, recall that, for $u \geq 0$, $R_L(u) = R(L+u)$ and
\begin{align*}
D_n^{(L)} = \sum_{\abs{z}=n} \e^{-V(z)} R_L(V(z)) \1_{\underline{V}(z) \geq -L}
\end{align*}
and that, using the martingale $(D_n^{(L)})_{n \in\N}$, we defined the modified probability measure $\Q^{(L)}$. 
We will work under this measure to study the asymptotic behaviour of $W_{n,\beta_n} (F)$.
\begin{prop} \label{proposition convergence under Q}
For all $L>0$ and $F \in \cC^u_b(\cD([0,1]))$, we have the convergence 
\[
\frac{n^{3\beta_n/2}}{\alpha_n^2} \frac{W^{(L)}_{n,\beta_n} (F)}{D_n^{(L)}}
\xrightarrow[n\to\infty]{}
\frac{\theta}{\sigma^2}
\Ec{F(\mathfrak{e})},
\quad
\text{ in }
\Q^{(L)} \text{-probability}.
\]
\end{prop}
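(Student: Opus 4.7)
The strategy is to combine the Biggins--Kyprianou change of measure of Proposition \ref{prop biggins and kyprianou's change of measure} with the Brownian excursion invariance principle (Proposition \ref{prop convergence to the excursion}) and a truncation via a peeling-style argument. The key observation is that the spine under $\Q^{(L)}$ selects a particle at generation $n$ with probability proportional to $R_L(V(z))\e^{-V(z)}\1_{\underline{V}(z)\geq -L}$, so the ratio can be rewritten as a $\Q^{(L)}$-conditional expectation along the spine:
\[
\frac{W^{(L)}_{n,\beta_n}(F)}{D_n^{(L)}}
=
\E_{\Q^{(L)}}\!\left[\frac{\e^{-V(w_n^{(L)})/\alpha_n}\,F(\bfV(w_n^{(L)}))}{R_L(V(w_n^{(L)}))} \,\middle|\, \sF_n\right].
\]
Taking the full $\Q^{(L)}$-expectation and invoking the spine identification (part (ii) of Proposition \ref{prop biggins and kyprianou's change of measure}) then reduces the $\Q^{(L)}$-mean to the one-dimensional random walk expectation $(1/R(L))\,\E\!\left[\e^{-S_n/\alpha_n}F(\bfS^{(n)})\1_{\underline{S}_n\geq -L}\right]$.

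I would identify the candidate limit by slicing this expectation on $S_n \in [b,b+1)$ (analogously $S_n=b$ in the lattice case), applying Proposition \ref{prop convergence to the excursion} with $v=0$, and summing against the weight $\e^{-b/\alpha_n}$. Since $\alpha_n \ll \sqrt{n}$, the effective range $b \lesssim \alpha_n$ lies within the uniform validity of the proposition, and using $R^-(b) \sim c_0^-\,b$ together with the identity $\theta^- c_0^- = \sqrt{2/(\pi\sigma^2)}$ from \eqref{equation lien theta c_0}, the weighted sum evaluates to $\sim c_0^- \alpha_n^2$. The constants collapse to give
\[
\E_{\Q^{(L)}}\!\left[\frac{W^{(L)}_{n,\beta_n}(F)}{D_n^{(L)}}\right]
\sim
\frac{\theta\,\alpha_n^2}{\sigma^2\,n^{3/2}}\,\Ec{F(\mathfrak{e})},
\]
which, after multiplication by $n^{3\beta_n/2}/\alpha_n^2$, yields the expected limit $\theta\,\Ec{F(\mathfrak{e})}/\sigma^2$ up to an extra factor $n^{3(\beta_n-1)/2} = n^{3/(2\alpha_n)}$. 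This factor tends to $1$ only in the sub-regime $\log n = o(\alpha_n)$; otherwise the first moment overshoots, signalling that the $\Q^{(L)}$-mean is inflated by rare atypical-spine events and that the desired convergence must come from concentration rather than from the mean alone.

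To promote the moment identification to convergence in $\Q^{(L)}$-probability, I would set up a peeling argument. Introducing a lower envelope analogous to the one in the envelope lemma of Section \ref{section preliminary results}, I would split $W^{(L)}_{n,\beta_n}(F)$ into a ``good'' part summing only over particles whose ancestral trajectory stays above the envelope, and a ``bad'' part. The peeling lemma combined with the many-to-one lemma bounds the $\Q^{(L)}$-mean of the scaled bad part by an arbitrarily small $\varepsilon$ once the envelope parameter is chosen large, and in particular removes precisely the rare trajectories responsible for the spurious $n^{3/(2\alpha_n)}$ inflation. For the good part, a conditional second-moment estimate under $\Q^{(L)}$ via a two-spine decomposition --- pairs of particles split according to their coalescence time, with spinal identifications applied to each resulting subtree --- yields $L^2$-concentration of the truncated ratio around its now correctly identified mean, and Chebyshev's inequality then delivers convergence in $\Q^{(L)}$-probability.

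The main obstacle is this final concentration step in the sub-regime $\alpha_n = O(\log n)$: the lower envelope must be calibrated sharply enough to kill the atypical trajectories responsible for the mean inflation while not removing any of the typical particles contributing to the Brownian excursion limit, and the ensuing two-spine second-moment computation must respect the spinal decomposition along the entire trajectory rather than only near its endpoint. This is precisely why a generalized version of the peeling lemma, with the flexible envelope allowed by the preliminary lemma of Section \ref{section preliminary results}, is required.
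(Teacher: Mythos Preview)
Your overall architecture --- spine representation, first-moment identification, peeling, then a second-moment concentration --- mirrors the paper's, and your diagnosis of the $n^{3/(2\alpha_n)}$ first-moment inflation when $\alpha_n = O(\log n)$ is correct. The gap is in how you propose to remove that inflation. You claim that the peeling/envelope lemma plus the many-to-one lemma will bound the $\Q^{(L)}$-mean of a ``bad part'' by $\varepsilon$; but any lower envelope of the type in Section~\ref{section preliminary results} sits only slightly above the existing barriers, so with the single barrier at $-L$ it lies near $-L + r_{n-i}$ at times close to $n$, far below $\frac{3}{2}\log n$. The trajectories responsible for the inflation are precisely those ending near or below $\frac{3}{2}\log n$; they do not violate such an envelope, and they are not rare in the first-moment sense --- on the contrary they carry the bulk of the first moment --- so no many-to-one bound can discard them.

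The paper resolves this differently: it introduces a \emph{second barrier} at $\frac{3}{2}\log n - K$ on $\llbracket \lfloor n/2\rfloor, n\rrbracket$, passing from $W^{(L)}_{n,\beta_n}$ to $W^{(L,K)}_{n,\beta_n}$ (Lemma~\ref{lemma addition of the barrier}). The justification is not a first-moment argument but a genuine branching-random-walk input, the tightness of $\min_{|z|=n} V(z) - \frac{3}{2}\log n$ from \eqref{eq tension position la plus basse}. Once this barrier is in place, the weight $\e^{-S_n/\alpha_n}$ evaluated at $S_n \geq \frac{3}{2}\log n - K$ supplies exactly the missing factor $n^{-3/(2\alpha_n)}$, and the first moment of $W^{(L,K)}_{n,\beta_n}(F)/D_n^{(L)}$ already has the correct order $\alpha_n^2/n^{3\beta_n/2}$ (Lemma~\ref{lemma first moment}). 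Only then do the peeling lemma and the second-moment computation enter, and there they serve to concentrate around an already correct mean rather than to repair an inflated one.
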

This proposition will be proved in the following subsections, using a second moment argument similar to the one used by A\"idekon and Shi \cite{aidekonshi2014}.
\begin{proof}[Proof of part (i) of Theorems \ref{theorem W} and \ref{theorem trajectory}]
We consider $F \in \cC^u_b(\cD([0,1]))$ and we are going to show here that 
\begin{align} \label{aa}
\frac{n^{3\beta_n/2}}{\alpha_n^2} W_{n,\beta_n} (F)
\xrightarrow[n\to\infty]{}
\frac{c_0 \theta}{\sigma^2} \Ec{F(\mathfrak{e})} D_\infty,
\quad \text{in } \P^*\text{-probability}.
\end{align}
Using \eqref{equation lien theta c_0}, it proves part (i) of Theorem \ref{theorem W} by taking $F \equiv 1$.
Moreover, noting that $\mu_{n,\beta_n} (F) = W_{n,\beta_n} (F) / W_{n,\beta_n} (1)$ and $D_\infty > 0$ $\P^*$-a.\@s.\@, it proves part (i) of Theorem \ref{theorem trajectory} in the case $F \in \cC^u_b(\cD([0,1]))$. 
The general case follows by Lemma \ref{lem weak convergence uc to c}.
Thus, it is now sufficient to prove \eqref{aa} and we can for this purpose assume that $F$ is nonnegative.
Let $\varepsilon, \eta >0$.
We first fix $L > 0$ such that $\P^* (\inf_{x \in \T} V(x) < -L) \leq \eta$.
Combining that $\min_{\abs{x}=n} V(x) \to \infty$ $\P^*$-a.\@s.\@ as $n \to \infty$ and that $R_L(u) \sim c_0 u$ as $u \to \infty$, we get that, on the event $\{ \inf_{x \in \T} V(x) \geq -L \}$, $\lim_{n\to\infty} D_n^{(L)} = c_0 D_\infty > 0$ $\P^*$-a.\@s.
Thus, considering the event
\begin{align*}
\Omega_0 
&\coloneqq 
S \cap \left\{ \inf_{x \in \T} V(x) \geq -L \right\} 
\cap \{\forall n \geq n_0, 0 < c_0(1-\varepsilon)D_\infty \leq D_n^{(L)} \leq c_0(1+\varepsilon)D_\infty\},
\end{align*}
we can fix $n_0 \in \N$ such that $\P^* (\Omega_0) \geq 1 - 2 \eta$.
We now introduce the event
\begin{align*}
E_n
& \coloneqq
\left\{
\frac{n^{3 \beta_n/2}}{\alpha_n^2}
W_{n,\beta_n}(F)
\notin
\left[
\frac{c_0 \theta}{\sigma^2} D_\infty
(\Ec{F(\mathfrak{e})} - \varepsilon)
(1-\varepsilon),
\frac{c_0 \theta}{\sigma^2} D_\infty
(\Ec{F(\mathfrak{e})} +\varepsilon)
(1+\varepsilon)
\right]
\right\}.
\end{align*}
Then, using that on $\Omega_0$ we have $W^{(L)}_{n,\beta_n}(F) = W_{n,\beta_n}(F)$, we get, for $n \geq n_0$,
\begin{align*}
\Ec{D_n^{(L)} \1_{E_n \cap \Omega_0}}
=
\Q^{(L)}
\left(
E_n \cap \Omega_0
\right)
\leq
\Q^{(L)}
\left(
\abs{
	\frac{n^{3 \beta_n/2}}{\alpha_n^2}
	\frac{W^{(L)}_{n,\beta_n}(F)}{D^{(L)}_n}
	- 
	\frac{\theta}{\sigma^2} 
	\Ec{F(\mathfrak{e})}
	}
>
\frac{\theta \varepsilon}{\sigma^2}
\right)
\xrightarrow[n\to\infty]{}
0,
\end{align*}
applying Proposition \ref{proposition convergence under Q}.
Since $D_n^{(L)} \1_{E_n \cap \Omega_0} \geq 0$, it follows that 
$D_n^{(L)} \1_{E_n \cap \Omega_0} \to 0$
in $\P$-probability.
Using again that, on $\Omega_0$, $\lim_{n\to\infty} D_n^{(L)} = c_0 D_\infty > 0$ $\P^*$-a.\@s.\@, we get that $\P^*(E_n \cap \Omega_0) \to 0$.
Recalling that $\P^* (\Omega_0) \geq 1 - 2\eta$, we showed that $\limsup_{n\to\infty} \P^*\left(E_n \right) \leq 2\eta$ and, therefore, it proves \eqref{aa}.
\end{proof}

\subsection{Proof of Proposition \ref{proposition convergence under Q} and of part (i) of Corollary \ref{cor typical energy}}
\label{subsection proof of proposition 4.1 and of part (i) of Theorem 1.3}

The aim of this section is to break down the proof of Proposition \ref{proposition convergence under Q} into the proof of several lemmas.
Our goal is to use a second moment argument, 
but the first moment of $W^{(L)}_{n,\beta_n}(F)/D_n^{(L)}$ under $\Q^{(L)}$ does not have the right order and the second moment is not even necessarily finite.
Thus, we first need to come down to another random variable $Y_n'(F)$ that is close to $W^{(L)}_{n,\beta_n}(F)$ with high probability and that has first and second moments of the right order, by eliminating some rare particles with a too strong weight in the expectations.
The lemmas stated in this subsection will also allow us to prove part (i) of Corollary \ref{cor typical energy}.
Until the end of the paper, $L$ is a fixed positive constant.
We first add a second barrier between times $\lfloor n/2 \rfloor$ and $n$ at position $(3/2) \log n -K$, by setting, for $K >0$,
\begin{align*}
W^{(L,K)}_{n,\beta_n}(F)
& \coloneqq
\sum_{\abs{z} = n} 
	\e^{-\beta_n V(z)} 
	F(\bfV (z))
	\1_{\underline{V}(z) \geq -L, 
		\min_{\lfloor n/2 \rfloor \leq j \leq n} V(z_j) 
			\geq \frac{3}{2} \log n - K}.
\end{align*}
This first lemma shows that $W^{(L)}_{n,\beta_n}(F)$ and $W^{(L,K)}_{n,\beta_n}(F)$ are close with high probability and will be proved in Subsection \ref{subsection addition of the second barrier}.
Note that this step is superfluous when $\log n \ll \alpha_n \ll \sqrt{n}$ : since the particles contributing to $W_{n,\beta_n}$ are of order $\alpha_n$, it is approximately the same difficulty for them to stay above the two barriers than only above the first (see Proposition \ref{prop convergence to the excursion}) and, thus, $W^{(L)}_{n,\beta_n}(F)$ and $W^{(L,K)}_{n,\beta_n}(F)$ have asymptotically the same first moment in that case.
\begin{lem} \label{lemma addition of the barrier}
For all $L>0$ and $\varepsilon, \eta > 0$, it exists $K>0$ such that, for each $F \in \cC_b(\cD([0,1]))$,
\begin{align*}
\limsup_{n\to\infty} 
\Q^{(L)}
\left(
\frac{n^{3\beta_n/2}}{\alpha_n^2} 
\frac{\abs{W^{(L)}_{n,\beta_n}(F) - W^{(L,K)}_{n,\beta_n}(F)}}{D_n^{(L)}}
>
\varepsilon \norme{F}
\right)
\leq 
\eta.
\end{align*}
\end{lem}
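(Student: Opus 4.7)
The plan is to convert the $\Q^{(L)}$-probability into a first-moment computation in $\P$ via Markov's inequality. The key identity, which follows immediately from the Radon--Nikodym derivative of $\Q^{(L)}$ with respect to $\P$ on $\sF_n$, is that for every nonnegative $\sF_n$-measurable random variable $Y$,
\[
\E_{\Q^{(L)}}\!\left[\frac{Y}{D_n^{(L)}}\right] = \frac{1}{R_L(0)}\,\E[Y].
\]
Taking $Y = |W^{(L)}_{n,\beta_n}(F) - W^{(L,K)}_{n,\beta_n}(F)|$ and using $|W^{(L)}(F) - W^{(L,K)}(F)| \leq \norme{F}(W^{(L)}_{n,\beta_n}-W^{(L,K)}_{n,\beta_n})$, Markov's inequality reduces the task to showing that $\frac{n^{3\beta_n/2}}{\alpha_n^2}\,\E[W^{(L)}_{n,\beta_n}-W^{(L,K)}_{n,\beta_n}] \leq C(L)\,\eta(K)$ with $\eta(K)\to 0$ as $K\to\infty$. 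By the many-to-one lemma, this becomes the one-dimensional random walk estimate
\[
\frac{n^{3\beta_n/2}}{\alpha_n^2}\,\E\!\left[e^{-S_n/\alpha_n}\1_{\underline S_n\geq -L,\ \min_{\lfloor n/2\rfloor\leq j\leq n} S_j<A_n}\right],
\]
where $A_n = \tfrac{3}{2}\log n - K$.

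To control this expectation, I would decompose on the first time $\tau\in[\lfloor n/2\rfloor,n]$ at which $S_\tau<A_n$ and apply the strong Markov property at $\tau$. On the initial segment $[0,\tau]$, the walk stays above $-L$ on $[0,\lfloor n/2\rfloor]$, stays above $A_n$ on $[\lfloor n/2\rfloor,\tau-1]$, and lands at $S_\tau\in[-L+k,-L+k+1)$ for some $0\leq k<A_n+L$; its probability is bounded using the two-barrier ballot theorem \eqref{eq ballot theorem and barrier} together with a Markov-type bound on the final downward step from $S_{\tau-1}\geq A_n$ to $S_\tau<A_n$. On the final segment $[\tau,n]$, the bound
\[
\E_s\!\left[e^{-S_{n-\tau}/\alpha_n}\1_{\underline S_{n-\tau}\geq -L}\right]\lesssim e^{L/\alpha_n}(L+s+1)\,\frac{\alpha_n^2}{(n-\tau)^{3/2}}
\]
is obtained by decomposing $S_n-s$ into blocks of size $\alpha_n$ (the natural scale of the tilted walk) and invoking the one-barrier ballot theorem \eqref{eq ballot theorem}; the $\alpha_n^2$ factor emerges from the geometric-type identity $\sum_{k\geq 0}(1+k)e^{-k/\alpha_n}\asymp \alpha_n^2$. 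Summing over $\tau$ and $s$, the decay $e^{-A_n/\alpha_n}=n^{-3/(2\alpha_n)}e^{K/\alpha_n}$ extracted from the intermediate-height weight $e^{-s/\alpha_n}$ with $s<A_n$ combines with the prefactor $n^{3\beta_n/2}/\alpha_n^2$ to leave an $e^{-K/\alpha_n}$ of smallness, giving the required $\eta(K)$.

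The main obstacle is that the random walk estimate is genuinely delicate: the naive first moment $\E[W^{(L)}_{n,\beta_n}]$ is of order $\alpha_n^2/n^{3/2}$, which exceeds the target order $\alpha_n^2/n^{3\beta_n/2}$ by the factor $n^{3/(2\alpha_n)}$ whenever $\alpha_n\ll\log n$, so the two-barrier ballot estimate is essential and a union bound over the dip index would be far too crude. The boundary case $\tau$ close to $n$ (where the final segment is too short for the block decomposition) requires ad hoc treatment of a short random walk, while the case $\tau=\lfloor n/2\rfloor$ (where the second-barrier estimate degenerates to the one-barrier version) must be handled by exploiting the polynomial factors $(L+s+1)$ against the sum over $k$ of size $O(\log n)$; it is precisely in these boundary cases, and uniformly across the regime $\alpha_n=O(\log n)$, that the analysis becomes most technically demanding.
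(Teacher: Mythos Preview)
Your plan has a genuine gap: the pure first-moment route via Markov's inequality does not give a bound that tends to $0$ as $K\to\infty$, and in fact the first moment of the difference $W^{(L)}_{n,\beta_n}-W^{(L,K)}_{n,\beta_n}$ is of the wrong order in the regime $\alpha_n=O(\log n)$. Concretely, the event $\{\min_{\lfloor n/2\rfloor\le j\le n}S_j<A_n\}$ contains all trajectories with $S_n$ small (say $S_n\in[-L,-L+1]$), and for those the weight $e^{-S_n/\alpha_n}$ is of order $1$, not $n^{-3/(2\alpha_n)}$. Summing over $S_n$ near $-L$ alone already gives a contribution of order $\alpha_n^2/n^{3/2}$, which exceeds the target $\alpha_n^2/n^{3\beta_n/2}$ by the factor $n^{3/(2\alpha_n)}\to\infty$. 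Your own final-segment bound confirms this: $\E_s[e^{-S_{n-\tau}/\alpha_n}\1_{\underline S_{n-\tau}\ge -L}]\lesssim e^{L/\alpha_n}(L+s+1)\alpha_n^2/(n-\tau)^{3/2}$ carries no factor $e^{-s/\alpha_n}$, because the expectation is dominated by trajectories that, after the dip, return to the vicinity of $-L$. So the ``decay $e^{-A_n/\alpha_n}$ extracted from the intermediate-height weight $e^{-s/\alpha_n}$'' simply is not there; and even if it were, $e^{K/\alpha_n}\to 1$ would give no smallness in $K$.

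What the paper does instead is precisely to avoid this obstruction. Rather than applying Markov's inequality directly under $\Q^{(L)}$, it first converts the $\Q^{(L)}$-probability into a $\P$-probability by localizing $D_n^{(L)}\in[\eta/4,M]$, and then inserts the tightness of the minimum, $\min_{|z|=n}V(z)\ge\tfrac32\log n-K'$ (equation~\eqref{eq tension position la plus basse}), \emph{before} taking any expectation. This extra constraint is the missing ingredient: it forces the endpoint $S_n\ge\tfrac32\log n-K'$ in the many-to-one computation, so that $e^{-S_n/\alpha_n}\le n^{-3/(2\alpha_n)}e^{K'/\alpha_n}$, restoring the correct order. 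The paper then decomposes on the location and depth of the minimum over $[\lfloor n/2\rfloor,n]$, handling dips far from time $n$ by a first-moment estimate (borrowed from Madaule's Equation~(4.27)) that yields $e^{-\nu K/2}$ smallness, and dips within distance $a_\ell\asymp e^{\nu(\ell+K)}$ of time $n$ by a direct $\P$-probability bound (not a first moment) that yields $e^{-(1-\nu)K}$ smallness. Both of these give genuine decay in $K$, which your scheme cannot produce.
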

We now consider a fixed $K >0$.
We will see in Lemma \ref{lemma first moment} that $W^{(L,K)}_{n,\beta_n} (F)$ has the right first moment, but we still need to remove other particles for the second moment.
Let $(\alpha^+_n)_{n\in\N}$ and $(\alpha^-_n)_{n\in\N}$ be sequences of positive real numbers and $(k_n)_{n\in\N}$ be a sequence of integers such that
\[
1 \ll \alpha^-_n \ll \alpha_n \ll \alpha^+_n \ll \sqrt{n}, 
\]
\[
(\log n)^6 \ll k_n \ll \sqrt{n},
\]
when $n\to \infty$.
We add some controls on the trajectory of the particle's lineage, by considering
\[
Y_n (F)
\coloneqq
\sum_{\abs{z} = n} 
\e^{-\beta_n V(z)} 
F(\bfV (z))
\1_{z \in A_n},
\]
where we set 
$A_n \coloneqq 
\{ \abs{z} = n : 
\Forall j \in \llbracket 0, n \rrbracket, V(z_j) \in I_{n,j} \}$ 
(see Figure \ref{figure A_n}),
with
\begin{align*}
I_{n,j} 
\coloneqq
\left\{
\begin{array}{ll}
\lbrack -L,\infty )
& \text{if } 0 \leq j < \lfloor n/2 \rfloor \text{ and } j \neq k_n, \\
\lbrack k_n^{1/3}, k_n]
& \text{if } j = k_n, \\
\lbrack (3/2) \log n, \infty)
& \text{if } \lfloor n/2 \rfloor \leq j \leq n, \\
\lbrack (3/2) \log n + \alpha_n^-, (3/2) \log n + \alpha^+_n ]
& \text{if } j=n.
\end{array}
\right.
\end{align*}
Note that the second barrier is here simply at $(3/2) \log n$: indeed, Lemma \ref{lemma first moment} shows that it does not change the first moment (and we could even have taken $(3/2) \log n + C$ with any $C>0$).
\begin{figure}[ht]
\centering
\begin{tikzpicture}
\fill[color = gray!20] (0,-0.6) -- (5,-0.6) -- (5,1.6) -- (10,1.6) -- (10,-1.2) -- (0,-1.2) -- cycle;
\draw[->,>=latex] (0,0) -- (10.8,0);
\draw[->,>=latex] (0,-1.2) -- (0,5) node[left]{$V$};
\draw (0,0) node[left]{$0$};
\draw (2.5,0) node[below]{$k_n$};
\draw (5,0) node[below right]{$\lfloor n/2 \rfloor$};
\draw (10,0.07) -- (10,-0.07); \draw (10,0) node[below right]{$n$};
\draw 
(0,0) -- (0.1,0.2) -- (0.2,-0.1) -- (0.3,-0.2) -- (0.4, -0.15) -- (0.5,-0.5) 
-- (0.6,-0.35) -- (0.7,-0.4) -- (0.8,-0.2) -- (0.9,-0.15) -- (1,0.3) 
-- (1.1,0.25) -- (1.2,0.5) -- (1.3,0.95) -- (1.4,1.15) -- (1.5,1.05)
-- (1.6,1.1) -- (1.7,1.4) -- (1.8,1.45) -- (1.9,1.8) -- (2,1.75)
-- (2.1,2.05) -- (2.2,2.15) -- (2.3,2.4) -- (2.4,2.35) 
-- (2.5,2.65) 
-- (2.6,2.65) -- (2.7,2.8) -- (2.8,2.7) -- (2.9,2.65) -- (3,3)
-- (3.1,3.2) -- (3.2,3) -- (3.3,2.95) -- (3.4,3.1) -- (3.5,3)
-- (3.6,3.05) -- (3.7,3.25) -- (3.8,3.18) -- (3.9,3.5) -- (4,3.6)
-- (4.1, 3.58) -- (4.2,3.7) -- (4.3,4) -- (4.4,3.9) -- (4.5,3.85)
-- (4.6,3.7) -- (4.7,4) -- (4.8,4.2) -- (4.9,4.15) -- (5,4.3)
-- (5.1,4.35) -- (5.2,4.6) -- (5.3,4.57) -- (5.4,4.65) -- (5.5,4.5)
-- (5.6,4.4) -- (5.7,4.47) -- (5.8,4.6) -- (5.9,4.9) -- (6,4.85)
-- (6.1,4.55) -- (6.2,4.6) -- (6.3,4.45) -- (6.4,4.3) -- (6.5,4.33)
-- (6.6,4.1) -- (6.7,4.3) -- (6.8,4) -- (6.9,4.05) -- (7,4.12)
-- (7.1,3.9) -- (7.2,3.85) -- (7.3,4) -- (7.4,4.15) -- (7.5,4.05)
-- (7.6,3.8) -- (7.7,3.7) -- (7.8,3.75) -- (7.9,3.45) -- (8,3.5)
-- (8.1,3.1) -- (8.2,3.15) -- (8.3,3.25) -- (8.4,3.55) -- (8.5,3.4)
-- (8.6,2.95) -- (8.7,3.05) -- (8.8,3) -- (8.9,2.75) -- (9,2.7)
-- (9.1,2.5) -- (9.2,2.65) -- (9.3,2.7) -- (9.4,2.6) -- (9.5,2.35)
-- (9.6,2.3) -- (9.7,1.9) -- (9.8,2) -- (9.9,2.25)
-- (10,2.2); 
\draw (0,-0.6) -- (5,-0.6) -- (5,1.6) -- (10,1.6);
\draw[dashed] (5,1.6) -- (0,1.6) node[left]{$\frac{3}{2} \log n$};
\draw (0,-0.6) node[left]{$-L$};
\draw[dashed] (2.5,2.2) -- (0,2.2) node[left]{$k_n^{1/3}$};
\draw[dashed] (2.5,3.2) -- (0,3.2) node[left]{$k_n$};
\draw[dashed] (2.5,2.2) -- (2.5,0);
\draw[very thick] (2.5,3.2) -- (2.5,2.2);
\draw[very thick] (10,2.6) -- (10,2);
\draw (10-0.07,2.6) -- (10+0.07,2.6) node[right]{$\frac{3}{2} \log n + \alpha_n^+$};
\draw (10-0.07,2) -- (10+0.07,2) node[right]{$\frac{3}{2} \log n + \alpha_n^-$};
\end{tikzpicture}
\caption{Representation of the trajectory of a particle in $A_n$. 
It has to stay above the gray area and to pass through both thick segments.}
\label{figure A_n}
\end{figure}
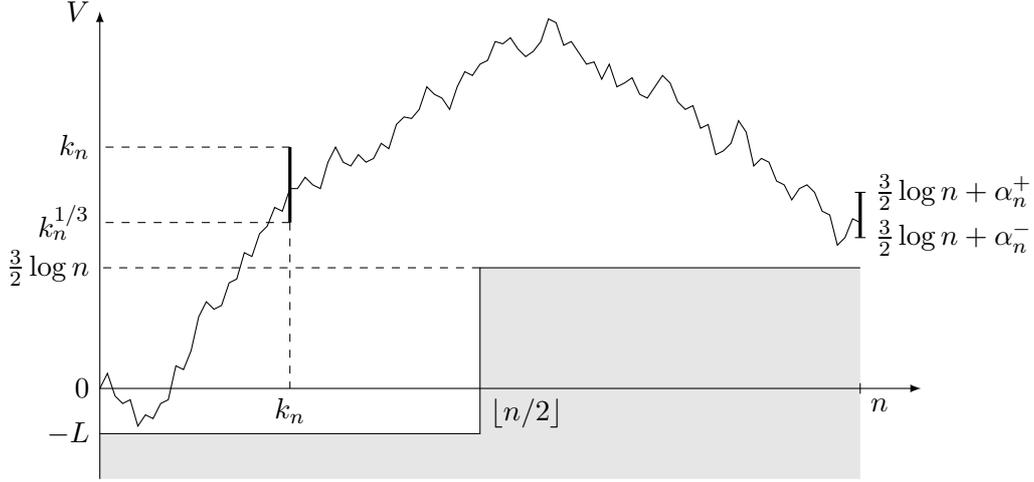

But, in order to compute the first moment of $Y_n(F)$, we will first need to take the conditional expectation given $\sF_{k_n}$ and, thus, we want to show that $F(\bfV (z))$ does not asymptotically depend on what happen before time $k_n$.
For this, we consider for each $n \in \N^*$ a slight modification $F_n$ of function $F$ such that
\begin{align*}
F_n(\bfV (z))
& \coloneqq
F \left(
\frac{V(z_{\lfloor (n- k_n) t \rfloor + k_n}) - V(z_{k_n})}{\sigma \sqrt{n - k_n}},
t \in [0,1]
\right).
\end{align*}
More formally, we set, for each $x \in \cD([0,1])$, $F_n(x) \coloneqq F( [\sqrt{n} / \sqrt{n-k_n}] \cdot [x_{((n-k_n)t + k_n)/n} - x_{k_n / n}]$, $t\in[0,1])$.
The following lemma, proved in Subsection \ref{subsection from F to F_n}, shows that we can replace $F$ with $F_n$. 
It does not play a crucial role, but makes the calculations easier for the next lemmas.
\begin{lem} \label{lemma from F to F_n}
For all $L,K >0$ and $F \in \cC_b^u(\cD([0,1]))$, we have
\[
\Eci{\Q^{(L)}}{ \frac{W^{(L,K)}_{n,\beta_n}(\abs{F - F_n})}{D_n^{(L)}} }
=
\petito{\frac{\alpha_n^2}{n^{3\beta_n/2}}},
\]
as $n \to \infty$.
\end{lem}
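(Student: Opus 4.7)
The plan is to remove the $\Q^{(L)}$-expectation and the normalising martingale $D_n^{(L)}$ by a Radon--Nikodym argument, apply the many-to-one lemma, and then use the uniform continuity of $F$ to control $\abs{F - F_n}(\bfS^{(n)})$ through a Skorokhod-distance estimate between $\bfS^{(n)}$ and its shifted and rescaled tail after time $k_n$. Since $\Q^{(L)}\restreinta_{\sF_n} = (D_n^{(L)}/R(L)) \cdot \P\restreinta_{\sF_n}$, the left-hand side of the lemma equals $\Ec{W^{(L,K)}_{n,\beta_n}(\abs{F-F_n})}/R(L)$, and the many-to-one lemma together with $\beta_n - 1 = 1/\alpha_n$ converts this into
\[
\frac{1}{R(L)}\Ec{\e^{-S_n/\alpha_n}\,\abs{F - F_n}(\bfS^{(n)})\, \1_{A_n}},
\qquad
A_n \coloneqq \Bigl\{ \underline{S}_n \geq -L,\ \min_{\lfloor n/2\rfloor \leq j \leq n} S_j \geq \tfrac{3}{2}\log n - K \Bigr\}.
\]
A direct Skorokhod-distance computation between $\bfS^{(n)}$ and the path defining $F_n$ then gives
\[
\abs{F - F_n}(\bfS^{(n)}) \leq \omega_F\!\left( C \frac{k_n}{n} + C \frac{\max_{0 \leq j \leq k_n} \abs{S_j}}{\sqrt{n}} + C \frac{k_n}{n} \cdot \frac{\max_{0 \leq j \leq n} \abs{S_j}}{\sqrt{n}} \right),
\]
the three terms reflecting respectively the time change $t \mapsto ((n-k_n)t+k_n)/n$, the shift $\bfS^{(n)}_{k_n/n}$, and the scaling factor $\sqrt{n/(n-k_n)}-1 = O(k_n/n)$.

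I would then split the expectation along the good event
\[
\cE_n \coloneqq \Bigl\{\max_{0 \leq j\leq k_n}\abs{S_j} \leq \delta_n \sqrt{n}\Bigr\} \cap \Bigl\{\max_{0 \leq j\leq n}\abs{S_j} \leq M_n \sqrt{n}\Bigr\},
\]
with $\delta_n \to 0$ and $M_n \to \infty$ to be chosen. On $\cE_n$ the argument of $\omega_F$ tends uniformly to $0$ (provided $(k_n/n) M_n \to 0$), so this contribution is $o(1) \cdot \Ec{\e^{-S_n/\alpha_n}\1_{A_n}} = \petito{\alpha_n^2/n^{3\beta_n/2}}$ in view of the first-moment bound $\Ec{\e^{-S_n/\alpha_n}\1_{A_n}} = \grandO{\alpha_n^2/n^{3\beta_n/2}}$ given by the forthcoming Lemma \ref{lemma first moment}. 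Off $\cE_n$, I would bound $\abs{F-F_n} \leq 2\norme{F}$ and apply the Markov property at time $k_n$; the ballot estimate \eqref{eq ballot theorem and barrier} applied to the walk on $[k_n, n]$, combined with the summation $\sum_{a \geq 0}\e^{-a/\alpha_n}(1+a) \asymp \alpha_n^2$ of the weight $\e^{-S_n/\alpha_n}$ over positions $S_n \in \tfrac{3}{2}\log n - K + [a, a+1)$, bounds the conditional expectation by $C(1+L+S_{k_n}^+)\alpha_n^2/n^{3\beta_n/2}$ (the exponent $n^{-3\beta_n/2}$ combining the $n^{-3/2}$ from \eqref{eq ballot theorem and barrier} with the $n^{-3/(2\alpha_n)}$ from $\e^{-(3/2)\log n/\alpha_n}$). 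It then suffices to show $\Ec{(1+L+S_{k_n}^+)\1_{\cE_n^c,\,\underline{S}_{k_n} \geq -L}} = \petito{1}$, which follows by Cauchy--Schwarz from $\Pp{\cE_n^c} \leq Ck_n/(\delta_n^2 n) + C/M_n^2$ (Kolmogorov's maximal inequality) and $\Ec{(S_{k_n}^+)^2} \leq \sigma^2 k_n$.

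The main obstacle is the joint choice of $\delta_n$ and $M_n$: they must satisfy $\delta_n \to 0$, $M_n \to \infty$ and $(k_n/n) M_n \to 0$ for the argument of $\omega_F$ to vanish on $\cE_n$, as well as $\delta_n \gg k_n/\sqrt{n}$ and $M_n \gg \sqrt{k_n}$ for the off-$\cE_n$ contribution to be $\petito{\alpha_n^2/n^{3\beta_n/2}}$ after Cauchy--Schwarz. All these constraints are simultaneously satisfiable because $k_n \ll \sqrt{n}$ (in particular $k_n^2 \ll n$); for example $\delta_n = (k_n/\sqrt{n})^{1/2}$ and $M_n = k_n$ work.
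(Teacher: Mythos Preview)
Your overall strategy --- Radon--Nikodym to remove $D_n^{(L)}$, many-to-one to pass to the random walk, a Skorokhod-distance bound for $\abs{F-F_n}$, and a good/bad event split --- is sound and differs from the paper's route. However, there is a genuine gap in the off-$\cE_n$ step. The event $\cE_{n,2}^c = \{\max_{0\leq j\leq n}\abs{S_j} > M_n\sqrt{n}\}$ is \emph{not} $\sF_{k_n}$-measurable, so after applying the Markov property at time $k_n$ you cannot factor $\1_{\cE_n^c}$ out of the conditional expectation to obtain
\[
\Ec{\e^{-S_n/\alpha_n}\1_{A_n}\1_{\cE_n^c}}
\leq C\,\frac{\alpha_n^2}{n^{3\beta_n/2}}\,\Ec{(1+L+S_{k_n}^+)\1_{\cE_n^c,\,\underline{S}_{k_n}\geq -L}}.
\]
This inequality is only justified for the $\sF_{k_n}$-measurable part $\cE_{n,1}^c$; for the remaining part your Cauchy--Schwarz bound on $\Pp{\cE_n^c}$ under the \emph{unconditioned} law does nothing. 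What you actually need is
$\Ec{\e^{-S_n/\alpha_n}\1_{A_n}\1_{\lVert\bfS^{(n)}\rVert_\infty>M_n}} = \petito{\alpha_n^2/n^{3\beta_n/2}}$,
i.e.\ tightness of the sup-norm under the weighted, barrier-conditioned measure, and this does not follow from Kolmogorov's inequality for the free walk. The trouble originates precisely from the scaling factor $u_n=\sqrt{n/(n-k_n)}$ in the definition of $F_n$: it forces the global quantity $\lVert\bfS^{(n)}\rVert_\infty$ into your Skorokhod estimate, whereas the time-shift alone only involves $\max_{j\leq 2k_n}\abs{S_j}$, which your $\cE_{n,1}$-argument does handle correctly.

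The paper's proof is much shorter and avoids the good/bad decomposition entirely. It repeats the slicing of the endpoint $S_n$ from the upper bound in Lemma~\ref{lemma first moment} and, on each slice, invokes Proposition~\ref{prop convergence to the excursion} (convergence of $\bfS^{(n)}$ to the normalised Brownian excursion, uniformly over the slicing parameter) together with the abstract approximation Lemmas~\ref{lemma weak convergence approximation F_n F} and~\ref{lemma weak convergence approximation F_n F 2} to deduce $\xi_n^\theta(\abs{F-F_n})\to 0$ uniformly. The sup-norm tightness you are missing is packaged inside that weak convergence. If you want to salvage your direct approach, the cleanest patch is to invoke Proposition~\ref{prop convergence to the excursion} just for the $\cE_{n,2}^c$ contribution (bound $\1_{\lVert\bfS^{(n)}\rVert_\infty>M}$ above by a Lipschitz function and use $\Pp{\lVert\mathfrak{e}\rVert_\infty>M-1}\to 0$); a purely elementary stopping-time argument for this tightness is possible but considerably longer than the paper's route.
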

Noting that, for $F$ nonnegative, $Y_n(F) \leq W^{(L,K)}_{n,\beta_n}(F)$, Lemma \ref{lemma from F to F_n} combined with the following lemma shows that the first moments under $\Q^{(L)}$ of $W^{(L,K)}_{n,\beta_n}(F)$, $W^{(L,K)}_{n,\beta_n}(F_n)$, $Y_n(F)$ and $Y_n(F_n)$ (divided by $D_n^{(L)}$) have the same equivalent as $n \to \infty$.
It will be proved in Subsection \ref{subsection first moment}.
\begin{lem} \label{lemma first moment}
For all $L,K >0$ and $F \in \cC_b(\cD([0,1]))$ nonnegative, we have
\begin{align*}
\limsup_{n\to\infty} 
\frac{n^{3\beta_n/2}}{\alpha_n^2} \Eci{\Q^{(L)}}{\frac{W^{(L,K)}_{n,\beta_n}(F)}{D_n^{(L)}}}
\leq
\frac{\theta}{\sigma^2}
\Ec{F(\mathfrak{e})}
\leq
\liminf_{n\to\infty} 
\frac{n^{3\beta_n/2}}{\alpha_n^2} \Eci{\Q^{(L)}}{\frac{Y_n(F_n)}{D_n^{(L)}}}.
\end{align*}
\end{lem}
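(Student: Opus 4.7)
The proof splits into an upper bound for $W^{(L,K)}_{n,\beta_n}(F)$ and a lower bound for $Y_n(F_n)$. Both start from the same basic step: by the definition of $\Q^{(L)}$,
\begin{align*}
\Eci{\Q^{(L)}}{\frac{W^{(L,K)}_{n,\beta_n}(F)}{D_n^{(L)}}} = \frac{\Ec{W^{(L,K)}_{n,\beta_n}(F)}}{R_L(0)},
\end{align*}
and applying the many-to-one lemma (with $1-\beta_n=-1/\alpha_n$) turns this into the one-dimensional random walk expectation $\Ec{\e^{-S_n/\alpha_n} F(\bfS^{(n)}) \1_E}$, where $E$ encodes the two barriers $\underline S_n \geq -L$ and $\min_{\lfloor n/2\rfloor \leq j\leq n} S_j \geq v_n$ with $v_n\coloneqq\tfrac{3}{2}\log n - K$; the analogous expression for $Y_n(F_n)$ carries the further constraints built into $A_n$.

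For the upper bound, I would write $S_n = v_n + T$ with $T\geq 0$ and decompose the expectation over strips $T \in [b,b+1)$ in the nonlattice case (and over lattice atoms in the lattice case). Bounding $\e^{-T/\alpha_n}\leq \e^{-b/\alpha_n}$ and applying Proposition~\ref{prop convergence to the excursion} with $u=L$, $v=v_n$, $\lambda=1/2$, $h=1$ gives, up to a tail handled via \eqref{eq ballot theorem and barrier} with the help of the exponential factor,
\begin{align*}
\Ec{W^{(L,K)}_{n,\beta_n}(F)} \lesssim \sqrt{\frac{\pi}{2}}\,\frac{\theta\theta^-}{\sigma}\,\frac{R(L)}{n^{3/2}}\,\Ec{F(\mathfrak e)}\,\sum_{b\in\N}\e^{-b/\alpha_n}\int_b^{b+1}R^-(t)\,\diff t.
\end{align*}
The Riemann sum compares with $\int_0^\infty\e^{-t/\alpha_n}R^-(t)\,\diff t$; since $R^-(t)\sim c_0^- t$, the substitution $u=t/\alpha_n$ and dominated convergence give that this integral is asymptotic to $c_0^-\alpha_n^2$. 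Using $n^{3\beta_n/2}\e^{-v_n/\alpha_n}=n^{3/2}\e^{K/\alpha_n}$ with $\e^{K/\alpha_n}\to 1$, the prefactor $1/R_L(0)=1/R(L)$, and the identity $\theta^- c_0^- = \sqrt{2/(\pi\sigma^2)}$ (valid for $S^-$ since $\sigma^-=\sigma$), all the constants collapse exactly into $\theta/\sigma^2 \cdot \Ec{F(\mathfrak e)}$.

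For the lower bound on $Y_n(F_n)$, the new feature is the constraint $S_{k_n}\in[k_n^{1/3},k_n]$. I would apply the Markov property at time $k_n$: conditionally on $S_{k_n}=x$ with $\underline S_{k_n}\geq -L$, the shifted post-$k_n$ walk $\tilde S_j\coloneqq S_{k_n+j}-x$ is independent and starts at $0$, and $F_n$ depends only on its trajectory rescaled by $\sigma\sqrt{n-k_n}\sim\sigma\sqrt n$. The barriers $-L-x$ and $\tfrac{3}{2}\log n -x$ seen by $\tilde S$ have absolute value $\leq k_n \ll \sqrt{n-k_n}$, so Proposition~\ref{prop convergence to the excursion} applies uniformly in $x\in[k_n^{1/3},k_n]$; the exponential weight telescopes as $\e^{-S_n/\alpha_n}=n^{-3/(2\alpha_n)}\e^{-T'/\alpha_n}$ with $T'=S_n-\tfrac{3}{2}\log n\in[\alpha_n^-,\alpha_n^+]$, and a lower bound by $\e^{-(b+1)/\alpha_n}$ on each strip produces a Riemann sum for $\int_{\alpha_n^-}^{\alpha_n^+}\e^{-t/\alpha_n}R^-(t)\,\diff t$, which is still $\sim c_0^-\alpha_n^2$ because $\alpha_n^-\ll\alpha_n\ll\alpha_n^+$. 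The intermediate value $x$ then appears only through the factor $R(L+x)$ coming from Proposition~\ref{prop convergence to the excursion}, and Proposition~\ref{prop biggins and kyprianou's change of measure}(ii) gives
\begin{align*}
\Ec{R(L+S_{k_n})\1_{\underline S_{k_n}\geq -L,\,S_{k_n}\in[k_n^{1/3},k_n]}} = R_L(0)\cdot\Q^{(L)}\left(V(w_{k_n}^{(L)})\in[k_n^{1/3},k_n]\right).
\end{align*}
This $\Q^{(L)}$-probability tends to $1$ because $V(w_{k_n}^{(L)})/(\sigma\sqrt{k_n})$ converges in law to a Rayleigh distribution by \eqref{equation convergence vers la loi Rayleigh}, while $k_n^{1/3}/\sqrt{k_n}\to 0$ and $\sqrt{k_n}/\sqrt{k_n}\to\infty$. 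Assembling the factors exactly as in the upper bound yields the same limit $\theta/\sigma^2 \cdot \Ec{F(\mathfrak e)}$.

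The main obstacle is the careful bookkeeping of constants across all the steps (change of measure, many-to-one, endpoint decomposition, Markov split at $k_n$, the $R^-$ asymptotic, and the identity $\theta^- c_0^- = \sqrt{2/(\pi\sigma^2)}$) and obtaining the uniformity needed to apply Proposition~\ref{prop convergence to the excursion} simultaneously in the endpoint $b\leq\gamma_n\ll\sqrt n$ and the intermediate value $x\leq k_n\ll\sqrt n$. The lattice case must be carried in parallel using Proposition~\ref{prop convergence to the excursion}(ii); the endpoint then lives on a lattice shifted by $-v_n+an$, so the Riemann-sum step needs a minor adjustment but the limit is the same.
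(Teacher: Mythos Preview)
Your approach is correct and, for the upper bound, matches the paper's proof exactly (many-to-one, slice the endpoint, apply Proposition~\ref{prop convergence to the excursion}, sum via $R^-(t)\sim c_0^- t$, collapse constants through $\theta^- c_0^-=\sqrt{2/(\pi\sigma^2)}$). For the lower bound your structure is also the paper's (Markov split at $k_n$, apply Proposition~\ref{prop convergence to the excursion} to the post-$k_n$ walk uniformly in $x\in[k_n^{1/3},k_n]$), but you handle the pre-$k_n$ piece more cleanly: you keep the factor $R(L+x)$ and use the identity
\[
\Ec{R(L+S_{k_n})\1_{\underline S_{k_n}\geq -L,\,S_{k_n}\in[k_n^{1/3},k_n]}} = R(L)\cdot \Q^{(L)}\bigl(V(w_{k_n}^{(L)})\in[k_n^{1/3},k_n]\bigr),
\]
whereas the paper replaces $R(L+b)$ by its asymptotic $c_0(L+b)$ and then evaluates $\E[(L+S_{k_n})\1_{\cdots}]$ through \eqref{equation convergence vers la loi Rayleigh}, which forces an explicit Rayleigh integration $\int_0^\infty t^2 e^{-t^2/2}\,dt=\sqrt{\pi/2}$ and more constant-matching. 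Your martingale shortcut buys a cleaner bookkeeping.

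One correction, which does not affect your conclusion: under $\Q^{(L)}$ the rescaled spine $V(w_{k_n}^{(L)})/(\sigma\sqrt{k_n})$ does \emph{not} converge to the Rayleigh law. Equation~\eqref{equation convergence vers la loi Rayleigh} gives the Rayleigh limit for the walk conditioned on $\{\underline S_{k_n}\geq -L\}$ \emph{without} the extra $R_L(S_{k_n})$ weighting; under $\Q^{(L)}$ (equivalently $\P^+$) the limit is the $3$-dimensional Bessel process at time $1$, by the Caravenna--Chaumont invariance principle \eqref{eq CV vers le Bessel}. Since all you need is that the limiting law is supported on $(0,\infty)$ together with $k_n^{1/3}\ll\sqrt{k_n}\ll k_n$ (your ``$\sqrt{k_n}/\sqrt{k_n}\to\infty$'' is a typo for $k_n/\sqrt{k_n}\to\infty$), the argument still goes through with the corrected reference.
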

However, this is still not sufficient for controlling the second moment of $Y_n(F)$ and we need to introduce a new random variable $Y_n'(F)$.
We consider the sequences
\begin{align*}
a_j^{(n)}
\coloneqq
\left\{
\begin{array}{ll}
-L 
& \text{if } 0 \leq j < \lfloor n/2 \rfloor, \\
\frac{3}{2} \log n
& \text{if } \lfloor n/2 \rfloor \leq j \leq n,
\end{array}
\right.
\quad \text{ and } \quad
\ell_j^{(n)}
\coloneqq
\left\{
\begin{array}{ll}
j^{1/7} 
& \text{if } 0 \leq j < \lfloor n/2 \rfloor, \\
(n-j)^{1/7}
& \text{if } \lfloor n/2 \rfloor \leq j \leq n.
\end{array}
\right.
\end{align*}
Then, for some fixed sequence $(\rho_n)_{n\in\N}$ that tends to infinity such that $\rho_n \ll \alpha_n^2$, we define the following set, that will allow us to control the offspring of the spine in the second moment calculation (see A\"{\i}dékon \cite{aidekon2013} for the first use of this method),
\[
B_n
\coloneqq
\Biggl\{
	\abs{z} = n :
	\Forall j \in \llbracket 0, n-1 \rrbracket,
	\sum_{y \in \Omega(z_{j+1})} 
	\e^{-[V(y)-a_j^{(n)}]} 
	\leq 
	\rho_n
	\e^{- \ell_j^{(n)}} 
\Biggr\},
\]
where $\Omega(x)$ is the set of brothers of $x$, and we set 
\[
Y_n'(F)
\coloneqq
\sum_{\abs{z} = n} 
\e^{-\beta_n V(z)} 
F_n(\bfV (z))
\1_{z \in A_n \cap B_n}.
\]
The following lemma shows that this new random variable $Y_n'(F)$ is close to $Y_n(F)$.
Its proof relies on the peeling lemma stated in Subsection \ref{subsection peeling lemma} in a more general feature.
\begin{lem} \label{lemma from Y_n to Y_n'}
For all $L,K >0$ and $F \in \cC_b(\cD([0,1]))$, we have
\[
\Eci{\Q^{(L)}}{\frac{\abs{Y_n(F_n) - Y_n'(F_n)}}{D_n^{(L)}}}
=
\petito{\frac{\alpha_n^2}{n^{3\beta_n/2}}},
\]
as $n \to \infty$.
\end{lem}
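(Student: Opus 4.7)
Since $Y_n(F_n) - Y_n'(F_n)$ is $\sF_n$-measurable and $\Q^{(L)}\restreinta_{\sF_n} = (D_n^{(L)}/R_L(0)) \bullet \P\restreinta_{\sF_n}$, I would first reduce the claim to showing
\[
\Ec{\sum_{\abs{z}=n} \e^{-\beta_n V(z)} \1_{z \in A_n, z \notin B_n}}
= \petito{\frac{\alpha_n^2}{n^{3\beta_n/2}}},
\]
using the pointwise majorisation $\abs{Y_n(F_n) - Y_n'(F_n)} \leq \norme{F} \sum_{\abs{z}=n} \e^{-\beta_n V(z)} \1_{z \in A_n \setminus B_n}$. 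Writing $\e^{-\beta_n V(z)} = \e^{-V(z)} \e^{-(\beta_n-1)V(z)}$ and applying Lyons' spinal decomposition (Proposition \ref{prop lyon's change of measure}), this expectation rewrites as $\Eci{\Q}{\e^{-(\beta_n-1)V(w_n)} \1_{w_n \in A_n,\, w_n \notin B_n}}$, where under $\Q$ the spine $(V(w_j))_{j \leq n}$ is distributed as $(S_j)_{j \leq n}$ and, conditionally on the spine, the offspring of $w_j$ forms a size-biased point process $\hat{\cL}$ translated by $V(w_j)$.

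Next I would decompose the event $\{w_n \notin B_n\}$ through a union bound, $\1_{w_n \notin B_n} \leq \sum_{j=0}^{n-1} \1_{\xi_j > \rho_n \e^{-\ell_j^{(n)}}}$, with $\xi_j \coloneqq \sum_{y \in \Omega(w_{j+1})} \e^{-[V(y)-a_j^{(n)}]}$. Setting $\zeta_j \coloneqq \e^{V(w_j)-a_j^{(n)}} \xi_j$ and noting that, conditionally on $\sF_j$, $\zeta_j$ is dominated by a random variable with the law of $\hat{X} \coloneqq \sum_{y \in \hat{\cL}} \e^{-V(y)}$ under $\Q$, the key observation is that assumption \eqref{hypothese 3} yields $\Eci{\Q}{(\log_+ \hat{X})^2} = \Ec{X(\log_+ X)^2}/\Ec{X} < \infty$. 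Markov's inequality then gives a tail bound $\Q(\zeta_j > s \mid \sF_j) \leq C/(\log_+ s)^2$ for $s$ large.

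The central device is the peeling lemma (Lemma \ref{lemme trajectoire sup\'erieure � i^alpha}) applied to the spine trajectory with the nonlinear correction $r_i = \ell_i^{(n)} = i^{1/7} \wedge (n-i)^{1/7}$, noting that $\sum_i i^{1/7}/i^{3/2} < \infty$. The peeling lemma shows that the contribution of the event where the spine ever descends below the shifted barrier $a_i^{(n)} + \ell_i^{(n)} - \mu$ is bounded by a constant multiple of $\varepsilon \alpha_n^2/n^{3\beta_n/2}$ for any prescribed $\varepsilon > 0$ (after suitable choice of $\mu$), using that the ballot scale in $A_n$ is of order $\alpha_n^2/n^{3\beta_n/2}$ by Lemma \ref{lemma first moment}. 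On the complementary event one has $V(w_j) - a_j^{(n)} \geq \ell_j^{(n)} - \mu$, so the bad event $\xi_j > \rho_n \e^{-\ell_j^{(n)}}$ reduces to $\zeta_j > c\rho_n$; the $(\log_+)^2$-tail bound then produces a factor $C/(\log \rho_n)^2$ for each $j$. A union bound over $j \in \llbracket 0, n-1 \rrbracket$ combined with the ballot estimate \eqref{eq ballot theorem and barrier} for the remaining trajectory constraints in $A_n$ yields a total contribution of order $\alpha_n^2/(n^{3\beta_n/2} (\log \rho_n)^2) = \petito{\alpha_n^2/n^{3\beta_n/2}}$ since $\rho_n \to \infty$.

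The main obstacle is twofold. First, the naive Markov estimate on the brother sum $\xi_j$ produces an exponential factor $\e^{\ell_j^{(n)}}$ (from the threshold $\rho_n \e^{-\ell_j^{(n)}}$) that cannot be absorbed into the prefactor $1/\rho_n$ uniformly in $j$ because $\ell_{n/2}^{(n)} \sim n^{1/7}$ dominates $\log \rho_n \ll \log \alpha_n^2$; this is precisely the reason one must invoke the peeling lemma to push the spine away from the flat barrier by exactly the amount $\ell_j^{(n)}$. Second, checking that the resulting bookkeeping — the interplay of the parameters $\rho_n$, $\alpha_n^{\pm}$, $k_n$, the peeling shift $\mu$, and the $(\log_+)^2$-moment of $\hat{X}$ — yields a bound strictly smaller than $\alpha_n^2/n^{3\beta_n/2}$, uniformly across the two regimes $j < n/2$ and $j \geq n/2$, is the delicate technical step.
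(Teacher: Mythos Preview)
Your reduction to $\E_\Q\bigl[e^{-V(w_n)/\alpha_n}\1_{w_n\in A_n\cap B_n^c}\bigr]$ via Lyons' change of measure, and your first step of discarding (through the lower-envelope lemma) the event where the spine dips below the shifted barrier, are correct and match the paper's route. The paper in fact packages the whole argument into a separate Peeling Lemma (Lemma~\ref{lemma peeling}), after which the present lemma follows in two lines by slicing $V(w_n)$ into unit intervals and summing.

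The genuine gap is in your final union bound. On the complementary event you weaken the bad event at time $j$ to $\zeta_j>c\rho_n$, bound its probability by $C/(\log\rho_n)^2$, and assert that summing over $j\in\llbracket 0,n-1\rrbracket$ together with the ballot estimate gives a total of order $\alpha_n^2/(n^{3\beta_n/2}(\log\rho_n)^2)$. But the ballot cost for $\{w_n\in A_n\}$ is already of order $\alpha_n^2/n^{3\beta_n/2}$, and the event $\{\zeta_j>c\rho_n\}$ carries no $j$-dependent decay (nor is it independent of the spine increment $\Delta(w_{j+1})$, so even the factoring is not immediate). A plain union bound therefore picks up an extra factor $n/(\log\rho_n)^2\to\infty$. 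The missing idea --- which is the heart of Lemma~\ref{lemma peeling} --- is to raise the envelope to $a_j^{(n)}+2\ell_j^{(n)}-\mu$ (factor $2$, not $1$); on its complement the inequality $\xi_j>\rho_n e^{-\ell_j^{(n)}}$ then forces $\zeta_j>\rho_n\, e^{[V(w_j)-a_j^{(n)}-\mu]/2}$, equivalently the \emph{local} constraint $V(w_j)-a_j^{(n)}\leq 2\log_+(\zeta_j/\rho_n)+\mu$. It is this constraint --- the spine must be close to the barrier at the bad time --- that makes the sum over $j$ converge, via the occupation-time bound~\eqref{equation MA somme de probas}, and turns the $(\log_+)^2$-moment of $X$ under $\Q$ into the required $o(1)$ as $\rho_n\to\infty$.
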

By considering $Y_n'(F_n)$, 
we can now control the second moment properly, as stated in this last lemma, proved in Subsection \ref{subsection second moment}. 
This second moment is exactly the square of the first moment in Lemma \ref{lemma first moment}.
\begin{lem} \label{lemma second moment}
For all $L,K >0$ and $F \in \cC_b(\cD([0,1]))$ nonnegative, we have
\[
\limsup_{n\to\infty} 
\left( \frac{n^{3\beta_n/2}}{\alpha_n^2} \right)^2 
\Eci{\Q^{(L)}}{\left( \frac{Y_n'(F_n)}{D_n^{(L)}} \right)^2}
\leq
\left( \frac{\theta}{\sigma^2} \Ec{F(\mathfrak{e})} \right)^2.
\]
\end{lem}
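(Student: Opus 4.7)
The plan is to run a second-moment computation in the spirit of A\"id\'ekon and Shi \cite{aidekonshi2014}, based on the spinal decomposition of Proposition \ref{prop biggins and kyprianou's change of measure} under $\Q^{(L)}$. Set $h_n(z) \coloneqq \1_{z \in A_n \cap B_n} \e^{-\beta_n V(z)} F_n(\bfV(z))$, so that $Y_n'(F_n) = \sum_{\abs{z} = n} h_n(z)$. Using the algebraic identity $h_n(z) = R_L(V(z)) \e^{-V(z)} \1_{\underline{V}(z) \geq -L} \cdot \frac{h_n(z) \e^{V(z)}}{R_L(V(z))}$, which is valid because $z \in A_n$ forces $\underline{V}(z) \geq -L$, part (i) of Proposition \ref{prop biggins and kyprianou's change of measure} allows one of the two sums defining the second moment to be identified with the spine, yielding
\[
\Eci{\Q^{(L)}}{\left( \frac{Y_n'(F_n)}{D_n^{(L)}} \right)^2}
= \Eci{\Q^{(L)}}{\frac{\1_{w_n^{(L)} \in A_n \cap B_n}\, \e^{-(\beta_n - 1) V(w_n^{(L)})} F_n(\bfV(w_n^{(L)}))}{R_L(V(w_n^{(L)}))} \cdot \frac{Y_n'(F_n)}{D_n^{(L)}}}.
\]

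Next I would decompose the remaining $Y_n'(F_n)$ according to the generation $k$ at which a generic particle $z$ branches off from the spine $w^{(L)}$: namely,
\[
Y_n'(F_n) = h_n(w_n^{(L)}) + \sum_{k=0}^{n-1} \sum_{y \in \Omega(w_{k+1}^{(L)})} \Sigma_{k,y},
\quad \text{with} \quad
\Sigma_{k,y} \coloneqq \sum_{\abs{z} = n,\, z_{k+1} = y} h_n(z).
\]
Under $\Q^{(L)}$, conditionally on the $\sigma$-algebra $\sG$ generated by the spine and the offspring of each spine particle, the subtrees rooted at the $y \in \Omega(w_{k+1}^{(L)})$ are independent branching random walks starting at $V(y)$, so $\Ecsq{\Sigma_{k,y}}{\sG}$ is exactly a first-moment quantity of the type bounded in Lemma \ref{lemma first moment}, applied to a branching random walk of length $n-k-1$ with barrier and endpoint constraints inherited from $A_n \cap B_n$. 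The spine self-term $h_n(w_n^{(L)})$, paired with the spine factor, involves $\e^{-(2\beta_n - 1) V(w_n^{(L)})}/R_L(V(w_n^{(L)}))$, and the lower bound $V(w_n^{(L)}) \geq \frac{3}{2}\log n + \alpha_n^-$ enforced by $A_n$ makes this term disappear after rescaling by $(n^{3\beta_n/2}/\alpha_n^2)^2$. The defining property of $B_n$, $\sum_{y \in \Omega(w_{k+1}^{(L)})} \e^{-(V(y) - a_k^{(n)})} \leq \rho_n \e^{-\ell_k^{(n)}}$, provides exactly the uniform control over the brother weights needed to sum over $y$.

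Plugging the subtree estimate back in and taking expectation over the spine, which under $\Q^{(L)}$ has the law of $(S_i)_{0 \leq i \leq n}$ conditioned to stay above $-L$ by part (ii) of Proposition \ref{prop biggins and kyprianou's change of measure}, reduces the second moment to a product-type random-walk functional. Each factor is of order $\alpha_n^2/n^{3\beta_n/2}$ and converges, via Proposition \ref{prop convergence to the excursion}, to $(\theta/\sigma^2) \Ec{F(\mathfrak{e})}$, yielding the claimed upper bound $(\theta/\sigma^2)^2 (\Ec{F(\mathfrak{e})})^2$. The main obstacle will be controlling the contribution of intermediate $k$, for instance $k$ comparable to $n/2$: there one must simultaneously force the spine trajectory to satisfy the $A_n$ constraints up to time $k+1$ and the subtree trajectory to pass through the window $[k_n^{1/3}, k_n]$ at its own time $k_n$ and satisfy the remaining $A_n$ constraints. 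It is precisely this intermediate window in the definition of $A_n$ that decorrelates the two halves of each trajectory and makes the limit a product of two independent excursion expectations. Combining the decay $\rho_n \e^{-\ell_k^{(n)}}$ from $B_n$, the ballot estimates \eqref{eq ballot theorem}--\eqref{eq ballot theorem and barrier}, and the peeling lemma of Subsection \ref{subsection peeling lemma}, one should then verify that these intermediate $k$ contribute only $o((\alpha_n^2/n^{3\beta_n/2})^2)$.
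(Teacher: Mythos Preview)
Your spine identity and spinal decomposition are the same starting point as the paper's proof, and your description of the $[k_n,n]$ contribution as negligible via the $B_n$ bound $\sum_{y \in \Omega(w_{k+1}^{(L)})} \e^{-(V(y)-a_k^{(n)})} \leq \rho_n \e^{-\ell_k^{(n)}}$ is also correct. But there is a genuine gap in your handling of the main term.

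The problem is the denominator $D_n^{(L)}$. You write that, conditionally on the $\sigma$-algebra $\sG$ generated by the spine and the brothers, $\Ecsq{\Sigma_{k,y}}{\sG}$ is a first-moment quantity, and then propose to ``plug the subtree estimate back in and take expectation over the spine''. But the quantity you need to bound is $\Eci{\Q^{(L)}}{\frac{Y_n'(F_n)}{D_n^{(L)}} \cdot (\text{spine factor})}$, and $D_n^{(L)}$ is \emph{not} $\sG$-measurable: it depends on the entire tree, not just the spine and its children. So you cannot simply condition on $\sG$, average the subtrees to get a first moment, and then average over the spine. Your proposal implicitly treats $1/D_n^{(L)}$ as a spine functional, which it is not, and this is exactly the difficulty that makes the second-moment argument delicate.

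The paper's resolution is to split the spinal sum only at time $k_n$, not at every $k$. For the branching in $[0,k_n)$, one first uses the crude bound $D_n^{(L)} \geq D_n^{(L),[0,k_n)}$ to replace the denominator by something depending only on the $[0,k_n)$ subtrees; after conditioning on $V(w_{k_n}^{(L)})$, the spine factor after time $k_n$ separates and gives one factor $(\theta/\sigma^2)\Ec{F(\mathfrak{e})} \cdot \alpha_n^2/n^{3\beta_n/2}$. One is then left with $\Eci{\Q^{(L)}}{Y_n'^{[0,k_n)}(F_n)/D_n^{(L),[0,k_n)}\, \1_{V(w_{k_n}^{(L)}) \in [k_n^{1/3},k_n]}}$, which is \emph{not} a first moment of the form in Lemma \ref{lemma first moment} because the denominator is wrong. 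The key device, borrowed from A\"id\'ekon--Shi \cite[Lemma 4.7]{aidekonshi2014} and recorded in the paper as \eqref{eq lemma 4.7 aidekon and shi}, is that conditionally on $V(w_{k_n}^{(L)}) \in [k_n^{1/3},k_n]$ one has $D_n^{(L),[k_n,n]} \leq n^{-2}$ with probability tending to $1$; on that event $D_n^{(L),[0,k_n)} \geq (1-n^{-1/4}) D_n^{(L)}$, which lets one replace $D_n^{(L),[0,k_n)}$ by $D_n^{(L)}$ again and invoke Lemma \ref{lemma first moment} for the second factor. This trick is the heart of the argument, and it is absent from your outline. Note also that the peeling lemma is not used in this proof; it was already consumed in passing from $Y_n$ to $Y_n'$ (Lemma \ref{lemma from Y_n to Y_n'}).
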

Using these lemmas, we can now prove Proposition \ref{proposition convergence under Q}.
\begin{proof}[Proof of Proposition \ref{proposition convergence under Q}]
We consider here the case $F \geq 0$ and the general case follows by taking the positive and negative parts of $F$.
We first fix $K>0$.
Using Lemmas \ref{lemma first moment} et \ref{lemma from Y_n to Y_n'}, we have
\begin{align} \label{ba}
\liminf_{n\to\infty} 
\frac{n^{3\beta_n/2}}{\alpha_n^2} \Eci{\Q^{(L)}}{\frac{Y_n'(F_n)}{D_n^{(L)}}}
\geq
\frac{\theta}{\sigma^2} \Ec{F(\mathfrak{e})},
\end{align}
and so, by applying Bienaymé-Chebyshev inequality and Lemma \ref{lemma second moment}, we get that
\begin{align} \label{bb}
\frac{n^{3 \beta_n/2}}{\alpha_n^2} \frac{Y_n'(F_n)}{D_n^{(L)}}
\xrightarrow[n\to\infty]{}
\frac{\theta}{\sigma^2} \Ec{F(\mathfrak{e})},
\quad \text{in } \Q^{(L)}\text{-probability}.
\end{align}
Combining \eqref{bb} with Lemmas \ref{lemma first moment}, \ref{lemma from F to F_n} and \ref{lemma from Y_n to Y_n'}, we deduce that for all $K>0$,
\begin{align} \label{bd}
\frac{n^{3\beta_n/2}}{\alpha_n^2} \frac{W^{(L,K)}_{n,\beta_n}(F)}{D_n^{(L)}}
\xrightarrow[n\to\infty]{}
\frac{\theta}{\sigma^2} \Ec{F(\mathfrak{e})},
\quad \text{in } \Q^{(L)}\text{-probability}.
\end{align}
Now, for $\varepsilon,\eta >0$, by Lemma \ref{lemma addition of the barrier}, we can choose $K >0$ such that
\begin{align} \label{be}
\limsup_{n\to\infty} 
\Q^{(L)}
\left(
\frac{n^{3\beta_n/2}}{\alpha_n^2} 
\frac{\abs{W^{(L)}_{n,\beta_n}(F) - W^{(L,K)}_{n,\beta_n}(F)}}{D_n^{(L)}}
>
\frac{\varepsilon}{2}
\right)
\leq 
\eta
\end{align}
and, thus, we get, using the triangle inequality, \eqref{bd} and \eqref{be},
\begin{align*}
\limsup_{n\to\infty} 
\Q^{(L)}
\left(
\abs{
\frac{n^{3\beta_n/2}}{\alpha_n^2} 
\frac{W^{(L)}_{n,\beta_n}(F)}{D_n^{(L)}}
-
\frac{\theta}{\sigma^2} \Ec{F(\mathfrak{e})}
}
>
\varepsilon
\right)
\leq 
\eta.
\end{align*}
This concludes the proof of Proposition \ref{proposition convergence under Q}.
\end{proof}
We conclude this subsection by proving part (i) of Corollary \ref{cor typical energy}, which is a consequence of the fact that considering only particles in $A_n$ does not change the first moment asymptotic.
\begin{proof}[Proof of part (i) of Corollary \ref{cor typical energy}]
First note that it is sufficient to prove that, for all sequences $(\alpha^-_n)_{n\in\N}$ and $(\alpha^+_n)_{n\in\N}$ such that $1 \ll \alpha^-_n \ll \alpha_n \ll \alpha^+_n \ll \sqrt{n}$, we have
\begin{align} \label{bf}
\nu_{n,\beta_n}
\left( \left[
(3/2) \log n + \alpha_n^-, (3/2) \log n + \alpha_n^+
\right] \right)
\xrightarrow[n\to\infty]{}
1,
\quad \text{in } \P^*\text{-probability}.
\end{align}
Indeed, if we assume that part (i) of Corollary \ref{cor typical energy} is false, then it exists $\varepsilon>0$ such that for any $k \geq 1$, it exists $n_k > n_{k-1}$ (with $n_0 \coloneqq 0$) such that
\begin{align} \label{bi}
\P^* \left(
	\nu_{n_k,\beta_{n_k}}
	\left( \left[
	(3/2) \log n_k + k^{-1} \alpha_{n_k}, (3/2) \log n_k + k \alpha_{n_k}
	\right] \right)
	\leq 1-\varepsilon
	\right)
\geq
1 - \varepsilon.
\end{align}
Setting $e_n \coloneqq \inf \{k\in\N : n_k \geq n\}$, we have $e_n \to \infty$ and $e_{n_k} = k$.
Thus, with $\alpha_n^+ \coloneqq e_n \alpha_n$ and $\alpha_n^- \coloneqq e_n^{-1} \alpha_n$, \eqref{bi} implies the negation of \eqref{bf}.
Therefore, we now want to prove \eqref{bf}.
The left-hand side of \eqref{bf} is larger than $Y_n(1) / W_{n,\beta_n}$, therefore, it is sufficient to show that $Y_n(1) / W_{n,\beta_n} \to 1$ in $\P^*$-probability. 
Combining \eqref{bb} with Lemma \ref{lemma from Y_n to Y_n'}, we first have
\begin{align} \label{bg}
\frac{n^{3 \beta_n/2}}{\alpha_n^2} \frac{Y_n(1)}{D_n^{(L)}}
\xrightarrow[n\to\infty]{}
\frac{\theta}{\sigma^2},
\quad \text{in } \Q^{(L)}\text{-probability},
\end{align}
and, in the same way as in the proof of part (i) of Theorems \ref{theorem W} and \ref{theorem trajectory}, where we showed \eqref{aa} from Proposition \ref{proposition convergence under Q}, it follows from \eqref{bg} that
\begin{align} \label{bh}
\frac{n^{3\beta_n/2}}{\alpha_n^2} \frac{Y_n(1)}{D_n}
\xrightarrow[n\to\infty]{}
\frac{c_0 \theta}{\sigma^2},
\quad \text{in } \P^*\text{-probability}.
\end{align}
Using \eqref{bh} and \eqref{aa} with $F\equiv 1$, we get that $Y_n(1) / W_{n,\beta_n} \to 1$ in $\P^*$-probability and so \eqref{bf} is proved.
\end{proof}

\subsection{First moments of \texorpdfstring{$W^{(L,K)}_{n,\beta_n}(F)$}{W(L,K)(F)} and \texorpdfstring{$Y_n(F_n)$}{Yn(Fn)}}
\label{subsection first moment}

We start with the proof of Lemma \ref{lemma first moment} in this subsection, because this first moment calculation will be a kind of routine at which we will refer for the proof of other lemmas.
In this calculations, sums of general term $(i+C) \e^{-i / \gamma_n}$ appear regularly, with $C \in \R$ a constant and $\gamma_n \to \infty$ as $n \to \infty$.
Therefore, we state the following result, proved by explicit computation: if $(\gamma^+_n)_{n\in\N}$ and $(\gamma^-_n)_{n\in\N}$ are sequences with values in $\R_+ \cup \{ \infty \}$ such that $\gamma_n^- \ll \gamma_n \ll \gamma^+_n$ as $n\to\infty$, then we have, for any constant $C \in \R$,
\begin{align} \label{eq resultat somme}
\sum_{i \in [\gamma_n^-, \gamma_n^+] \cap \N}
(i+C) \e^{-i / \gamma_n}
\underset{n\to\infty}{\sim}
\gamma_n^2
\quad \text{ and } \quad
\sum_{i \notin [\gamma_n^-, \gamma_n^+] \cap \N}
(i+C) \e^{-i / \gamma_n}
=
\petito{\gamma_n^2}.
\end{align}
\begin{proof}[Proof of Lemma \ref{lemma first moment}]
Recall that we consider $F \in \cC_b(\cD([0,1]))$ nonnegative.
We begin with the control of $\E_{\Q^{(L)}}[W^{(L,K)}_{n,\beta_n}(F)/D_n^{(L)}]$.
Note that $\E_{\Q^{(L)}}[W^{(L,K)}_{n,\beta_n}(F)/D_n^{(L)}] = \E[W^{(L,K)}_{n,\beta_n}(F)] / R(L)$.
Then, using the many-to-one lemma, we get
\begin{align*}
\Ec{W^{(L,K)}_{n,\beta_n}(F)}
& =
\Ec{
\e^{-S_n / \alpha_n}
F(\bfS^{(n)})
\1_{\underline{S}_n \geq -L,
	\min_{\lfloor n/2 \rfloor \leq j \leq n} V(z_j) 
			\geq \frac{3}{2} \log n - K}}.
\end{align*}
We cut this expectation in two pieces depending on whether $S_n \leq \frac{3}{2} \log n + \alpha^+_n$ or $S_n > \frac{3}{2} \log n + \alpha^+_n$.
Let start with the case $S_n > \frac{3}{2} \log n + \alpha^+_n$: we have, by cutting the interval $[\alpha^+_n + K, \infty)$ in pieces of length $1$,
\begin{align}
& \Ec{
\e^{-S_n / \alpha_n}
F(\bfS^{(n)})
\1_{\underline{S}_n \geq -L, 
	\min_{\lfloor n/2 \rfloor \leq j \leq n} S_j \geq \frac{3}{2} \log n - K,
	S_n > \frac{3}{2} \log n + \alpha^+_n}
} \nonumber \\
& \leq
\sum_{i \geq \lfloor \alpha^+_n + K \rfloor}
\Ec{
\e^{-(\frac{3}{2} \log n -K + i) / \alpha_n}
\norme{F}
\1_{\underline{S}_n \geq -L, 
	\min_{\lfloor n/2 \rfloor \leq j \leq n} S_j \geq \frac{3}{2} \log n - K, 
	S_n - \left( \frac{3}{2} \log n - K \right) \in [i, i+1) 
}} \nonumber \\
& \leq
\norme{F}
\frac{\e^{K / \alpha_n}}{n^{3/2\alpha_n}}
\sum_{i \geq \lfloor \alpha^+_n + K \rfloor}
\e^{-i / \alpha_n}
c_5 \frac{(L+1) 2 (i+2)}{n^{3/2}}, \label{cb}
\end{align}
by using \eqref{eq ballot theorem and barrier}.
Since $\lfloor \alpha^+_n + K \rfloor \gg \alpha_n$, it follows from \eqref{eq resultat somme} that the right-hand side of \eqref{cb} is a $o(\alpha_n^2/n^{3\beta_n/2})$.
We now control the term corresponding to the case $S_n \leq \frac{3}{2} \log n + \alpha^+_n$, which is the dominant term in $\E[W^{(L,K)}_{n,\beta_n}(F)]$. 
This time, we cut the segment $[0, \alpha^+_n + K]$ in pieces of length $h>0$, where $h$ is any real number if $S_1$ is nonlattice and is the span of the lattice if $S_1$ is lattice, and thus we get 
\begin{align}
& \Ec{
\e^{-S_n / \alpha_n}
F(\bfS^{(n)})
\1_{\underline{S}_n \geq -L, 
	\min_{\lfloor n/2 \rfloor \leq j \leq n} S_j \geq \frac{3}{2} \log n - K,
	S_n \leq \frac{3}{2} \log n + \alpha^+_n}
} \nonumber \\
& \leq
\sum_{i = 0}^{\lceil (\alpha^+_n + K) /h \rceil - 1}
\frac{\e^{(K-ih) / \alpha_n}}{n^{3/2\alpha_n}}
\Ec{
	F(\bfS^{(n)})
	\1_{
	\underline{S}_n \geq -L, 
	\min_{\lfloor n/2 \rfloor \leq j \leq n} S_j \geq \frac{3}{2} \log n - K,
	S_n - \left( \frac{3}{2} \log n - K \right) \in [ih, (i+1)h)} 
	} \nonumber
\\
& \leq
\sum_{i = 0}^{\lceil (\alpha^+_n + K) /h \rceil - 1}
\frac{\e^{-ih / \alpha_n}}{n^{3/2\alpha_n}}
\sqrt{\frac{\pi}{2}} \frac{\theta \theta^-}{\sigma}
\frac{R(L)}{n^{3/2}} 
(\Ec{F(\mathfrak{e})}+\petito{1})
h R^-((i+1)h), \label{cm}
\end{align}
by using Proposition \ref{prop convergence to the excursion} in both lattice and nonlattice cases, with uniformity in $i$ because $h (\lceil (\alpha^+_n + K) /h \rceil - 1) \ll \sqrt{n}$.
Then, by applying \eqref{equation R 1} to $R^-$, for $\varepsilon >0$, it exists $M >0$ such that, for all $u\geq 0$, $R^-(u) \leq c_0^- (1+\varepsilon) (M+u)$ and thus we get
\begin{align}
\sum_{i = 0}^{\lceil (\alpha^+_n + K) /h \rceil - 1}
R^-((i+1)h) \e^{-ih / \alpha_n}
& \leq 
\sum_{i = 0}^{\infty}
c_0^- (1+\varepsilon)
\left( i+1 + \frac{M}{h} \right) h
\e^{-ih / \alpha_n} \nonumber \\
& =
(1+\petito{1}) c_0^- (1+\varepsilon) h
\left( \frac{\alpha_n}{h} \right)^2, \label{cn}
\end{align}
by applying \eqref{eq resultat somme}.
Coming back to \eqref{cm}, we get 
\begin{align}
& \limsup_{n\to\infty} \frac{n^{3\beta_n/2}}{\alpha_n^2} 
\Ec{\e^{-S_n / \alpha_n}
F(\bfS^{(n)})
\1_{\underline{S}_n \geq -L, 
	\min_{\lfloor n/2 \rfloor \leq j \leq n} S_j \geq \frac{3}{2} \log n - K,
	S_n \leq \frac{3}{2} \log n + \alpha^+_n}
} \nonumber \\
& \leq
\sqrt{\frac{\pi}{2}} 
\frac{\theta \theta^-}{\sigma} 
R(L) \Ec{F(\mathfrak{e})} h
\frac{c_0^- (1+\varepsilon)}{h}
\xrightarrow[\varepsilon\to0]{} 
\frac{\theta}{\sigma^2} 
R(L) \Ec{F(\mathfrak{e})}, \label{cd}
\end{align}
by applying \eqref{equation lien theta c_0} to constants $c_0^-$ and $\theta^-$.
Combining (\ref{cb}) and (\ref{cd}), we conclude that
\begin{align*}
\Eci{\Q^{(L)}}{\frac{W^{(L,K)}_{n,\beta_n}}{D_n^{(L)}}}
= 
\frac{1}{R(L)} \Ec{W^{(L,K)}_{n,\beta_n}}
& \leq
\frac{\theta}{\sigma^2} 
\frac{\alpha_n^2}{n^{3\beta_n/2}}  (\Ec{F(\mathfrak{e})}+\petito{1}),
\end{align*}
and it shows the first part of Lemma \ref{lemma first moment}.

We now want to prove the lower bound for $\E_{\Q^{(L)}}[Y_n (F_n)/D_n^{(L)}] = \E[Y_n(F_n)] / R(L)$.
We use the branching property at time $k_n$ to get
\begin{align} \label{ce}
\Ec{Y_n(F_n)}
& =
\E \Biggl[
\sum_{\abs{x} = k_n} 
\1_{\underline{V}(x) \geq -L, V(x) \in [k_n^{1/3}, k_n]}
\psi(V(x))
\Biggr],
\end{align}
where we set, for all $b \in [k_n^{1/3}, k_n]$, 
\begin{align*}
\psi(b)
& \coloneqq
\E_b \Biggr[
\sum_{\abs{z} = n-k_n}
\e^{-\beta_n V(z)} 
F \left( \bfV(z)- \frac{b}{\sigma \sqrt{n-k_n}} \right) 
\1_{ \Forall 0 \leq i \leq n-k_n, V(z_i) \in I_{n,i+k_n}}
\Biggl].
\end{align*}
We note $m \coloneqq n-k_n$ and fix $\lambda \in (0,1/2)$, then we have $\lfloor \lambda m \rfloor \leq \lfloor n/2 \rfloor - k_n$ for $n$ large enough because $k_n \ll n$.
Using the many-to-one lemma, we get, for $b \in [k_n^{1/3}, k_n]$,
\begin{align*} 
\psi(b)
& \geq
\e^{-b}
\Eci{b}{
\e^{-S_m / \alpha_n} 
F \left( \bfS^{(m)} - \frac{b}{\sigma \sqrt{m}} \right)
\1_{
S_m - \frac{3}{2} \log n \in [\alpha^-_n, \alpha^+_n],
\underline{S}_m \geq -L,
\min_{\lfloor \lambda m \rfloor \leq j \leq m} S_j 
	\geq \frac{3}{2} \log n}
} \\
& =
\e^{-b}
\Ec{
\e^{-(S_m+b) / \alpha_n} 
F(\bfS^{(m)})
\1_{
S_m - ( \frac{3}{2} \log n -b ) \in [\alpha^-_n, \alpha^+_n],
\underline{S}_m \geq -(L+b),
\min_{\lfloor \lambda m \rfloor \leq j \leq m} S_j 
	\geq \frac{3}{2} \log n -b}
}.
\end{align*}
Then, we cut the segment $[\alpha^-_n, \alpha^+_n]$ in pieces of length $h>0$, where $h$ is any real number if $S_1$ is nonlattice and is the span of the lattice if $S_1$ is lattice, and we get that $\psi (b)$ is larger than
\begin{align*}
&
\e^{- b} 
\sum_{i = \lceil \alpha^-_n / h \rceil}
	^{\lfloor \alpha^+_n / h \rfloor -1}
\frac{\e^{- (i+1)h/ \alpha_n}}{n^{3/2\alpha_n}} 
\Ec{
	F(\bfS^{(m)})
	\1_{S_m - ( \frac{3}{2} \log n -b ) \in \left[ ih, (i+1)h \right),
		\underline{S}_m \geq - (L+b),
		\min_{\lfloor \lambda m \rfloor \leq j \leq m} S_j 
			\geq \frac{3}{2} \log n -b}
	} \\
& \geq
\frac{\e^{- b}}{n^{3/2\alpha_n}} 
\sum_{i = \lceil \alpha^-_n / h \rceil}
	^{\lfloor \alpha^+_n / h \rfloor -1}
\e^{-(i+1)h / \alpha_n}
\sqrt{\frac{\pi}{2}} \frac{\theta \theta^-}{\sigma}
\frac{R(L+b)}{m^{3/2}} 
(\Ec{F(\mathfrak{e})}+\petito{1})
h R^-(ih),
\end{align*}
where the $\petito{1}$ is uniform in $b \in [k_n^{1/3}, k_n]$ and $i \in \llbracket \lceil \alpha^-_n / h \rceil, \lfloor \alpha^+_n / h \rfloor -1 \rrbracket$, by using Proposition \ref{prop convergence to the excursion}, because we have $L + k_n \ll \sqrt{n}$ and $h(\lfloor \alpha^+_n / h \rfloor -1) \ll \sqrt{n}$. 
Thus, we get, using that $R(L+b) \sim c_0 (L+b)$ and $R^-(ih) \sim c_0^- ih$ uniformly in $b$ and $i$,
\begin{align*}
\psi(b)
& \geq
\frac{\e^{- b}}{n^{3/2\alpha_n}} 
\sqrt{\frac{\pi}{2}} 
\frac{\theta \theta^-}{\sigma} 
\frac{c_0 (L+b)}{n^{3/2}}
h (\Ec{F(\mathfrak{e})}+\petito{1})
\sum_{i = \lceil \alpha^-_n / h \rceil}
	^{\lfloor \alpha^+_n / h \rfloor -1} 
\e^{-(i+1)h / \alpha_n}
c_0^- ih \\
& =
(L+b) \e^{- b}
\sqrt{\frac{2}{\pi}} 
\frac{1}{\sigma^3}
\frac{\alpha_n^2}{n^{3 \beta_n/2}}
(\Ec{F(\mathfrak{e})}+\petito{1}),
\end{align*}
uniformly in $b \in [k_n^{1/3}, k_n]$, where we used \eqref{equation lien theta c_0} twice and also \eqref{eq resultat somme}.
Coming back to (\ref{ce}), we get that $\Ec{Y_n(F_n)}$ is larger than
\begin{align} \label{cf}
\sqrt{\frac{2}{\pi}} 
\frac{1}{\sigma^3}
\frac{\alpha_n^2}{n^{3 \beta_n/2}}
(\Ec{F(\mathfrak{e})}+\petito{1})
\E \Biggl[
\sum_{\abs{x} = k_n} 
\1_{\underline{V}(x) \geq -L, V(x) \in [k_n^{1/3}, k_n]}
(L+V(x)) \e^{-V(x)}
\Biggr].
\end{align}
Using the many-to-one lemma, the expectation in \eqref{cf} is equal to
\begin{align}
\Ec{(S_{k_n} + L)
\1_{\underline{S}_{k_n} \geq -L, S_{k_n} \in [k_n^{1/3}, k_n]}} 
& \geq
\Ec{(S_{k_n} + L)
\1_{\underline{S}_{k_n} \geq -L, 
	(S_{k_n} + L)/ \sigma k_n^{1/2} \in [C^{-1},C]}}, \label{cg}
\end{align}
for all $C >0$.
We then choose a function $\chi \colon \R_+ \to \R$ continuous and bounded such that, for all $t \in \R_+$,
\[
t \1_{t  \in [2 C^{-1} , C/2 ]}
\leq
\chi(t) 
\leq 
t \1_{t  \in [C^{-1} , C]},
\]
and \eqref{cg} is larger than
\begin{align*}
\sigma k_n^{1/2}
\Ec{\chi \left( \frac{S_{k_n} + L}{\sigma k_n^{1/2}} \right) 
\1_{\underline{S}_{k_n} \geq -L}}
& =
\sigma k_n^{1/2} (1+\petito{1})
\frac{\theta R(L)}{k_n^{1/2}}
\int_0^\infty \chi(t) t \e^{-t^2/2} \diff t \\
& \geq
\sigma \theta R(L) (1+\petito{1})
\int_{2C^{-1}}^{C / 2} t^2 \e^{-t^2/2} \diff t,
\end{align*}
by applying \eqref{equation convergence vers la loi Rayleigh}.
Coming back to (\ref{cf}), we get
\begin{align*}
\liminf_{n\to\infty} \frac{n^{3 \beta_n/2}}{\alpha_n^2}
\Ec{Y_n(F_n)}
& \geq
\sqrt{\frac{2}{\pi}} 
\frac{1}{\sigma^3}
\Ec{F(\mathfrak{e})}
\sigma \theta R(L)
\int_{2C^{-1}}^{C / 2} t^2 \e^{-t^2/2} \diff t 
\xrightarrow[C \to \infty]{}
\frac{\theta}{\sigma^2}
R(L)
\Ec{F(\mathfrak{e})},
\end{align*}
using that $\int_0^\infty t^2 \e^{-t^2/2} \diff t = \sqrt{\frac{\pi}{2}}$.
Since $\E_{\Q^{(L)}}[Y_n(F_n)/D_n^{(L)}] = \E[Y_n(F_n)] / R(L)$, it concludes the proof of Lemma \ref{lemma first moment}.
\end{proof}

\subsection{Addition of the second barrier}
\label{subsection addition of the second barrier}

In this section, we prove Lemma \ref{lemma addition of the barrier} with a method similar to the one used by Madaule \cite{madaule2015} for his Lemma 4.9 (or Lemma 3.3 of A\"{\i}dékon \cite{aidekon2013}).
The main difference is that we do not only consider particles that are at a distance of order $1$ from $(3/2) \log n$, but we can nevertheless apply some of Madaule's results.
\begin{proof}[Proof of Lemma \ref{lemma addition of the barrier}]
We fix $L>0$ and $\varepsilon, \eta > 0$. 
For all $F \in \cC_b(\cD([0,1]))$ and $K > 0$, we have $\lvert W^{(L)}_{n,\beta_n}(F) - W^{(L,K)}_{n,\beta_n}(F) \rvert \leq 2 \norme{F} \lvert W^{(L)}_{n,\beta_n} - W^{(L,K)}_{n,\beta_n} \rvert$, where $W^{(L)}_{n,\beta_n} \coloneqq W^{(L)}_{n,\beta_n}(1)$ and $W^{(L,K)}_{n,\beta_n} \coloneqq W^{(L,K)}_{n,\beta_n}(1)$.
Therefore, it is sufficient to show that we have 
\begin{align} \label{fi}
\limsup_{n\to\infty}
\Q^{(L)}
\left(
\frac{n^{3\beta_n/2}}{\alpha_n^2} 
\frac{\abs{W^{(L)}_{n,\beta_n} - W^{(L,K)}_{n,\beta_n}}}{D_n^{(L)}}
>
\varepsilon
\right)
\leq \eta,
\end{align}
for $K > 0$ large enough.
Using Proposition A.3 of A\"{\i}dékon \cite{aidekon2013}, we know that $D_n^{(L)}$ converges in $L^1$ to $D_\infty^{(L)}$ under $\P$, so we can choose $M > 0$ large enough such that, for all $n \in \N$, $\Q^{(L)} (D_n^{(L)} > M) = \E [D_n^{(L)} \1_{D_n^{(L)} > M}] / R(L) \leq \eta/4$.
Therefore, the probability in \eqref{fi} is smaller than
\begin{align*} 
& 
\Q^{(L)}
\left(
n^{3\beta_n/2}
\abs{ W^{(L)}_{n,\beta_n} - W^{(L,K)}_{n,\beta_n}}
>
\varepsilon \alpha_n^2 D_n^{(L)},
\frac{\eta}{4} \leq D_n^{(L)} \leq M
\right)
+
\frac{\eta}{4} 
+
\Q^{(L)}
\left(
D_n^{(L)} < \frac{\eta}{4}
\right) \\
& =
\Ec{D_n^{(L)}
	\1_{n^{3\beta_n/2}
		\lvert W^{(L)}_{n,\beta_n} - W^{(L,K)}_{n,\beta_n} \rvert
		>
		\varepsilon \alpha_n^2 D_n^{(L)},
		\frac{\eta}{4} \leq D_n^{(L)} \leq M}}
+
\frac{\eta}{4}
+
\frac{1}{R(L)}
\Ec{D_n^{(L)} \1_{D_n^{(L)} < \frac{\eta}{4}}} \\
& \leq 
M
\Pp{n^{3\beta_n/2}
	\abs{ W^{(L)}_{n,\beta_n} - W^{(L,K)}_{n,\beta_n}}
	>
	\varepsilon \alpha_n^2 \frac{\eta}{4}}
+
\frac{\eta}{4}
+
\frac{\eta}{4},
\end{align*}
because $R(L) \geq 1$.
Thus, we now want to prove that, for some $K >0$ large enough, we have
\begin{align*}
\limsup_{n\to\infty}
\Pp{n^{3\beta_n/2}
	\abs{ W^{(L)}_{n,\beta_n} - W^{(L,K)}_{n,\beta_n}}
	>
	\varepsilon' \alpha_n^2}
\leq
2 \eta',
\end{align*}
with $\eta' \coloneqq \eta/4M$ and $\varepsilon' \coloneqq \varepsilon \eta /4$.
Moreover, using \eqref{eq tension position la plus basse}, we can fix $K' \geq 0$ such that $\P ( \min_{\abs{z} = n} V(z) < \frac{3}{2} \log n - K') \leq \eta'$.
Thus, our aim is now to show that we have
\begin{align} \label{fa}
\limsup_{n\to\infty}
\Pp{
	\abs{ W^{(L)}_{n,\beta_n} - W^{(L,K)}_{n,\beta_n}}
	\1_{\min_{\abs{z} = n} V(z) \geq \frac{3}{2} \log n - K'}
	>
	\varepsilon' \frac{\alpha_n^2}{n^{3\beta_n/2}}}
\leq
\eta',
\end{align}
for some $K \geq K'$ large enough.

Now, following Madaule's \cite{madaule2015} proof of his Lemma 4.9, we introduce the intervals $J_n(x) \coloneqq [ \frac{3}{2} \log n - x-1, \frac{3}{2} \log n - x )$, for $x\in \R$, and the events, for $i,\ell \in \N$ and $\lfloor n/2 \rfloor \leq k \leq n$,
\begin{align*}
E_{i,k,\ell}(z)
\coloneqq
\left\{
\underline{V}(z) \geq -L,
V(z_k) = \min_{\lfloor n/2 \rfloor \leq j \leq n} V(z_j)
	   \in J_n(K) - \ell,
V(z) \in J_n(K') + i
\right\},
\end{align*}
and, denoting $a_\ell \coloneqq \lfloor \e^{\nu (\ell + K)} \rfloor$ for some fixed $\nu \in (0,1)$,
\begin{align*}
F_\ell^1 (z)
\coloneqq
\bigcup_{i \geq 1, \lfloor n/2 \rfloor \leq k < n-a_\ell} E_{i,k,\ell}(z)
\quad \text{ and } \quad
F_\ell^2 (z)
\coloneqq
\bigcup_{i \geq 1, n- a_\ell \leq k \leq n} E_{i,k,\ell}(z).
\end{align*}
Then, we have
\begin{align} \label{fb}
\abs{W^{(L)}_{n,\beta_n} - W^{(L,K)}_{n,\beta_n}}
\1_{\min_{\abs{z} = n} V(z) \geq \frac{3}{2} \log n - K'}
& \leq
\sum_{\ell \geq 0}
\sum_{\abs{z} = n} 
	\e^{-\beta_n V(z)} 
	\left(
	\1_{F_\ell^1 (z)} + \1_{F_\ell^2 (z)}
	\right).
\end{align}
On the one hand, by Madaule's \cite{madaule2015} proof of his Lemma 4.9, we have the inequality
\begin{align*}
\P \Biggl( \sum_{\abs{z} = n} \1_{F_\ell^2 (z)} \geq 1 \Biggr)
\leq
c_{14} (1+a_\ell) (1+L) \e^{-K-\ell}
\end{align*}
and, therefore,
\begin{align}
\P \Biggl( 
\sum_{\ell \geq 0}
\sum_{\abs{z} = n} 
\e^{-\beta_n V(z)} 
\1_{F_\ell^2 (z)}
> 0 \Biggr)
& \leq
\sum_{\ell \geq 0}
\P \Biggl( \sum_{\abs{z} = n} \1_{F_\ell^2 (z)} \geq 1 \Biggr)
\leq
c_{15} (1+L) \e^{-(1-\nu)K}. \label{fc}
\end{align}
On the other hand, by using the many-to-one lemma, we get, for $i \geq 1$, $\lfloor n/2 \rfloor \leq k < n-a_\ell$ and $\ell \geq 0$,
\begin{align} \label{fd}
\E \Biggl[
\sum_{\abs{z} = n} 
	\e^{-\beta_n V(z)} 
	\1_{E_{i,k,\ell}(z)}
	\Biggr]
& =
\Ec{
\e^{-S_n/\alpha_n }
\1_{E_{i,k,\ell}} 
}
\leq
\frac{\e^{-(i-1 -K')/\alpha_n }}{n^{3/2\alpha_n}}
\Pp{E_{i,k,\ell}}, 
\end{align}
where we set
\begin{align*}
E_{i,k,\ell}
\coloneqq
\left\{
\underline{S}_n \geq -L,
S_k = \min_{\lfloor n/2 \rfloor \leq j \leq n} S_j
	\in J_n(K) - \ell,
S_n \in J_n(K') + i
\right\}.
\end{align*}
We recall Equation (4.27) of Madaule \cite{madaule2015}: 
\begin{align} \label{fe}
\Pp{E_{i,k,\ell}}
\leq
\left\{
\begin{array}{ll}
c_{16} \frac{(1+L) \log n}{n^{3/2} (n-k+1)^{3/2}} (1 + \ell + i)
& \text{if } \lfloor n/2 \rfloor \leq k < \lfloor 3n/4 \rfloor, \\
c_{16} \frac{(1+L)}{n^{3/2} (n-k+1)^{3/2}} (1 + \ell + i)
& \text{if } \lfloor 3n/4 \rfloor \leq k \leq n.
\end{array}
\right.
\end{align}
Applying \eqref{fe}, we get
\begin{align} \label{ff}
\sum_{k= \lfloor n/2 \rfloor}^{n-a_\ell-1}
\Pp{E_{i,k,l}}
\leq
c_{17} \frac{(1+L)}{n^{3/2}} (1 + \ell + i)
\left(
\frac{\log n}{\sqrt{n}}
+
a_\ell^{-1/2}
\right).
\end{align}
Using \eqref{fd}, \eqref{ff} and that $\Pp{E_{i,k,\ell}} = 0$ for $\ell \geq \frac{3}{2} \log n - K + L$, we have
\begin{align}
& \E \Biggl[
\sum_{\ell \geq 0}
\sum_{\abs{z} = n} 
\e^{-\beta_n V(z)} 
\1_{F_\ell^1 (z)}
\Biggr] \nonumber \\
& \leq
\sum_{\ell \geq 0}
\sum_{i\geq 1}
\sum_{k= \lfloor n/2 \rfloor}^{n-a_\ell-1}
\E \Biggl[
\sum_{\abs{z} = n} 
	\e^{-\beta_n V(z)} 
	\1_{E_{i,k,\ell}(z)} \Biggr] \nonumber \\
& \leq
c_{17} \e^{(K'+1)/\alpha_n }
\frac{(1+L)}{n^{3\beta_n /2}}
\sum_{\ell = 0}^{\lceil \frac{3}{2} \log n - K + L \rceil}
\left(
\frac{\log n}{\sqrt{n}}
+
a_\ell^{-1/2}
\right)
\sum_{i\geq 1}
\e^{-i/\alpha_n }
(1 + \ell + i).  \label{fg}
\end{align}
We can bound the sum on $i$ in \eqref{fg} by $(1+\ell) \alpha_n^2(1+\petito{1})$ uniformly in $\ell$.
Moreover, taking $K$ large enough such that $\e^{- \nu K} < 1/2$, we have $a_\ell \geq \e^{\nu (\ell + K)} / 2$ for all $\ell \geq 0$.
Thus, we get that \eqref{fg} is smaller than
\begin{align*}
& c_{18} (1+ \petito{1})
\frac{(1+L)}{n^{3\beta_n /2}}
\alpha_n^2
\left(
\frac{(\log n)^3}{n^{1/2}}
+
\e^{- \nu K / 2}
\right)
=
c_{18} \e^{- \nu K / 2} (1+L)
(1+ \petito{1})
\frac{\alpha_n^2}{n^{3\beta_n /2}},
\end{align*}
and, thus, with the Markov inequality, we have
\begin{align} \label{fh}
\P \Biggl(
\sum_{\ell \geq 0}
\sum_{\abs{z} = n} 
\e^{-\beta_n V(z)} 
\1_{F_\ell^1 (z)} 
>
\varepsilon' \frac{\alpha_n^2}{n^{3\beta_n/2}} \Biggr)
& \leq
\frac{c_{18}}{\varepsilon'} \e^{- \nu K / 2}
(1+L) (1+ \petito{1}). 
\end{align}
Finally, \eqref{fa} follows from \eqref{fb}, \eqref{fc} and \eqref{fh} by taking $K$ large enough and it concludes the proof of Lemma \ref{lemma addition of the barrier}.
\end{proof}

\subsection{From \texorpdfstring{$F$}{F} to \texorpdfstring{$F_n$}{Fn}}
\label{subsection from F to F_n}

We prove here that considering $F_n$ instead of $F$ does not change significantly the first moment.
\begin{proof}[Proof of Lemma \ref{lemma from F to F_n}]
To control the first moment of $W^{(L,K)}_{n,\beta_n}(\abs{F - F_n}) / D_n^{(L)}$ under $\E_{\Q^{(L)}}$, we follow the proof of Lemma \ref{lemma first moment} for the upper bound of the first moment of $W^{(L,K)}_{n,\beta_n}(F) / D_n^{(L)}$, but, instead of applying directly Proposition \ref{prop convergence to the excursion} as in \eqref{cm}, we use that
\begin{align*}
& \Ec{\abs{F - F_n} (\bfS^{(n)})
	\1_{\underline{S}_n \geq -L, 
	\min_{\lfloor n/2 \rfloor \leq j \leq n} S_j \geq \frac{3}{2} \log n - K,
	S_n - ( \frac{3}{2} \log n - K ) \in [ih, (i+1)h)}} \\
& =
\petito{
\sqrt{\frac{\pi}{2}} \frac{\theta \theta^-}{\sigma}
\frac{R(L)}{n^{3/2}} 
h R^-((i+1)h) },
\end{align*}
uniformly in $i \in [0, h (\lceil (\alpha^+_n + K) /h \rceil - 1)]$,
by using Proposition \ref{prop convergence to the excursion} combined with Lemmas \ref{lemma weak convergence approximation F_n F} and \ref{lemma weak convergence approximation F_n F 2}, because $F \in \cC_b^u(\cD([0,1]))$.
The result follows with the same calculations as in the proof of Lemma \ref{lemma first moment}.
\end{proof}

\subsection{The peeling lemma}
\label{subsection peeling lemma}

The aim of this subsection is to prove Lemma \ref{lemma from Y_n to Y_n'}, which shows that introducing the event $\{z \in B_n \}$ does not change the first moment.
This proof is based on the so-called peeling lemma (see Shi \cite{shi2015}), which controls that the spine, conditioned to have a specific trajectory, does not have too many and too low children. 
Such lemmas have been proved in the case where the spine ends up at a distance of constant order from $\frac{3}{2} \log n$ (see \cite[Lemma C.1]{aidekon2013}, \cite[Lemma 7.1]{madaule2015} and \cite[Theorem 5.14]{shi2015})
and also when it ends up at a position of order $\sqrt{n}$ (see \cite[Lemma 4.7]{aidekonshi2014}).
Here we have to deal with the intermediate case where the spine end up far above $\frac{3}{2} \log n$ and far below $\sqrt{n}$.

In order to state the peeling lemma in a general setting, we introduce some notation.
For $b,u,v \in \R$ and $n \in \N$, we set
\begin{align*}
A_n^{b,u,v}
\coloneqq
\left\{
	\abs{z} = n :
	V(z) \in [v+b, v+b+1), 
	\underline{V}(z) \geq -u, 
	\min_{\lfloor n/2 \rfloor \leq j \leq n} V(z_j) \geq v
\right\}.
\end{align*}
We consider
\begin{align*}
a_i^{(n)}
\coloneqq
\left\{
\begin{array}{ll}
-u 
& \text{if } 0 \leq i < \lfloor n/2 \rfloor, \\
v
& \text{if } \lfloor n/2 \rfloor \leq i \leq n,
\end{array}
\right.
\quad \text{ and } \quad
\ell_i^{(n)}
\coloneqq
\left\{
\begin{array}{ll}
i^{1/7} 
& \text{if } 0 \leq i < \lfloor n/2 \rfloor, \\
(n-i)^{1/7}
& \text{if } \lfloor n/2 \rfloor \leq i \leq n,
\end{array}
\right.
\end{align*}
and the following set
\[
B_n^\rho
\coloneqq
\Biggl\{
	\abs{z} = n :
	\Forall j \in \llbracket 0, n-1 \rrbracket,
	\sum_{y \in \Omega(z_{j+1})} 
	\left(1 + [V(y)-a_j^{(n)}]_+ \right)
	\e^{-[V(y)-a_j^{(n)}]} 
	\leq 
	\rho
	\e^{- \ell_j^{(n)}} 
\Biggr\}.
\]
We can now state our version of the peeling lemma, which covers the case where the spine ends up far below $\sqrt{n}$ and is therefore more general than the peeling lemmas in \cite{aidekon2013,madaule2015,shi2015}.
\begin{lem}[Peeling lemma] \label{lemma peeling}
For all $\varepsilon >0$, there exist $\rho > 0$ and $n_0 \in \N$ such that, for all $n \geq n_0$, $b \in \R_+$ and $u,v \in [0, n^{1/8}]$,
\begin{align*}
\Q \left( w_n \in A_n^{b,u,v} \cap (B_n^\rho)^c \right) 
\leq
\varepsilon \frac{R(u) R^-(b)}{n^{3/2}}.
\end{align*}
\end{lem}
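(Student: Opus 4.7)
The proof follows the general template of the classical peeling lemmas (Lemma C.1 of \cite{aidekon2013}, Lemma 7.1 of \cite{madaule2015}, Theorem 5.14 of \cite{shi2015}), adapted here to the regime where the endpoint $v+b$ is allowed to be as large as $n^{1/8}$ (so neither of order $1$, as in the first three references, nor of order $\sqrt{n}$, as in \cite{aidekonshi2014}). The starting point is Proposition \ref{prop lyon's change of measure}: under $\Q$, the spine $(V(w_i))_{0\le i\le n}$ has the law of $(S_i)_{0\le i\le n}$, and conditionally on the spine the sibling subtrees are independent shifted branching random walks. By a union bound over $j\in\llbracket 0,n-1\rrbracket$, it suffices to control
\[
q_j \coloneqq \Q\!\left(w_n\in A_n^{b,u,v},\ \Sigma_j > \rho\,\e^{-\ell_j^{(n)}}\right),\qquad \Sigma_j \coloneqq \sum_{y\in\Omega(w_{j+1})}\!\bigl(1+[V(y)-a_j^{(n)}]_+\bigr)\e^{-[V(y)-a_j^{(n)}]}.
\]

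\textbf{Splitting the spine trajectory.} First I would pick $r_i\coloneqq \ell_i^{(n)}+1$ (or any sequence with $r_i\ge \ell_i^{(n)}$ and $\sum_i r_i i^{-3/2}<\infty$, which holds since $\ell_i^{(n)}\lesssim i^{1/7}$) and apply Lemma \ref{lemme trajectoire sup�rieure � i^alpha} with $\ell=\lfloor n/2\rfloor$: for any $\varepsilon>0$, there exists $\mu=\mu(\varepsilon)$ such that the $\Q$-probability that the spine ends in $A_n^{b,u,v}$ while $V(w_i)<a_i^{(n)}+r_i-\mu$ for some $i$ is at most $(\varepsilon/2)(1+u)(1+b)/n^{3/2}$, which by \eqref{equation R 2} is $\le \varepsilon R(u) R^-(b)/(2 n^{3/2})$ up to constants. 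Denote by $\cG_n$ the complementary \emph{good trajectory} event $\{V(w_i)\ge a_i^{(n)}+r_i-\mu\ \forall i\}$; it remains to bound $\sum_j \Q(w_n\in A_n^{b,u,v},\cG_n,\Sigma_j>\rho\e^{-\ell_j^{(n)}})$.

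\textbf{Markov inequality on the siblings.} Applying Markov's inequality and conditioning on the spine,
\[
\Q\!\left(w_n\in A_n^{b,u,v},\cG_n,\Sigma_j>\rho\,\e^{-\ell_j^{(n)}}\right)\le \frac{\e^{\ell_j^{(n)}}}{\rho}\,\E_\Q\!\left[\1_{w_n\in A_n^{b,u,v},\cG_n}\,\Phi_j(V(w_j),V(w_{j+1}))\right],
\]
where $\Phi_j(s,t)=\E_\Q[\Sigma_j\mid V(w_j)=s, V(w_{j+1})=t]$. By Lyons' spinal decomposition the offspring of $w_j$ is a size-biased point process, and a direct computation yields $\Phi_j(s,t)=\e^{-s}\E[\sum_{|z|=1}\e^{-V(z)}\!\!\cdot\!(1+[s+V(z)-a_j^{(n)}]_+)\e^{-(s+V(z)-a_j^{(n)})}]-g_j(t)\le C\,(1+(s-a_j^{(n)})_+)\,\e^{-(s-a_j^{(n)})}$, where the bound uses $\E[X]=1$ and $\E[\widetilde X]<\infty$ from \eqref{hypothese 2}--\eqref{hypothese 3}. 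On $\cG_n$ this is at most $C'\,e^{\mu}(1+(S_j-a_j^{(n)})_+)\,\e^{-r_j}\cdot \e^{-(S_j-a_j^{(n)}-r_j+\mu)_+}$ (separating the part of the exponential already used to bound $r_j-\mu$ from the residual excess).

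\textbf{Summing over $j$ via ballot estimates.} The final task is to bound
\[
\sum_{j=0}^{n-1}\frac{\e^{\ell_j^{(n)}-r_j}}{\rho}\,\E\!\left[\1_{\cG_n}\,h_j(S_j)\right],\qquad h_j(s)\coloneqq (1+(s-a_j^{(n)})_+)\e^{-(s-a_j^{(n)}-r_j+\mu)_+},
\]
with $S$ satisfying $\underline S_{\lfloor n/2\rfloor}\ge -u$, $\min_{\lfloor n/2\rfloor\le i\le n}S_i\ge v$, $S_n\in[v+b,v+b+1)$ and the lower envelope $S_i\ge a_i^{(n)}+r_i-\mu$. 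Splitting the range of $S_j$ dyadically around $a_j^{(n)}$ and using the two-barrier ballot estimate \eqref{eq ballot theorem and barrier} (applied to $S$ before time $j$, then conditionally on $S_j$ to $S$ after time $j$, playing the role of $S^-$ for the reversed walk on the second half), one obtains $\E[\1_{\cG_n}h_j(S_j)]\le c\, R(u)R^-(b) q_j^\ast/n^{3/2}$ where $q_j^\ast$ decays like a polynomial in $\min(j,n-j)$. Since $r_j-\ell_j^{(n)}\ge 1$, the series $\sum_j \e^{\ell_j^{(n)}-r_j}q_j^\ast$ converges, so the total contribution is $\le C_0 R(u)R^-(b)/(\rho\,n^{3/2})$, which is smaller than $\varepsilon R(u)R^-(b)/(2n^{3/2})$ once $\rho$ is chosen large enough. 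Combining with the bad-trajectory bound concludes the proof.

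\textbf{Main obstacle.} The delicate point is Step three: getting the right dependence in $s$ for $\Phi_j(s,t)$ uniformly in $j$, which is exactly why the second logarithmic moment assumption $\E[X(\log_+ X)^2+\widetilde X\log_+\widetilde X]<\infty$ in \eqref{hypothese 3} cannot be weakened; and in Step four, organizing the ballot estimates so that the bound factorizes as $R(u)R^-(b)/n^{3/2}$ uniformly for $u,v\in[0,n^{1/8}]$, which forces us to use Lemma \ref{lemme trajectoire sup�rieure � i^alpha} rather than the cruder envelopes $r_i=i^\alpha$ with $\alpha\in(0,1/6)$ available in the previous peeling lemmas.
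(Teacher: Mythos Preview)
Your proposal has a genuine gap at the Markov-inequality step. Under $\Q$, the offspring of $w_j$ follows the \emph{size-biased} law $\hat\cL$, so for any nonnegative $g$,
\[
\E_\Q\!\Bigl[\sum_{y\text{ child of }w_j} g(\Delta(y))\Bigr]=\E_\P\!\Bigl[X\cdot\sum_{|z|=1}g(V(z))\Bigr],
\]
not the many-to-one expression you wrote for $\Phi_j$. With $g(v)=(1+[s-a_j^{(n)}+v]_+)\e^{-(s-a_j^{(n)}+v)}$ this gives
\[
\E_\Q[\Sigma_j\mid V(w_j)=s]\le \e^{-(s-a_j^{(n)})}\bigl(1+(s-a_j^{(n)})_+\bigr)\,\E_\P\bigl[X(X+\widetilde X)\bigr],
\]
and the last factor involves $\E[X^2]$, which is \emph{not} assumed in \eqref{hypothese 3}. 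So your claimed bound ``$\Phi_j\le C(1+(s-a_j^{(n)})_+)\e^{-(s-a_j^{(n)})}$ using only $\E[X]=1$ and $\E[\widetilde X]<\infty$'' is wrong, and in general Markov's inequality yields an infinite (useless) bound.

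The paper avoids this precisely by \emph{not} taking the expectation of $\Sigma_j$. On the good-trajectory event $\{V(w_j)\ge a_j^{(n)}+2\ell_j^{(n)}-\mu\}$, the bad event $\{\Sigma_j>\rho\e^{-\ell_j^{(n)}}\}$ forces
\[
\Theta(w_{j+1})\coloneqq\sum_{y\in\Omega(w_{j+1})}(1+\Delta(y)_+)\e^{-\Delta(y)}\ >\ \tilde\rho\,\e^{[V(w_j)-a_j^{(n)}]/3},
\]
hence the \emph{pathwise} constraint $V(w_j)-a_j^{(n)}<3\log_+\Theta(w_{j+1})$. Feeding this into the ballot estimates \eqref{eq ballot theorem and barrier} and \eqref{equation MA somme de probas} on each side of time $j$ and summing over $j$ produces a bound proportional to
\[
\E_\P\!\Bigl[\bigl(X\log_+^2(X+\widetilde X)+\widetilde X\log_+(X+\widetilde X)\bigr)\1_{X+\widetilde X>\tilde\rho}\Bigr],
\]
which is finite by \eqref{hypothese 3} and vanishes as $\rho\to\infty$. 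The passage through $\log_+\Theta$ rather than $\Theta$ itself is exactly what reduces the required moment from $\E[X^2]$ to $\E[X(\log_+X)^2]$; your Markov route cannot exploit this. (Incidentally, the paper does use $\ell_j^{(n)}=j^{1/7}$, i.e.\ an envelope $i^\alpha$ with $\alpha<1/6$; the extra generality of Lemma~\ref{lemme trajectoire sup�rieure � i^alpha} is not needed here.)
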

\begin{rem}
We present here the peeling lemma in terms of probability measure $\Q$, because it simplifies somehow the proof (for example, under $\Q$, the reproduction law along the spine does not depend on the position), but it is a direct consequence that,
for all $\varepsilon >0$, there exist $\rho > 0$ and $n_0 \in \N$ such that, for all $n \geq n_0$, $b \in \R_+$ and $u,v \in [0, n^{1/8}]$,
\begin{align*}
\Q^{(u)} \left( w_n^{(u)} \in A_n^{b,u,v} \cap (B_n^\rho)^c \right) 
\leq
\varepsilon \frac{R(u+v+b) R^-(b)}{n^{3/2}}.
\end{align*}
\end{rem}
Before proving the peeling lemma, we first use it to show Lemma \ref{lemma from Y_n to Y_n'}.
\begin{proof}[Proof of Lemma \ref{lemma from Y_n to Y_n'}]
We set $Y_n \coloneqq Y_n(1)$ and $Y_n' \coloneqq Y_n'(1)$ and, since $F$ is bounded, it is sufficient to show that
\begin{align*}
\Eci{\Q^{(L)}}{\frac{Y_n - Y_n'}{D_n^{(L)}}}
& =
\petito{\frac{\alpha_n^2}{n^{3 \beta_n /2}}}
\end{align*}
We first change probabilities from $\Q^{(L)}$ to $\Q$: we have
\begin{align}
\Eci{\Q^{(L)}}{\frac{Y_n - Y_n'}{D_n^{(L)}}}
& =
\frac{1}{R(L)}
\Eci{\Q}{\frac{Y_n - Y_n'}{W_n}}
=
\frac{1}{R(L)}
\Eci{\Q}{
\e^{-V(w_n) / \alpha_n} 
\1_{w_n \in A_n \cap B_n^c}}. \label{ds}
\end{align}
Then, setting $u = L$ and $v = \frac{3}{2} \log n$ and cutting the segment $[\alpha^-_n, \alpha^+_n]$ in pieces of length 1, we get that \eqref{ds} is smaller than
\begin{align}
\frac{1}{R(L)}
\sum_{i = \lfloor \alpha_n^- \rfloor}
	^{\lceil \alpha_n^+ \rceil - 1}
\e^{-i/ \alpha_n} 
\Q \left( w_n \in A_n^{i,u,v} \cap B_n^c \right)
& \leq
\petito{\frac{1}{n^{3/2}}}
\sum_{i = \lfloor \alpha_n^- \rfloor}
	^{\lceil \alpha_n^+ \rceil - 1}
\e^{-i/ \alpha_n} 
R^-(i), \label{dt}
\end{align}
using Lemma \ref{lemma peeling} uniformly in $i$, noting that $B_n \subset B_n^{\rho_n}$ and $\rho_n \to \infty$.
Using then \eqref{equation R 2} and \eqref{eq resultat somme}, we get that \eqref{dt} is a $o(\alpha_n^2 / n^{3/2})$ and it concludes the proof of Lemma \ref{lemma from Y_n to Y_n'}.
\end{proof}

\begin{proof}[Proof of Lemma \ref{lemma peeling}]
By Lemma \ref{lemme trajectoire supérieure à i^alpha} and \eqref{equation R 2}, it exists $\mu >0$ such that, for all $b,u \in \R_+$, $v \in \R$ and $n\in\N^*$, we have
\begin{align*}
\Q \left( w_n \in A_n^{b,u,v},
	\Exists i \in \llbracket 0,n \rrbracket:
	V(w_i) < a_i^{(n)} + 2\ell_i^{(n)} - \mu
	\right) 
& \leq
\frac{\varepsilon}{2}
\frac{R(u) R^-(b)}{n^{3/2}}.
\end{align*}
Thus, it is sufficient to show that
\begin{align} \label{ga}
\Q \left( w_n \in A_n^{b,u,v} \cap B_n^c,
	\Forall j \in \llbracket 0,n \rrbracket,
	V(w_j) \geq a_j^{(n)} + 2\ell_j^{(n)} - \mu
	\right) 
& \leq
\frac{\varepsilon}{2}
\frac{R(u) R^-(b)}{n^{3/2}},
\end{align}
for $n$ large enough, $b \in \R_+$ and $u,v \in [0, n^{1/8}]$.
Therefore, we now prove \eqref{ga}.
We first set, for $0 \leq i \leq n-1$,
\[
B_{n,i}^\rho
\coloneqq
\Biggl\{
	\abs{z} = n :
	\sum_{y \in \Omega(z_{i+1})} 
	\left(1 + [V(y)-a_i^{(n)}]_+ \right)
	\e^{-[V(y)-a_i^{(n)}]} 
	\leq 
	\rho
	\e^{- \ell_i^{(n)}} 
\Biggr\}.
\]
Since for all $u,v \in \R$, $1 + (u+v)_+ \leq (1+ u_+) (1 + v_+)$, we have
\begin{align*}
\sum_{y \in \Omega(z_{i+1})} 
\left(1 + [V(y)-a_i^{(n)}]_+ \right)
\e^{-[V(y)-a_i^{(n)}]} 
\leq
\left(1 + [V(w_i)-a_i^{(n)}]_+ \right)
\e^{-[V(w_i)-a_i^{(n)}]}
\Theta (w_{i+1}),
\end{align*}
where we set, for $x \in \T$, by noting $\overleftarrow{x}$ the parent of $x$ and  $\Delta(x) \coloneqq V(x) - V(\overleftarrow{x})$,
\[
\Theta (x) 
\coloneqq
\sum_{y \in \Omega(x)} 
(1 + \Delta(y)_+)
\e^{-\Delta(y)}.
\]
Thus, we have, on event $\{ w_n \notin B_{n,i}^\rho \} \cap \{ V(w_i) \geq a_i^{(n)} + 2\ell_i^{(n)} - \mu \}$,
\begin{align*}
\left(1 + [V(w_i)-a_i^{(n)}]_+ \right)
\e^{-[V(w_i)-a_i^{(n)}]}
\Theta (w_{i+1})
\geq
\rho \e^{- \ell_i^{(n)}}
\geq
\rho \e^{- [V(w_i) - a_i^{(n)} + \mu]/2}
\end{align*}
and, if we are moreover on event $\{ w_n \in A_n^{b,u,v} \}$ so that $V(w_i) \geq a_i^{(n)}$, it implies that
\begin{align*}
\Theta (w_{i+1})
\geq
\rho \e^{- \mu/2} 
\frac{\e^{[V(w_i)-a_i^{(n)}]/2}}{1 + [V(w_i)-a_i^{(n)}]_+}
\geq
c_{19} \rho \e^{- \mu/2} 
\e^{[V(w_i)-a_i^{(n)}]/3},
\end{align*}
where $c_{19} \coloneqq \inf_{u\geq0} \e^{u/6}/(1+u) > 0$.
Therefore, we get
\begin{align}
\Q \left( w_n \in A_n^{b,u,v} \cap (B_n^\rho)^c,
	\Forall j \in \llbracket 0,n \rrbracket,
	V(w_j) \geq a_j^{(n)} + 2 \ell_j^{(n)} - \mu
	\right)
& \leq
\sum_{i = 0}^{n-1} q_i, \label{gb}
\end{align}
where we set, for $0 \leq i \leq n-1$,
\begin{align*}
q_i
\coloneqq
\Q \left( 
	w_n \in A_n^{b,u,v}, 
	\Forall j \in \llbracket 0,n \rrbracket, 
		V(w_j) \geq a_j^{(n)} + 2 \ell_j^{(n)} - \mu,
	\Theta (w_{i+1}) > \tilde{\rho} \e^{[V(w_i)-a_i^{(n)}]/3}
	\right)
\end{align*}
and $\tilde{\rho} \coloneqq c_{19} \rho \e^{- \mu/2}$.
From now, we choose $\rho$ such that $\tilde{\rho} \geq \e$.
We first consider the case $0 \leq i < \lfloor n/2 \rfloor$.
Since $a_i^{(n)} = -u$, we have
\begin{align*}
q_i
& \leq
\Eci{\Q}{
1_{\underline{V}(w_i) \geq -u,
	\Theta (w_{i+1}) > \tilde{\rho} \e^{[V(w_i) + u]/3} }
G_i( V(w_{i+1}))
},
\end{align*}
where we set, for $x \in \R$,
\begin{align*}
G_i(x)
& \coloneqq
\Q_x \left( 
	\underline{V}(w_{n-i-1}) \geq -u,
	V(w_{n-i-1}) -v \in [b, b+1),
	\min_{\lfloor n/2 \rfloor-i-1 \leq j \leq n-i-1} V(w_j) \geq v
	\right) \\
& \leq
\Pp{
	\underline{S}_{n-i-1} \geq -u-x,
	S_{n-i-1} - (v-x) \in [b, b+1),
	\min_{\lfloor (n-i-1)/2 \rfloor \leq j \leq n-i-1} S_j \geq v-x
	} \\
& \leq
c_5 
\frac{(1+ x+u)2(b+2)}{(n-i)^{3/2}} \\
& \leq
c_{20} 
\frac{(1+ x+u)R^-(b)}{n^{3/2}},
\end{align*}
using successively Proposition \ref{prop lyon's change of measure} (ii), \eqref{eq ballot theorem and barrier} and \eqref{equation R 2}.
We thus get
\begin{align*}
q_i
& \leq
c_{20}
\frac{R^-(b)}{n^{3/2}} 
\Eci{\Q}{
1_{\underline{V}(w_i) \geq -u,
	\Theta (w_{i+1}) > \tilde{\rho} \e^{[V(w_i) +u]/3} }
\left(
1 + V(w_{i+1}) + u
\right)
} \\
& \leq
c_{20}
\frac{R^-(b)}{n^{3/2}} 
\Eci{\Q}{
1_{\underline{V}(w_i) \geq -u,
	V(w_i) +u
		< 3 \log \frac{\Theta (w_{i+1})}{\tilde{\rho}} }
\left(
1 + V(w_i) +u + \Delta (w_{i+1})
\right)
} \\
& \leq
c_{20}
\frac{R^-(b)}{n^{3/2}} 
\Eci{\Q}{
1_{\underline{V}(w_i) \geq -u,
	V(w_i) +u
		< 3 \log \frac{\Theta (w_{i+1})}{\tilde{\rho}} }
\left[
1 + 3 \log_+ \frac{\Theta (w_{i+1})}{\tilde{\rho}} + (\Delta (w_{i+1}))_+
\right]
}.
\end{align*}
But, under $\Q$, $(\Theta (w_{i+1}),\Delta (w_{i+1}))$ is independent of $(V(w_j))_{0\leq j \leq i}$ and has moreover the same law as $(X+ \widetilde{X},V(w_1))$, where $X$ and $\widetilde{X}$ are defined in \eqref{equation def X et tildeX}.
Therefore, we get, by integrating first on $(V(w_j))_{0\leq j \leq i}$,
\begin{align}
q_i
& \leq
c_{20}
\frac{R^-(b)}{n^{3/2}} 
\Eci{\Q}{
	F_i(X + \widetilde{X}) 
	\left(
	1 + 3 \log_+ \frac{X+ \widetilde{X}}{\tilde{\rho}} + V(w_1)_+]
	\right)}, \label{gc}
\end{align}
where we set, for $x>0$,
\begin{align*}
F_i(x) 
& \coloneqq
\Q
\left(
\underline{V}(w_i) \geq -u,
V(w_i) +u 
	< 3 \log \frac{x}{\tilde{\rho}} 
\right)  =
\1_{0 < \log \frac{x}{\tilde{\rho}}} 
\Pp{\underline{S}_i \geq -u, 
	S_i < - u + 3 \log_+ x},
\end{align*}
by using Proposition \ref{prop lyon's change of measure} (ii) and that $\tilde{\rho} \geq 1$.
Then, applying \eqref{equation MA somme de probas}, we have
\begin{align*}
\sum_{i = k_n}^{\lfloor n/2 \rfloor - 1}
F_i(x) 
& \leq
\1_{x > \tilde{\rho}} 
c_6 
\left( 1 + 3\log_+ x \right) 
(1+ u)
\leq
3 \frac{c_6}{c_1}
R(u)
(1 + \log_+ x)
\1_{x > \tilde{\rho}}
\end{align*}
using also \eqref{equation R 2}.
Coming back to \eqref{gc} and noting that, since $X+ \widetilde{X} > \tilde{\rho} \geq \e$, we have $\log_+ (X+ \widetilde{X}) \geq 1$, this gives 
\begin{align}
\sum_{i = k_n}^{\lfloor n/2 \rfloor - 1} q_i
& \leq
c_{21} 
\frac{R(u) R^-(b)}{n^{3/2}} 
\Eci{\Q}{
	\1_{X + \widetilde{X} > \tilde{\rho}}
	\log_+ (X+ \widetilde{X})
	\left(
	\log_+ (X+ \widetilde{X}) + V(w_1)_+]
	\right)} \label{gd}
\end{align}
and concludes the case $0 \leq i < \lfloor n/2 \rfloor$.

We now consider the case $\lfloor n/2 \rfloor \leq i < n$, so $a_i^{(n)} = v$.
On event $\{ w_n \in A_n^{b,u,v}, \Theta (w_{i+1}) > \tilde{\rho} \e^{[V(w_i)-v]/3} \}$, 
we have $V(w_i) = V(w_n) - (V(w_n) - V(w_i)) \in [v + b - (V(w_n) - V(w_i)), v + b+1 - (V(w_n) - V(w_i))]$,
$\Theta(w_{i+1}) > \tilde{\rho} \vee \tilde{\rho} \e^{[V(w_i)-V(w_n) + b] / 3}$ and, for all $i+1 \leq j \leq n$, $V(w_n) - V(w_j) \leq b+1$.
Moreover, we set
\begin{align*}
\tilde{a}_j^{(n)}
\coloneqq
\left\{
\begin{array}{ll}
-u 
& \text{if } 0 \leq j < \lfloor n/4 \rfloor, \\
v
& \text{if } \lfloor n/4 \rfloor \leq j \leq n,
\end{array}
\right.
\end{align*}
and note that, on event $\{ w_n \in A_n^{b,u,v}, \Forall j \in \llbracket 0,n \rrbracket, V(w_j) \geq a_j^{(n)} + 2 \ell_j^{(n)} - \mu \}$, for $n$ large enough, we have 
$\Forall j \in \llbracket \lfloor n/4 \rfloor,\lfloor n/2 \rfloor - 1 \rrbracket, V(w_j) \geq v = \tilde{a}_j^{(n)}$, 
because $\ell_j^{(n)} \geq \lfloor n/4 \rfloor^{1/7}$ and $u+v \leq 2 n^{1/8}$.
Thus, we get
\begin{align}
\begin{split}
q_i
& \leq
\Q
\Bigl( 
\Forall j \in \llbracket 1,i \rrbracket, V(w_j) \geq \tilde{a}^{(n)}_j,
	V(w_i) - \left( v - (V(w_n) - V(w_i)) \right) \in [b, b+1]
\\
& \relphantom{\leq} \hphantom{\Q \Bigl( {}}
\Theta (w_{i+1}) > \tilde{\rho} \vee \tilde{\rho} \e^{[V(w_i)-V(w_n) + b] / 3},
	\Forall j \in \llbracket i+1, n \rrbracket, V(w_n) - V(w_j) \leq b+1
\Bigr). \label{ge}
\end{split}
\end{align}
Under $\Q$, $(\Theta (w_{i+1}), (V(w_n) - V(w_j))_{i \leq j \leq n})$ is independent of $(V(w_j))_{0 \leq j \leq i}$, so the right-hand side of \eqref{ge} is equal to
\begin{align}
\Eci{\Q}
{
H_i(V(w_n) - V(w_i))
\1_{\Theta (w_{i+1}) > \tilde{\rho} \vee \tilde{\rho} \e^{[V(w_i)-V(w_n) + b] / 3},
	\Forall j \in \llbracket i+1, n \rrbracket, V(w_n) - V(w_j) \leq b+1}
}, \label{gf}
\end{align}
where we set, for $x \in \R$,
\begin{align*}
H_i(x)
& \coloneqq
\Q
\left( 
\underline{V}(w_i) \geq -u,
\min_{\lfloor n/4 \rfloor \leq j \leq i} V(w_j) \geq v,
V(w_i) - (v - x) \in [b, b+1] 
\right) \\
& \leq
\Pp{ 
\underline{S}_i \geq -u,
\min_{\lfloor i/2 \rfloor \leq j \leq i} S_j \geq v,
S_i - v \in [b-x, b-x+1]
} \\
& \leq
c_{22} \frac{R(u)}{n^{3/2}} (1 + b - x),
\end{align*}
using successively Proposition \ref{prop lyon's change of measure} (ii), \eqref{eq ballot theorem and barrier} and \eqref{equation R 2} as before.
On the event $\{ \Theta (w_{i+1}) > \tilde{\rho} \e^{[V(w_i)-V(w_n) + b] / 3}\}$, using that $\tilde{\rho} \geq 1$, we have $b - (V(w_n) - V(w_i)) \leq 3 \log_+ \Theta (w_{i+1})$ and also $V(w_n) -V(w_{i+1}) \geq b - 3 \log_+ \Theta (w_{i+1}) - \Delta(w_{i+1})_+$,
therefore, \eqref{gf} is smaller than
\begin{align}
\begin{split}
& c_{22} \frac{R(u)}{n^{3/2}} 
\E_{\Q}
\Bigl[
\left( 1 + 3 \log_+ \Theta (w_{i+1}) \right) 
\1_{\Theta (w_{i+1}) > \tilde{\rho}} \\
& \hphantom{c_{22} \frac{R(u)}{n^{3/2}} \E_{\Q} \Bigl[}
{} \times
\1_{V(w_n) -V(w_{i+1}) \geq b - 3 \log_+ \Theta (w_{i+1}) 
								- \Delta(w_{i+1})_+,
	\Forall j \in \llbracket i+1, n \rrbracket, V(w_n) - V(w_j) \leq b+1}
\Bigr].
\end{split} \label{gg}
\end{align}
Note then that, under $\Q$, $(\Theta (w_{i+1}),\Delta (w_{i+1}))$ is independent of $(V(w_n) - V(w_j))_{i+1 \leq j \leq n}$ and has the same law as $(X + \widetilde{X},V(w_1))$, so \eqref{gg} is equal to
\begin{align}
& c_{22} \frac{R(u)}{n^{3/2}} 
\Eci{\Q}
{
\left( 1 + 3 \log_+ (X + \widetilde{X}) \right)
\1_{X + \widetilde{X} > \tilde{\rho}}
\Gamma_i \left( 3 \log_+ (X + \widetilde{X}) + V(w_1)_+ \right)
}, \label{gh}
\end{align}
where we set, for $x \geq 0$,
\begin{align*}
\Gamma_i (x)
& \coloneqq
\Q
\left(	
V(w_n) -V(w_{i+1}) \geq b - x,
\Forall j \in \llbracket i+1, n \rrbracket, V(w_n) - V(w_j) \leq b + 1
\right) \\
& =
\Pp{
S^-_{n-i-1} \leq (x +1) -(b+1),
\underline{S}^-_{n-i-1} \geq -(b+1)
}, 
\end{align*}
by applying Proposition \ref{prop lyon's change of measure} (ii) and then reversing time.
Thus, using \eqref{equation MA somme de probas} and \eqref{equation R 2}, we get
\begin{align*}
\sum_{i = \lfloor n/2 \rfloor}^{n-1}
\Gamma_i (x)
& \leq 
c_6^- (1+x + 1) (1+ b + 1)
\leq
4\frac{c_6^-}{c_1^-} (1+x) R^-(b).
\end{align*}
On event $\{X+ \widetilde{X} > \tilde{\rho}\}$, we have $\log_+ (X+ \widetilde{X}) \geq 1$ so, coming back to (\ref{gh}), we get
\begin{align}
\sum_{i = \lfloor n/2 \rfloor}^{n-1} q_i
& \leq
c_{23}
\frac{R(u) R^-(b)}{n^{3/2}}
\Eci{\Q}
{
\log_+ (X + \widetilde{X})
\1_{X + \widetilde{X} > \tilde{\rho}}
\left(\log_+ (X + \widetilde{X}) + V(w_1)_+ \right)
}. \label{gi}
\end{align}
Finally, using \eqref{gd}, \eqref{gi} and that $\Ecsqi{\Q}{V(w_1)_+}{\sF_1} = \widetilde{X} / X$ by Proposition \ref{prop lyon's change of measure} (i), we get
\begin{align}
\sum_{i = 0}^{n-1} q_i
& \leq
(c_{21} + c_{23})
\frac{R(u) R^-(b)}{n^{3/2}}
\Ec{ \left(X \log_+^2 (X+ \widetilde{X})
	+ \widetilde{X} \log_+ (X+ \widetilde{X}) \right) 
	\1_{X + \widetilde{X} > \tilde{\rho}}}. \label{gj}
\end{align}
Using \eqref{hypothese 3}, we can choose $\rho$ large enough such that the expectation in \eqref{gj} is smaller than $\varepsilon / 2 (c_{21} + c_{23})$ and, recalling \eqref{gb}, it proves \eqref{ga} and concludes the proof of Lemma \ref{lemma peeling}.
\end{proof}

\subsection{Second moment of \texorpdfstring{$Y_n'(F_n)$}{Yn'(Fn)}}
\label{subsection second moment}

For $F \in \cC(\cD([0,1]))$, by decomposing along the spine, $Y_n'(F)$ is equal to
\begin{align*}
\sum_{i=0}^{n-1}
\sum_{y \in \Omega(w_{i+1}^{(L)})}
\sum_{\abs{z}=n, z \geq y}
\e^{- \beta_n V(z)}
F(\bfV (z))
\1_{z \in A_n \cap B_n} 
+
\e^{- \beta_n V(w_n^{(L)})}
F(\bfV (w_n^{(L)}))
\1_{w_n^{(L)} \in A_n\cap B_n}.
\end{align*}
We cut this sum in two pieces, depending on whether the lineage of the considered particle $z$ splits off from the spine's lineage before or after time $k_n$:
\begin{align*}
Y_n'^{[0,k_n)} (F)
& \coloneqq
\sum_{i=0}^{k_n-1}
\sum_{y \in \Omega(w_{i+1}^{(L)})}
\sum_{\abs{z}=n, z \geq y}
\e^{- \beta_n V(z)}
F(\bfV (z))
\1_{z \in A_n \cap B_n} \\
Y_n'^{[k_n,n]} (F)
& \coloneqq
Y_n'(F) - Y_n'^{[0,k_n)} (F).
\end{align*}
We define in the same way $D_n^{(L),[0,k_n)}$ and $D_n^{(L),[k_n,n]}$. 
Then, by A\"idékon and Shi \cite[Lemma 4.7]{aidekonshi2014}, since $(\log n)^6 \ll k_n \ll \sqrt{n}$, we have
\begin{align} \label{eq lemma 4.7 aidekon and shi}
\inf_{u \in [k_n^{1/3}, k_n]} 
\Q^{(L)} 
\left( 
D_n^{(L),[k_n,n]} \leq n^{-2}
\mathrel{}\middle|\mathrel{}
V(w^{(L)}_{k_n}) = u
\right)
\xrightarrow[n \to \infty]{}
1.
\end{align}

\begin{proof}[Proof of Lemma \ref{lemma second moment}]
First note that, using Proposition \ref{prop biggins and kyprianou's change of measure} (i),
\begin{align*}
\frac{Y_n'(F_n)}{D_n^{(L)}}
& =
\sum_{\abs{z} = n} 
\Q^{(L)} \left( w_n^{(L)} = z  \mathrel{}\middle|\mathrel{} \sF_n \right)
\frac{\e^{-V(z) / \alpha_n} }{R_L(V(z))}
F_n(\bfV (z))
\1_{z \in A_n \cap B_n} \\
& =
\Ecsqi{\Q^{(L)}}{
\frac{\e^{-V(w_n^{(L)}) / \alpha_n} }{R_L(V(w_n^{(L)}))}
F_n(\bfV (w_n^{(L)}))
\1_{w_n^{(L)} \in A_n \cap B_n} 
}{\sF_n}
\end{align*}
and, thus, we have
\begin{align*}
\Eci{\Q^{(L)}}{\left( \frac{Y_n'(F_n)}{D_n^{(L)}} \right)^2}
& =
\Eci{\Q^{(L)}}
{\frac{Y_n'(F_n)}{D_n^{(L)}} 
\frac{\e^{-V(w_n^{(L)}) / \alpha_n} }{R_L(V(w_n^{(L)}))}
F_n(\bfV (w_n^{(L)}))
\1_{w_n^{(L)} \in A_n \cap B_n} } \\
& \eqqcolon
\E_{\Q^{(L)}}^{[0,k_n)}
+
\E_{\Q^{(L)}}^{[k_n,n]},
\end{align*}
by splitting $Y_n'(F_n) = Y_n'^{[0,k_n)}(F_n) + Y_n'^{[k_n,n]}(F_n)$. 
The first part will give the right order and constant and the second part will be negligible.
Recall that $F$ is assumed to be nonnegative.

We begin by bounding $\E_{\Q^{(L)}}^{[0,k_n)}$.
Using $D_n^{(L)} \geq D_n^{(L),[0,k_n)}$ and $\1_{w_n^{(L)} \in B_n} \leq 1$,
we get
\begin{align}
\E_{\Q^{(L)}}^{[0,k_n)}
& \leq
\Eci{\Q^{(L)}}
{\frac{Y_n'^{[0,k_n)}(F_n)}{D_n^{(L),[0,k_n)}} 
\frac{\e^{-V(w_n^{(L)}) / \alpha_n} }{R_L(V(w_n^{(L)}))}
F_n (\bfV (w_n^{(L)}))
\1_{w_n^{(L)} \in A_n} } \nonumber \\
& =
\Eci{\Q^{(L)}}
{\frac{Y_n'^{[0,k_n)}(F_n)}{D_n^{(L),[0,k_n)}} 
\varphi \left( V(w_{k_n}^{(L)}) \right) 
\1_{V(w_{k_n}^{(L)}) \in [k_n^{1/3}, k_n]} }, \label{ed}
\end{align}
where we set, for $b \in [k_n^{1/3}, k_n]$ and with $m\coloneqq n-k_n$,
\begin{align*}
\varphi (b)
& \coloneqq
\Eci{\Q^{(L)}_b}{
\frac{\e^{-V(w_m^{(L)}) / \alpha_n} }{R_L(V(w_m^{(L)}))}
F \left( 
\bfV (w_m^{(L)}) - \frac{b}{\sigma \sqrt{m}}
\right) 
\1_{\Forall 0 \leq i \leq n-k_n, V(w_i^{(L)}) \in I_{n,i+k_n}}
} \\
& \leq
\frac{1}{R_L(b)} 
\Ec{
\e^{-(S_m + b) / \alpha_n}
F(\bfS^{(m)})
\1_{\underline{S}_m \geq -(L+b),
\min_{\lfloor \lambda m \rfloor \leq j \leq m} S_j \geq \frac{3}{2} \log n - b,
S_m - (\frac{3}{2} \log n -b) \in [\alpha^-_n, \alpha^+_n]
}},
\end{align*}
for some fixed $\lambda \in (1/2, 1)$ and $n$ large enough, by applying Proposition \ref{prop biggins and kyprianou's change of measure} (ii). 
Then, proceeding in the same way as for the lower bound of $\psi (b)$ in the proof of Lemma \ref{lemma first moment} (but with a sum on $i$ from $\lfloor \alpha^-_n / h \rfloor$ to $\lceil \alpha^+_n / h \rceil -1$), we get the upper bound
\begin{align} \label{el}
\varphi (b)
& \leq
\frac{1}{R_L(b)} 
(L+b)
\sqrt{\frac{2}{\pi}} 
\frac{1}{\sigma^3}
\frac{\alpha_n^2}{n^{3 \beta_n/2}}
(\Ec{F(\mathfrak{e})} +\petito{1})
=
\frac{\theta}{\sigma^2}
\frac{\alpha_n^2}{n^{3 \beta_n/2}}
(\Ec{F(\mathfrak{e})} +\petito{1}),
\end{align}
uniformly in $b \in [k_n^{1/3}, k_n]$, using \eqref{equation R 1} and \eqref{equation lien theta c_0} for the last equality.
Coming back to \eqref{ed}, we showed that
\begin{align}
\E_{\Q^{(L)}}^{[0,k_n)}
& \leq
\frac{\theta}{\sigma^2}
\frac{\alpha_n^2}{n^{3 \beta_n/2}}
(\Ec{F(\mathfrak{e})} +\petito{1})
\Eci{\Q^{(L)}}
{\frac{Y_n'^{[0,k_n)}(F_n)}{D_n^{(L),[0,k_n)}} 
\1_{V(w_{k_n}^{(L)}) \in [k_n^{1/3}, k_n]} }, \label{ee}
\end{align}
and thus we now want to bound the expectation in \eqref{ee}.
We proceed in a way similar to the proof of Lemma 4.5 of A\"idekon et Shi \cite{aidekonshi2014}, by introducing the event $\{ D_n^{(L),[k_n,n]} \leq n^{-2} \}$: 
\begin{align*}
& \Eci{\Q^{(L)}}
{\frac{Y_n'^{[0,k_n)}(F_n)}{D_n^{(L),[0,k_n)}} 
	\1_{D_n^{(L),[k_n,n]} \leq n^{-2}} } \\
& =
\Eci{\Q^{(L)}}
{\frac{Y_n'^{[0,k_n)}(F_n)}{D_n^{(L),[0,k_n)}} 
\Q^{(L)} 
\left( 
D_n^{(L),[k_n,n]} \leq n^{-2}
\mathrel{}\middle|\mathrel{}
V(w^{(L)}_{k_n}) = u
\right)
} \\
& \geq
\Eci{\Q^{(L)}}
{\frac{Y_n'^{[0,k_n)}(F_n)}{D_n^{(L),[0,k_n)}} 
\1_{V(w_{k_n}^{(L)}) \in [k_n^{1/3}, k_n]} 
} 
\inf_{u \in [k_n^{1/3}, k_n]} 
\Q^{(L)} 
\left( 
D_n^{(L),[k_n,n]} \leq n^{-2}
\mathrel{}\middle|\mathrel{}
V(w^{(L)}_{k_n}) = u
\right).
\end{align*}
Thus, applying \eqref{eq lemma 4.7 aidekon and shi}, we get that the expectation in \eqref{ee} is smaller than
\begin{align}
& (1 + \petito{1}) 
\Eci{\Q^{(L)}}
{\frac{Y_n'^{[0,k_n)}(F_n)}{D_n^{(L),[0,k_n)}} 
\1_{D_n^{(L),[k_n,n]} \leq n^{-2} }} 
\nonumber \\
& \leq
(1 + \petito{1})
\Biggl(
\Eci{\Q^{(L)}}
{\frac{Y_n'^{[0,k_n)}(F_n)}{D_n^{(L),[0,k_n)}} 
\1_{D_n^{(L)} \geq n^{-7/4},
	D_n^{(L),[k_n,n]} \leq n^{-2}} }
+
\norme{F}
\Q^{(L)} \left( D_n^{(L)} < n^{-7/4} \right) 
\Biggr) \nonumber \\
& \leq
(1 + \petito{1})
\Biggl(
\Eci{\Q^{(L)}}
{\frac{Y_n'^{[0,k_n)}(F_n)}{(1 - n^{-1/4}) D_n^{(L)}} } 
+
\norme{F}
n^{-7/4} \Eci{\Q^{(L)}}{\frac{1}{D_n^{(L)}}}
\Biggr), \label{ef}
\end{align}
using that $Y_n'^{[0,k_n)}(F_n) / D_n^{(L),[0,k_n)} \leq \norme{F}$ and
applying the Markov inequality for the second term.
Using that $Y_n'^{[0,k_n)} \leq Y_n' \leq W^{(L,K)}_{n,\beta_n}$, and then Lemmas \ref{lemma first moment} and \ref{lemma from F to F_n}, we get that \eqref{ef} is smaller than 
\begin{align*}
(1 + \petito{1})
\Biggl(
(\Ec{F(\mathfrak{e})} + \petito{1}) 
\frac{\theta}{\sigma^2}
\frac{\alpha_n^2}{n^{3\beta_n /2}}
+
\norme{F}
\frac{n^{-7/4}}{R(L)}
\Biggr)
=
(\Ec{F(\mathfrak{e})} + \petito{1}) 
\frac{\theta}{\sigma^2}
\frac{\alpha_n^2}{n^{3\beta_n /2}},
\end{align*}
because $3 \beta_n/2 \leq 7/4$ for $n$ large enough.
Thus, coming back to \eqref{ee}, we proved that
\begin{align}
\E_{\Q^{(L)}}^{[0,k_n)}
& \leq
\left(
(\Ec{F(\mathfrak{e})}+\petito{1})
\frac{\theta}{\sigma^2}
\frac{\alpha_n^2}{n^{3 \beta_n/2}}
\right)^2. \label{eo}
\end{align}

We now want to show that $\E_{\Q^{(L)}}^{[k_n,n]} = o((\alpha_n^2/n^{3 \beta_n/2})^2)$ and, by bounding $F \leq \norme{F}$, it is sufficient to deal with the case $F \equiv 1$.
Using that $D_n^{(L),[0,k_n)} \leq D_n^{(L)}$, $\1_{z \in B_n} \leq 1$ and breaking down $Y_n'^{[k_n,n]}(1)$ along the spine, we first have that $\E_{\Q^{(L)}}^{[k_n,n]}$ is smaller than
\begin{align}
\begin{split}
& 
\sum_{i=k_n}^{n-1}
\E_{\Q^{(L)}}
\Biggl[
\frac{1}{D_n^{(L),[0,k_n)}} 
\frac{\e^{-V(w_n^{(L)}) / \alpha_n} }{R_L(V(w_n^{(L)}))}
\1_{w_n^{(L)} \in A_n \cap B_n}
\sum_{y \in \Omega(w_{i+1}^{(L)})}
\sum_{\abs{z}=n, z \geq y}
\e^{-\beta_n V(z)}
\1_{z \in A_n}
\Biggr] \\
& {} +
\E_{\Q^{(L)}}
\Biggl[
\frac{1}{D_n^{(L),[0,k_n)}} 
\frac{\e^{-V(w_n^{(L)}) / \alpha_n} }{R_L(V(w_n^{(L)}))}
\1_{w_n^{(L)} \in A_n \cap B_n}
\e^{-\beta_n V(w_n^{(L)})}
\Biggr]. \label{ea}
\end{split}
\end{align}
Noting that, on the event $\{ z \in A_n \}$, we have $\e^{-\beta_n V(z)} \leq \e^{-V(z)} n^{-3/2\alpha_n}$, the first term in \eqref{ea} is smaller than
\begin{align}
& \sum_{i=k_n}^{n-1}
\E_{\Q^{(L)}}
\Biggl[
\frac{1}{D_n^{(L),[0,k_n)}} 
\frac{\e^{-V(w_n^{(L)}) / \alpha_n} }{R_L(V(w_n^{(L)}))}
\1_{w_n^{(L)} \in A_n \cap B_n}
\sum_{y \in \Omega(w_{i+1}^{(L)})}
\sum_{\abs{z}=n, z \geq y}
\e^{-V(z)} n^{-3/2\alpha_n}
\Biggr] \nonumber \\
& =
n^{-3/2\alpha_n}
\sum_{i=k_n}^{n-1}
\E_{\Q^{(L)}}
\Biggl[
\frac{1}{D_n^{(L),[0,k_n)}} 
\frac{\e^{-V(w_n^{(L)}) / \alpha_n} }{R_L(V(w_n^{(L)}))}
\1_{w_n^{(L)} \in A_n \cap B_n}
\sum_{y \in \Omega(w_{i+1}^{(L)})}
\e^{-V(y)}
\Biggr], \label{ep}
\end{align}
by conditioning with respect to $\sG_\infty \coloneqq \sigma ( V(w_i^{(L)}), V(y), y \in \Omega(w_{i+1}^{(L)}), i \in \N)$ and noting that, given $\sG_\infty$, $D_n^{(L),[0,k_n)}$ is independent of $(V(z), \abs{z}=n, z \geq y, y \in \Omega(w_{i+1}^{(L)}))$.
Noting that we are on the event $\{ w_n^{(L)} \in B_n \}$, \eqref{ep} is smaller than
\begin{align}
& n^{-3/2\alpha_n}
\sum_{i=k_n}^{n-1}
\Eci{\Q^{(L)}}
{
\frac{1}{D_n^{(L),[0,k_n)}} 
\frac{\e^{-V(w_n^{(L)}) / \alpha_n} }{R_L(V(w_n^{(L)}))}
\1_{w_n^{(L)} \in A_n \cap B_n}
\rho_n
\e^{- a_i^{(n)} - \ell_i^{(n)}} 
} \nonumber \\
& \leq
\frac{\rho_n}{n^{3/2\alpha_n}}
\left(
\sum_{i=k_n}^{\lfloor n/2 \rfloor - 1}
\e^{L - i^{1/7}} 
+
\sum_{i=\lfloor n/2 \rfloor}^{n-1}
\frac{\e^{- (n-i)^{1/7}} }{n^{3/2}}
\right)
\Eci{\Q^{(L)}}
{
\frac{1}{D_n^{(L),[0,k_n)}} 
\frac{\e^{-V(w_n^{(L)}) / \alpha_n} }{R_L(V(w_n^{(L)}))}
\1_{w_n^{(L)} \in A_n}
} \nonumber \\
& \leq
c_{24} \e^L
\frac{\rho_n}{n^{3 \beta_n/2}}
(1+\petito{1})
\Eci{\Q^{(L)}}
{
\frac{1}{D_n^{(L),[0,k_n)}} 
\frac{\e^{-V(w_n^{(L)}) / \alpha_n} }{R_L(V(w_n^{(L)}))}
\1_{w_n^{(L)} \in A_n}
}, \label{eq}
\end{align}
because $k_n^{1/7} \gg \log n$ and therefore the sum for $i \in \llbracket k_n, \lfloor n/2 \rfloor - 1 \rrbracket$ is a $o(n^{-3/2})$.
For the second term in \eqref{ea},
we use that, on the event $\{ w_n^{(L)} \in A_n \}$, $\e^{-\beta_n V(w_n^{(L)})} \leq n^{-3\beta_n / 2}$, and, thus, combining with \eqref{eq}, we get
\begin{align*}
\E_{\Q^{(L)}}^{[k_n,n]}
& \leq
c_{24} \e^L
\frac{\rho_n}{n^{3 \beta_n/2}}
(1+\petito{1})
\Eci{\Q^{(L)}}
{
\frac{1}{D_n^{(L),[0,k_n)}} 
\frac{\e^{-V(w_n^{(L)}) / \alpha_n} }{R_L(V(w_n^{(L)}))}
\1_{w_n^{(L)} \in A_n}
} \\
& =
c_{24} \e^L
\frac{\rho_n}{n^{3 \beta_n/2}}
(1+\petito{1})
\Eci{\Q^{(L)}}
{
\frac{1}{D_n^{(L),[0,k_n)}} 
\varphi \left( V(w_{k_n}^{(L)}) \right) 
\1_{V(w_{k_n}^{(L)}) \in [k_n^{1/3}, k_n]}
},
\end{align*}
where the function $\varphi$ has been defined previously in the proof.
Using \eqref{el} again, we get
\begin{align} \label{er}
\E_{\Q^{(L)}}^{[k_n,n]}
& \leq
c_{25} \e^L
\frac{\rho_n}{n^{3 \beta_n/2}}
\frac{\alpha_n^2}{n^{3 \beta_n/2}}
(1+\petito{1})
\Eci{\Q^{(L)}}
{
\frac{1}{D_n^{(L),[0,k_n)}} 
\1_{V(w_{k_n}^{(L)}) \in [k_n^{1/3}, k_n]}
}.
\end{align}
Proceeding in the same way as before by using \eqref{eq lemma 4.7 aidekon and shi} to introduce the event $\{ D_n^{(L),[k_n,n]} \leq n^{-2} \}$, the expectation in \eqref{er} is smaller than
\begin{align}
& (1 + \petito{1})
\Eci{\Q^{(L)}}
{\frac{1}{D_n^{(L),[0,k_n)}} 
\1_{D_n^{(L),[k_n,n]} \leq n^{-2}} }
\nonumber\\
& \leq
(1 + \petito{1})
\left(
\Eci{\Q^{(L)}}
{\frac{1}{D_n^{(L),[0,k_n)}} 
\1_{D_n^{(L)} \geq n^{-7/4},
	D_n^{(L),[k_n,n]} \leq n^{-2}} }
+
\Q^{(L)} \left( D_n^{(L)} < n^{-7/4} \right) 
\right).  \label{es}
\end{align}
As for \eqref{ef}, \eqref{es} is equal to $(1 + \petito{1}) / R(L)$ and, coming back to \eqref{er}, we get 
\begin{align*}
\E_{\Q^{(L)}}^{[k_n,n]} 
\leq
c_{26} \frac{\e^L}{R(L)}
\frac{\rho_n}{n^{3 \beta_n/2}}
\frac{\alpha_n^2}{n^{3 \beta_n/2}}
(1+\petito{1})
= 
\petito{\left( \frac{\alpha_n^2}{n^{3\beta_n/2}} \right)^2},
\end{align*}
because $\rho_n \ll \alpha_n^2$.
This concludes the proof of Lemma \ref{lemma second moment}.
\end{proof}

\begin{appendix}

\section{Convergence of random measures}

In this section, we present some results concerning convergence of random or deterministic probability measures on a polish space $S$ or more specifically on $\cD([0,1])$.
Some of these results are classical, but we state them here with uniformity in some parameter $\theta \in \Theta$.

\subsection{General space}

Let $(S,d)$, $(S_1,d_1)$ and $(S_2,d_2)$ be Polish spaces.
We consider some set $\Theta$.
In the sequel, for each $\theta \in \Theta$, $(\mu^\theta_n)_{n\in\N}$ will denote a sequence of random probability measures on $S$ and $(\xi^\theta_n)_{n\in\N}$ a sequence of deterministic probability measures on $S$.
Moreover, $\mu$ and $\xi$ will be probability measures on $S$, that are deterministic and do not depend on $\theta$.
\begin{lem} \label{lem weak convergence uc to c}
Assume that for all Lipschitz $F \in \cC_b(S)$ and $\varepsilon>0$,
$\P(\lvert \mu^\theta_n(F) - \mu (F) \rvert \geq \varepsilon) \to 0$ 
as $n \to \infty$ uniformly in $\theta \in \Theta$.
Then, the same convergence holds for all $F \in \cC_b(S)$.
\end{lem}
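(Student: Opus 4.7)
The plan is to sandwich a general $F \in \cC_b(S)$ between two Lipschitz functions whose $\mu$-integrals are arbitrarily close, and then invoke the hypothesis for these Lipschitz approximants.

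First I would introduce the classical Moreau-Yosida sup/inf-convolutions: for $F \in \cC_b(S)$ and $\epsilon > 0$, set
\[
F_\epsilon(x) \coloneqq \inf_{y \in S}\{F(y) + \epsilon^{-1} d(x,y)\},
\qquad
F^\epsilon(x) \coloneqq \sup_{y \in S}\{F(y) - \epsilon^{-1} d(x,y)\}.
\]
Routine verifications show that both are $\epsilon^{-1}$-Lipschitz, bounded by $\norme{F}$, that $F_\epsilon \leq F \leq F^\epsilon$, and that $F_\epsilon(x) \uparrow F(x)$, $F^\epsilon(x) \downarrow F(x)$ pointwise as $\epsilon \downarrow 0$ (the continuity of $F$ is used here).

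Next, because $\mu$ is a fixed probability measure and $\abs{F_\epsilon}, \abs{F^\epsilon} \leq \norme{F}$, dominated convergence applied to the fixed measure $\mu$ gives $\mu(F^\epsilon) - \mu(F_\epsilon) \to 0$ as $\epsilon \downarrow 0$. Thus, for any $\varepsilon > 0$, one can fix $\epsilon > 0$ small enough so that $\mu(F^\epsilon) - \mu(F_\epsilon) \leq \varepsilon/2$.

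Finally, for every $\theta \in \Theta$ and every $n$, the inequalities $F_\epsilon \leq F \leq F^\epsilon$ yield $\mu^\theta_n(F_\epsilon) \leq \mu^\theta_n(F) \leq \mu^\theta_n(F^\epsilon)$, so on the event where both $\abs{\mu^\theta_n(F_\epsilon) - \mu(F_\epsilon)} < \varepsilon/2$ and $\abs{\mu^\theta_n(F^\epsilon) - \mu(F^\epsilon)} < \varepsilon/2$ hold, one deduces $\abs{\mu^\theta_n(F) - \mu(F)} < \varepsilon$ using the triangle inequality together with $\mu(F^\epsilon) - \mu(F_\epsilon) \leq \varepsilon/2$. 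Hence
\[
\Pp{\abs{\mu^\theta_n(F) - \mu(F)} \geq \varepsilon}
\leq
\Pp{\abs{\mu^\theta_n(F_\epsilon) - \mu(F_\epsilon)} \geq \varepsilon/2}
+ \Pp{\abs{\mu^\theta_n(F^\epsilon) - \mu(F^\epsilon)} \geq \varepsilon/2},
\]
and the hypothesis, applied to the Lipschitz functions $F_\epsilon$ and $F^\epsilon$, makes the right-hand side tend to $0$ uniformly in $\theta \in \Theta$. There is no real obstacle; the only minor point is to make sure that the sandwich step passes the uniformity in $\theta$ through, which it does because the chosen $\epsilon$ depends only on $F, \varepsilon$ and $\mu$.
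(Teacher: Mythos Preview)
Your proof is correct and takes a genuinely different route from the paper's. The paper follows a Portmanteau-style chain: it first passes from Lipschitz functions to closed sets (via the Lipschitz approximation $F^\eta(x) = 1 - (1 \wedge \eta^{-1} d(x,A))$), then from closed sets to $\mu$-continuity sets, and finally recovers a general $F \in \cC_b(S)$ by a Riemann-sum decomposition $\mu(F) = \int_0^M \mu(F>t)\,\diff t$ along a subdivision $0 = t_0 < \dots < t_N = M$ chosen so that each level set $\{F > t_k\}$ is a $\mu$-continuity set.

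Your argument is more direct: the inf/sup-convolutions $F_\epsilon, F^\epsilon$ give a two-sided Lipschitz sandwich of $F$ in one stroke, and dominated convergence (applied once, to the fixed $\mu$) controls the gap $\mu(F^\epsilon)-\mu(F_\epsilon)$. This bypasses the detour through sets entirely and yields a shorter proof. The paper's route has the minor side benefit of establishing the intermediate statements \eqref{va} and \eqref{vc} for sets, but these are not used elsewhere in the paper, so nothing is lost by your shortcut. Your remark that the chosen $\epsilon$ depends only on $F$, $\varepsilon$, and $\mu$ (not on $\theta$) is exactly the point that makes the uniformity go through, and it is correct.
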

Note that in the case where we consider a deterministic sequence $(\xi^\theta_n)_{n\in\N}$, it simply means that $\xi^\theta_n (F) \to \xi (F)$ uniformly in $\theta \in \Theta$.
It is necessary that the limit does not depend on $\theta$.
\begin{proof}
We follow the proof of Portmanteau Theorem in Billingsley \cite[Theorem 2.1]{billingsley99}.
Thus, we first consider a closed set $A$ and $\varepsilon>0$ and we want to show that
\begin{align} \label{va}
\lim_{n\to\infty} 
\sup_{\theta \in \Theta}
\Pp{\mu^\theta_n(A) \geq \mu (A) + \varepsilon}
=
0.
\end{align}
We consider, for each $\eta > 0$, the function $F^\eta \colon x \mapsto 1 - (1 \wedge \eta^{-1} d(x,A)) \in [0,1]$ that is Lipschitz and such that $F^\eta \downarrow \1_A$.
Thus, by dominated convergence, we have $\mu(F^\eta) \to \mu(A)$ as $\eta \to 0$.
We fix $\eta$ such that $\mu(F^\eta) \leq \mu(A) + (\varepsilon /2)$.
Since $\mu^\theta_n(F^\eta) \geq \mu^\theta_n(A)$, we get
\begin{align*}
\sup_{\theta \in \Theta}
\Pp{\mu^\theta_n(A) \geq \mu (A) + \varepsilon}
\leq
\sup_{\theta \in \Theta}
\Pp{\mu^\theta_n(F^\eta) \geq \mu (F^\eta) + \frac{\varepsilon}{2}}
\xrightarrow[n\to\infty]{}
0,
\end{align*}
by using the assumption of the lemma.
From \eqref{va}, we get that, for all set $A$ such that $\mu(\partial A) = 0$ $\P$-a.\@s.\@ and all $\varepsilon>0$,
\begin{align} \label{vc}
\lim_{n\to\infty} 
\sup_{\theta \in \Theta}
\Pp{\abs{\mu^\theta_n(A) - \mu (A)} \geq \varepsilon}
=
0.
\end{align}
We now consider $F \in \cC_b(S)$ and we can assume that $F$ is nonnegative.
We fix $\varepsilon>0$ and set $M \coloneqq \norme{F}$.
Firstly, we have
\begin{align*}
\int_0^M \Ec{\mu( \partial \{ F > t \} )} \diff t
\leq \int_0^M \Ec{\mu( \{ F = t \} )} \diff t
= \Ec{\int_S \int_0^M \1_{F(x) = t} \diff t \diff \mu (x)}
= 0,
\end{align*}
and, therefore, for almost every $t \in [0,M]$ (in the sense of the Lebesgue measure), $\P$-a.\@s.\@, $\mu( \partial \{ F > t \} ) = 0$.
Thus, for all $N \in \N$, we can fix a subdivision $0 = t_0 < t_1 < \dots < t_N = M$ such that for all $0\leq k \leq N-1$, $t_{k+1} - t_k \leq 2M/N$ and for all $1\leq k \leq N-1$, $\mu( \partial \{ F > t_k \} ) = 0$ $\P$-a.\@s.
Since $\mu (F) = \int_0^M \mu (F > t) \diff t$ and $t \mapsto \mu (F > t)$ is nonincreasing,
we have
\begin{align} \label{vd}
\sum_{k=1}^N (t_k - t_{k-1}) \mu (F > t_k)
\leq
\mu (F)
\leq
\sum_{k=0}^{N-1} (t_{k+1} - t_k) \mu (F > t_k)
\end{align}
and the same holds for $\mu_n^\theta$ instead of $\mu$, for all $n\in\N$ and $\theta \in\Theta$.
Since, in \eqref{vd}, the left-hand side and right-hand side of \eqref{vd} tend to $\mu(F)$ as $N\to\infty$, we can choose $N$ large enough such that they are at most at distance $\varepsilon /2$ from $\mu (F)$.
Then, using \eqref{vd} for $\mu_n^\theta$, it follows that
\begin{align*}
\sup_{\theta \in \Theta}
\Pp{\abs{\mu^\theta_n(F) - \mu (F)} \geq \varepsilon}
& \leq
\sup_{\theta \in \Theta} 
\Biggl(
\Pp{
\sum_{k=1}^{N-1} 
(t_{k+1} - t_k) 
\left( \mu^\theta_n (F > t_k) - \mu (F > t_k) \right)
\geq 
\frac{\varepsilon}{4} } \\
& \relphantom{\leq} \phantom{\sup_{\theta \in \Theta} \Biggl(} {}
+
\Pp{
\sum_{k=1}^{N-1}
(t_{k+1} - t_k) 
\left( \mu^\theta_n (F > t_k) - \mu (F > t_k) \right)
\leq 
- \frac{\varepsilon}{4} }
\Biggr),
\end{align*}
by noting that, in the sums, the term for $k=N$ is zero, since $t_N = M = \norme{F}$, and the term for $k=0$ is smaller than $2 t_1 \leq 4M/N \leq \varepsilon/ 4$, if we choose $N$ large enough.
Then, we have
\begin{align*}
\sup_{\theta \in \Theta}
\Pp{\abs{\mu^\theta_n(F) - \mu (F)} \geq \varepsilon}
& \leq
\sup_{\theta \in \Theta}
\sum_{k=1}^{N-1} 
\Pp{
\abs{ \mu^\theta_n (F > t_k) - \mu (F > t_k) }
\geq 
\frac{\varepsilon N}{8M(N-1)} }
\xrightarrow[n\to\infty]{}
0,
\end{align*}
by using \eqref{vc}, because for all $1\leq k \leq N-1$, $\mu( \partial \{ F > t_k \} ) = 0$ $\P$-a.\@s.
\end{proof}

\begin{lem} \label{lemma weak convergence G_1 G_2}
We consider the product space $S \coloneqq S_1 \times S_2$.
Assume that, for all $G_1 \in \cC_b^u(S_1)$ and $G_2 \in \cC_b^u(S_2)$,
$\xi^\theta_n (G_1 \star G_2) \to \xi (G_1 \star G_2)$ as $n \to \infty$ uniformly in $\theta \in \Theta$, 
where $G_1 \star G_2 \colon (x,y) \in S \mapsto G_1(x) G_2(y)$.
Then, for all $F \in \cC_b(S)$, $\xi^\theta_n(F) \to \xi (F)$ uniformly in $\theta \in \Theta$.
\end{lem}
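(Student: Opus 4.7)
The plan is to reduce to Lipschitz test functions via Lemma~\ref{lem weak convergence uc to c} and then argue by contradiction using Prokhorov's theorem: any subsequence of the $\xi_n^\theta$ witnessing a failure of uniform convergence must, by tightness of its marginals, admit a weakly convergent sub-subsequence whose limit agrees with $\xi$ on products of uniformly continuous bounded functions and therefore equals $\xi$.

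First, viewing the deterministic measures $\xi_n^\theta$ as (trivially) random ones, Lemma~\ref{lem weak convergence uc to c} reduces the task to proving $\xi_n^\theta(F) \to \xi(F)$ uniformly in $\theta$ for every Lipschitz $F \in \cC_b(S)$. I would then assume the contrary: there would exist a Lipschitz $F$, some $\varepsilon > 0$, and sequences $(n_k), (\theta_k)$ with $n_k \to \infty$ and $\abs{\xi_{n_k}^{\theta_k}(F) - \xi(F)} \geq \varepsilon$ for every $k$; setting $\tilde\xi_k := \xi_{n_k}^{\theta_k}$, I would aim for a contradiction.

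To establish tightness of $(\tilde\xi_k)$ on $S = S_1 \times S_2$, I would apply the hypothesis with $G_2 \equiv 1 \in \cC_b^u(S_2)$, which gives $(\pi_1)_*\tilde\xi_k(G_1) \to (\pi_1)_*\xi(G_1)$ for every $G_1 \in \cC_b^u(S_1)$; since Lipschitz functions on $S_1$ lie in $\cC_b^u(S_1)$, a further application of Lemma~\ref{lem weak convergence uc to c} upgrades this to the weak convergence $(\pi_1)_*\tilde\xi_k \Rightarrow (\pi_1)_*\xi$, and symmetrically for the second marginal. A weakly convergent sequence of probability measures on a Polish space is tight, so for any $\eta > 0$ I can find compacts $K_j \subset S_j$ with $(\pi_j)_*\tilde\xi_k(K_j) \geq 1 - \eta/2$ uniformly in $k$; then $K_1 \times K_2$ is compact in $S$ and $\tilde\xi_k((K_1 \times K_2)^c) \leq \eta$, giving the desired tightness of $(\tilde\xi_k)$.

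By Prokhorov's theorem a subsequence $\tilde\xi_{k_j}$ will converge weakly to some probability measure $\xi^*$ on $S$. Combining weak convergence applied to bounded continuous products $G_1 \star G_2$ with the hypothesis, I obtain $\xi^*(G_1 \star G_2) = \xi(G_1 \star G_2)$ for all $G_j \in \cC_b^u(S_j)$. The main obstacle is to upgrade this equality on products to $\xi^* = \xi$: for each pair of open sets $U_j \subset S_j$, the functions $g_j^n(x) := \min(1, n\, d_j(x, U_j^c))$ lie in $\cC_b^u(S_j)$ and increase pointwise to $\1_{U_j}$, so monotone convergence in both arguments yields $\xi^*(U_1 \times U_2) = \xi(U_1 \times U_2)$; since open rectangles form a $\pi$-system generating the Borel $\sigma$-algebra of $S$, Dynkin's $\pi$-$\lambda$ theorem then gives $\xi^* = \xi$. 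In particular $\tilde\xi_{k_j}(F) \to \xi(F)$, contradicting the choice of $(n_k, \theta_k)$ and finishing the proof.
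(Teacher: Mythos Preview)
Your proof is correct and takes a genuinely different route from the paper's. The paper proceeds constructively: it reduces to $F \in \cC_b^u(S)$ via Lemma~\ref{lem weak convergence uc to c}, chooses a compact $K \subset S$ with $\xi(K) \geq 1-\varepsilon$, builds a finite Lipschitz partition of unity $\chi_0,\dots,\chi_N$ on $S_2$ subordinate to small balls covering the projection of $K$, and approximates $F$ by the finite sum $\sum_i F(\cdot,y_i)\,\chi_i(\cdot)$ of products; the error is controlled by the uniform continuity of $F$ and by $\xi_n^\theta(1-\chi) \to \xi(1-\chi) \leq \varepsilon$, which is itself an instance of the hypothesis. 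Your argument is instead a compactness one: you show that any candidate counterexample sequence has tight marginals (hence is tight on $S$), extract a Prokhorov limit $\xi^*$, and identify $\xi^* = \xi$ via the $\pi$-$\lambda$ theorem on open rectangles. The paper's approach is more hands-on and avoids Prokhorov entirely, giving an explicit approximation scheme; yours is cleaner bookkeeping-wise, arguably generalizes more transparently to products of several factors, and isolates exactly which structural fact is doing the work (that products determine the measure).
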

\begin{proof}
Using Lemma \ref{lem weak convergence uc to c}, it is sufficient to consider $F \in \cC_b^u(S)$.
Let $\varepsilon >0$.
Since $F$ is uniformly continuous, it exists $\eta > 0$ such that for any $x,y \in S$ that verify $d(x,y) \leq \eta$, we have $\abs{F(x) - F(y)} \leq \varepsilon$.
Since $S_2$ is separable for the metric $d$, it exists $(y_i)_{i\in\N} \in S_2^\N$ such that
$\bigcup_{i\in\N} B_2(y_i,\eta /2) =  S_2$,
where $B_2(y, r)$ denotes the open ball of radius $r$ centred at $y$ in $S_2$.
Now we consider a compact set $K \subset S$ such that $\xi (K) \geq 1 - \varepsilon$ and $K'$ the image of $K$ under the canonical projection $S \to S_2$.
Since $K'$ is a compact set of $S_2$, we can extract a finite cover $K' \subset \bigcup_{i=0}^N B(y_i,\eta /2)$.
Using again the compacity of $K'$, there exist nonnegative Lipschitz functions $\chi_0, \dots, \chi_N \in \cC_b(S_2)$ such that, for all $0 \leq i \leq N$, $\supp \chi_i \subset B(y_i,\eta /2)$, $\chi_0+ \dots+ \chi_N \leq 1$ and $\chi_0+ \dots+ \chi_N = 1$ on $K'$.
Finally, we set $\chi \coloneqq 1 \star (\chi_0+ \dots+ \chi_N)$, so that $\chi$ is a Lipschitz continuous function from $S \to [0,1]$ and $\chi = 1$ on $K$.
We can now construct some functions of the form $G_1 \star G_2$ to approach $F$.
For $0 \leq i \leq N$, we set
$G^i_1 \colon x \in S_1 \mapsto F( x, y_i)$
and
$G^i_2 \colon y \in S_2 \mapsto \chi_i(y)$.
By the triangle inequality, we have that $\lvert \xi^\theta_n(F) - \xi (F) \rvert$ is smaller than
\begin{align} \label{vg}
\abs{\xi^\theta_n \left( F - \sum_{i=0}^N G^i_1 \star G^i_2 \right)}
+
\sum_{i=0}^N \abs{\xi^\theta_n(G^i_1 \star G^i_2) - \xi (G^i_1 \star G^i_2)}
+
\abs{\xi \left( F - \sum_{i=0}^N G^i_1 \star G^i_2 \right)}.
\end{align}
The second term in \eqref{vg} tends to 0 as $n \to \infty$ uniformly in $\theta \in \Theta$ by the assumption of the lemma.
On the other hand, we have
\begin{align*}
\abs{F - \sum_{i=0}^N G^i_1 \star G^i_2}
\leq
\norme{F} \left( 1 - \chi \right)
+
\abs{\chi F  - \sum_{i=0}^N G^i_1 \star G^i_2}
\leq 
\norme{F} (1 - \chi)
+
\varepsilon \chi,
\end{align*}
because of the choice of $\eta$.
Thus, we get that the first and third terms of \eqref{vg} are smaller than
$2 \varepsilon
+
\norme{F} ( \xi^\theta_n (1 - \chi) +  \xi(1 - \chi))$.
Since $1 -\chi = 1 \star (1 - \chi_0 - \dots - \chi_N)$, we can use again the assumption of the lemma to get that $\xi^\theta_n (1 - \chi) \to \xi(1 - \chi)$ as $n\to\infty$ uniformly in $\theta \in \Theta$.
Noting that $1 - \chi \leq \1_{K^c}$ and $\xi(K^c) \leq \varepsilon$, it proves Lemma \ref{lemma weak convergence G_1 G_2}.
\end{proof}

\subsection{Weak convergence in \texorpdfstring{$\cD([0,1])$}{D([0,1])}}
\label{section appendix D([0,1])}

We keep here the notation of the previous subsection, but we take $S = \cD([0,1])$.
Recall the definition of the Skorokhod distance $d$ on $\cD ([0,1])$: for $x,y \in \cD([0,1])$,
\begin{align*}
d(x,y)
\coloneqq
\inf_{\lambda \in \Lambda}
\left(
\norme{\lambda - \mathrm{id}}_\infty
\vee
\norme{x - y \circ \lambda}_\infty \right),
\end{align*}
where we set $\Lambda \coloneqq \enstq{ \lambda \colon [0,1] \to [0,1]}{\lambda(0) = 0, \lambda(1) = 1, \lambda \text{ continuous and increasing}}$, and that, equipped with this distance, $\cD ([0,1])$ is a polish space (see Billingsley \cite{billingsley99}).
\begin{lem} \label{lemma weak convergence approximation F_n F}
Assume that, for all $F \in \cC_b^u(\cD ([0,1]))$, we have
$\xi^\theta_n(F) \to \xi (F)$ as $n \to \infty$ uniformly in $\theta \in \Theta$.
We consider $F \in \cC_b^u(\cD ([0,1]))$, $(u_n)_{n \in \N} \in (\R_+^*)^\N$, $(v_n)_{n \in \N} \in \cD ([0,1])^\N$ and $(\lambda_n)_{n \in \N} \in \Lambda^\N$ such that $u_n \to 1$, $\lVert v_n \rVert_\infty \to 0$ and $\lVert \lambda_n - \mathrm{id} \rVert_\infty \to 0$. 
We set $F_n \colon x \in \cD([0,1]) \mapsto F(v_n + u_n (x \circ \lambda_n))$.
Then, we have $\xi^\theta_n(\lvert F_n - F \rvert ) \to 0$ uniformly in $\theta \in \Theta$.
\end{lem}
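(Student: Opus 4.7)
My plan is to pass from the Skorokhod distance between $x$ and $F_n$'s argument to a pointwise control via $\omega_F$, and then handle a tail issue in $\|x\|_\infty$ using the hypothesis applied to a single cutoff function.

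First I would bound the Skorokhod distance. Setting $y_n(x) \coloneqq v_n + u_n (x \circ \lambda_n)$ and testing with $\lambda = \lambda_n^{-1} \in \Lambda$ in the definition of $d$, one checks that $y_n(x) \circ \lambda_n^{-1} = v_n \circ \lambda_n^{-1} + u_n x$ and that $\|\lambda_n^{-1} - \mathrm{id}\|_\infty = \|\lambda_n - \mathrm{id}\|_\infty$, yielding
\[
d(x, y_n(x))
\leq \|\lambda_n - \mathrm{id}\|_\infty \vee \bigl( \|v_n\|_\infty + |u_n - 1|\,\|x\|_\infty \bigr)
\eqqcolon \delta_n(x).
\]
Since $F \in \cC_b^u(\cD([0,1]))$, this gives the pointwise bound $|F_n(x) - F(x)| \leq \omega_F(\delta_n(x))$.

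Next, for any $M > 0$, I would split
\[
\xi_n^\theta(\abs{F_n - F})
\leq \omega_F \bigl( \|\lambda_n - \mathrm{id}\|_\infty \vee (\|v_n\|_\infty + |u_n - 1| M) \bigr)
+ 2 \norme{F}\, \xi_n^\theta(\{\|x\|_\infty > M\}).
\]
The first term goes to $0$ as $n \to \infty$ for every fixed $M$, by the assumptions on $u_n, v_n, \lambda_n$ and the continuity of $\omega_F$ at $0$. The whole point is therefore to control the tail $\xi_n^\theta(\|x\|_\infty > M)$ uniformly in $\theta$.

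The main obstacle, and where I would spend real care, is constructing a \textbf{uniformly continuous} tail cutoff so that the hypothesis of the lemma applies. Here the useful observation is that $x \mapsto \|x\|_\infty$ is $1$-Lipschitz for the Skorokhod distance: indeed, $\|y \circ \lambda\|_\infty = \|y\|_\infty$ for $\lambda \in \Lambda$, so $|\|x\|_\infty - \|y\|_\infty| \leq \|x - y \circ \lambda\|_\infty \leq d(x,y)$ after taking the infimum over $\lambda$. Fixing $\varphi_M \colon \R_+ \to [0,1]$ continuous with $\varphi_M = 0$ on $[0,M]$ and $\varphi_M = 1$ on $[2M,\infty)$, the function $G_M \colon x \mapsto \varphi_M(\|x\|_\infty)$ lies in $\cC_b^u(\cD([0,1]))$ and satisfies $\1_{\|x\|_\infty \geq 2M} \leq G_M \leq \1_{\|x\|_\infty \geq M}$.

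Combining these ingredients finishes the argument: given $\varepsilon > 0$, I would first choose $M$ large enough that $\xi(G_M) \leq \varepsilon$, which is possible since $\xi$ is a probability measure on $\cD([0,1])$ and each element of $\cD([0,1])$ is bounded; then apply the hypothesis to $G_M$ to get $n_0$ such that $\xi_n^\theta(G_M) \leq 2\varepsilon$ for every $\theta \in \Theta$ and $n \geq n_0$; finally enlarge $n_0$ so that the $\omega_F$ term in the split above is also at most $\varepsilon$, yielding $\xi_n^\theta(\abs{F_n - F}) \leq \varepsilon + 4\norme{F}\varepsilon$ uniformly in $\theta$.
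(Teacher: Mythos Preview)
Your argument is essentially the paper's: bound the Skorokhod distance between $x$ and $v_n+u_n(x\circ\lambda_n)$ by a quantity involving $\|\lambda_n-\mathrm{id}\|_\infty$, $\|v_n\|_\infty$ and $|u_n-1|\,\|x\|_\infty$, then handle the $\|x\|_\infty$ tail via a Lipschitz cutoff and apply the hypothesis to that single function. There is one indexing slip to fix: with $\varphi_M=0$ on $[0,M]$ and $=1$ on $[2M,\infty)$ you get $\1_{\|x\|_\infty\geq 2M}\leq G_M\leq \1_{\|x\|_\infty> M}$, hence $\xi_n^\theta(G_M)\leq \xi_n^\theta(\{\|x\|_\infty>M\})$, which is the wrong direction for your final bound; either perform the split at level $2M$ in the $\omega_F$ term (so that $\1_{\|x\|_\infty>2M}\leq G_M$ is the inequality you use), or define $\varphi_M=1$ on $[M,\infty)$ and $=0$ on $[0,M/2]$.
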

\begin{proof}
For $x \in \cD([0,1])$, we first have
\begin{align}
d(x, v_n + u_n (x \circ \lambda_n))
& \leq
\lVert v_n \rVert_\infty
+
d(x, x \circ \lambda_n)
+
d(x \circ \lambda_n, u_n (x \circ \lambda_n)) \nonumber \\
& \leq
\lVert v_n \rVert_\infty
+
\lVert \lambda_n - \mathrm{id} \rVert_\infty
+
\abs{u_n -1}
\lVert x \rVert_\infty. \label{vi}
\end{align}
Now, we consider $\varepsilon >0$ and 
we fix $K > 0$ such that $\xi(\{ \lVert x \rVert_\infty \geq K \}) \leq \varepsilon$ and 
some Lipschitz function $\chi \colon \cD([0,1]) \to [0,1]$ such that $\1_{\lVert x \rVert_\infty < K} \leq \chi(x) \leq \1_{\lVert x \rVert_\infty < K+1}$ for all $x \in \cD([0,1])$ 
(this is possible since $\lVert - \rVert_\infty$ is Lipschitz on $\cD([0,1])$).
Thus, we have
\begin{align} \label{wc}
\xi^\theta_n(\lvert F_n - F \rvert ) 
\leq 
\norme{F_n}
\xi^\theta_n(1 - \chi)
+
\xi^\theta_n(\chi \lvert F_n - F \rvert)
+
\norme{F}
\xi^\theta_n(1 - \chi).
\end{align}
On the one hand, for $x \in \cD([0,1])$, we have, using $\chi(x) \leq \1_{\lVert x \rVert_\infty < K+1}$ and \eqref{vi},
\begin{align*}
\chi(x) \lvert F_n(x) - F(x) \rvert
& \leq
\omega_F
( \lVert v_n \rVert_\infty
+
\lVert \lambda_n - \mathrm{id} \rVert_\infty
+
\abs{u_n -1} (K+1)),
\end{align*}
so we get $\lvert \xi^\theta_n(\chi (F_n - F)) \rvert \leq \norme{\chi (F_n - F)} \to 0$ as $n \to \infty$ uniformly in $\theta \in \Theta$.
On the other hand, by using the assumption of the lemma with the function $1-\chi$, 
the first and third terms in the right-hand side of \eqref{wc} tend towards 
$2 \norme{F} \xi(1 - \chi)$ uniformly in $\theta \in \Theta$.
Since $\xi(1 - \chi) \leq \varepsilon$, it concludes the proof of Lemma \ref{lemma weak convergence approximation F_n F}.
\end{proof}
\begin{rem} \label{rem weak convergence approximation F_n F}
In Lemma \ref{lemma weak convergence approximation F_n F}, if the limit measure $\xi^\theta$ depends on $\theta \in \Theta$, then the result is still true under the additional assumption that $\sup_{\theta \in \Theta} \xi(\{ \lVert x \rVert_\infty \geq K \}) \to 0$ as $K \to \infty$, so that in the proof $K$ and $\chi$ could be chosen independently of $\theta$.
\end{rem}

\begin{lem} \label{lemma weak convergence approximation F_n F 2}
Assume that $\xi(\{ x(0) = 0 \}) = 1$ and that, for all $F \in \cC_b^u(\cD ([0,1]))$, we have
$\xi^\theta_n(F) \to \xi (F)$ as $n\to\infty$ uniformly in $\theta \in \Theta$.
We consider $F \in \cC_b^u(\cD ([0,1]))$ and $(\kappa_n)_{n \in \N}$ such that $\kappa_n \to 0$. 
We set $F_n \colon x \in \cD([0,1]) \mapsto F(x_{\kappa_n + (1-\kappa_n)t} - x_{\kappa_n},t \in [0,1])$.
Then, we have $\xi^\theta_n(\abs{F_n - F}) \to 0$ uniformly in $\theta \in \Theta$.
\end{lem}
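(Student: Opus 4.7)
The plan is to reduce to Lemma~\ref{lemma weak convergence approximation F_n F} by introducing an intermediate map $\tilde F_n(x) \coloneqq F(x \circ \lambda_n)$ with $\lambda_n \in \Lambda$ close to the identity, and then absorb the $x$-dependent vertical shift $-x(\kappa_n)$ that distinguishes $F_n$ from $\tilde F_n$ via a Skorokhod-continuous surrogate for the near-zero oscillation of $x$. The delicate point is precisely that this shift depends on $x$ and so cannot be encoded in the deterministic $v_n$ of Lemma~\ref{lemma weak convergence approximation F_n F}.

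First, I will choose $\lambda_n \in \Lambda$ by piecewise linear interpolation with $\lambda_n(t) = (2-\kappa_n)t$ on $[0, \kappa_n]$ and $\lambda_n(t) = \kappa_n + (1-\kappa_n)t$ on $[\kappa_n, 1]$, so that $\norme{\lambda_n - \mathrm{id}}_\infty \leq \kappa_n \to 0$. Then Lemma~\ref{lemma weak convergence approximation F_n F} applied with $u_n = 1$ and $v_n \equiv 0$ gives $\xi^\theta_n(\abs{\tilde F_n - F}) \to 0$ uniformly in $\theta$, so it remains to prove $\xi^\theta_n(\abs{F_n - \tilde F_n}) \to 0$ uniformly in $\theta$.

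For this, I will use that the two trajectories passed to $F$ in $F_n(x)$ and $\tilde F_n(x)$ agree on $[\kappa_n, 1]$ up to the additive constant $-x(\kappa_n)$, while on $[0, \kappa_n]$ both only involve values of $x$ on $[0, 2\kappa_n]$. Taking the identity as time change in the Skorokhod distance then yields $\abs{F_n(x) - \tilde F_n(x)} \leq \omega_F\bigl(3 \sup_{s \in [0, 2\kappa_n]} \abs{x(s)}\bigr)$. The obstacle is that $x \mapsto \sup_{s \in [0, 2\kappa_n]} \abs{x(s)}$ is not Skorokhod-continuous; to bypass this, I will introduce, for each fixed $\delta > 0$, a continuous cutoff $\psi_\delta \colon [0,1] \to [0,1]$ equal to $1$ on $[0, \delta]$ and vanishing on $[2\delta, 1]$, and bound the previous inequality by $H_\delta(x) \coloneqq \omega_F\bigl(3 \sup_s \abs{x(s) \psi_\delta(s)}\bigr)$ as soon as $2\kappa_n \leq \delta$. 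A short computation (using the change of variables $s = \lambda_m(t)$ and uniform continuity of $\psi_\delta$) shows that $H_\delta \in \cC_b(\cD([0,1]))$.

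Lemma~\ref{lem weak convergence uc to c} then gives $\xi^\theta_n(H_\delta) \to \xi(H_\delta)$ uniformly in $\theta$ for every fixed $\delta > 0$, so $\limsup_{n \to \infty} \sup_{\theta \in \Theta} \xi^\theta_n(\abs{F_n - \tilde F_n}) \leq \xi(H_\delta)$. Finally, the hypothesis $\xi(\{x(0) = 0\}) = 1$ together with right-continuity at $0$ forces $\sup_s \abs{x(s) \psi_\delta(s)} \to 0$ as $\delta \to 0$ for $\xi$-a.e.\ $x$, and bounded convergence yields $\xi(H_\delta) \to 0$, completing the proof.
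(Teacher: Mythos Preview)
Your proof is correct and follows the same overall strategy as the paper: bound $\abs{F_n(x)-F(x)}$ (or a proxy) by $\omega_F$ applied to the oscillation of $x$ near $0$, then use the uniform weak convergence hypothesis together with $\xi(\{x(0)=0\})=1$ to show this vanishes. The paper does it in one step, bounding $d(x, x\circ\varphi_n - x_{\kappa_n}) \leq \kappa_n \vee 3\max_{[0,\kappa_n]}\abs{x}$ directly and then introducing a cutoff $\chi$ on the event $\{\max_{[0,\delta]}\abs{x} < \varepsilon\}$; you instead split through the intermediate $\tilde F_n = F(\cdot\circ\lambda_n)$ (reducing one half to Lemma~\ref{lemma weak convergence approximation F_n F}) and handle the remainder via the time-smoothed functional $H_\delta(x) = \omega_F\bigl(3\sup_s\abs{x(s)\psi_\delta(s)}\bigr)$. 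Your $\psi_\delta$ device is a nice touch: it makes $H_\delta$ manifestly Skorokhod-continuous (the change of variables $s=\mu(t)$ works precisely because $\psi_\delta$ is uniformly continuous), whereas $x\mapsto\max_{[0,\delta]}\abs{x}$ is not continuous on $\cD([0,1])$, a point the paper glosses over when asserting the existence of its Lipschitz $\chi$.
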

\begin{proof}
The function $\varphi_n \colon t \in [0,1] \mapsto \kappa_n + (1-\kappa_n)t$ is not bijective from $[0,1]$ to $[0,1]$, so we consider a function $\lambda_n \in \Lambda$ such that $\lambda_n (t) = \kappa_n + (1-\kappa_n)t$ for $t \geq \kappa_n$ and that is linear on $[0,\kappa_n]$.
Then, for $x \in \cD([0,1])$, we have
\begin{align}
d(x,x \circ \varphi_n - x_{\kappa_n})
& \leq
\norme{\lambda_n - \mathrm{id}}_\infty
\vee
\norme{x \circ \lambda_n - x \circ \varphi_n - x_{\kappa_n}}_\infty 
\leq
\kappa_n
\vee
3 \max_{[0,\kappa_n]} \abs{x}. \label{vl}
\end{align}
Let $\varepsilon > 0$.
Since $\xi(\{ x(0) = 0 \}) = 1$, it exists $\delta > 0$ such that $\xi (\{\max_{[0,\delta]} \abs{x} > \varepsilon \}) \leq \varepsilon$.
Let $\chi \colon \cD([0,1]) \to [0,1]$ be a Lipschitz function such that
$
\1_{\max_{[0,\delta]} \abs{x} < \varepsilon} \leq \chi(x) \leq \1_{\max_{[0,\delta]} \abs{x} < 2 \varepsilon}
$
for all $x \in \cD([0,1])$.
Thus, we have, using the triangle inequality and then \eqref{vl},
\begin{align*}
\xi^\theta_n(\abs{F_n - F}) 
& \leq 
\norme{F_n}
\xi^\theta_n(1 - \chi)
+
\xi^\theta_n(\chi \abs{F_n - F})
+
\norme{F}
\xi^\theta_n(1 - \chi) \\
& \leq 
2 \norme{F}
\xi^\theta_n(1 - \chi)
+
\omega_F( \kappa_n
\vee
6 \varepsilon),
\end{align*}
for $n$ large enough such that $\kappa_n \leq \delta$ (so independent of $\theta$). 
Finally, using the assumption of the lemma with the function $1 - \chi$ and recalling that $\xi(1 - \chi) \leq \varepsilon$, it concludes the proof. 
\end{proof}

\section{Proofs of the preliminary results concerning random walk}

In this section, we prove the results stated in Section \ref{section preliminary results}. 
Subsection \ref{subsection appendix convergence towards the bessel and the meander} is devoted to the proof of Proposition \ref{prop CV vers le meandre} and Corollary \ref{cor convergence to the meander with extension}, and Subsection \ref{subsection lower envelope} to the proof of Lemma \ref{lemme trajectoire supérieure à i^alpha}.
Subsections \ref{subsection appendix local limit theorems} and \ref{subsection appendix convergence towards the Bessel bridge} contain preliminary results for the proof of Proposition \ref{prop convergence to the excursion} in Subsection \ref{subsection appendix convergence towards the Brownian excursion}.

\subsection{Convergence towards the 3-dimensional Bessel process and the Brownian meander}
\label{subsection appendix convergence towards the bessel and the meander}

We first recall a known invariance principle for the random walk conditioned to stay nonnegative for all time.
For all $n\in \N$, $u \in \R_+$ and $B \in \sF_n$, we set
\begin{align} \label{eq definition P^+}
\P^+_u (B)
\coloneqq
\frac{1}{R(u)}
\Eci{u}{R(S_n) \1_B \1_{\underline{S}_n \geq 0}}.
\end{align}
It defines a probability measure $\P^+_u$, that is called the law of the random walk started at $u \in \R_+$ and conditioned to stay nonnegative for all time.
Then we have the following invariance principle, 
by Theorem 1.1 of Caravenna and Chaumont \cite{caravennachaumont2008}: 
for any $b \in \R_+$ and $(b_n)_{n\in\N}$ such that $b_n / \sigma \sqrt{n} \to b$ as $n\to\infty$ and for any $F \in \cC_b (\cD([0,1]))$,
\begin{align} \label{eq CV vers le Bessel}
\E^+_{b_n} \left[
	F ( \bfS^{(n)} )
	\right]
\xrightarrow[n\to\infty]{}
\Eci{b}
	{F( \cR )},
\end{align}
where $\cR$ denotes the 3-dimensional Bessel process on $[0,1]$.

Proposition \ref{prop CV vers le meandre} follows from \eqref{eq CV vers le Bessel} and from the following link between the 3-dimensional Bessel process and the Brownian meander (see Imhof \cite{imhof84}): for all $F \in \cC_b(\cD([0,1]))$, we have
\begin{align} \label{eq lien bessel meandre}
\Ec{F(\cM)}
=
\sqrt{\frac{\pi}{2}}
\Ec{\frac{1}{\cR(1)} F(\cR)}.
\end{align}
\begin{proof}[Proof of Proposition \ref{prop CV vers le meandre}]
We can assume that $F$ is nonnegative.
For $K>0$, we consider $\chi \colon \R_+ \to [0,1]$ continuous such that $\1_{[K^{-1}, K]} \leq \chi \leq \1_{[(2K)^{-1}, 2K]}$.
On the one hand, we have
\begin{align}
\Eci{u}{F(\bfS^{(n)}) 
	\left( 1 - \chi \left( \frac{S_n}{\sigma \sqrt{n}} \right) \right)
	\1_{\underline{S}_n \geq 0}}
& \leq 
\norme{F} 
\frac{\theta R(u)}{\sqrt{n}}
\left(
\int_0^\infty (1 - \chi(t)) t \e^{-t^2/2} \diff t
+
\petito{1}
\right), \label{bx}
\end{align}
uniformly in $u\in [0, \gamma_n]$, using \eqref{equation convergence vers la loi Rayleigh}.
On the other hand, we have
\begin{align} \label{by}
\Eci{u}{F(\bfS^{(n)})
	\chi \left( \frac{S_n}{\sigma \sqrt{n}} \right)
	\1_{\underline{S}_n \geq 0}}
= 
\frac{R(u)}{\sqrt{n}}
\E^+_u \left[ 
	h_n \left( \frac{S_n}{\sigma \sqrt{n}} \right)
	F(\bfS^{(n)})
	\right],
\end{align}
where we set, for $z \in \R_+$, $h_n(z) \coloneqq \sqrt{n} \chi(z) / R(z \sigma \sqrt{n})$.
Using \eqref{equation R 1} and $\chi \leq \1_{[(2K)^{-1}, 2K]}$, we have
$\limsup_{n\to\infty} 
\sup_{z \in \R_+}
\abs{h_n(z) - h(z)}
=
0$,
with $h(z) \coloneqq \chi(z) / c_0 \sigma z$.
Thus, we get, uniformly in $u$ because $h_n$ and $h$ do not depend on $u$,
\begin{align*}
\limsup_{n\to\infty} 
\abs{
\E^+_u \left[ 
	h_n \left( \frac{S_n}{\sigma \sqrt{n}} \right)
	F(\bfS^{(n)})
	\right]
-
\E^+_u \left[ 
	h \left( \frac{S_n}{\sigma \sqrt{n}} \right)
	F(\bfS^{(n)})
	\right]
}
= 0.
\end{align*}
Moreover, using \eqref{eq CV vers le Bessel}, \eqref{eq lien bessel meandre} and \eqref{equation lien theta c_0}, we have
\begin{align*}
\E^+_u \left[ 
	h \left( \frac{S_n}{\sigma \sqrt{n}} \right)
	F(\bfS^{(n)})
	\right]
& \xrightarrow[n\to\infty]{}
\Ec{\frac{\chi(\cR(1))}{c_0 \sigma \cR(1)} 
	F(\cR(t), t \in [0,1])}
=
\theta
\Ec{\chi(\cM(1)) F(\cM)},
\end{align*}
uniformly in $u \in [0,\gamma_n]$.
Coming back to \eqref{bx} and \eqref{by} and using that the density of $\cM(1)$ is $t \mapsto t \e^{-t^2/2} \1_{t > 0}$, we showed that
\begin{align*}
\limsup_{n\to\infty} 
\sup_{u \in [0,\gamma_n]}
\abs{
\frac{\sqrt{n}}{\theta R(u)}
\Eci{u}{F(\bfS^{(n)}) 
	\1_{\underline{S}_n \geq 0}}
-
\Ec{F(\cM)}
}
& \leq 
2 \norme{F} 
\int_0^\infty (1 - \chi(t)) t \e^{-t^2/2} \diff t,
\end{align*}
which tends to 0 as $K\to\infty$, so it concludes the proof.
\end{proof}

\begin{proof}[Proof of Corollary \ref{cor convergence to the meander with extension}]
By Lemma A.2 of Madaule \cite{madaule2016} (that holds under the assumption of this corollary, see Remark \ref{rem assumption madaule}), it exists $c_{27}(L,C) > 0$ such that for all $n$ large enough and $K \geq 0$,
\begin{align} \label{wz}
\Ec{\e^{C S_n / \sqrt{n}} \1_{\underline{S}_n \geq -L, S_n \geq K \sqrt{n}}}
\leq
\frac{c_{27}(L,C)}{\sqrt{n}} \e^{-CK/2}.
\end{align}
We consider some $\varepsilon > 0$ and fix $K$ large enough such that $c_{27}(L,C) \e^{-CK/2} \leq \varepsilon \theta R(L) / \norme{F}$ and also $\E[\e^{C \sigma \cM(1)} \1_{\cM(1) \geq K}] \leq \varepsilon / \norme{F}$.
Considering a continuous function $\chi \colon \R \to [0,1]$ such that $\1_{x \leq K} \leq \chi(x) \leq \1_{x \leq K+1}$ and using the triangle inequality, we get
\begin{align}
& \abs{
\frac{\sqrt{n}}{\theta R(L)} 
\Ec{\e^{C S_n / \sqrt{n}} 
	F(\bfS^{(n)}) 
	\1_{\underline{S}_n \geq -L}}
-
\Ec{\e^{C \sigma \cM(1)} F(\cM)}} \nonumber \\
& \leq
2 \varepsilon
+
\abs{
\frac{\sqrt{n}}{\theta R(L)} 
\Ec{\e^{C S_n / \sqrt{n}} 
	F(\bfS^{(n)}) 
	\chi \left( \frac{S_n}{\sqrt{n}} \right)
	\1_{\underline{S}_n \geq -L}
	}
-
\Ec{\e^{C \sigma \cM(1)} 
	F(\cM) 
	\chi (\sigma \cM(1))
	}}. \label{wy}
\end{align}
Then, note that in Proposition \ref{prop CV vers le meandre}, we can replace $\E_{u} [F(\bfS^{(n)}) \1_{\underline{S}_n \geq 0}]$ by $\E [F(\bfS^{(n)}) \1_{\underline{S}_n \geq -u}]$: indeed, it works when $F \in \cC_b^u(\cD([0,1]))$ and we extend to $F \in \cC_b(\cD([0,1]))$ by Lemma \ref{lem weak convergence uc to c}.
Thus, applying Proposition \ref{prop CV vers le meandre} with the function $x \in \cD([0,1]) \mapsto \e^{C \sigma x_1} F(x) \chi (\sigma x_1)$ which belongs to $\cC_b(\cD([0,1]))$
we get that the right-hand of \eqref{wy} tends to $2\varepsilon$ as $n \to \infty$ and it concludes the proof of Corollary \ref{cor convergence to the meander with extension}.
\end{proof}

\subsection{Lower envelope for the random walk above two barriers}
\label{subsection lower envelope}

We prove here Lemma \ref{lemme trajectoire supérieure à i^alpha}.
Firstly, by Equation (A.9) of Shi \cite{shi2015}, we have: 
it exists $c_{28} >0$ such that, for all $b > a \geq 0$, $u \geq 0$ and $n \geq 1$,
\begin{align} \label{eq ballot theorem 2}
\Pp{S_n \in [b-u, b-u+1], \underline{S}_n \geq -u} 
\leq
c_{28} \frac{b+1}{n}.
\end{align}
Secondly, it exists $c_{29} >0$ such that for all $b > a \geq 0$, $u \geq 0$, $v \in\R$ and $n \geq k \geq 0$, 
\begin{align} \label{eq ballot theorem and barrier 2} 
\Pp{S_n  \in [b+v,b+v+1], \underline{S}_k \geq -u, 
	\min_{k \leq j \leq n} S_j \geq v} 
\leq
c_{29} \frac{(u+1)(b+1)}{(n-m)\sqrt{k+1}}. 
\end{align}
Indeed, the left-hand side of \eqref{eq ballot theorem and barrier 2} is equal to
\begin{align*}
&\Ec{\1_{\underline{S}_k \geq -u} 
\Ppi{S_k}{S_{n-k} \in [b+v,b+v+1], \underline{S}_{n-k} \geq v}} 
\leq 
\Ec{\1_{\underline{S}_k \geq -u} c_{28} \frac{b+1}{n-k}},
\end{align*}
by using \eqref{eq ballot theorem 2}, and then we get \eqref{eq ballot theorem and barrier 2} by applying \eqref{equation majoration proba min S geq -a}. 

\begin{proof}[Proof of Lemma \ref{lemme trajectoire supérieure à i^alpha}]
Recall that, for $\ell, i \in \llbracket 0, n \rrbracket$, $u, \mu \geq 0$ and $v \in \R$, we have
\begin{align*}
m_i^{(n,\ell)}
\coloneqq
\left\{
\begin{array}{ll}
- u + r_i - \mu & \text{if } 0 \leq i < \ell, \\
v + r_{n-i} - \mu & \text{if } \ell \leq i \leq n.
\end{array}
\right.
\end{align*}
Moreover, for $J \subset \llbracket 0,n \rrbracket$, we set
\begin{align*}
\P_J (n, \ell, u,v,b,\mu)
&\coloneqq \Pp{
	\underline{S}_\ell \geq -u,
	\min_{\ell \leq j \leq n} S_j \geq v,
	S_n \in [b+v, b+v+1],
	\Exists i \in J:
	S_i \leq m_i^{(n,\ell)}}
\end{align*}
and write simply $\P_J= \P_J (n, \ell, u,v,b,\mu)$ when the parameters are obvious.
For any $\varepsilon > 0$ and $\lambda \in (0,1/2)$, we want to prove that for $\mu$ large enough, for any $b,u \geq 0$, $v \in \R$, $n\in \N$ and $\ell \in [\lambda n, (1-\lambda)n]$, $\P_{\llbracket 0,n \rrbracket} \leq \varepsilon (1+u) (1+b) n^{-3/2}$.
For this, it is sufficient to prove that 
\begin{align} 
\begin{split} \label{za}
& \Forall \varepsilon >0, \Forall \lambda \in (0,1/2), 
\Exists \mu > 0 : 
\Forall b,u \geq 0, \Forall v \in \R, \Forall n\in \N, 
\Forall \ell \in [\lambda n, (1-\lambda)n], \\
& \P_{\llbracket 0,\ell \rrbracket} \leq \varepsilon (1+u) (1+b) n^{-3/2}.
\end{split}
\end{align}
Indeed, we have $\P_{\llbracket 0,n \rrbracket} \leq \P_{\llbracket 0,\ell \rrbracket} + \P_{\llbracket \ell +1,n \rrbracket}$ and, setting $\tilde{S}_k \coloneqq S_{n-k} - S_n$,
\begin{align*}
\P_{\llbracket \ell +1,n \rrbracket}
& \leq \P \Bigl(
	\min_{n-\ell \leq j \leq n} \tilde{S}_j \geq -u-(b+v+1),
	\underline{\tilde{S}}_{n-\ell} \geq -b-1, \\
&	\relphantom{\leq} \hphantom{\P \Bigl(}
	\tilde{S}_n \in [-(b+v+1), -(b+v)],
	\Exists i \in \llbracket 0,n-\ell-1 \rrbracket:
	\tilde{S}_i \leq m_{n-i}^{(n,\ell)} - (b+v) \Bigr) \\
& = \tilde{\P}_{\llbracket 0,n-\ell-1 \rrbracket}(n,n-\ell,b+1,-u-b-v-1,u,\mu -1),
\end{align*}
where $\tilde{\P}_J$ is the analogue of $\P_J$ for the random walk $\tilde{S}$.
Then, using \eqref{za} for $\P_{\llbracket 0,\ell \rrbracket}$ and for $\tilde{\P}_{\llbracket 0,n-\ell-1 \rrbracket}$, we get the wanted bound for $\P_{\llbracket 0,n \rrbracket}$.

We now prove \eqref{za}.
We first have $\P_{\llbracket 0,\ell \rrbracket} \leq \P_{\llbracket 0,i_0 \rrbracket} + \P_{\llbracket i_0,\ell \rrbracket}$, 
for some $i_0 \geq 1$ that will be chosen afterwards.
We take $\mu = r_{i_0} + 1$, so that $m_i^{(n,\ell)} < -u$ for $0 \leq i \leq i_0$, and thus $\P_{\llbracket 0,i_0 \rrbracket} = 0$.
We set $\tau \coloneqq \max \{ i \in \llbracket 0,\ell \rrbracket: S_i \leq m_i^{(n,\ell)} \}$ and get
\begin{align*}
\P_{\llbracket i_0,\ell \rrbracket}
& =
\sum_{i=i_0}^{\ell}
\Pp{	\underline{S}_n \geq -u,
	\min_{\ell \leq j \leq n} S_j \geq v,
	S_n \in [b+v, b+v+1],
	\tau = i} 
= \sum_{i=i_0}^{\ell} p_i,
\end{align*}
where, for $i \in \llbracket i_0, \ell \rrbracket$, we set
\begin{align*}
p_i &\coloneqq
\Pp{\underline{S}_n \geq -u,
	\min_{\ell \leq j \leq n} S_j \geq v,
	S_n \in [b+v, b+v+1],
	S_i \leq m_i^{(n,\ell)}, 
	\forall k \in \llbracket i+1,\ell \rrbracket, S_k > m_k^{(n,\ell)} }.
\end{align*}
We first consider the case $i \in \llbracket i_0, \ell -1 \rrbracket$ and the case $i=\ell$ will be treated after.
Applying the Markov property at time $i+1$ and noting that for $k \in \llbracket i+1,\ell \rrbracket$, $m_k^{(n,\ell)} \geq m_i^{(n,\ell)}$, we have
\begin{align}
\begin{split} \label{ia}
p_i 
& \leq 
\E \Bigl[
\1_{\underline{S}_i \geq -u, S_i \leq m_i^{(n,\ell)}}
\1_{S_{i+1} > m_i^{(n,\ell)}}
\P_{S_{i+1}} \Bigl(
	\underline{S}_{\ell-i-1} > m_i^{(n,\ell)}, \\
& \relphantom{\leq} \hphantom{\E \Bigl[}
\min_{\ell-i-1 \leq j \leq n-i-1} S_j \geq v,
	S_{n-i-1} \in [b+v, b+v+1] \Bigr)
\Bigr].
\end{split}
\end{align}
If $i \leq \lambda n/ 2$, we can use \eqref{eq ballot theorem and barrier} to bound the probability in \eqref{ia} (because $\ell-i-1 \geq \lambda n/ 2$), but, if $i > \lambda n/ 2$, we use instead \eqref{eq ballot theorem and barrier 2}.
Thus, the probability in \eqref{ia} is smaller than
\begin{align*} 
(1+S_{i+1}-m_i^{(n,\ell)})(1+b)
\left( 
\frac{2 c_5}{(n-i-1)^{3/2}} \1_{i \leq \lambda n/ 2}
+\frac{c_{29}}{(n-\ell)(\ell -i)^{1/2}} \1_{i > \lambda n/ 2}
\right).
\end{align*}
Setting $\xi \coloneqq S_{i+1} - S_i$, we get
\begin{align} \label{zb}
p_i
& \leq 
c_{30} \frac{(1+b)}{n^{3/2}} 
\left( 
1+\frac{\sqrt{n} \1_{i > \lambda n/ 2}}{(\ell -i)^{1/2}}
\right)
\Ec{
\1_{\underline{S}_i \geq -u, S_i \leq m_i^{(n,\ell)}}
\1_{\xi > m_i^{(n,\ell)}-S_i}
(1+S_i+\xi-m_i^{(n,\ell)})}.
\end{align}
Cutting the interval $[-u,m_i^{(n,\ell)}]$ in pieces of length 1, the expectation in \eqref{zb} is smaller than
\begin{align*}
& \sum_{k=1}^{\lceil m_i^{(n,\ell)} +u \rceil} 
\Ec{
\1_{\underline{S}_i \geq -u, S_i \in [m_i^{(n,\ell)} - k, m_i^{(n,\ell)} - k +1]}
\1_{\xi > k-1}
(2-k+\xi)} \\
& \leq 
\sum_{k=1}^{\lceil m_i^{(n,\ell)} +u \rceil} 
c_4\frac{(1+u)(1+u + m_i^{(n,\ell)} - k +1) 2}{i^{3/2}} 
\Ec{\1_{\xi > k-1} (2-k+\xi)} ,
\end{align*}
by noting that $\xi$ is independent of $(S_j)_{0\leq j\leq i}$ and applying \eqref{eq ballot theorem}.
Recalling that $m_i^{(n,\ell)} = r_i - u - \mu \leq r_i - u$ and coming back to \eqref{zb}, we get
\begin{align}
p_i
& \leq c_{31}
\frac{(1+b)(1+u)}{n^{3/2}}
\frac{(1+r_i)}{i^{3/2}} 
\left( 1+\frac{\sqrt{n} \1_{i > \lambda n/ 2}}{(\ell -i)^{1/2}} \right)
\sum_{k \geq 1} \Ec{\1_{\xi > k-1} (2+\xi)}. \label{ib}
\end{align}
Moreover, we have
\begin{align} \label{zc}
\sum_{k \geq 1} \Ec{\1_{\xi > k-1} (2+\xi)}
= \sum_{i \geq 0} (i+1) \Ec{\1_{\xi \in (i,i+1]} (2+\xi)} 
\leq \Ec{(2+\xi)^2} 
= 4+\sigma^2, 
\end{align}
using that $\xi$ has the same law than $S_1$.
We now deal with the case $i = \ell$: we have
\begin{align*} 
p_{\ell} 
& \leq 
\Ec{\1_{\underline{S}_{\ell} \geq -u, 
	S_{\ell} \leq m_{\ell}^{(n,\ell)}}
	\Ppi{S_i}{\underline{S}_{n-\ell} \geq v,
		S_{n-\ell} \in [b+v, b+v+1] \Bigr)} } \\
& \leq 
\Ec{\1_{\underline{S}_{\ell} \geq -u, 
	S_{\ell} \leq v + r_{n-\ell}}
	c_5 \frac{(1+S_{\ell}-v) (1+b) 2}{(n-\ell)^{3/2}} } \\
& \leq c_{32} \frac{(1+r_{n-\ell}) (1+b)}{n^{3/2}} 
	\frac{(1+u)}{\sqrt{n}},
\end{align*}
by using first \eqref{eq ballot theorem and barrier} and then \eqref{equation majoration proba min S geq -a}.
Combining this with \eqref{ib} and \eqref{zc}, we get
\begin{align*}
\sum_{i=i_0}^{\ell} p_i
& \leq c_{33}
\frac{(1+b)(1+u)}{n^{3/2}}
\left( 
\sum_{i=i_0}^{\ell-1} \frac{(1+r_i)}{i^{3/2}} 
+ \sum_{i= \lfloor\lambda n/2 \rfloor +1}^{\ell-1}
\frac{(1+r_i)}{i^{3/2}} 
\frac{\sqrt{n}}{(\ell -i)^{1/2}} 
+ \frac{(1+r_n)}{\sqrt{n}}  \right) \\
& \leq c_{33}
\frac{(1+b)(1+u)}{n^{3/2}}
\left( 
\sum_{i=i_0}^\infty \frac{(1+r_i)}{i^{3/2}} 
+ \frac{(1+r_n)\sqrt{n}}{(\lfloor\lambda n/2 \rfloor + 1)^{3/2}} 
\sum_{j=1}^{\ell - \lfloor\lambda n/2 \rfloor -1}
\frac{1}{j^{1/2}} 
+ \frac{(1+r_n)}{\sqrt{n}} \right) \\
& \leq c_{34}
\frac{(1+b)(1+u)}{n^{3/2}}
\left( 
\sum_{i=i_0}^\infty \frac{(1+r_i)}{i^{3/2}} 
+ \frac{(1+r_n)}{n} 
\sum_{j=1}^n
\frac{1}{j^{1/2}} 
+ \frac{(1+r_n)}{\sqrt{n}} \right) \\
& \leq c_{35}
\frac{(1+b)(1+u)}{n^{3/2}}
\left( 
\sum_{i=i_0}^\infty \frac{(1+r_i)}{i^{3/2}} 
+ \frac{(1+r_n)}{\sqrt{n}} \right),
\end{align*}
where we used that $(r_n)_{n\in\N}$ is an increasing sequence and the constants depend only on $\lambda$.
Since furthermore $\sum_{n\geq1} r_n n^{-3/2} < \infty$, we have $r_n / \sqrt{n} \to 0$. 
Thus, we can choose $i_0$ such that for all $n \geq i_0$, $c_{35} (1+r_n)/\sqrt{n} \leq \varepsilon/4$.
Moreover, we can choose $i_0$ such that $c_{35} \sum_{i=i_0}^\infty \frac{(1+r_i)}{i^{3/2}} \leq \varepsilon/4$. 
This concludes the proof of \eqref{za} and, therefore, of the lemma.
\end{proof}

\subsection{Local limit theorems}
\label{subsection appendix local limit theorems}

We first recall the classical Stone's \cite{stone65} local limit theorem: letting $h>0$ be any real number if $S_1$ is nonlattice and be the span of the lattice if $S_1$ is lattice, we have
\begin{align} \label{eq TLL Stones}
\Pp{S_n \in [b,b+h)}
=
\frac{h}{\sigma \sqrt{2 \pi n}}
\e^{- b^2 / 2 \sigma^2 n}
+
\petito{\frac{1}{\sqrt{n}}},
\end{align}
as $n\to\infty$, uniformly in $b \in \R$.
Thus we have the following uniform bound: it exists $c_{36} > 0$ such that, for all $n \geq 1$ and $b \in \R$,
\begin{align} \label{eq TLL Stones uniform}
\Pp{S_n \in [b,b+1)}
\leq
\frac{c_{36}}{\sqrt{n}}.
\end{align}
Now, we state a local limit theorem for the random walk staying above a barrier, in the case where the starting point is at distance of order $\sqrt{n}$ from the barrier and the endpoint at distance $o(\sqrt{n})$.
\begin{lem} \label{lemme deviations normales inverse}
Let $(\gamma_n)_{n\in \N}$ be a sequence of positive numbers such that $\gamma_n \ll \sqrt{n}$ as $n\to \infty$.
We set $f \colon t \in \R \mapsto t \e^{-t^2/2} \1_{t \geq 0}$.
\begin{enumerate}
\item If the law of $S_1$ is nonlattice, then, for all $h>0$, 
\begin{align*}
\Pp{S_n \in [u-b, u-b+h), \underline{S}_n \geq -b} 
=
\frac{\theta^-}{\sigma n} 
f \left( \frac{b}{\sigma \sqrt{n}} \right)
\int_u^{u+h} R^-(t) \diff t
+ \petito{\frac{R^-(u)}{n}},
\end{align*}
as $n \to \infty$, uniformly in $b \in \R$ and in $u \in [0, \gamma_n]$.
\item If the law of $S_1$ is $(h,a)$-lattice, then, 
\[
\Pp{S_n = u-b, \underline{S}_n \geq -b} 
=
\frac{\theta^-}{\sigma n} 
f \left( \frac{b}{\sigma \sqrt{n}} \right)
h R^-(u)
+ \petito{\frac{R^-(u)}{n}},
\]
as $n \to \infty$, uniformly in $b \in \R$ and $u \in [0, \gamma_n] \cap (b + an + h\Z)$.
\end{enumerate}
\end{lem}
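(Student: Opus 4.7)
The strategy is a time reversal that transforms the problem into a local limit theorem for the dual walk $S^-$ starting near the barrier and ending at a position of order $\sqrt{n}$, together with the proof of this reduced local limit theorem by splitting time at $m := \lfloor n/2 \rfloor$, applying Stone's theorem \eqref{eq TLL Stones} on $[m, n]$, and applying Proposition \ref{prop CV vers le meandre} (for the walk $S^-$) on $[0,m]$. Since the event forces $\underline{S}_n \leq 0$, the case $b < 0$ is trivially zero on both sides, so one may assume $b \geq 0$.

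First, set $V_k \coloneqq S_{n-k} - S_n$ for $0 \leq k \leq n$, so $V_0 = 0$, $V_n = -S_n$, and the increments of $V$ have the law of $-S_1$; hence $V$ has the same distribution as $S^-$. A direct manipulation gives $\underline{V}_n = V_n + \underline{S}_n$, so $\{\underline{S}_n \geq -b\} = \{\underline{V}_n \geq V_n - b\}$ and $\{S_n \in [u-b, u-b+h)\} = \{V_n \in (b-u-h, b-u]\}$. Parametrizing by $t := b - V_n \in [u, u+h)$, the probability under study equals
\[
\P\bigl(S^-_n \in (b-u-h, b-u],\ \underline{S^-}_n \geq S^-_n - b\bigr).
\]
I partition $[u, u+h)$ into subintervals of length $\eta$ (with $\eta \to 0$ in the nonlattice case, while $\eta = h$ in the lattice case where only one value of $t$ is compatible with the lattice constraint on $S^-_n$). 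On the subinterval with left endpoint $t_j$, the condition $\underline{S^-}_n \geq S^-_n - b$ is sandwiched, up to an $\eta$-error in the floor, by the cleaner condition $\underline{S^-}_n \geq -t_j$.

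The main step is the local limit estimate: for $t \in [0, \gamma_n]$ and $s$ at scale $\sigma\sqrt{n}$,
\[
\P\bigl(S^-_n \in [s, s+\eta),\ \underline{S^-}_n \geq -t\bigr) = \frac{\theta^- \eta}{\sigma n}\, R^-(t)\, f\!\left(\frac{s}{\sigma\sqrt{n}}\right) + \petito{\frac{R^-(t)}{n}}.
\]
To prove it I condition on $S^-_m$:
\[
\P\bigl(S^-_n \in [s, s+\eta),\ \underline{S^-}_n \geq -t\bigr) = \Eci{}{\1_{\underline{S^-}_m \geq -t}\, g(S^-_m)},
\]
where $g(y)$ is the probability that the free increments on $[m,n]$ land in $[s-y, s-y+\eta)$ while staying above $-t-y$. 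For $y$ of order $\sqrt{n}$ (the typical meander scale) the floor becomes harmless, at cost controlled by \eqref{equation majoration proba min S geq -a}, and Stone's theorem \eqref{eq TLL Stones} gives a Gaussian density in $s-y$. The outer expectation is then an expectation of a Gaussian in $S^-_m/(\sigma\sqrt{m})$ under the meander-like conditioning, handled by Proposition \ref{prop CV vers le meandre} applied to $S^-$; the latter yields the prefactor $\theta^- R^-(t)/\sqrt{m}$ and a Rayleigh law for $S^-_m/(\sigma\sqrt{m})$, and the resulting Gaussian-against-Rayleigh convolution integrates explicitly to $(\sigma\sqrt{n})^{-1} f(s/(\sigma\sqrt{n}))$. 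Summing the local estimate with $s = b - t_j$ over the partition of $[u, u+h)$ produces the Riemann sum $\sum_j R^-(t_j)\,\eta \to \int_u^{u+h} R^-(t)\,dt$, multiplied by $\theta^- f(b/(\sigma\sqrt{n}))/(\sigma n)$, which is the desired conclusion in the nonlattice case; in the lattice case the discrete sum directly yields $h R^-(u)$.

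The principal obstacle is the uniformity of the error $\petito{R^-(t)/n}$ with respect to both $t \in [0, \gamma_n]$ and $b \in \R$. Since $R^-(t)$ grows linearly, it must be extracted cleanly from the meander convergence, which requires the uniform-in-starting-point form of Proposition \ref{prop CV vers le meandre}. Handling the Gaussian factor in the regime $b = \grandO{1}$ (where the leading term shrinks like $b/\sqrt{n}$) forces Stone's theorem to be applied with its quantitative error rather than evaluated once at $b/(\sigma\sqrt{n})$. A further minor point is that the floor condition on the second half depends on the random starting point $S^-_m$, so one truncates the outer expectation to the event $S^-_m \geq C\sqrt{n} \gg t$ for large $C$ and bounds the remainder by Proposition \ref{prop CV vers le meandre} applied with a cut-off function that is supported near zero, ensuring its contribution is $\petito{R^-(t)/n}$.
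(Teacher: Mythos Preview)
Your argument contains a genuine gap: the claim that ``the resulting Gaussian-against-Rayleigh convolution integrates explicitly to $(\sigma\sqrt{n})^{-1} f(s/(\sigma\sqrt{n}))$'' is false. With $m=n/2$ and $x := s/(\sigma\sqrt n)$, your computation requires
\[
\sqrt{\tfrac{2}{\pi}}\,\E_R\!\bigl[e^{-(x - R/\sqrt 2)^2}\bigr] \;=\; x e^{-x^2/2},
\]
where $R$ has the Rayleigh density $r\mapsto r e^{-r^2/2}$. At $x=0$ the left side equals $1/\sqrt{2\pi}$ while the right side is $0$. In probabilistic terms: if you take the Brownian meander on $[0,1]$ and replace the second half by an independent Brownian motion, the time-$1$ marginal becomes $(1/\sqrt 2)R + (1/\sqrt 2)N$ with $N$ standard Gaussian, which is not Rayleigh. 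Hence dropping the barrier on $[m,n]$ introduces an error of the \emph{same} order $R^-(t)/n$ as the main term, not $\petito{R^-(t)/n}$. The inequality \eqref{equation majoration proba min S geq -a} does not rescue this: for $y$ of order $\sqrt n$, the floor at $-(t+y)$ is at distance of order $\sqrt{n-m}$, so the probability of touching it in $n-m$ steps is bounded away from $0$.

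The paper takes a different route. It invokes Doney's local limit theorems \cite{doney2012} as a black box to get the exact asymptotic on any window $b/(\sigma\sqrt n)\in[D^{-1},D]$; then, since $f$ vanishes at $0$ and at $\infty$, it reduces the two tails to mere upper bounds. For $b\le D^{-1}\sqrt n$ the ballot theorem \eqref{eq ballot theorem} suffices; for $b> D\sqrt n$ the paper uses a meander-plus-Stone splitting very much like yours, but only to show the probability is $\le \varepsilon R^-(u)/n$, not to identify the constant. Your decomposition is perfectly adequate for this crude tail bound, but extracting the correct constant on the bulk genuinely requires a conditioned local limit input of Doney type; the functional meander convergence alone is too weak to localize $S^-_n$ in a bounded interval.
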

\begin{proof}
First note that, for each $D >0$, by Propositions 11, 18 and 24 of Doney \cite{doney2012}, both estimates of Lemma \ref{lemme deviations normales inverse} holds uniformly in $b \in [D^{-1} \sqrt{n}, D \sqrt{n}]$ and $u \in [0, \gamma_n)$%
\footnote{Doney states his results in terms of the renewal function for the first \textit{weak} increasing ladder height process of $S$ and of $\P ( \min_{1 \leq k \leq n} S_n >0)$. 
Our formulation follows from Remark 4.6 of Caravenna and Chaumont \cite{caravennachaumont2013} (although they work only in the lattice and absolutely continuous cases, this remark does not rely on these assumptions).}.
Noting also that $f$ tends to $0$ at $0$ and at infinity, it is sufficient to prove that, for each $h>0$ and $\varepsilon >0$, there exist $D$ and $n$ large enough such that $\Pp{S_n \in [u-b, u-b+h), \underline{S}_n \geq -b} \leq \varepsilon R^-(u) / n$ for all $b \notin [D^{-1} \sqrt{n}, D \sqrt{n}]$ and $u \in \R_+$.
Reversing time, we can equivalently prove that $\Pp{S_n \in [b-u, b-u+h), \underline{S}_n \geq -u} \leq \varepsilon R(u) / n$.
For $b \in [0, D^{-1} \sqrt{n}]$, using \eqref{eq ballot theorem} and \eqref{equation R 2}, we get 
$\Pp{S_n \in [b-u, b-u+h), \underline{S}_n \geq -u} 
\leq
c_{37} R(u) (1+D^{-1} \sqrt{n}) / n^{3/2}$.
For $D$ and $n$ large enough and independent of $u$, this is smaller than $\varepsilon R(u)/n$.
For $b > D \sqrt{n}$, we cancel the lower barrier between times $\lfloor n/2 \rfloor + 1$ and $n$ so that we get
\begin{align} \label{se}
\Pp{S_n \in [b-u, b-u+h), \underline{S}_n \geq -u} 
& \leq
\Ec{\1_{\underline{S}_{\lfloor n/2 \rfloor} \geq -u} 
	h\left( 
		\frac{S_{\lfloor n/2 \rfloor} + u}{\sigma \lfloor n/2 \rfloor^{1/2}}
	\right)},
\end{align}
where we set for $x \geq 0$, $h(x) \coloneqq
\P ( S_{n-\lfloor n/2 \rfloor} + x\sigma \lfloor n/2 \rfloor^{1/2} 
	\in	[ b, b+h ) )$.
Using \eqref{eq TLL Stones}, we have, uniformly in $b > D \sqrt{n}$ and $x \in \R_+$,
\begin{align*}
h(x)
& \leq 
\frac{c_{38}}{\sqrt{n}}
\exp \left( 
- \frac{(x \sigma \lfloor n/2 \rfloor^{1/2} - b)^2}{2 \sigma^2 (n-\lfloor n/2 \rfloor)}  \right)
+ \petito{\frac{1}{\sqrt{n}}} 
\leq 
\frac{c_{38}}{\sqrt{n}} 
g_D(x)
+ \petito{\frac{1}{\sqrt{n}}},
\end{align*}
where $g_D \colon \R_+ \to [0,1]$, obtained by taking the supremum on $b > D \sqrt{n}$, is continuous and converges simply to 0 as $D \to \infty$.
Thus, using \eqref{equation convergence vers la loi Rayleigh} and \eqref{equation majoration proba min S geq -a}, we get that \eqref{se} is smaller than
\begin{align}
\frac{c_{38}}{\sqrt{n}}
\frac{\theta R(u)}{\sqrt{n}}
\left(
\int_0^\infty g_D(t) t \e^{-t^2/2} \diff t
+
\petito{1}
\right)
+
\frac{c_3 R(u)}{\sqrt{n}}
\petito{\frac{1}{\sqrt{n}}}, \label{bz}
\end{align}
uniformly in $b \geq D \sqrt{n}$ and $u \in[0,\gamma_n]$.
By the dominated convergence theorem, the integral in \eqref{bz} tends to 0 as $D\to \infty$ and, thus, \eqref{se} is smaller than $\varepsilon R(u) / n$ for $D$ and $n$ large enough and independent of $u$.
\end{proof}

\subsection{Convergence towards the Bessel bridge}
\label{subsection appendix convergence towards the Bessel bridge}

In this subsection, we are going to prove the following generalization of Lemma 2.4 of Chen, Madaule and Mallein \cite{cmm2015arxiv}.
It proves that conditioned by the event of Lemma \ref{lemme deviations normales inverse} the trajectory $\bfS^{(n)}$ converges to the Bessel bridge.
This is a first step in the proof of Proposition \ref{prop convergence to the excursion}.
\begin{lem} \label{lem convergence pont de bessel}
Let $(\gamma_n)_{n\in \N}$ be a sequence of positive numbers such that $\gamma_n \ll \sqrt{n}$ as $n\to \infty$.
We set $f \colon t \in \R \mapsto t \e^{-t^2/2} \1_{t \geq 0}$ and we denote by $\rho^1_{b,0}$ the 3-dimensional Bessel bridge of length $1$ from $b \in \R_+$ to $0$.
\begin{enumerate}
\item If the law of $S_1$ is nonlattice, then, for all $h>0$ and $F \in \cC_b^u(\cD([0,1]))$,
\begin{align*}
\Eci{b\sigma \sqrt{n}}
	{F(\bfS^{(n)})
	\1_{S_n \in [u,u+h), 
		\underline{S}_n \geq 0}}
=
\frac{\theta^-}{\sigma n} 
\int_u^{u+h} R^-(t) \diff t
f(b)
\Ec{F(\rho^1_{b,0})}
+ \petito{\frac{R^-(u)}{n}},
\end{align*}
as $n \to \infty$, uniformly in $b \in \R$ and in $u \in [0, \gamma_n]$.
\item If the law of $S_1$ is $(h,a)$-lattice, then, for all $F \in \cC_b^u(\cD([0,1]))$,
\begin{align*}
\Eci{b\sigma \sqrt{n}}
	{F(\bfS^{(n)})
	\1_{S_n = u, 
		\underline{S}_n \geq 0}}
=
\frac{\theta^-}{\sigma n} 
h R^-(u)
f(b)
\Ec{F(\rho^1_{b,0})}
+ \petito{\frac{R^-(u)}{n}},
\end{align*}
as $n \to \infty$, uniformly in $b \in \R$ and $u \in [0, \gamma_n] \cap (b \sigma \sqrt{n} + an + h\Z)$.
\end{enumerate}
\end{lem}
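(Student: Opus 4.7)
\medskip
\noindent\textbf{Proof sketch proposal.}
The plan is to split the trajectory at time $m_n := \lfloor (1-\varepsilon)n \rfloor$ via the Markov property, treating the bulk $[0,1-\varepsilon]$ and the tail $[1-\varepsilon,1]$ by entirely different tools, and then to let $\varepsilon \downarrow 0$ at the end. On the bulk, the Caravenna--Chaumont invariance principle \eqref{eq CV vers le Bessel} gives convergence under $\P^+$ to the $3$-dimensional Bessel process $\cR$ started at $b$. On the tail, Lemma \ref{lemme deviations normales inverse} applied with the random starting position $S_{m_n}$ of order $\sqrt{n}$ supplies both the leading-order asymptotics for the event $\{S_n \in [u, u+h),\,\underline{S}_n \geq 0\}$ (for the nonlattice case; similarly $\{S_n = u\}$ in the lattice case) and the reweighting factor $f(S_{m_n}/(\sigma\sqrt{\varepsilon n}))$ which, in the limit, turns the Bessel process into the Bessel bridge.

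First I would reduce to $b \in [D^{-1}, D]$ by a truncation argument: for $b \notin [D^{-1}, D]$, Lemma \ref{lemme deviations normales inverse} directly bounds the quantity of interest by $\norme{F}$ times a term involving $f(b)$, which can be made uniformly $o(R^-(u)/n)$ by taking $D$ large (using that $f$ vanishes at both $0$ and $\infty$). On the compact range $b \in [D^{-1}, D]$, I apply the Markov property at $m_n$:
\begin{align*}
\E_{b\sigma\sqrt n}\bigl[F(\bfS^{(n)})\1_{S_n \in [u,u+h),\,\underline{S}_n\geq0}\bigr]
= \E_{b\sigma\sqrt n}\bigl[\1_{\underline{S}_{m_n}\geq 0}\,\Lambda_n\bigl(\bfS^{(n)}|_{[0,1-\varepsilon]},\,S_{m_n}\bigr)\bigr],
\end{align*}
where $\Lambda_n(\omega,y) = \E_y[F(\omega \oplus_n \bfS^{(n-m_n)}) \1_{S_{n-m_n}\in[u,u+h),\,\underline{S}_{n-m_n}\geq 0}]$ and $\oplus_n$ denotes the obvious concatenation after rescaling by $\sigma\sqrt n$.

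Using uniform continuity of $F$ together with the fact that, conditionally on $S_{m_n}=y$ with $y$ of order $\sqrt n$ and on ending in $[u,u+h)$ with $u\ll\sqrt n$, the rescaled path on $[1-\varepsilon,1]$ has fluctuations $O(\sqrt\varepsilon)$ around a straight line from $y/(\sigma\sqrt n)$ to $0$, the function $F$ restricted to the second piece is close (as $\varepsilon\to 0$, uniformly in everything) to $F$ applied to the first piece extended linearly to $0$ on $[1-\varepsilon,1]$. Combining with Lemma \ref{lemme deviations normales inverse} applied uniformly in $y$, I get
\begin{align*}
\Lambda_n(\omega,y)
= \frac{\theta^-}{\sigma\,\varepsilon n}\,J(u,h)\,f\!\left(\frac{y}{\sigma\sqrt{\varepsilon n}}\right) F\!\bigl(\widetilde\omega_\varepsilon\bigr) + \text{error},
\end{align*}
where $J(u,h)$ denotes $\int_u^{u+h}R^-$ (resp.\ $hR^-(u)$) in the (non)lattice case and $\widetilde\omega_\varepsilon$ is the said extension. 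Plugging this back, passing to $\P^+_{b\sigma\sqrt n}$ via the $h$-transform \eqref{eq definition P^+} (and using $R(b\sigma\sqrt n)\sim c_0 b\sigma\sqrt n$), the outer expectation becomes $R(b\sigma\sqrt n)$ times an $\E^+_{b\sigma\sqrt n}$-expectation of a bounded continuous functional of $\bfS^{(n)}|_{[0,1-\varepsilon]}$. By \eqref{eq CV vers le Bessel} this converges to $\E[f(\cR_{1-\varepsilon}/\sqrt\varepsilon)\cdot F(\widetilde\cR)]$ (times the appropriate constants from \eqref{equation lien theta c_0}). Recognizing $\varepsilon^{-1}f(y/\sqrt\varepsilon)$ as (proportional to) the transition density $q_\varepsilon(y,0)$ of $\cR$ at $0$, this reweighted law is exactly that of the Bessel bridge $\rho^1_{b,0}$ restricted to $[0,1-\varepsilon]$, and letting $\varepsilon\downarrow 0$ (invoking continuity of $F$ and continuity of the Bessel bridge's law in $\varepsilon$) finishes the identification.

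\medskip
\noindent\textbf{Main obstacle.} I expect the technical heart of the argument to be the simultaneous uniformity in $b \in \R_+$ and $u \in [0,\gamma_n]$. For fixed compact $b$ the invariance principle is immediate, but one must control $b$ near $0$ and near $\infty$ by separate truncation bounds whose errors, when measured against the normalization $R^-(u)/n$, vanish only thanks to the decay of $f$ at both ends; this will require revisiting the proof of Lemma \ref{lemme deviations normales inverse} to make the implicit uniformity in $b$ explicit. A secondary but nontrivial point is the uniform-in-$y$ application of Lemma \ref{lemme deviations normales inverse} in the tail piece, which propagates through the Bessel-process functional and must cooperate with the $o(1)$ error from the uniform continuity of $F$.
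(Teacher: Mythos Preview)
Your proposal is correct and follows essentially the same route as the paper: split at time $m_n=\lceil(1-\varepsilon)n\rceil$, apply Lemma~\ref{lemme deviations normales inverse} on the tail $[1-\varepsilon,1]$, pass to $\P^+$ via the $h$-transform \eqref{eq definition P^+} on the bulk, invoke the Caravenna--Chaumont principle \eqref{eq CV vers le Bessel}, and identify the Bessel bridge through the density reweighting (the paper cites Equation~(2.30) of \cite{cmm2015arxiv} for the exact identity $\varepsilon f(b)\Ec{F(\rho^1_{b,0})}=(b/\sqrt{\varepsilon})\Eci{b}{F'(\cR|_{[0,1-\varepsilon]})\e^{-\cR(1-\varepsilon)^2/2\varepsilon}}$).

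The only structural difference is the handling of the tail piece. The paper does \emph{not} argue that the rescaled path on $[1-\varepsilon,1]$ is close to a straight line; instead it first proves the result for functionals $F$ of the special form $F(x)=F'(x_t,t\in[0,1-\varepsilon])$ (so that $F(\bfS^{(n)})$ is $\sF_m$-measurable and the Markov factorisation is exact), and only afterwards extends to general $F\in\cC^u_b(\cD([0,1]))$ by comparing $F$ with $F\circ\varphi_\varepsilon$ (where $\varphi_\varepsilon$ sets the path to zero on $[1-\varepsilon,1]$) and bounding the resulting error by a time-reversal plus the meander invariance principle (Proposition~\ref{prop CV vers le meandre}). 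This two-stage scheme avoids having to control the conditioned tail trajectory directly and makes the uniformity in $(b,u)$ transparent. Your one-stage argument would ultimately need the same estimate (essentially the paper's bound \eqref{bl}) to justify the claimed $O(\sqrt{\varepsilon})$ fluctuation, so the work is the same---just organised differently. Also, the paper reduces only to $b\in[0,K]$ rather than $b\in[D^{-1},D]$; there is no need to truncate near $0$, since the bulk argument via \eqref{eq CV vers le Bessel} works uniformly down to $b=0$.
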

\begin{proof}
We will treat only the nonlattice case, because the proof in the lattice case is exactly the same, with $h R^-(u)$ instead of $\int_u^{u+h} R^-(t) \diff t$.
Moreover, since $F$ is bounded, $f(b) \to 0$ as $b \to \infty$ and Lemma \ref{lemme deviations normales inverse} deals with the case $F \equiv 1$,
it is sufficient to show that, for each $K>0$, the estimate holds uniformly in $b\in[0,K]$ instead of $b \in \R_+$.
We first assume that $\Forall x \in \cD ([0,1])$, $F(x) = F'(x_t,t\in[0,1-\varepsilon])$ for some $F' \in \cC^u_b (\cD ([0,1-\varepsilon]))$.
Thus, $F(\bfS^{(n)})$ is $\sF_m$-measurable with $m \coloneqq \lceil (1-\varepsilon)n \rceil$ and we have
\begin{align} \label{bt}
\Eci{b\sigma \sqrt{n}}
	{F(\bfS^{(n)})
	\1_{S_n \in [u,u+h), 
		\underline{S}_n \geq 0}}
& =
\Eci{b\sigma \sqrt{n}}
	{F(\bfS^{(n)})
	g \left( \frac{S_m}{\sigma \sqrt{n}} \right)
	\1_{\underline{S}_m \geq 0}},
\end{align}
where we set, for $z \in \R_+$,
\begin{align*}
g(z)
& \coloneqq
\Ppi{z\sigma \sqrt{n}}
	{\underline{S}_{n-m} \geq 0, 
	S_{n-m} \in [u,u+h)} 
=
\frac{\theta^-}{\sigma \varepsilon n} 
f \left( \frac{z}{\sqrt{\varepsilon}} \right)
\int_u^{u+h} R^-(t) \diff t
+ \petito{\frac{R^-(u)}{n}},
\end{align*}
uniformly in $u \in [0,\gamma_n]$ and $z \in \R_+$, using Lemma \ref{lemme deviations normales inverse}.
Therefore, \eqref{bt} is equal to
\begin{align*}
\frac{\theta^-}{\sigma \varepsilon n}
\int_u^{u+h} R^-(t) \diff t
\Eci{b\sigma \sqrt{n}}
	{F(\bfS^{(n)})
	 f \left( \frac{S_m}{\sigma \sqrt{\varepsilon n}} \right)
	\1_{\underline{S}_m \geq 0}}
+
\Eci{b\sigma \sqrt{n}}
	{F(\bfS^{(n)})
	\1_{\underline{S}_m \geq 0}}
	\petito{\frac{R^-(u)}{n}},
\end{align*}
uniformly in $u \in [0,\gamma_n]$ and $b \in [0,K]$.
Thus, it is now sufficient to prove that
\begin{align} \label{bs}
\Eci{b\sigma \sqrt{n}}
	{F(\bfS^{(n)})
	 f \left( \frac{S_m}{\sigma \sqrt{\varepsilon n}} \right)
	\1_{\underline{S}_m \geq 0}}
\xrightarrow[n\to \infty]{}
\varepsilon f(b)
\Ec{F(\rho^1_{b,0})},
\end{align}
uniformly in $b \in [0,K]$.
On the one hand, by Equation (2.30) of Chen, Madaule and Mallein \cite{cmm2015arxiv}, we have
\begin{align} \label{br}
\varepsilon f(b) \Ec{F(\rho^1_{b,0})}
= 
\frac{b}{\sqrt{\varepsilon}} 
\Eci{b}
	{F'( \cR(s), s\in [0,1-\varepsilon]) 
	\e^{-\cR(1-\varepsilon)^2/2\varepsilon}}.
\end{align}
On the other hand, recalling \eqref{eq definition P^+}, we get
\begin{align} \label{bq}
\Eci{b\sigma \sqrt{n}}
	{F(\bfS^{(n)})
	 f \left( \frac{S_m}{\sigma \sqrt{\varepsilon n}} \right)
	\1_{\underline{S}_m \geq 0}} 
& = 
\E^+_{b\sigma \sqrt{n}}
\left[
F(\bfS^{(n)})
h_{\varepsilon,b}^n \left( \frac{S_m}{\sigma \sqrt{n}} \right)
\right],
\end{align}
where we set, for $z \in \R_+$,
\begin{align*}
h_{\varepsilon,b}^n (z)
\coloneqq
f \left( \frac{z}{\sqrt{\varepsilon}} \right)
\frac{R(b \sigma \sqrt{n})}{R(z \sigma \sqrt{n})}
=
\frac{z}{\sqrt{\varepsilon}} \e^{-z^2/2 \varepsilon}
\frac{R(b \sigma \sqrt{n})}{R(z \sigma \sqrt{n})}.
\end{align*}
Since $\e^{-z^2/2 \varepsilon} \to 0$ as $z \to \infty$ and using \eqref{equation R 1}, it is clear that
\begin{align} \label{bp}
\sup_{b\in [0,K],z \in \R_+}
\abs{
h_{\varepsilon,b}^n (z)
-
\frac{b}{\sqrt{\varepsilon}}
\e^{-z^2/2 \varepsilon}} 
\xrightarrow[n \to \infty]{} 0,
\end{align}
so we get, combining \eqref{br}, \eqref{bq} and \eqref{bp},
\begin{align*}
& \limsup_{n\to \infty}
\sup_{b\in [0,K]}
\abs{
\Eci{b\sigma \sqrt{n}}
	{F(\bfS^{(n)})
	 f \left( \frac{S_m}{\sigma \sqrt{\varepsilon n}} \right)
	\1_{\underline{S}_m \geq 0}}
-
\varepsilon f(b)
\Ec{F(\rho^1_{b,0})}
} \\
& \leq
\limsup_{n\to \infty}
\sup_{b\in [0,K]}
\frac{b}{\sqrt{\varepsilon}}
\abs{
\E^+_{b\sigma \sqrt{n}}
	\left[
	F(\bfS^{(n)})
	\e^{-S_m^2/2 \sigma^2 \varepsilon n}
	\right]
-
\Eci{b}
	{F'( \cR(s), s\in [0,1-\varepsilon]) 
	\e^{-\cR(1-\varepsilon)^2/2\varepsilon}}
},
\end{align*}
which is equal to 0 by applying \eqref{eq CV vers le Bessel}.
It proves \eqref{bs} and so it concludes the case where $F(x) = F'(x_t,t\in[0,1-\varepsilon])$ for some $F' \in \cC^u_b (\cD ([0,1-\varepsilon]))$.
We now want to extend the result to the case $F \in \cC^u_b (\cD ([0,1]))$.
For $\varepsilon >0$ and $x \in \cD ([0,1])$, we define $\varphi_\varepsilon (x) \in \cD ([0,1])$ by $\varphi_\varepsilon (x)\restreinta_{[0,1-\varepsilon)} = x\restreinta_{[0,1-\varepsilon)}$ and $\varphi_\varepsilon (x)\restreinta_{[1-\varepsilon,1]} \equiv 0$,
so that $F \circ \varphi_\varepsilon$ satisfies the assumption of the particular case that is already proved.
Thus, it is now sufficient to show that, for each $\eta > 0$, it exists $\varepsilon > 0$ such that
\begin{align} \label{bo}
\limsup_{n \to \infty}
\sup_{u \in [0,\gamma_n], b \in [0,K]}
\Eci{b\sigma \sqrt{n}}
	{\abs{F(\bfS^{(n)}) - F \circ \varphi_\varepsilon(\bfS^{(n)})}
	\1_{S_n \in [u,u+h), 
		\underline{S}_n \geq 0}}
& \leq
\eta \frac{R^-(u)}{n}, \\
\sup_{b \in [0,K]}
f(b) \Ec{\abs{F(\rho^1_{b,0}) - F \circ \varphi_\varepsilon(\rho^1_{b,0})}} 
& \leq
\eta. \label{bn}
\end{align}
We first prove \eqref{bn}: we have
\begin{align*}
\Ec{\abs{F(\rho^1_{b,0}) - F \circ \varphi_\varepsilon(\rho^1_{b,0})}} 
& \leq
\Ec{\omega_F(\lVert \rho^1_{b,0} - \varphi_\varepsilon(\rho^1_{b,0})  \rVert_\infty)}
\leq
\Ec{\omega_F \left( \sup_{t\in[1-\varepsilon,1]} \rho^1_{b,0}(t) \right)}
\eqqcolon
\E_{b,\varepsilon}.
\end{align*}
Since the function $b \mapsto \E_{b,\varepsilon}$ is nondecreasing and $f \leq 1$, the left-hand side of \eqref{bn} is smaller than $\E_{K,\varepsilon}$ and, thus, it tends to 0 as $\varepsilon \to 0$ by dominated convergence, because $\omega_F$ is bounded.
Now, we prove \eqref{bo}: 
using that 
$d(\bfS^{(n)}, \varphi_\varepsilon(\bfS^{(n)})) 
\leq \max_{0 \leq k \leq \varepsilon n} S_{n-k}/\sigma \sqrt{n}$ and reversing time, we get that the expectation in \eqref{bo} is smaller than
\begin{align}
\Ec{\omega_F \left( 
		\sup_{0 \leq k \leq \varepsilon n} 
		\frac{S^-_k + u +h}{\sigma \sqrt{n}} 
		\right)
	\1_{S^-_n - b\sigma \sqrt{n} \in (-u-h,-u], 
		\underline{S}^-_n \geq -u-h}}. \label{bm}
\end{align}
Conditioning with respect to $\sF_{\lfloor n/2 \rfloor}$ and applying \eqref{eq TLL Stones uniform}, we get that \eqref{bm} is smaller than
\begin{align}
& \frac{c_{36} (1+h)}{(n-\lfloor n/2 \rfloor)^{1/2}}
\Ec{\omega_F \left( 
		\sup_{0 \leq k \leq \varepsilon n} 
		\frac{S^-_k + u +h}{\sigma \sqrt{n}} 
		\right)
	\1_{\underline{S}^-_{\lfloor n/2 \rfloor} \geq -u-h}} \nonumber \\
& =
\frac{c_{36} (1+h)}{\sqrt{n/2}}
\frac{\theta^- R^-(u+h)}{\sqrt{n/2}}
\left(
\Ec{\omega_F \left( \frac{1}{\sqrt{2}}
		\sup_{t \in [0, 2 \varepsilon]} 
		\cM_t
		\right)}
+
\petito{1}
\right), \label{bl}
\end{align}
uniformly in $u \in [0,\gamma_n]$ et $b \in [0,K]$, by using Lemma \ref{prop CV vers le meandre} to get the last equality.
The expectation in the right-hand side of \eqref{bl} does not depend on $n$, $b$ and $u$ and tends to 0 as $\varepsilon \to 0$ by dominated convergence, so it shows \eqref{bo} and concludes the proof. 
\end{proof}

\subsection{Convergence towards the Brownian excursion}
\label{subsection appendix convergence towards the Brownian excursion}

We prove here Proposition \ref{prop convergence to the excursion}, in a similar way as Lemma 2.5 of Chen, Madaule and Mallein \cite{cmm2015arxiv}, but directly with a first barrier that can be different of 0.
Following \cite{cmm2015arxiv}, we fix some $\lambda \in (0,1)$ and, for $G_1 \colon \cD([0,\lambda]) \to \R$, $G_2 \colon \cD([0,1-\lambda]) \to \R$ and $x \in \cD([0,1])$, we set
\begin{align*}
G_1 \star G_2 (x) 
\coloneqq
G_1 (x_s, s \in [0,\lambda])
G_2 (x_{\lambda+s}, s \in [0,1-\lambda]).
\end{align*}
Then, by \cite[Lemma 2.3]{cmm2015arxiv}, we have, for each $G_1 \in \cC_b(\cD([0,\lambda]))$ and $G_2 \in \cC_b(\cD([0,1-\lambda]))$,
\begin{align} \label{eq lien excursion meandre pont de bessel}
\Ec{G_1 \star G_2 (\mathfrak{e})}
=
\sqrt{\frac{2}{\pi}}
\frac{1}{\lambda^{1/2} (1- \lambda)^{3/2}}
\Ec{\cM(\lambda) 
	\e^{-\cM(\lambda)^2/2(1-\lambda)}
	G_1(\cM)
	G_2(\rho^{1-\lambda}_{\cM(\lambda),0})
},
\end{align}
where $\mathfrak{e}$ is the normalized Brownian excursion, $\cM$ is the Brownian meander of length $\lambda$ and $(\rho^{1-\lambda}_{z,0})_{z\in\R_+}$ is a family of Bessel bridges of length $1-\lambda$ from $z$ to $0$, independent of $\cM$.
\begin{proof}[Proof of Proposition \ref{prop convergence to the excursion}]
Using Lemma \ref{lemma weak convergence G_1 G_2}, it is sufficient to consider the case where $F = G_1 \star G_2$ for some $G_1 \in \cC_b^u(\cD([0,\lambda]))$ and $G_2 \in \cC_b^u(\cD([0,1-\lambda]))$.
The proof will be only treated in the lattice case: in the nonlattice case, the reasoning is exactly the same with $\int_b^{b+h} R^-(t) \diff t$ instead of $h R^-(b)$.
We set $m \coloneqq \lfloor \lambda n \rfloor$ and take the conditional expectation according to $\sF_m$ to get that
\begin{align} \label{bk}
& \Ec{F(\bfS^{(n)})
	\1_{\underline{S}_{\lfloor \lambda n \rfloor} \geq -u, 
	\min_{\lfloor \lambda n \rfloor \leq i \leq n} S_i \geq v,
	S_n = v + b}} 
=
\Ec{G_1 \left( \frac{S_{\lfloor sn \rfloor}}{\sigma \sqrt{n}}, s\leq \lambda \right)
	\varphi \left( \frac{S_m}{\sigma \sqrt{n}} \right)
	\1_{\underline{S}_m \geq -u}},
\end{align}
where we set
$\varphi (z) \coloneqq
\E_{z \sigma \sqrt{n}}
	[G_2(S_{\lfloor (s + \lambda) n \rfloor -m}/\sigma \sqrt{n}, s\leq 1-\lambda)
	\1_{\underline{S}_{n -m}  \geq v, S_n = v + b}]$, for each $z \in \R$ such that $v + b + z \sigma \sqrt{n} \in a(n-m) + h \Z$.
Then, since $G_2$ is uniformly continuous, using Lemma \ref{lem convergence pont de bessel} combined with Lemma \ref{lemma weak convergence approximation F_n F}%
\footnote{Here the limit measure is the law of $\rho^{1-\lambda}_{z,0}$ that depends on the parameter $(b,v,z)$. Thus, using Remark \ref{rem weak convergence approximation F_n F}, we should prove that $\P(\lVert \rho^{1-\lambda}_{z,0} \rVert_\infty > K) \to 0$ as $K \to \infty$ uniformly in $z \in \R_+$, but it is obviously false.
However, since $f ( z / \sqrt{1-\lambda}) \to 0$ as $z \to \infty$, it is sufficient to have, for each $M >0$, $\P(\lVert \rho^{1-\lambda}_{z,0} \rVert_\infty > K) \to 0$ as $K \to \infty$ uniformly in $z \in [0,M]$ and this is clearly true.}, 
we get that $\varphi(z)$ is equal to
\begin{align*}
& \frac{\theta^-}{\sigma (1-\lambda) n} 
h R^-(b) 
f \left( \frac{z}{\sqrt{1-\lambda}} \right) 
\Ec{G_2 \left( \sqrt{1-\lambda} \rho^1_{z/\sqrt{1-\lambda},0} (s/(1-\lambda)), 
				s\leq 1-\lambda \right)}
+ \petito{\frac{R^-(b)}{n}},
\end{align*}
uniformly in $v \in [-\gamma_n, \gamma_n]$, $b \in [0, \gamma_n] \cap (-v + an + h\Z)$ and $z \in \R$ such that $v + b + z \sigma \sqrt{n} \in a(n-m) + h \Z$ (note that if $z<0$, the fact that the Bessel bridge is not well-defined is not a problem, because $f(z/\sqrt{1-\lambda}) = 0$ so the first term is zero).
Since the last expectation is equal to $\E[G_2 ( \rho^{1-\lambda}_{z,0} )]$ by scaling properties of the Bessel bridge, \eqref{bk} is equal to
\begin{align}
\begin{split} \label{bj}
& \frac{\theta^-}{\sigma (1-\lambda) n} 
h R^-(b)
\Ec{G_1 \left( \frac{S_{\lfloor sn \rfloor}}{\sigma \sqrt{n}}, s\leq \lambda \right)
	f \left( \frac{S_m}{\sqrt{1-\lambda} \sigma \sqrt{n}} \right)
	G_2 \left( \rho^{1-\lambda}_{S_m / \sigma \sqrt{n},0} \right)
	\1_{\underline{S}_m \geq -u}} \\
& {} +
\Ec{G_1 \left( \frac{S_{\lfloor sn \rfloor}}{\sigma \sqrt{n}}, s\leq \lambda \right)
	\1_{\underline{S}_m \geq -u}}
\petito{\frac{R^-(b)}{n}},
\end{split}
\end{align}
where $(\rho^{1-\lambda}_{z,0})_{z\in\R_+}$ is independent of $(S_n)_{n\in\N}$.
Then, since the function $z \in \R \mapsto \E[G_2 ( \rho^{1-\lambda}_{z,0} )]$ is continuous, using Lemma \ref{prop CV vers le meandre} combined with Lemma \ref{lemma weak convergence approximation F_n F}, we get that \eqref{bj} is equal to
\begin{align*}
& \frac{\theta^-}{\sigma (1-\lambda) n} 
h R^-(b)
\frac{\theta R(u)}{\sqrt{m}} 
\left( 
\Ec{G_1(\cM) 
	f \left( \frac{\cM(\lambda)}{\sqrt{1-\lambda}} \right)
	G_2 \left( \rho^{1-\lambda}_{\cM(\lambda),0} \right)}
+ \petito{1} 
\right) 
+
\petito{\frac{R^-(b) R(u)}{n^{3/2}}},
\end{align*}
where $\cM$ is the Brownian meander of length $\lambda$, independent of $(\rho^{1-\lambda}_{z,0})_{z\in\R_+}$.
Finally, recalling the definition of $f$ and using \eqref{eq lien excursion meandre pont de bessel}, it concludes the proof of Proposition \ref{prop convergence to the excursion}.
\end{proof}

\section{Trajectories in the weak disorder regime}
\label{section appendix weak disorder}

We prove here \eqref{eq cv trajectories weak disorder regime}. We consider some $\beta <1$ and $p >1$ such that $\E[W_{1,\beta}^p] < \infty$.
Using also \eqref{hypothese 2}, \eqref{hypothese 3} and the convexity of $\Psi$, we get that $\Psi$ is finite and nonincreasing on $[\beta,1]$ and positive on $[\beta,1)$.
Moreover, we can assume that $p \leq 2$ and that $p \beta < 1$.
Then, we have $\Psi(p\beta) \leq \Psi(\beta) < p \Psi(\beta)$ and, by Theorem 1 of Biggins \cite{biggins92}, we get that $\widetilde{W}_{n,\beta} \to \widetilde{W}_{\infty,\beta}$ in $L^p$.
The proof follows the lines of the proof of part (iv) of Theorem \ref{theorem trajectory}, without having to introduce the barrier at $-L$.
By Lemma \ref{lem weak convergence uc to c}, we can reduce the proof to the case $F \in \cC_b^u(\cD([0,1]))$.
Since $\widetilde{W}_{\infty,\beta}$ is positive $\P^*$-a.\@s., it is sufficient to prove that
\begin{align} \label{pz}
U_n(F)
\coloneqq
\e^{-n \Psi(\beta)}
\sum_{\abs{z} = n} 
\e^{-\beta V(z)} 
	F \left( \widetilde{\bfV}^{(n)}(z) \right)
\xrightarrow[n\to\infty]{} \widetilde{W}_{\infty,\beta} \Ec{F(B)},
\end{align}
in $\P^*$-probability, where $\widetilde{\bfV}^{(n)}_t(z) \coloneqq [V(z_{\lfloor tn \rfloor}) + t n\Psi'(\beta)]/\sigma_\beta \sqrt{n}$ for $t \in [0,1]$.
We consider a sequence of integers $(k_n)_{n\in\N}$ such that $1 \ll k_n \ll n$ 
and, for each $x \in \cD([0,1])$, $F_n(x) \coloneqq F( x_{((n-k_n)t + k_n)/n} - x_{k_n / n}, t\in [0,1])$.
Then, using \eqref{eq many-to-one beta < 1} and setting $\bfS^{(n,\beta)} \coloneqq (S_{\lfloor tn \rfloor,\beta} / \sigma_{\beta} \sqrt{n})_{t \in [0,1]}$, we get
\begin{align*}
\E^*[\abs{U_n(F)-U_n(F_n)}]
& \leq \E^*[U_n(\abs{F-F_n})]
= \Ec{\abs{F-F_n} (\bfS^{(n,\beta)})}
\xrightarrow[n\to\infty]{} 0,
\end{align*}
by Lemma \ref{lemma weak convergence approximation F_n F 2}.
Thus, in order to prove \eqref{pz}, it is now sufficient to prove that $U_n(F_n) \to \widetilde{W}_{\infty,\beta} \Ec{F(B)}$ in $\P^*$-probability.

First, we prove that $\zeta_n \coloneqq U_n(F_n)- \Ecsq{U_n(F_n)}{\sF_{k_n}} \to 0$ in $\P^*$-probability. 
We set $\zeta'_n \coloneqq \E [ \abs{\zeta_n}^p | \sF_n]$ and, by \eqref{or}, we have 
$\P^*(\abs{\zeta_n} \geq \varepsilon)
\leq \varepsilon \P(S)^{-1} + \P^*(\zeta'_n \geq \varepsilon^{1+p_n})$.
By the branching property at time $k_n$, we have
\begin{align} \label{ps}
\zeta_n
= \sum_{\abs{x} = k_n} \e^{-\beta V(x)- k_n \Psi(\beta)} 
\left( \Upsilon_n^{(x)} - \Ec{\Upsilon_n} \right),
\end{align}
where, conditionally on $\sF_{k_n}$, the $\Upsilon_n^{(x)}$ for $\abs{x} = k_n$ are independent variables with the same law as $\Upsilon_n$ defined by
\begin{align} \label{pt}
\Upsilon_n
& \coloneqq 
\sum_{\abs{z} = n-k_n} \e^{-\beta V(z) - (n-k_n) \Psi(\beta)} 
F \left( \frac{V(z_{\lfloor t(n-k_n) \rfloor})+ t (n-k_n) \Psi'(\beta)}{\sigma_\beta \sqrt{n}}, 
	t\in [0,1] \right).
\end{align}
Since $p \in [1,2]$ and $c_{39} \coloneqq \sup_{n\in\N} \E [\widetilde{W}_{n,\beta}^p] < \infty$, we get, in the same way as for \eqref{oq},
\begin{align*}
\zeta'_n 
& \leq 2 c_{39} (4\norme{F})^p
\widetilde{W}_{k_n,p \beta} \e^{k_n (\Psi(p \beta) - p \Psi(\beta))}.
\end{align*}
Using that $\Psi(p \beta) - p \Psi(\beta) < 0$ and $\widetilde{W}_{k_n,p \beta} \to \widetilde{W}_{\infty,p \beta} < \infty$ $\P^*$-a.s.\@, we get $\zeta'_n \to 0$ $\P^*$-a.s.\@ and, therefore, $\zeta_n \to 0$ in $\P^*$-probability.

Finally, we prove that $\Ecsq{U_n(F_n)}{\sF_{k_n}} \to \widetilde{W}_{\infty,\beta} \Ec{F(B)}$ $\P^*$-a.s.
Using the branching property in the same way as for \eqref{ps}, we have $\Ecsq{U_n(F_n)}{\sF_{k_n}} = \widetilde{W}_{k_n,\beta} \Ec{\Upsilon_n}$,  where $\Upsilon_n$ is defined in \eqref{pt}.
By \eqref{eq many-to-one beta < 1} and recalling that $\bfS^{(n,\beta)} \coloneqq (S_{\lfloor tn \rfloor,\beta} / \sigma_{\beta} \sqrt{n})_{t \in [0,1]}$, we have 
\begin{align*}
\Ec{\Upsilon_n}
& = \Ec{F \left( \frac{S_{\lfloor t(n-k_n) \rfloor,\beta}
	+ (t (n-k_n)-\lfloor t(n-k_n) \rfloor) \Psi'(\beta)}{\sigma_\beta \sqrt{n}}, 
	t\in [0,1] \right)} \\
& = \Ec{F \left( u_n \bfS^{(n-k_n,\beta)} + v_n \right)},
\end{align*}
where 
$(u_n)_{n \in \N} \in (\R_+^*)^\N$ and $(v_n)_{n \in \N} \in \cD ([0,1])^\N$ satisfy $u_n \to 1$ and $\lVert v_n \rVert_\infty \to 0$.
Using that $\bfS^{(n-k_n,\beta)}$ converges in law towards the Brownian motion and applying Lemma \ref{lemma weak convergence approximation F_n F}, we get that $\Ec{\Upsilon_n} \to \Ec{F(B)}$ as $n \to \infty$.
On the other hand, we have $\widetilde{W}_{k_n,\beta} \to \widetilde{W}_{\infty,\beta}$ $\P^*$-a.s. 
Therefore, $\Ecsq{U_n(F_n)}{\sF_{k_n}} \to \widetilde{W}_{\infty,\beta} \Ec{F(B)}$ $\P^*$-a.s.\@ and it concludes the proof.

\end{appendix}

\label{FIN}
\addcontentsline{toc}{section}{References}
\bibliographystyle{abbrv}

\begin{thebibliography}{10}

\bibitem{aidekon2013}
E.~A{\"{\i}}d{\'e}kon.
\newblock Convergence in law of the minimum of a branching random walk.
\newblock {\em Ann. Probab.}, 41(3A):1362--1426, 2013.

\bibitem{aidekonjaffuel2011}
E.~A{\"{\i}}d{\'e}kon and B.~Jaffuel.
\newblock Survival of branching random walks with absorption.
\newblock {\em Stochastic Process. Appl.}, 121(9):1901--1937, 2011.

\bibitem{aidekonshi2014}
E.~A{\"{\i}}d{\'e}kon and Z.~Shi.
\newblock The {S}eneta-{H}eyde scaling for the branching random walk.
\newblock {\em Ann. Probab.}, 42(3):959--993, 2014.

\bibitem{akq2014-2}
T.~Alberts, K.~Khanin, and J.~Quastel.
\newblock The intermediate disorder regime for directed polymers in dimension
  {$1+1$}.
\newblock {\em Ann. Probab.}, 42(3):1212--1256, 2014.

\bibitem{albertsortgiese2013}
T.~Alberts and M.~Ortgiese.
\newblock The near-critical scaling window for directed polymers on disordered
  trees.
\newblock {\em Electron. J. Probab.}, 18:no. 19, 24, 2013.

\bibitem{brv2012}
J.~Barral, R.~Rhodes, and V.~Vargas.
\newblock Limiting laws of supercritical branching random walks.
\newblock {\em C. R. Math. Acad. Sci. Paris}, 350(9-10):535--538, 2012.

\bibitem{biggins77-1}
J.~D. Biggins.
\newblock Martingale convergence in the branching random walk.
\newblock {\em J. Appl. Probability}, 14(1):25--37, 1977.

\bibitem{biggins79}
J.~D. Biggins.
\newblock Growth rates in the branching random walk.
\newblock {\em Z. Wahrsch. Verw. Gebiete}, 48(1):17--34, 1979.

\bibitem{biggins92}
J.~D. Biggins.
\newblock Uniform convergence of martingales in the branching random walk.
\newblock {\em Ann. Probab.}, 20(1):137--151, 1992.

\bibitem{bigginskyprianou97}
J.~D. Biggins and A.~E. Kyprianou.
\newblock Seneta-{H}eyde norming in the branching random walk.
\newblock {\em Ann. Probab.}, 25(1):337--360, 1997.

\bibitem{bigginskyprianou2004}
J.~D. Biggins and A.~E. Kyprianou.
\newblock Measure change in multitype branching.
\newblock {\em Adv. in Appl. Probab.}, 36(2):544--581, 2004.

\bibitem{bigginskyprianou2005}
J.~D. Biggins and A.~E. Kyprianou.
\newblock Fixed points of the smoothing transform: the boundary case.
\newblock {\em Electron. J. Probab.}, 10:no. 17, 609--631, 2005.

\bibitem{billingsley99}
P.~Billingsley.
\newblock {\em Convergence of probability measures}.
\newblock Wiley Series in Probability and Statistics: Probability and
  Statistics. John Wiley \& Sons, Inc., New York, second edition, 1999.
\newblock A Wiley-Interscience Publication.

\bibitem{bolthausen76}
E.~Bolthausen.
\newblock On a functional central limit theorem for random walks conditioned to
  stay positive.
\newblock {\em Ann. Probability}, 4(3):480--485, 1976.

\bibitem{caravennachaumont2008}
F.~Caravenna and L.~Chaumont.
\newblock Invariance principles for random walks conditioned to stay positive.
\newblock {\em Ann. Inst. Henri Poincar\'e Probab. Stat.}, 44(1):170--190,
  2008.

\bibitem{caravennachaumont2013}
F.~Caravenna and L.~Chaumont.
\newblock An invariance principle for random walk bridges conditioned to stay
  positive.
\newblock {\em Electron. J. Probab.}, 18:no. 60, 32, 2013.

\bibitem{csz2014arxiv}
F.~Caravenna, R.~Sun, and N.~Zygouras.
\newblock Polynomial chaos and scaling limits of disordered systems.
\newblock 2014.
\newblock {\tt arXiv:1312.3357v2 [math.PR]}.

\bibitem{csz2016arxiv}
F.~Caravenna, R.~Sun, and N.~Zygouras.
\newblock Universality in marginally relevant disordered systems.
\newblock 2016.
\newblock {\tt arXiv:1510.06287v2 [math.PR]}.

\bibitem{chauvinrouault88}
B.~Chauvin and A.~Rouault.
\newblock K{PP} equation and supercritical branching {B}rownian motion in the
  subcritical speed area. {A}pplication to spatial trees.
\newblock {\em Probab. Theory Related Fields}, 80(2):299--314, 1988.

\bibitem{chen2015-1}
X.~Chen.
\newblock A necessary and sufficient condition for the nontrivial limit of the
  derivative martingale in a branching random walk.
\newblock {\em Adv. in Appl. Probab.}, 47(3):741--760, 2015.

\bibitem{chen2015-2}
X.~Chen.
\newblock Scaling limit of the path leading to the leftmost particle in a
  branching random walk.
\newblock {\em Theory Probab. Appl.}, 59(4):567--589, 2015.

\bibitem{cmm2015arxiv}
X.~Chen, T.~Madaule, and B.~Mallein.
\newblock On the trajectory of an individual chosen according to supercritical
  gibbs measure in the branching random walk.
\newblock 2015.
\newblock {\tt arXiv:1507.04506v1 [math.PR]}.

\bibitem{cometsyoshida2006}
F.~Comets and N.~Yoshida.
\newblock Directed polymers in random environment are diffusive at weak
  disorder.
\newblock {\em Ann. Probab.}, 34(5):1746--1770, 2006.

\bibitem{derridaspohn88}
B.~Derrida and H.~Spohn.
\newblock Polymers on disordered trees, spin glasses, and traveling waves.
\newblock {\em J. Statist. Phys.}, 51(5-6):817--840, 1988.
\newblock New directions in statistical mechanics (Santa Barbara, CA, 1987).

\bibitem{doney85}
R.~A. Doney.
\newblock Conditional limit theorems for asymptotically stable random walks.
\newblock {\em Z. Wahrsch. Verw. Gebiete}, 70(3):351--360, 1985.

\bibitem{doney2012}
R.~A. Doney.
\newblock Local behaviour of first passage probabilities.
\newblock {\em Probab. Theory Related Fields}, 152(3-4):559--588, 2012.

\bibitem{feller71}
W.~Feller.
\newblock {\em An introduction to probability theory and its applications.
  {V}ol. {II}.}
\newblock Second edition. John Wiley \& Sons, Inc., New York-London-Sydney,
  1971.

\bibitem{iglehart74}
D.~L. Iglehart.
\newblock Functional central limit theorems for random walks conditioned to
  stay positive.
\newblock {\em Ann. Probability}, 2:608--619, 1974.

\bibitem{imhof84}
J.-P. Imhof.
\newblock Density factorizations for {B}rownian motion, meander and the
  three-dimensional {B}essel process, and applications.
\newblock {\em J. Appl. Probab.}, 21(3):500--510, 1984.

\bibitem{jaffuel2012}
B.~Jaffuel.
\newblock The critical barrier for the survival of branching random walk with
  absorption.
\newblock {\em Ann. Inst. Henri Poincar\'e Probab. Stat.}, 48(4):989--1009,
  2012.

\bibitem{kahanepeyriere76}
J.-P. Kahane and J.~Peyri{\`e}re.
\newblock Sur certaines martingales de {B}enoit {M}andelbrot.
\newblock {\em Advances in Math.}, 22(2):131--145, 1976.

\bibitem{kozlov76}
M.~V. Kozlov.
\newblock The asymptotic behavior of the probability of non-extinction of
  critical branching processes in a random environment.
\newblock {\em Teor. Verojatnost. i Primenen.}, 21(4):813--825, 1976.

\bibitem{lyons95}
R.~Lyons.
\newblock A simple path to {B}iggins' martingale convergence for branching
  random walk.
\newblock In {\em Classical and modern branching processes ({M}inneapolis,
  {MN}, 1994)}, volume~84 of {\em IMA Vol. Math. Appl.}, pages 217--221.
  Springer, New York, 1997.

\bibitem{madaule2015}
T.~Madaule.
\newblock Convergence in law for the branching random walk seen from its tip.
\newblock {\em Journal of Theoretical Probability}, pages 1--37, 2015.

\bibitem{madaule2016}
T.~Madaule.
\newblock First order transition for the branching random walk at the critical
  parameter.
\newblock {\em Stochastic Process. Appl.}, 126(2):470--502, 2016.

\bibitem{mallein2016arxiv-1}
B.~Mallein.
\newblock Asymptotic of the maximal displacement in a branching random walk.
\newblock 2016.
\newblock {\tt arXiv:1605.08292v1 [math.PR]}.

\bibitem{mallein2016arxiv-2}
B.~Mallein.
\newblock Genealogy of the extremal process of the branching random walk.
\newblock 2016.
\newblock {\tt arXiv:1606.01748v2 [math.PR]}.

\bibitem{rouault81}
A.~Rouault.
\newblock Lois empiriques dans les processus de branchement spatiaux
  homog\`enes supercritiques.
\newblock {\em C. R. Acad. Sci. Paris S\'er. I Math.}, 292(20):933--936, 1981.

\bibitem{sakhanenko2006}
A.~I. Sakhanenko.
\newblock Estimates in the invariance principle in terms of truncated power
  moments.
\newblock {\em Sibirsk. Mat. Zh.}, 47(6):1355--1371, 2006.

\bibitem{shi2015}
Z.~Shi.
\newblock {\em Branching random walks}, volume 2151 of {\em Lecture Notes in
  Mathematics}.
\newblock Springer, Cham, 2015.
\newblock Lecture notes from the 42nd Probability Summer School held in Saint
  Flour, 2012, {\'E}cole d'{\'E}t{\'e} de Probabilit{\'e}s de Saint-Flour.
  [Saint-Flour Probability Summer School].

\bibitem{stone65}
C.~Stone.
\newblock A local limit theorem for nonlattice multi-dimensional distribution
  functions.
\newblock {\em Ann. Math. Statist.}, 36:546--551, 1965.

\bibitem{vonbahresseen65}
B.~von Bahr and C.-G. Esseen.
\newblock Inequalities for the {$r$}th absolute moment of a sum of random
  variables, {$1\leq r\leq 2$}.
\newblock {\em Ann. Math. Statist}, 36:299--303, 1965.

\end{thebibliography}
\def\cprime{$'$}

\end{document}